\numberwithin{equation}{section}
\def\lra{\longrightarrow}
\def\({\big(}\def\){\big)}
\def\lek{\lesssim} \def\gek{\gtrsim}
\def\sbt{\subset} 
\def\g{\gamma} \def\th{\theta}
\def\lt{\left} \def\rt{\right}
\def\sms{\setminus}
\def\es{\emptyset}
\def\Om{\Omega} \def\om{\omega}
\def\Ga{\Gamma}
\def\lam{\lambda}
\def\fr{\noindent}
\def\cM{\mathcal M}
\def\Int{{\rm Int}}
\def\cEP{\cE{\rm P}}
\def\P{{\rm P}}
\def\bu{\bigcup}
\def\lt{\left}
\def\rt{\right}
\newcommand{\mylabel}[2]{#2\def\@currentlabel{#2}\label{#1}}
\title[]{{\large C}ritically {\large F}inite {\large R}andom {\large M}aps of an {\large I}nterval}
\author[\sc Jason ATNIP]{\sc Jason ATNIP}
\address{ Jason Atnip,
School of Mathematics and Statistics, University of New South Wales,
Sydney, NSW 2052,
Australia}
\email{j.atnip@unsw.edu.au\newline\hspace*{0.3cm} 
Web: http://atnipmath.com}
\author[\sc Mariusz URBA\'NSKI]{\sc Mariusz URBA\'NSKI}
\address{Mariusz Urba\'nski, Department of Mathematics,
 University of North Texas, Denton, TX 76203-1430, USA}
\email{urbanski@unt.edu\newline \hspace*{0.3cm} Web:
http://www.math.unt.edu/$\sim$urbanski}
\begin{document}
\begin{abstract} 
We consider random multimodal $C^3$ maps with negative Schwarzian derivative, defined on a finite union of closed intervals in $[0,1]$, onto the interval $[0,1]$ with the base space $\Om$ and a base invertible ergodic map $\th:\Om\to\Om$ preserving a probability measure $m$ on $\Om$. We denote the corresponding skew product map by $T$ and call it a critically finite random map of an interval. We prove that there exists a subset $AA(T)$ of $[0,1]$ with the following properties:
\begin{enumerate}
\item For each $t\in AA(T)$ a $t$--conformal random measure $\nu_t$ exists. We denote by $\lm_{t,\nu_t,\om}$ the corresponding generalized eigenvalues of the corresponding dual operators $\tr_{t,\om}^*$, $\om\in\Om$. 

\item Given $t\ge 0$ any two $t$--conformal random measures are equivalent.

\item The expected topological pressure of the parameter $t$:
\begin{align*}
		\cEP(t):=\int_{\Om}\log\lm_{t,\nu,\om}dm(\om).
\end{align*} 
is independent of the choice of a $t$--conformal random measure $\nu$. 

\item The function 
$$
AA(T)\ni t\longmapsto \cEP(t)\in\mathbb R
$$
is monotone decreasing and Lipschitz continuous. 

\item With $b_T$ being defined as the supremum of such parameters $t\in AA(T)$ that $\cEP(t)\ge 0$, it holds that 
$$
\cEP(b_T)=0 
\  \  \  {\rm and} \  \  \
[0,b_T]\sbt \Int(AA(T)).
$$
\item $\HD(\cJ_\om(T))=b_T$ for $m$--a.e $\om\in\Om$, where $\cJ_\om(T)$, $\om\in\Om$, form the random closed set generated by the skew product map $T$.

\item $b_T=1$ if and only if $\union_{\Dl\in \cG}\Dl=[0,1]$,
and then $\cJ_\om(T)=[0,1]$ for all $\om\in\Om$. 
\end{enumerate}
\end{abstract}

\maketitle
\tableofcontents

\section{Introduction}

This paper has three primary sources of motivation: random dynamical systems, smooth multimodal maps of an interval with negative Schwarzian derivative and conformal dynamical systems. The general idea behind random dynamics is that one considers a generalized skew product map. More precisely, 
\begin{itemize}
\item a probability space $\Om$ is given with a probability measure $m$, 

\item an ergodic invertible measurable map $\theta:\Om\to\Om$ preserving measure $m$ is given,

\item for every $\om\in\Om$ a closed subset $\cJ_\om$ of a complete metrizable space $X$ is given,

\item for every $\om\in\Om$ a continuous map $T_\om:\cJ_\om\to \cJ_{\th(\om)}$ is given. Moreover the map $\Om\ni \om\mapsto \cJ_\om$ is a closed random set, i.e. it satisfies a certain measurability condition which we will describe later. 
\end{itemize}
Then the induced map
\begin{align*}
T:	\cJ:=\union_{\om\in\Om}\set{\om}\times\cJ_\om \to\cJ,
\end{align*}
$$
T(\om,x):=\(\th(\om),T_\om(x)\)
$$
is called the corresponding generalized skew product map or a random dynamical system. In order to see randomness more clearly, consider the maps
$$
T_\om^n:=T_{\th^{n-1}(\om)}\circ T_{\th^{n-2}(\om)}\circ\ldots T_{\th(\om)}\circ T_\om:\cJ_\om\lra \cJ_{\th^n(\om)}.
$$
One can view this composition scheme as iterating randomly the maps $T_\om$
according to the random process governed by the measure preserving ergodic map $\th:\Om\to\Om$. This is ``particularly random'' when $\th$ is a Bernoulli  shift, which essentially means that the maps $T_\om$ form an independent and identically distributed sequence of random variables. The general task of random dynamical systems is to search for probability measures $\mu$ on $\cJ$ whose marginal is equal to $m$, i.e. 
$$
\nu\circ\pi^{-1}_\Om=m,
$$
where 
$$
\pi_\Om:\Om\times X\lra\Om
$$ 
is the projection onto the first coordinate. One typically looks for such measures with some prescribed dynamical, stochastic, or geometric properties, and then one studies them. 

More on the abstract setting of random dynamical systems can be found in \cite{Arnold}, \cite{kifer1}, \cite{kifer2}, \cite{kifer+1survey}. We also recommend the little, well written, book of Hans Crauel \cite{Crauel}. Additional papers on random dynamics, somewhat related to our current work include \cite{Bogenschutz1}--\cite{Bogenschutz6}, \cite{Crauel-Flandoli}, \cite{Khanin-Kifer}, \cite{kifer1}--\cite{kifer5}, \cite{kifer+1survey}, \cite{MSU},  \cite{Rugh1} and \cite{Rugh2}. This list is not by any means complete. Our approach to randomness stems from \cite{MSU}.  

Our second motivation, deterministic iteration of smooth multimodal maps of an interval with negative Schwarzian derivative, also has a long history and forms a well established fully developed sophisticated theory. It all began with the seminal paper \cite{Misiurewicz} of Michal Misiurewicz in 1981 and has been rapidly developing ever since. A good, reader friendly introduction to this theory can be found in the classical book \cite{CE} by Pierre Collett and Paul Eckmann. A full systematic exposition is available in the book \cite{MS} by Wellington de Melo and Sebastian van Strien. Given a $C^3$ map $f$, the Schwarzian derivative $S(f)$ of $f$ is given by the formula
\begin{align*}
S(f)=\left(\frac{f''}{f'}\right)'-\frac{1}{2}\left(\frac{f''}{f'}\right)^2=\frac{f'''}{f'}-\frac{3}{2}\left(\frac{f''}{f'}\right)^2.
\end{align*}
The key feature of negative Schwarzian derivative $S(f)$ is that it entails a bounded distortion property almost as strong as the Koebe's Distortion Theorem for univalent holomorphic maps on the complex plane $\mathbb C$. This is the most important property due to which the above mentioned theory of deterministic maps of an interval works so well, and due to which our treatment of random maps with negative Schwarzian derivative was possible. We describe it now. 

The probability space $\Om$ endowed with the $\theta$-invariant probability measure $m$ is  as above. Given a finite collection $\cG$ of closed subintervals of the unit interval $[0,1]$ with mutually empty interiors, the set $\cM$, with full notation of Section~\ref{SRIMM} denoted by $\cM(\cG;\kappa, A,\g)$, is a collection of $C^3$ maps with negative Schwarzian derivatives, from the intervals in $\cG$ onto $[0,1]$ satisfying some mild natural uniformity conditions. We iterate these maps randomly. More precisely, 
$$
\Om\ni\om\longmapsto T_\om\in\cM(\cG;\kappa, A,\g)
$$ 
is a measurable map in the sense of Crauel \cite{Crauel}. We cannot iterate these maps yet as described above. This would be actually possible if we demand that the union of intervals forming $\cG$ is equal to the whole interval $[0,1]$. But we do not assume this. We allow this but are far from assuming this. Then already the first iterate $T_\om(x)$, $x\in[0,1]$, may not be well defined. We therefore define the random closed sets $\cJ_\om(T)$ as the sets of all points $x\in[0,1]$ for which all iterates $T_\om^n(x)$, $n\ge 0$, are well defined, i.e. they belong to elements of $\cG$. Then we have a good map 
\begin{align*}
T:	\cJ(T):=\union_{\om\in\Om}\set{\om}\times\cJ_\om(T) \lra\cJ(T). 
\end{align*}
We call this map a random critically finite map of the interval $[0,1]$. Our primary goals in this paper are twofold. To investigate the existence, uniqueness, and properties of random conformal measures for $T$ and random $T$--invariant measures (both with marginal $m$) absolutely continuous/equivalent with respect to conformal measures. 

A random measure $\nu=\nu_t$ on $\cJ(T)$ is called $t$--\textit{conformal} if there exists a measurable function $\lm:\Omega\to(0,\infty)$ such that
\begin{align}\label{eqn: conformal measure}
\nu_{\ta(\om)}(T_\om(A))=\lm_\om\int_A\absval{T'_\om}^td\nu_\om
\end{align}
for every special set $A\sub \cJ_\om(T)$ and $m$-a.e. $\om\in\Omega$, and also 
$$
\nu_\om\(T_{\om,\Dl}^{-1}(A)\)
=\lm_{\om}^{-1}\int_A\absval{\(T_{\om,\Dl}^{-1}\)'}^t d\nu_{\th(\om)}
$$
for every $\om\in\Om$, every $\Dl\in\cG$ and every Borel set $A\sbt [0,1]$. We call the former property throughout the paper, $t$--Fconformal while the latter $t$--Bconformal. Quite frequently, in random (and deterministic as well) dynamical systems both $t$--Fconformality and $t$--Bconformality are equivalent. But here, because of the existence of critical points of the maps $T_\om$, the situation is much more subtle. We fully settle (see Corollary~\ref{c520180405}) the relation between Fconformality and Bconformality of critically finite maps in Section~\ref{FBCM} entitled ``$t$--Fconformal, $t$--Bconformal, and $t$-conformal Measures''. The existence of $t$--conformal measures is one of the major issues in our paper, one which however, trivially disappears if the union of the elements of $\cG$ is the entire interval $[0,1]$ and one is merely interested in $1$--conformal measures. Lebesgue measure on $[0,1]$ makes it then. We do deal with non-trivial conformal measures. In order to present our corresponding result meaningfully, we need some further definitions; they will be needed to formulate some other major theorems as well. We introduce some technical condition (see Definition~\ref{d1tp3}) and denote by $AA(T)$ the set of all parameters $t\ge 0$ that satisfy it. We prove (see Theorem~\ref{t1tp5}) that 

\medskip \centerline{For each $t\in AA(T)$ a $t$--conformal random measure $\nu_t$ exists.}

\medskip and

\medskip \centerline{Given $t\ge 0$ any two $t$--conformal random measures are equivalent.}

\medskip 

But this is not the end of this issue. We investigate the structure of the set $AA(T)$ further. Indeed, for every $t\in AA(T)$, we introduce the expected topological pressure of the parameter $t$:
\begin{align*}
		\cEP(t):=\int_{\Om}\log\lm_{t,\nu,\om}dm(\om).
	\end{align*} 
We show that this number is independent of the choice of $t$--conformal measure $\nu$. We prove that that the function 
$$
AA(T)\ni t\longmapsto \cEP(t)\in\mathbb R
$$
is monotone decreasing and Lipschitz continuous. We define $b_T$ as the supremum of such parameters $t\in AA(T)$ that $\cEP(t)\ge 0$. We show that
$$
\cEP(b_T)=0 
\  \  \  {\rm and} \  \  \
[0,b_T]\sbt \Int(AA(T)).
$$

The construction of $t$--conformal measures for $t\in AA(T)$ is technical and involved. First we truncate the original system $T$ by defining some sequence $\cJ^{(k)}(T)$, $k\ge 1$, of $T$--invariant closed sets whose points, under iterates of $T$, omit some appropriate neighborhoods of zero. These neighborhoods are constructed in such a way that the maps $T|_{\cJ^{(k)}(T)}:\cJ^{(k)}(T)\to \cJ^{(k)}(T)$, $k\ge 1$, satisfy all the requirements of the distance expanding random maps considered in \cite{MSU}. In particular, we get the corresponding $t$-conformal measures $\nu_k^{(t)}$, $k\ge 1$, for the system $T|_{\cJ^{(k)}(T)}$. We finally show that any weak limit in the narrow topology, in sense of \cite{Crauel}, of the $\nu_k^{(t)}$, $k\ge 1$, is a $t$--conformal measure for the map $T:\cJ\to\cJ$. 

\medskip Now, we pass to $T$-invariant measures absolutely continuous with respect to conformal measures. We prove the following. 

\begin{theorem}
For every admissible parameter $t\geq 0$, i.e. belonging to AA(T), there exists a unique $T$--invariant random measure on $\cJ(T)$ absolutely continuous with respect to $\nu_t$. In addition $\mu_t$ is equivalent to $\nu_t$ and ergodic.
\end{theorem}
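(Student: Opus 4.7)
The plan is to construct the invariant density $h_t$ as a normalized fixed point of the transfer operator and then exploit item (2) of the abstract (equivalence of all $t$-conformal random measures) to deduce uniqueness and ergodicity. Given the $t$-conformal measure $\nu_t$ from item (1), consider the fiberwise transfer operator
\begin{align*}
\mathcal{L}_{t,\om}g(y) \;=\; \sum_{x\in T_\om^{-1}(y)} |T'_\om(x)|^{-t}\,g(x).
\end{align*}
A short computation from the Bconformality clause of \eqref{eqn: conformal measure} yields $\mathcal{L}_{t,\om}^*\nu_{t,\th(\om)} = \lm_{t,\om}\nu_{t,\om}$, and hence the normalized operator $\widehat{\mathcal{L}}_{t,\om} := \lm_{t,\om}^{-1}\mathcal{L}_{t,\om}$ satisfies $\int \widehat{\mathcal{L}}_{t,\om}g\, d\nu_{t,\th(\om)} = \int g\, d\nu_{t,\om}$. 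A nonnegative measurable random density $h_{t,\om}$ with $\int h_{t,\om}\, d\nu_{t,\om} = 1$ and $\widehat{\mathcal{L}}_{t,\om}h_{t,\om} = h_{t,\th(\om)}$ for $m$-a.e.\ $\om$ will then produce the desired $T$-invariant random measure $d\mu_{t,\om} := h_{t,\om}\,d\nu_{t,\om}$; $T$-invariance is a direct consequence of the adjoint relation.

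To produce such a density, I would revisit the truncated systems $T|_{\cJ^{(k)}(T)}$ already used to build $\nu_t$. Since each truncation falls within the MSU distance-expanding random framework \cite{MSU}, it admits an invariant density $h^{(k)}_{t,\om}$ for its own normalized transfer operator. The core analytic task, and the main obstacle, is to show that the sequence $\{h^{(k)}_{t,\om}\}$ has a subsequential limit in the narrow topology of Crauel that does not lose mass as the critical-set neighborhoods defining the truncations shrink. I expect to obtain this from uniform-in-$k$ $L^1$-tail estimates on the preimage sums defining $\widehat{\mathcal{L}}^{(k)}_{t,\om}\mathbf{1}$ near the critical points, combining the Koebe-type bounded distortion provided by the negative Schwarzian derivative with the summability condition built into the definition of $AA(T)$ (Definition~\ref{d1tp3}). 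Once this tightness is established, the identity $\widehat{\mathcal{L}}_{t,\om}h^{(k)}_{t,\om} = h^{(k)}_{t,\th(\om)}$ passes to the limit, producing $h_{t,\om}$. An equivalent route is to work directly with $T$ and extract a narrow limit of the Cesàro averages $\frac{1}{N}\sum_{n=0}^{N-1}\widehat{\mathcal{L}}^n_{t,\th^{-n}(\om)}\mathbf{1}$, with the same near-critical tail machinery supplying compactness.

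For the equivalence $\mu_t\sim\nu_t$ it suffices to show $h_{t,\om}>0$ $\nu_{t,\om}$-a.e.\ The set $\{h_{t,\om}>0\}$ is essentially $T$-forward-invariant, and the topological transitivity of the truncated systems inherited from \cite{MSU}, combined with a standard Birkhoff-type argument, forces it to have full $\nu_{t,\om}$-measure. Ergodicity then follows from item (2): if $B\sbt\cJ(T)$ were a $T$-invariant measurable set with $0 < \mu_t(B) < \mu_t(\cJ(T))$, the fiber restrictions of $\nu_t$ to $B$ and to $\cJ(T)\sms B$ would, after normalization, each satisfy the Bconformality relation with their own generalized eigenvalue and hence be $t$-conformal random measures; by item (2) they would be mutually equivalent, contradicting their mutual singularity. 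Uniqueness in the absolutely continuous class is an immediate corollary: any two admissible densities $h,\tilde h$ both satisfy the cocycle identity for $\widehat{\mathcal{L}}_{t,\om}$, so the ratio $h_\om/\tilde h_\om$ is $T$-invariant, hence $\mu_t$-a.e.\ constant by the already-established ergodicity, and then equal to $1$ by normalization.
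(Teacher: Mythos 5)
Your ergodicity and uniqueness steps are essentially the paper's own argument: conditioning $\nu_t$ on a putative invariant set $E$ and on its complement produces two $t$--Bconformal (hence $t$--conformal) random measures which must be equivalent by the equivalence of conformal measures, contradicting their mutual singularity; uniqueness then follows from $\mu_t\sim\nu_t$ plus ergodicity. That part is sound. The genuine gap is in the construction of $\mu_t$ and in the equivalence step. You propose to obtain an invariant \emph{density} $h_{t,\om}$ as a subsequential narrow limit, either of the MSU invariant densities of the truncated systems or of the Ces\`aro averages $\frac1N\sum_{n<N}\widehat{\mathcal L}^n_{t,\th^{-n}(\om)}\ind$, and you identify ``no loss of mass'' as the main obstacle. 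But tightness is not the only issue, and by itself it does not suffice: a narrow limit of absolutely continuous random measures need not be absolutely continuous, so even granting your tail estimates you only get a limit measure, not a density, and its absolute continuity with respect to $\nu_t$ is exactly what still has to be proved. The paper resolves this by working at the level of measures, taking $\mu^{(n)}=\frac1n\sum_{j<n}\nu_t\circ T^{-j}$, and proving the two--sided comparison \eqref{2mis121}: for sets $A$ with $p_2(A)\sbt[\eta,1-\eta]$ one has $\mu^{(n)}_\om(A_\om)\le K_\eta^t Q_\cG^{-1}(t)\,\nu_\om(A_\om)$ uniformly in $n$ (via conformality, the Special Bounded Distortion Property and the uniform lower bound of Lemma~\ref{l1cmii3.2B}), while the mass near $\{0,1\}$ is controlled uniformly in $n$ by Lemma~\ref{l2tp6}/Corollary~\ref{c520180606} -- this is precisely where admissibility $t\in AA(T)$ enters, and note the dangerous region is the orbit of $\{0,1\}$ (the sets $V_\om(\ell)$ and their critical preimages), not merely neighborhoods of $\Crit(\cG)$ as you suggest. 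Your proposal contains no substitute for these uniform domination estimates. The truncated--density route has the additional complication that the MSU densities $h^{(k)}$ are densities with respect to the truncated conformal measures $\nu_{t,k}$, not with respect to $\nu_t$, so passing to the limit requires simultaneous control of both sequences.

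The equivalence step is also only gestured at: ``topological transitivity of the truncated systems plus a standard Birkhoff-type argument'' does not obviously yield $h_{t,\om}>0$ $\nu_{t,\om}$--a.e.\ in this critically finite, non--uniformly expanding setting, where positivity can degenerate along orbits approaching $0$, $1$, or the critical points. The paper instead proves $\nu_t\ll\mu_t$ quantitatively, using the \emph{lower} bound in \eqref{2mis121} together with Lemma~\ref{l1cmii3.2B}, Remark~\ref{r120180526}, Portmanteau's theorem for closed/open random sets, and a Tchebyshev argument to guarantee that a definite proportion of the Ces\`aro mass stays in $[\eta_2,1-\eta_2]$ on a positive--measure set of fibers. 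To make your argument complete you would need to supply analogues of these estimates; as written, the construction and the equivalence both rest on steps you have only announced.
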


We construct the measure $\nu_t$ globally as a weak limit, in the narrow topology, of ergodic averages of the measures $\nu_t\circ T^{-n}$, $n\ge 1$. 
A technical reasoning then shows that $\mu_t$ is equivalent ot $\nu_t$, ergodic and unique. The last property, in particular, is obtained by making use of the above mentioned fact, that any two $t$--conformal measures are equivalent. 

\, Our third main theme in this paper is the question of what is the value of Hausdorff dimension of the fiberwise sets $\cJ_\om(T)$, $\om\in\Om$. It follows immediately from ergodicity of the map $\theta:\Om\to\Om$ that the function $\Om\ni\om\mapsto\HD(\cJ_\om(T))\in[0,1]$ is $m$--a.e constant. But what is its value? Our answer is a version of Bowen's formula tracing back to and primarily motivated by the seminal pioneering work of Rufus Bowen (\cite{bowen}) on the Hausdorff dimension of the limit sets of quasi--Fuchsian groups where he employed for the first time the machinery of thermodynamic formalism to determine the Hausdorff dimension. Indeed, we prove that 
$$
\HD(\cJ_\om(T))=b_T
$$ 
for $m$--a.e. $\om\in\Om$, i.e. $\HD(\cJ_\om(T))$ is equal to the first zero of the expected topological pressure function $\cEP(t)$, $t\ge 0$. Our proof primarily stems from the work \cite{MSU} but is technically considerably more involved due to the existence of critical points and lack of hyperbolicity. 

As the last result of this paper we prove the following theorem which shows that the sets $\cJ_\om(T)$, $\om\in\Om$, are all, up to a set of $m$--measure zero, true fractals unless
\begin{align*}
	I_*=\union_{\Dl\in\cG}\Dl=[0,1],
\end{align*}
in which case each set $\cJ_\om(T)$ is equal to $[0,1]$. 

\begin{theorem}
	If $T:\cJ(T)\to\cJ(T)$ is a random critically finite map, then 
\begin{align*}
		b_T=1 \  \text{ if an only if }  \  \union_{\Dl\in \cG}\Dl=[0,1],
	\end{align*}
and then $\cJ_\om(T)=[0,1]$ for all $\om\in\Om$. 
\end{theorem}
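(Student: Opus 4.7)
The direction $I_*=[0,1]\Rightarrow b_T=1$ (together with the final assertion $\cJ_\om(T)=[0,1]$) is immediate: if $I_*:=\union_{\Dl\in\cG}\Dl=[0,1]$, then every $x\in[0,1]$ lies in some $\Dl\in\cG$, so $T_\om(x)$ is defined and belongs to $T_\om(\Dl)=[0,1]=I_*$. Induction shows that every iterate $T_\om^n(x)$ is defined and belongs to $I_*$, so $\cJ_\om(T)=[0,1]$ for every $\om\in\Om$, and property (6) forces $b_T=\HD([0,1])=1$.

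For the converse I would argue the contrapositive. Assume $I_*\neq [0,1]$; then $U:=[0,1]\sms I_*$ is a nonempty open set and $|I_*|<1$. By Bowen's formula from (6) it suffices to show $\HD(\cJ_\om(T))<1$ for $m$-a.e.\ $\om$. I would introduce the finite-time survivors
\[
\cJ_\om^{(n)}:=\{x\in[0,1]:T_\om^k(x)\text{ is defined and lies in }I_{*,\th^k(\om)}\text{ for }0\le k\le n-1\},
\]
so that $\cJ_\om(T)=\bi_{n\ge 1}\cJ_\om^{(n)}$. Each $\cJ_\om^{(n)}$ is a disjoint finite union of ``full'' pullback components $V$ on which $T_\om^n|_V:V\to [0,1]$ is a diffeomorphism.

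The crux is the Koebe-type distortion principle forced by the negative Schwarzian derivative. Handled, exactly as in the truncated systems $\cJ^{(k)}(T)$ constructed in earlier sections, by excising small neighborhoods of the finitely many critical orbits (legitimate precisely because the map is critically finite), this yields a uniform distortion constant $K\ge 1$ such that
\[
|(T_\om^n|_V)^{-1}(A)|\le K\,|A|\cdot|V|\quad\text{for every measurable }A\sbt[0,1].
\]
Applied with $A=\cJ_{\th^n(\om)}^{(m)}$ and summed over components, this produces the submultiplicative bound $|\cJ_\om^{(n+m)}|\le K\,|\cJ_\om^{(n)}|\,|\cJ_{\th^n(\om)}^{(m)}|$, and Kingman's subadditive ergodic theorem then supplies a deterministic constant
\[
L:=\lim_{n\to\infty}\tfrac{1}{n}\log|\cJ_\om^{(n)}|\le 0\quad m\text{-a.e.}
\]
The same distortion control, combined with the uniform fiberwise expansion established for the truncated systems in earlier sections, converts ``$L<0$'' into ``$\HD(\cJ_\om(T))<1$'' via a standard Moran cover / thermodynamic-formalism computation.

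The principal technical hurdle is upgrading $L\le 0$ to the strict inequality $L<0$. The crude per-step estimate only gives $L\le\log(K|I_*|)$, which need not be negative. The remedy is to exhibit some $n_0$ for which $|\cJ_\om^{(n_0)}|<K^{-1}$ on a set of positive $m$-measure; by the submultiplicative bound and Kingman this forces $L<0$. By continuity of measure from above, such $n_0$ exists as soon as $\cJ_\om(T)=\bi_n\cJ_\om^{(n)}$ has zero Lebesgue measure, which follows from the positive fiberwise Lyapunov exponent available from the thermodynamic formalism on the truncated systems together with the steady leakage of Lebesgue mass into the open gap $U$ at every iterate (no orbit of positive Lebesgue measure can remain trapped inside $I_*$ under a non-uniformly expanding dynamics on a proper subset). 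Once $L<0$ is established, property (6) delivers $b_T=\HD(\cJ_\om(T))<1$, completing the contrapositive.
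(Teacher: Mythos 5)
Your first direction and the final assertion are fine and coincide with the paper's argument. The converse, however, has a genuine gap, and it sits exactly at the point your strategy must confront the critical points.

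The estimate on which everything rests, a uniform constant $K$ with $\big|(T_\om^n|_V)^{-1}(A)\big|\le K\,|A|\,|V|$ for \emph{every} full pullback component $V$ of $\cJ_\om^{(n)}$, is false in this setting. A full branch $T_\om^n|_V:V\to[0,1]$ has no Koebe space at all (its image is the whole interval), and any branch whose itinerary passes through an interval $\Dl\in\cG_C$ contains a preimage of a critical point of $T$ on its boundary; near such points the distortion is unbounded, and the preimage of a small interval $A$ adjacent to the critical value $0$ or $1$ has length of order $|A|^{1/(1+\g_\Dl)}$, not $O(|A|\,|V|)$. Excising neighborhoods of the critical orbits does not repair this: that produces the truncated sets $\cJ^{(k)}(T)$, which are strictly smaller than $\cJ_\om(T)$, and a dimension bound for them says nothing about $\HD(\cJ_\om(T))$ without a further (and nontrivial) limiting argument. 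Consequently the submultiplicative inequality feeding Kingman is unjustified, and so is the later step converting exponential decay of $|\cJ_\om^{(n)}|$ into $\HD(\cJ_\om(T))<1$ "by a standard Moran cover,'' since Moran-type covers again require uniformly bounded distortion on the covering branches. In addition, the input $\big|\cJ_\om(T)\big|=0$ is asserted rather than proved: the paper's Lyapunov-exponent results (Proposition~\ref{p1mis83}) concern invariant measures in $M_m^{1,0}(T)_e$, not Lebesgue measure, and "no set of positive Lebesgue measure can remain trapped in $I_*$'' is precisely the kind of statement that needs an argument for non-uniformly expanding multimodal maps.

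The paper takes a different route that sidesteps all of this. It fixes a gap $H$ of $[0,1]\sms\union_{\Dl\in\cG}\Dl$, a compact interval $D$ near an endpoint $\xi\in(0,1)$ of $H$ with $\mu_\om(D)>0$ (here $\mu=\mu_{b_T}$ is the invariant measure equivalent to the $b_T$--conformal measure $\nu$), and uses Birkhoff's theorem to extract a positive-$\mu$-measure random set $E_\om$ of points visiting $D$ with definite frequency. At each visit time one pulls back the hole $H$ with the Special Bounded Distortion Property (legitimate there, because the relevant inverse branches land in $D$, well inside $(0,1)$ and away from the critical values), which makes $E_\om$ \emph{mean porous}; the Koskela--Rohde theorem then gives $\HD(E_\om)<1$. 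Since $\mu_\om(E_\om)>0$, the conformal-measure lower bound from the proof of Bowen's formula (Remark~\ref{r120180828}) gives $\HD(E_\om)\ge b_T$, whence $b_T<1$. Note that the paper never tries to bound the dimension of all of $\cJ_\om(T)$ by a direct covering argument -- only of a positive-measure subset where distortion is controlled -- which is exactly the manoeuvre your proposal is missing.
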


\section{Preliminaries on Random Measures}\label{sec: prelim on rand meas}

We consider a Polish space (complete separable metrizable space) $X$, with Borel $\sg$-algebra $\sB$, and a complete probability space $(\Om,\sF,m)$.

\, Let $\sB\otimes\sF$ be the product $\sg$-algebra of $\sF$ and $\sB$. A probability measure $\nu$ on $\Om\times X$ with respect to the product $\sg$-algebra $\sB\otimes\sF$ is said to be \textit{random probability measure} relative to $m$ if it has marginal $m$, i.e. if
$$
\nu\circ\pi^{-1}_\Om=m,
$$
where 
$$
\pi_\Om:\Om\times X\lra\Om
$$ 
is the projection onto the first coordinate, i.e. defined by $\pi_\Om(\om,x)=\om$. If $(\nu_\om)_{\om\in\Om}$ are disintegrations of $\nu$ with respect to the partition $\(\{\om\}\times X\)_{\om\in\Om}$, then these satisfy the following properties:
\begin{enumerate}
	\item For every $B\in\sB$, the map $\Om\ni\om\longmapsto\nu_\om(B)\in X$ is measurable, 
	\item For $m$-a.e. $\om\in\Om$, the map $\sB\ni B\longmapsto\nu_\om(B)\in X$ is a Borel probability measure.
\end{enumerate}

\noindent Equivalently, to come up with a random probability measure relative to $m$, one can start with a family $(\nu_\om)_{\om\in\Om}$ of probability measures on $(X,\sB)$ satisfying conditions (1) and (2) above, and then define the measure $\nu$ on $(\Om\times X,\sB\otimes\sF)$ by the formula
$$
\nu(A):=\int_\Om\nu_\om(A_\om)\,dm,
$$
where the sets $A_\om$, $\om\in\Om$, are uniquely determined by the condition that
$$
\{\om\}\times A_\om=A\cap (\{\om\}\times X).
$$
Equivalently
$$
A_\om=\pi_X(A\cap(\{\om\}\times X)),
$$
where 
$$
\pi_X:\Om\times X\lra X
$$ 
is the projection onto the second coordinate, i.e. defined by $\pi_X(\om,x)=x$. The space of all random probability measures relative to $m$ will be denoted in the sequel by 
$$
M^1_m(X).
$$

Denote by $2^X$ the family of all subsets of $X$. Let $\rho$ be any complete metric on $X$ compatible with its topology. Following \cite{Crauel} we say that a function $F:\Om\to 2^X$ is a closed random set if and only if the set $F(\om)\sbt X$ is a closed subset of $X$ for all $\om\in\Om$ and 
$$
\Om\ni\om\longmapsto \rho(x,F(\om)):=\inf\{\rho(x,y):y\in F(\om)\}\in[0,+\infty)
$$ 
is measurable for every $x\in X$. We frequently identify a closed random set with its graph
$$
{\rm graph}(F):=\bu_{\om\in\Om}\{\om\}\times F(\om). 
$$
Of course 
$$
{\rm graph}(F)_\om=F(\om)
$$
and with our identification, we have
$$
F_\om=F(\om).
$$
We say that a measurable set $U\sub\Om\times X$ is an open random set if its complement is a closed random set. We again identify an open random set with its graph. Since our probability measure $m$ on $\Om$ is complete,
Proposition 2.4 of \cite{Crauel} implies the following.

\begin{prop}\label{p120180619}
A function $F:\Om\to 2^X$ is a closed random set if and only if the set $F(\om)\sbt X$ is a closed subset of $X$ for all $\om\in\Om$ and graph$(F)$ is a measurable subset of $\Om\times X$. 
\end{prop}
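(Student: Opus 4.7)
The plan is to prove the two implications separately. The forward direction is a routine Carath\'eodory-function argument, while the backward direction is where completeness of $m$ does genuine work.

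For the forward direction, set $\Phi(\om,x):=\rho(x,F(\om))$. Under the hypothesis that $F$ is a closed random set, $\Phi$ is measurable in $\om$ for each fixed $x$ and, since
$$\bigl|\rho(x,F(\om))-\rho(y,F(\om))\bigr|\le\rho(x,y),$$
it is $1$-Lipschitz (in particular continuous) in $x$ for each fixed $\om$. Because $X$ is separable, any such Carath\'eodory function is $\sF\otimes\sB$-measurable: one approximates $\Phi$ pointwise by simple-in-$x$ functions built from a countable dense set $\{x_n\}\sbt X$ and a refining sequence of Borel partitions of $X$ with shrinking diameters, and each such simple function is manifestly jointly measurable. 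Each fibre $F(\om)$ being closed then gives graph$(F)=\Phi^{-1}(\{0\})\in\sF\otimes\sB$.

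For the backward direction, fix $x\in X$ and $c>0$ and write $B(x,c)$ for the open ball of radius $c$. The identity
$$\{\om\in\Om:\rho(x,F(\om))<c\}=\pi_\Om\bigl(\text{graph}(F)\cap(\Om\times B(x,c))\bigr)$$
reduces measurability of the map $\om\mapsto\rho(x,F(\om))$ to showing that the projection onto $\Om$ of a set in $\sF\otimes\sB$ belongs to $\sF$. This is precisely the Measurable Projection Theorem (von Neumann--Aumann): on a complete probability space $(\Om,\sF,m)$, with $X$ Polish, the $\Om$-projection of any element of $\sF\otimes\sB$ lies in $\sF$. Ranging over a countable sequence of values of $c$ delivers the required measurability for every $x\in X$.

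The only real obstacle is the projection step: without completeness of $(\Om,\sF,m)$ the projection is only guaranteed to be analytic (Souslin) in general, and one could at best conclude measurability relative to the completion of $\sF$ rather than $\sF$ itself. Completeness of $m$, which is imposed at the start of the section, is exactly the hypothesis that removes this obstacle and makes the proposition a direct application of the projection theorem, which is the route that \cite{Crauel} takes to his Proposition~2.4.
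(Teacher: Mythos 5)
Your proof is correct, and it is in substance the same argument the paper relies on: the paper gives no proof at all, simply invoking Proposition~2.4 of Crauel's book under the blanket hypothesis that $m$ is complete, and what you have written is exactly the standard argument behind that result. The forward direction via the Carath\'eodory-function lemma (separability of $X$ plus $1$-Lipschitz continuity of $\rho(\cdot,F(\om))$ gives joint measurability of $\Phi$, and graph$(F)=\Phi^{-1}(\{0\})$ since each fibre is closed) and the backward direction via the Measurable Projection Theorem (where completeness of $\sF$ with respect to $m$ is precisely what upgrades the analytic projection to an $\sF$-measurable set) are both carried out correctly, and you correctly isolate completeness as the hypothesis that does the real work. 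The only thing worth flagging is that your identity $\{\om:\rho(x,F(\om))<c\}=\pi_\Om\bigl(\mathrm{graph}(F)\cap(\Om\times B(x,c))\bigr)$ implicitly assumes $F(\om)\ne\es$; the paper's definition of a closed random set already builds this in by requiring $\rho(x,F(\om))\in[0,+\infty)$, so this is consistent, but it is worth a word if one wants to be fully careful.
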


\noindent This proposition in turn directly implies the following.

\begin{prop}\label{p220180619}
A function $F:\Om\to 2^X$ is an open random set if and only if the set $F(\om)\sbt X$ is an open subset of $X$ for all $\om\in\Om$ and graph$(F)$ is a measurable subset of $\Om\times X$. 
\end{prop}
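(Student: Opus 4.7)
The plan is to deduce Proposition~\ref{p220180619} from Proposition~\ref{p120180619} by passing to complements, which is natural given that the definition of an open random set in the excerpt is precisely that its complement (as a subset of $\Om\times X$) is a closed random set. So the whole content is to translate that complementary relation through both sides of the biconditional in Proposition~\ref{p120180619}.

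Concretely, given $F:\Om\to 2^X$ with open values, define $G:\Om\to 2^X$ by $G(\om):=X\setminus F(\om)$. Then ${\rm graph}(G)=(\Om\times X)\setminus{\rm graph}(F)$, so ${\rm graph}(F)$ is measurable in $\Om\times X$ if and only if ${\rm graph}(G)$ is measurable. I would record these two observations, and then run the argument as two implications.

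For the forward direction, suppose $F$ is an open random set. Unwinding the definition given just before the statement, this means the subset ${\rm graph}(F)\sub\Om\times X$ is measurable and its complement ${\rm graph}(G)$ is a closed random set. Proposition~\ref{p120180619} applied to $G$ then yields that each $G(\om)$ is closed in $X$, so that $F(\om)=X\setminus G(\om)$ is open; measurability of ${\rm graph}(F)$ is already in hand. For the reverse direction, assume every $F(\om)$ is open and ${\rm graph}(F)$ is measurable. Then every $G(\om)$ is closed and ${\rm graph}(G)$ is measurable, so Proposition~\ref{p120180619} says $G$ is a closed random set. By the definition of open random set, $F$ (identified with its graph) is therefore open.

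I do not anticipate a genuine obstacle. The only delicate point worth spelling out, and the one place I would be careful, is that Proposition~\ref{p120180619} uses completeness of $m$ in order to upgrade measurability of the graph to the measurability of $\om\mapsto\rho(x,G(\om))$ for every $x\in X$; since completeness of $(\Om,\sF,m)$ is a standing assumption of Section~\ref{sec: prelim on rand meas}, applying Proposition~\ref{p120180619} to $G$ is legitimate without any further hypothesis on $F$. Once this is noted, the two implications above close the argument.
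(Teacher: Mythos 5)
Your argument — pass to complements, note that measurability of ${\rm graph}(F)$ is equivalent to measurability of ${\rm graph}(G)$, and apply Proposition~\ref{p120180619} to $G=X\setminus F$ — is exactly the "direct implication" the paper intends; the paper simply omits these details. Your proof is correct and matches the intended approach.
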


\noindent As a direct consequence of these two propositions we get the following.

\begin{prop}\label{p220180619b}
Random closed and open sets behave naturally under set theoretical operations. More precisely.
\begin{enumerate}
\item Any countable intersection of closed random sets is a closed random sets. 

\, \item Any countable union of open random sets is an open random sets. 

\, \item Any finite union of of closed random sets is a closed random sets.

\, \item Any finite intersection of open random sets is an open random sets. 
\end{enumerate}
\end{prop}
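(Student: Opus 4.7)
The plan is to reduce each of the four claims to the graph characterization given by Propositions~\ref{p120180619} and \ref{p220180619}: for closed random sets, it suffices to verify that the fiberwise sets are closed in $X$ and that the graph is measurable in the product $\sg$-algebra $\sB\otimes\sF$; for open random sets the analogous statement holds with ``closed'' replaced by ``open''. The crucial observation, which makes everything routine, is that fiberwise set-theoretic operations correspond exactly to the same operations on graphs. Indeed, if $(F_n)_{n\in I}$ is any family of maps $\Om\to 2^X$, then for each $\om\in\Om$ one has
$$
\lt(\bi_{n\in I}F_n\rt)(\om)=\bi_{n\in I}F_n(\om) \  \  \text{ and } \  \  \lt(\bu_{n\in I}F_n\rt)(\om)=\bu_{n\in I}F_n(\om),
$$
so the same identities hold at the level of graphs in $\Om\times X$.

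For item (1), let $(F_n)_{n\ge 1}$ be closed random sets and set $F:=\bi_{n\ge 1}F_n$. For every $\om\in\Om$, $F(\om)$ is an intersection of closed subsets of $X$ and hence closed. Moreover, by Proposition~\ref{p120180619} each graph$(F_n)$ belongs to $\sB\otimes\sF$, so graph$(F)=\bi_{n\ge 1}$graph$(F_n)$ belongs to $\sB\otimes\sF$ as a countable intersection of measurable sets. A second application of Proposition~\ref{p120180619} yields that $F$ is a closed random set. Item (3) is identical but for finite unions: a finite union of closed sets is closed, and a finite union of measurable sets is measurable.

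Items (2) and (4) are the dual statements for open random sets, and one can proceed in either of two equivalent ways. Most directly, invoke Proposition~\ref{p220180619}: fiberwise, a countable union of open sets is open and a finite intersection of open sets is open; at the graph level, a countable union (resp.\ finite intersection) of elements of $\sB\otimes\sF$ is again in $\sB\otimes\sF$. Alternatively, one can pass to complements and quote items (1) and (3), using the identities
$$
(\Om\times X)\sms \bu_{n\ge 1}U_n=\bi_{n\ge 1}\bigl((\Om\times X)\sms U_n\bigr), \qquad  (\Om\times X)\sms \bi_{n=1}^N U_n=\bu_{n=1}^N\bigl((\Om\times X)\sms U_n\bigr),
$$
together with the defining fact that $U$ is an open random set if and only if $(\Om\times X)\sms U$ is a closed random set. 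No obstacles are expected; the only point that requires (implicit) care is that the completeness of $m$, which is hypothesized throughout, is what licenses the graph-measurability characterization of Propositions~\ref{p120180619}--\ref{p220180619} used at each step.
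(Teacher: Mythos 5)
Your argument is correct and is precisely the reasoning the paper has in mind when it states that the proposition follows "as a direct consequence" of Propositions~\ref{p120180619} and \ref{p220180619}: reduce to the graph characterizations, note that fiberwise operations agree with graph-level operations, and use closedness/openness of the relevant fiberwise operations together with closure of $\sB\otimes\sF$ under countable intersections and unions. Nothing to add.
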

 
 Having all of that, the standard proof (see for example, the proof of Lemma 1.5.7 in \cite{cohn_measure_2013}) yields inner and outer regularity of any random measure $\nu$. Precisely, we have the following.
\begin{theorem}\label{t1mis119}
Every random measure $\nu\in M^1_m(X)$, where $X$ is a Polish space, is inner and outer regular, meaning that if $A$ is a measurable set in $\Om\times X$ then 
	\begin{align*}
	\nu(A)&=\sup\set{\nu(F): F\sub A\text{ is a closed random set}}\\
	&=\inf\set{\nu(G): G\bus A\text{ is an open random set}}.
	\end{align*}
\end{theorem}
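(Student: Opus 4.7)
My plan is to mimic the standard proof that any finite Borel measure on a Polish space is regular (e.g.\ Cohn, Lemma~1.5.7), substituting ``closed random set'' for ``closed set'' and ``open random set'' for ``open set'' throughout. Let $\mathcal R\sbt\sB\otimes\sF$ denote the family of measurable sets $A$ such that for every $\varepsilon>0$ there exist a closed random set $F\sbt A$ and an open random set $G\spt A$ with $\nu(G\sms F)<\varepsilon$. The whole proof reduces to showing that $\mathcal R$ is a $\sg$-algebra and that it contains a family of closed random sets that generates $\sB\otimes\sF$.

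Closure of $\mathcal R$ under complementation is immediate: from $F\sbt A\sbt G$ one passes to $G^c\sbt A^c\sbt F^c$, and Propositions~\ref{p120180619} and \ref{p220180619} ensure that $G^c$ is closed random while $F^c$ is open random. For closure under countable unions, given $A=\bu_n A_n$ with $A_n\in\mathcal R$ I would choose $F_n\sbt A_n\sbt G_n$ with $\nu(G_n\sms F_n)<\varepsilon/2^{n+2}$, set $G:=\bu_n G_n$ (open random by Proposition~\ref{p220180619b}(2)), fix $N$ large enough that $\nu\(A\sms\bu_{n\le N}A_n\)<\varepsilon/2$, and let $F:=\bu_{n\le N}F_n$ (closed random by Proposition~\ref{p220180619b}(3)); a short computation then gives $\nu(G\sms F)<\varepsilon$, so $A\in\mathcal R$.

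The heart of the argument is verifying that every closed random set $F$ itself lies in $\mathcal R$; inner approximation is trivial (take $F$), so it remains to approximate $F$ from outside by open random sets. Using the Polish metric $\rho$ I would form tubes
\[
G_n:=\{(\om,x)\in\Om\times X:\rho(x,F(\om))<1/n\},
\]
whose fibers are clearly open. The main obstacle is showing that $G_n$ is itself an open random set, i.e.\ that its graph is measurable; for this I would observe that $\varphi(\om,x):=\rho(x,F(\om))$ is measurable in $\om$ for each fixed $x$ by the definition of a closed random set and $1$-Lipschitz in $x$ for each fixed $\om$, hence is a Carath\'eodory function and therefore jointly measurable because $X$ is separable. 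Then $G_n=\varphi^{-1}([0,1/n))$ is measurable with open fibers, so Proposition~\ref{p220180619} applies, and since each $F(\om)$ is closed we have $G_n\downarrow F$ fiberwise, giving $\nu(G_n\sms F)\to 0$ by continuity of $\nu$ from above.

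To close the argument I would note that the product $\sg$-algebra $\sB\otimes\sF$ is generated by rectangles $E\times C$ with $E\in\sF$ and $C$ closed in $X$, and each such rectangle has measurable graph and closed fibers (either $C$ or $\es$), hence is a closed random set by Proposition~\ref{p120180619}. Combining this with the three earlier steps yields $\mathcal R=\sB\otimes\sF$, which is the desired inner and outer regularity.
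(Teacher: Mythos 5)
Your proposal is correct and carries out precisely the argument the paper gestures at: the paper offers no details, citing only Cohn's Lemma~1.5.7 and the preceding propositions on random open/closed sets, whereas you execute that proof transplanted into the random setting. The one step that is genuinely new in the random version — the joint measurability of $(\om,x)\mapsto\rho(x,F(\om))$ needed to make the tubes $G_n$ into open random sets — you handle correctly via the Carath\'eodory measurability criterion ($1$-Lipschitz in $x$, measurable in $\om$, $X$ separable), which is indeed the missing ingredient one must supply on top of Cohn's deterministic argument. Two harmless remarks: your choice of $N$ controls $\nu(A\sms\bu_{n\le N}A_n)$ rather than $\nu(\bu_n F_n\sms\bu_{n\le N}F_n)$, so the final estimate comes out to roughly $3\varepsilon/4$ rather than $\varepsilon/2$, but still under $\varepsilon$; and the rectangles $E\times C$ (and the tubes $G_n$) have empty fibers for $\om\notin E$, so one implicitly allows $\emptyset$ as a value of a closed/open random set — consistent with Crauel's convention $\rho(x,\emptyset)=\infty$, and harmless here, though it sits slightly at odds with the paper's stated codomain $[0,+\infty)$ for $\om\mapsto\rho(x,F(\om))$.
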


\, 

Consider a closed random  set 
$$
\Om\ni\om\longmapsto\cJ_\om\sub X
$$ 
and let
\begin{align*}
	\cJ:=\union_{\om\in\Om}\set{\om}\times\cJ_\om.
\end{align*}

The random measure $\nu$ is said to be supported on $\cJ$ if 
$$
\nu(\cJ)=1.
$$
Equivalently:
$$
\nu_\om(\cJ_\om)=1
$$
for $m$--a.e. $\om\in\Om$.

Denote by $M_m^1(\cJ)$ the set of all random probability measures relative to $m$ supported on $\cJ$.

Following Crauel (\cite{Crauel}), we say that a function $g:\Om\times X\lra \RR$ is said to be random continuous if the function
$$
g_\om:X\lra\RR,
$$ 
given by the formula
$$
g_\om(z)=g(\om,z),
$$
is continuous for $m$-a.e. $\om\in\Om$, and for each $x\in X$, the function 
$$
g_x:\Om\lra\RR
$$ 
is measurable, where 
$$
g_x(\om)=g(\om,x).
$$
A function $g:\cJ\to \RR$ is said to be random continuous if it has an extension to a random continuous from $\Om\times X$ to $\RR$.

Then, according to \cite{Crauel}, the function $g:\cJ\to \RR$ is measurable. Still following \cite{Crauel}, we put 
\begin{align*}
\norm{g}_\infty:=\esssup\(\norm{g_\om}_\infty:\om\in\Om\)
\end{align*}
and denote by 
$$
C_b(\cJ)
$$
the space of all random continuous functions $g$ from $\cJ$ to $\RR$ for which
$$
\norm{g}_\infty<+\infty.
$$
Obviously $\norm{\cdot}_\infty$ is a norm on the vector space $C_b(\cJ)$ 
and makes this space a Banach space. 

We denote by $C_b^*(\cJ)$ the set of all elements $s$ in 
$$
\bigcup_{\om\in\Om}\{\om\}\times C_b^*(\cJ_\om).
$$
such that for every $g\in C_b(\cJ)$, the map
$$
\Om\ni\om\longmapsto s_\om(g_\om)\in\RR
$$
is measurable and 
\begin{align*}
	\norm{s}_\infty:=\esssup(\norm{s_\om}_\infty:\om\in\Om)
\end{align*}
is finite. Obviously $\norm{\cdot}_\infty$ is a norm on the vector space $C_b^*(\cJ)$ and makes this space into a Banach space. 

\section{Conformal Measures I}
We start with a very general setting. Suppose that $(X,\cA,\mu)$ and $(Y,\cB,\nu)$ are probability spaces. Suppose that $T:X\to Y$ is measurable with respect to the respective $\sg$--algebras $\cA$ and $\cB$. We say that $T$ is \textit{quasi--invariant} with respect to the pair of measures $(\mu,\nu)$ if the measure $\mu\circ T^{-1}$ is absolutely continuous with respect to $\nu$. Then 
\begin{align*}
(g\mu)\circ T^{-1}<<\nu
\end{align*}
for every non--negative function $g\in L^1(\mu)$. Let 
\begin{align}
\tr_{\mu,\nu}g:=\frac{d((g\mu)\circ T^{-1})}{d\nu}:Y\to [0,\infty] \label{1mis.cmi1}
\end{align}
be the Radon--Nikodym derivative of $g\mu\circ T^{-1}$ with respect to $\nu$. $\tr_{\mu,\nu}$ then extends to a bounded (with norm 1) linear operator from $L^1(\mu)$ to $L^1(\nu)$ by the formula 
\begin{align*}
\tr_{\mu,\nu} g=\tr_{\mu,\nu} g_+-\tr_{\mu,\nu} g_-, 		
\end{align*}  
where $g=g_+-g_-$ is the canonical decomposition of $g$ into its positive and negative parts. By the very definition \eqref{1mis.cmi1} and linearity, we have that
\begin{align}
\int_Y \tr_{\mu,\nu} g d\nu
= g\mu ( T^{-1}(Y))
=g\mu(X)
=\int_X gd\mu. \label{2mis.cmi1}
\end{align}
It also immediately follows from \eqref{1mis.cmi1} that if $g=f\circ T$, where $f:Y\to \RR$ belongs to $L^1(\nu)$, then 
\begin{align*}
\tr_{\mu,\nu}\(h\cdot (f\circ T)\)=f\cdot\tr_{\mu,\nu}(h)
\end{align*}
for every $h\in L^1(\mu)$. In particular if $f=\ind_F$, where $F\in\cB$, then 
\begin{equation}\label{1mis.cmi2}
\begin{aligned}
\mu(T^{-1}(F))
&=\int_X\ind_{T^{-1}(F)}d\mu
=\int_X\ind_F\circ Td\mu \\
&=\int_Y\tr_{\mu,\nu}(\ind_F\circ T)d\nu=\int_Y\ind_F\tr_{\mu,\nu}(\ind)d\nu \\
&=\int_F\tr_{\mu,\nu}(\ind)d\nu. 
\end{aligned} 
\end{equation}
We say that a measurable map $T:X\to Y$ is of \textit{standard type} if there exists a countable partition $(X_k)_{k=0}^\infty$ of $X$ such that 
\begin{enumerate}[(a)]
	\item Each set $X_k$ is measurable,
	\item Each set $T(X_k)$ is measurable, 
	\item For all $k\geq 0$ the map $T\rvert_{X_k}:X_k\to Y$ is 1-to-1. 	
\end{enumerate}
Then for every $x\in X$, we define $\hat J_T(x)\in[0,\infty]$ to be the \textit{reciprocal of the Jacobian} of the map 
$$
(T\rvert_{X_k})^{-1}:T(X_k)\to X_k
$$ 
with respect to the measures $\nu$ and $\mu$ evaluated at the point $T(x)$, where $k\geq 0$ is the unique integer such that $x\in X_k$. Of course, even this is not openly indicated in the symbol $\hat J_T(x)$, it does depend on both measures $\mu$ and $\nu$. We then have
\begin{align}
\tr_{\mu,\nu}g(y)=\sum_{x\in T^{-1}(y)}\frac{g(x)}{\hat J_T(x)}. \label{2mis.cmi2}
\end{align}
We assume the standard convention that
$$
1/0=+\infty \  {\rm and} \ 1/(+\infty)=0.
$$
We shall show that 
\begin{equation}\label{120180302}
\nu\(T\(\hat J_T^{-1}(0)\)\)=0.
\end{equation}
Indeed, if $\nu\(T\(\hat J_T^{-1}(0)\)\)>0$, then there exists at least one $k\ge 0$ such that 
$$
\nu\(T\(X_k\cap\hat J_T^{-1}(0)\)\)>0. 
$$
Hence,
$$
1\ge \mu\(X_k\cap\hat J_T^{-1}(0)\)
=\int_{T\left(X_k\cap\hat J_T^{-1}(0)\right)} \frac1{\hat J_T\circ(T\rvert_{X_k})^{-1}}\,d\nu
=\int_{T\left(X_k\cap\hat J_T^{-1}(0)\right)}+\infty
=+\infty.
$$
This contradiction finishes the proof of formula \eqref{120180302}. 

With the above setting, we call the map $T:X\to Y$ \textit{$\hat J_T$--Bconformal} with respect to the pair of measures $(\mu,\nu)$; the letter ``B'' comes from {``backward''. In other words, having a measurable function $\psi:X\to[0,+\infty]$, we say that the map $T:X\to Y$ is \textit{$\psi$--Bconformal} with respect to the pair of measures $(\mu,\nu)$ if it is quasi--invariant with respect to this pair and 
$$
\hat J_T=\psi.
$$

\medskip We say that a measurable set $A\sub X$ is \textit{special} if $T\rvert_A$ is 1--to--1. \newline
Relaxing quasi--invariance and given a measurable function $\psi:X\to[0,+\infty]$, we say that the map $T:X\to Y$ is \textit{$\psi$--Fconformal} with respect to the pair of measures $(\mu,\nu)$ (letter ``F'' coming from ``forward'') if for every special subset $A$ of $X$ we have that $T(A)\in\mathcal B$ and 
\begin{align}
\nu(T(A))=\int_A \psi \, d\mu \label{3mis.cmi2}
\end{align}
Formula \eqref{3mis.cmi2} means that $\psi$ is the Jacobian of $T$ with respect to the measures $\mu$ and $\nu$, and we then frequently write
$$
\psi=J_T,
$$ 
i.e denoting by $J_T$ this Jacobian. Obviously,
\begin{equation}\label{220180302}
\mu(\psi^{-1}(\infty))=0.
\end{equation}
It follows from \eqref{3mis.cmi2} that for every $k\geq 0$ we have 
\begin{align*}
\nu(T(X_k\cap\psi^{-1}(0)))=\int_{X_k\cap\psi^{-1}(0)}\,\psi d\mu=0.
\end{align*}
Hence, summing over all $k\geq 0$, we get
\begin{align}
\nu(T(\psi^{-1}(0)))=0. \label{4mis.cmi2}
\end{align}

\medskip The map $T$ is called \textit{$\psi$--conformal} with respect to the pair of measures $(\mu,\nu)$ if it is both $\psi$--Bconformal and $\psi$--Fconformal with respect to this pair of measures. We shall prove the following.
\begin{proposition}\label{p1mis.cmi3}
	Let $(X,\cA,\mu)$ and $(Y,\cB,\nu)$ be probability spaces. Assume that $T:X\to Y$ is a measurable map of standard type. Let $\psi:X\to [0,+\infty]$ be a measurable function.
\begin{enumerate}[{{\rm(a)}}]
\item If $T$ is $\psi$--Bconformal with respect to $\mu$ and $\nu$, and $\nu(T(\psi^{-1}(\infty)))=0$, then 

\centerline{$T$ is $\psi$--conformal with respect to $\mu$ and $\nu$.}
\noindent In particular:
$$
J_T=\psi=\hat J_T.
$$
Conversely:
\item If $T$ is $\psi$--Fconformal with respect to $\mu$ and $\nu$ and $\mu(\psi^{-1}(0))=0$, then 

\centerline{$T$ is $\psi$--conformal with respect to $\mu$ and $\nu$.}
\noindent In particular:
$$
\hat J_T=\psi=J_T.  
$$
\end{enumerate} 
\end{proposition}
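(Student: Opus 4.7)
The plan is to establish both implications by localizing to the pieces $X_k$ of the standard--type partition and transferring between the forward and backward Jacobian pictures via the measurable bijections $S_k := (T|_{X_k})^{-1}: T(X_k) \to X_k$. In both cases the standing conventions $1/0 = +\infty$ and $1/\infty = 0$ mean that $\psi$--Fconformality and $\psi$--Bconformality already agree on the set where $\psi$ is finite and strictly positive, so the only real work is to handle the exceptional sets $\psi^{-1}(0)$ and $\psi^{-1}(\infty)$; the hypotheses of (a) and (b) are precisely what makes those sets negligible on the correct side.

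For (a), Bconformality directly yields
\[
\mu(E) = \int_{T(E)} \frac{1}{\psi \circ S_k}\, d\nu
\]
for every measurable $E \sbt X_k$. Applied to $E = X_k \cap \psi^{-1}(\infty)$, the convention $1/\infty = 0$ gives $\mu(\psi^{-1}(\infty)) = 0$. Combined with \eqref{120180302}, which says $\nu(T(\psi^{-1}(0))) = 0$, and with the hypothesis $\nu(T(\psi^{-1}(\infty))) = 0$, this makes the values $0$ and $\infty$ of $\psi \circ S_k$ negligible for whichever measure is in play. On the good complement a routine change of variables under $T|_{X_k}$ inverts the displayed identity into $\nu(T(E')) = \int_{E'} \psi\, d\mu$, and this extends to all measurable $E' \sbt X_k$ after absorbing the null parts. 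To upgrade to a general special $A$, I would write $A = \bigsqcup_k (A \cap X_k)$ and use that specialness of $A$ together with the partition property of $(X_k)$ forces the images $T(A \cap X_k)$ to be pairwise disjoint (a common image would yield two points of $A$ sharing a $T$--value which, being in the same $X_k$, must coincide); summing over $k$ delivers $\psi$--Fconformality and thus, with Bconformality, $\psi$--conformality.

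For (b), \eqref{220180302} supplies $\mu(\psi^{-1}(\infty)) = 0$ in addition to the hypothesis $\mu(\psi^{-1}(0)) = 0$. Quasi--invariance comes first: given $N \in \cB$ with $\nu(N) = 0$, each $(T|_{X_k})^{-1}(N)$ is automatically special, so Fconformality together with $\nu(N \cap T(X_k)) = 0$ forces $\int_{(T|_{X_k})^{-1}(N)} \psi\, d\mu = 0$, which in turn forces $\mu((T|_{X_k})^{-1}(N)) = 0$ since $\psi$ is strictly positive off the $\mu$--null set $\psi^{-1}(0)$; summation over $k$ then gives $\mu(T^{-1}(N)) = 0$. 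For the Jacobian identification, substituting $E = S_k(B)$ in the Fconformality identity and applying change of variables under $T|_{X_k}$ produces
\[
\nu(B) = \int_B (\psi \circ S_k)\, d\((T|_{X_k})_* \mu\)
\]
for every measurable $B \sbt T(X_k)$. The subset $T(X_k) \sms T(X_k \cap (\psi^{-1}(0) \cup \psi^{-1}(\infty)))$ has full $\nu$--measure (use \eqref{4mis.cmi2} and Fconformality applied to the $\mu$--null set $X_k \cap \psi^{-1}(\infty)$) and hence also full $(T|_{X_k})_*\mu$--measure by the quasi--invariance just established; there $\psi \circ S_k$ is finite and strictly positive, so the identity inverts to $(T|_{X_k})_*\mu = (\psi \circ S_k)^{-1}\,\nu|_{T(X_k)}$. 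This is exactly the statement that the Jacobian of $S_k$ at $T(x)$ equals $1/\psi(x)$, i.e.\ $\hat J_T = \psi$.

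The main obstacle in both directions is the bookkeeping of the exceptional sets $\psi^{-1}(0)$ and $\psi^{-1}(\infty)$ under the measure being tested: the two hypotheses are tailored to null them out on the correct side, after which everything reduces to an exchange of the roles of $T|_{X_k}$ and $S_k$ via change of variables. A minor subsidiary point is the measurability of $T(A)$ for general special $A$, which in the standard--type framework reduces to the assumed measurability of each $T(X_k)$ together with the disjointness of the $T(A \cap X_k)$ established above.
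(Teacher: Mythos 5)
Your proof is correct and follows essentially the same strategy as the paper's: part (a) in the paper is packaged into a single transfer--operator calculation $\int_A\psi\,d\mu=\int_Y\tr_{\mu,\nu}(\psi\ind_A)\,d\nu=\nu(T(A))$, using $\psi\cdot(1/\psi)=1$ off the two exceptional sets $\psi^{-1}(0)$ and $\psi^{-1}(\infty)$, and your piece--by--piece change of variables on each $X_k$ followed by summation is exactly that calculation unpacked; part (b) is the paper's argument with the quasi--invariance step spelled out explicitly. One small slip: in your disjointness parenthetical the two hypothetical points of $A$ mapping to a common $y$ lie in \emph{different} parts $X_j\ne X_k$ of the partition, not ``the same $X_k$''; it is injectivity of $T|_A$ that forces them to coincide, which contradicts $X_j\cap X_k=\es$ — so the conclusion you are after still holds, the sentence is just garbled. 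Also note that measurability of $T(A\cap X_k)$ for arbitrary measurable $A$ does not follow merely from measurability of $T(X_k)$ and disjointness; both the paper and your write--up implicitly rely on the standard Borel structure (or completion) here, but since the paper glosses over the same point this is not a gap relative to it.
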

\begin{proof}
$(a)$ Because of \eqref{120180302}, we have that 
$$
\nu(T(\psi^{-1}(0)))=\nu\(T\(\hat J_T^{-1}(0)\)\)=0.
$$
Let $A\sub X$ be a special set. We then have 
\begin{equation}\label{1mis.cmi3}	
\begin{aligned}
\int_A\psi d\mu
&=\int_X\psi\ind_A d\mu
=\int_Y\tr_{\mu,\nu}(\psi\ind_A)d\nu
=\int_Y\sum_{x\in T^{-1}(y)}\psi(x)\ind_A(x)\left(\frac{1}{\psi(x)}\right)\,d\nu(y)\\
&=\int_{T(A)}\ind d\nu 
=\nu(T(A)) 
\end{aligned}
\end{equation}
So, the map $T$ is $\psi$--Fconformal with respect to $\mu$ and $\nu$, thus the item $(a)$ is established.
	
(b) The $\psi$--Fconformality implies that 
	\begin{align*}
	\frac{d\nu\circ T\rvert_{X_k}}{d\mu}(x)=\psi(x).
	\end{align*}
Since $\mu(\psi^{-1}(0))=0$, this implies that $\mu\rvert_{X_k}\circ(T\rvert_{X_k})^{-1}<<\nu\rvert_{T(X_k)}$ and 
	\begin{align*}
	\frac{d\mu\rvert_{X_k}\circ (T\rvert_{X_k})^{-1}}{d\nu\rvert_{T(X_k)}}=\frac{1}{\psi}\circ(T\rvert_{X_k})^{-1}.
	\end{align*}
Therefore the map $T:X\to Y$ is quasi--invariant with respect to $\mu$ and $\nu$ and $\hat J_T=\psi$. Thus, the item (b) is established, and the proof of the entire proposition is complete.
\end{proof}

We derive the following three immediate consequences of this proposition.
\begin{corollary}\label{c1mis.cmi4}
	Let $(X,\cA,\mu)$ and $(Y,\cB,\nu)$ be probability spaces. Assume that $T:X\to Y$ is a measurable map of standard type. If $\psi:X\to[0,\infty)$ is a measurable function and $T$ is $\psi$--Bconformal with respect to $\mu$ and $\nu$, then 

\centerline{$T$ is $\psi$--conformal with respect to $\mu$ and $\nu$.}
\noindent In particular:
$$
J_T=\psi=\hat J_T.
$$
\end{corollary}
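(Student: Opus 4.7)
The plan is to derive this corollary as an essentially immediate consequence of Proposition~\ref{p1mis.cmi3}(a). The only hypothesis of that proposition that is not literally assumed in the statement of the corollary is the condition $\nu(T(\psi^{-1}(\infty)))=0$, so the entire task reduces to verifying that condition for free from the assumption that $\psi$ takes values in $[0,\infty)$.

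First I would observe that, since by hypothesis $\psi:X\to[0,\infty)$ is real-valued, i.e.\ never attains the value $+\infty$, the preimage $\psi^{-1}(\infty)$ is the empty set. Consequently $T(\psi^{-1}(\infty))=\emptyset$, and therefore
$$
\nu(T(\psi^{-1}(\infty)))=\nu(\emptyset)=0.
$$
Thus the missing hypothesis of Proposition~\ref{p1mis.cmi3}(a) is automatically fulfilled.

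Then, applying part (a) of Proposition~\ref{p1mis.cmi3} directly to the pair $(\mu,\nu)$ and the function $\psi$, we conclude that $T$ is $\psi$--conformal with respect to $\mu$ and $\nu$, and that the Jacobians agree: $J_T=\psi=\hat J_T$. Since this reduction is completely transparent, I do not anticipate any obstacle; the whole content of the corollary is to isolate, for convenient later reference, the practically important case in which $\psi$ is finite, so that the technical hypothesis on $\psi^{-1}(\infty)$ disappears automatically.
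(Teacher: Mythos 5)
Your proposal is correct and matches the paper's intent exactly: the corollary is stated there as an immediate consequence of Proposition~\ref{p1mis.cmi3}(a), with the point being precisely that $\psi^{-1}(\infty)=\emptyset$ when $\psi$ is $[0,\infty)$--valued, so $\nu(T(\psi^{-1}(\infty)))=0$ holds trivially. Nothing further is needed.
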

\begin{corollary}\label{c2mis.cmi4}
Let $(X,\cA,\mu)$ and $(Y,\cB,\nu)$ be probability spaces. Assume that $T:X\to Y$ is a measurable map of standard type. If $\psi:X\to(0,\infty]$ is a measurable function and $T$ is $\psi$--Fconformal with respect to $\mu$ and $\nu$, then 

\centerline{$T$ is $\psi$--conformal with respect to $\mu$ and $\nu$ and}
$$
\hat J_T=\psi=J_T.  
$$
\end{corollary}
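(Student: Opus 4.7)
The plan is to derive this result as an immediate specialization of Proposition~\ref{p1mis.cmi3}(b). The hypothesis of that proposition part~(b) requires $\psi$--Fconformality together with the vanishing condition $\mu(\psi^{-1}(0))=0$; the present corollary assumes $\psi$--Fconformality outright, so the only thing to verify is the vanishing condition.

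Since $\psi$ takes values in $(0,\infty]$, the preimage $\psi^{-1}(0)$ is literally the empty set. In particular $\mu(\psi^{-1}(0))=\mu(\es)=0$, so the hypothesis of Proposition~\ref{p1mis.cmi3}(b) is met.

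Applying that proposition now yields both that $T$ is quasi--invariant with respect to $(\mu,\nu)$ with $\hat J_T=\psi$ and that $T$ is $\psi$--Fconformal (which we already have), so $T$ is $\psi$--conformal with respect to $\mu$ and $\nu$, and the equalities $\hat J_T=\psi=J_T$ follow. There is no real obstacle here; the corollary is essentially a packaging of the sufficient hypothesis $\mu(\psi^{-1}(0))=0$ into the cleaner pointwise positivity assumption $\psi>0$.
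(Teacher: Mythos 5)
Your proof is correct and is exactly the argument the paper intends: Corollary~\ref{c2mis.cmi4} is stated as an immediate consequence of Proposition~\ref{p1mis.cmi3}(b), and the only point to check is that $\psi>0$ everywhere forces $\psi^{-1}(0)=\es$, hence $\mu(\psi^{-1}(0))=0$, which is what you did.
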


\begin{corollary}\label{c3mis.cmi4}
Let $(X,\cA,\mu)$ and $(Y,\cB,\nu)$ be probability spaces. Assume that $T:X\to Y$ is a measurable map of standard type. If $\psi:X\to(0,\infty)$ is a measurable function, then the following conditions are equivalent:
\begin{itemize}
\item $T$ is $\psi$--Bconformal with respect to $\mu$ and $\nu$.

\item $T$ is $\psi$--Fconformal with respect to $\mu$ and $\nu$.
		 
\item $T$ is $\psi$--conformal with respect to $\mu$ and $\nu$.
\end{itemize} 
	
\fr If any of these conditions holds, then
$$
\hat J_T=\psi=J_T.  
$$
 	
\end{corollary}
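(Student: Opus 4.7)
The plan is to deduce the equivalence as an immediate consequence of Corollaries~\ref{c1mis.cmi4} and \ref{c2mis.cmi4}, using only the observation that the hypothesis $\psi:X\to(0,\infty)$ automatically trivializes the two ``degeneracy'' conditions that appeared in Proposition~\ref{p1mis.cmi3}.

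First I would note that, since $\psi$ takes values strictly between $0$ and $\infty$, we have $\psi^{-1}(0)=\es=\psi^{-1}(\infty)$. In particular $\mu(\psi^{-1}(0))=0$ and $\nu(T(\psi^{-1}(\infty)))=0$ hold trivially, so there is nothing to check beyond applying the preceding corollaries.

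Next I would establish the three implications. Assume first that $T$ is $\psi$--Bconformal with respect to $(\mu,\nu)$. Since $\psi$ in fact takes values in $[0,\infty)$, Corollary~\ref{c1mis.cmi4} applies and yields that $T$ is $\psi$--conformal, with $J_T=\psi=\hat J_T$. Assume next that $T$ is $\psi$--Fconformal. Since $\psi$ takes values in $(0,\infty]$, Corollary~\ref{c2mis.cmi4} applies and yields that $T$ is $\psi$--conformal, again with $\hat J_T=\psi=J_T$. Finally, if $T$ is $\psi$--conformal then it is by definition both $\psi$--Bconformal and $\psi$--Fconformal. This closes the cycle of implications and simultaneously gives the identity $\hat J_T=\psi=J_T$ whenever any of the three conditions holds.

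There is no genuine obstacle here; the whole point of the corollary is bookkeeping. The one thing worth being careful about is simply matching the target intervals of $\psi$ in Corollaries~\ref{c1mis.cmi4} and \ref{c2mis.cmi4}, namely $[0,\infty)$ and $(0,\infty]$ respectively, and observing that $(0,\infty)$ lies in their intersection, so both corollaries are applicable under the stronger hypothesis of the present statement.
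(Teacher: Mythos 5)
Your proposal is correct and matches the paper's intent: the corollary is stated there as an immediate consequence of Proposition~\ref{p1mis.cmi3}, obtained exactly by combining Corollaries~\ref{c1mis.cmi4} and \ref{c2mis.cmi4}, which apply since $(0,\infty)\subset[0,\infty)\cap(0,\infty]$, together with the fact that $\psi$--conformality means both $\psi$--Bconformality and $\psi$--Fconformality by definition.
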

Now we want to formulate an integral criterion for quasi--invariance.
Keep $(X,\cA,\mu)$ and $(Y,\cB,\nu)$ as probability spaces. Assume that $T:X\to Y$ is a measurable map of standard type. Let $\psi:X\to[0,\infty]$ be a measurable function. Denote by $L_\infty^+(X)$ the set of all non-negative bounded functions on $X$ measurable with respect the $\sg$--algebra $\cA$. For every $g\in L_\infty^+(X)$ define $\tr_\psi g:X\to(0,\infty]$ by the following formula:
\begin{align}
\tr_\psi g(y):=\sum_{x\in T^{-1}(y)}g(x)(1/\psi(x)).\label{1mis.cmi5}
\end{align}
Of course $\tr_\psi g\geq 0$ but it may take on the value $\infty$. We shall prove the following.
\begin{proposition}\label{p1mis.cmi5} 
	Let $(X,\cA,\mu)$ and $(Y,\cB,\nu)$ be probability spaces. Assume that $T:X\to Y$ is a measurable map of standard type. Let $\psi:X\to [0,\infty]$ be a measurable function. Then 
	\begin{enumerate}[(a)]
\item $T$ is $\psi$--Bconformal respect to $\mu$ and $\nu$
		
		if and only if
		
\item $\int_Y\tr_\psi gd\nu=\int_X gd\mu$ for every $g\in L^+_\infty(X)$.
	\end{enumerate}
\end{proposition}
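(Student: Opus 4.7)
My plan for the proposition is as follows.

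The implication $(a)\Rightarrow(b)$ is essentially bookkeeping. If $T$ is $\psi$--Bconformal, then $T$ is quasi--invariant with respect to $(\mu,\nu)$ and $\hat J_T=\psi$, so \eqref{2mis.cmi2} says $\tr_{\mu,\nu}g=\tr_\psi g$, and \eqref{2mis.cmi1}---which applies to any $g\in L_\infty^+(X)\subset L^1(\mu)$ since $\mu$ is a probability---yields (b).

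For the converse $(b)\Rightarrow(a)$ I would work in two steps. First, to establish quasi--invariance, I fix $F\in\cB$ with $\nu(F)=0$ and plug $g=\ind_{T^{-1}(F)}$ into (b). Because $\ind_{T^{-1}(F)}(x)=\ind_F(T(x))$, a direct computation gives $\tr_\psi g(y)=\ind_F(y)\cdot\tr_\psi\ind(y)$, a nonnegative function supported on a $\nu$--null set. Hence (b) forces $\mu(T^{-1}(F))=0$, i.e.\ $\mu\circ T^{-1}\ll\nu$, and $\hat J_T$ is then well defined via \eqref{2mis.cmi2}.

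Second, I would show $\psi=\hat J_T$ $\mu$--a.e. Applying the already proved direction $(a)\Rightarrow(b)$ to $\hat J_T$ itself gives $\int\tr_{\hat J_T}g\,d\nu=\int g\,d\mu$ for every $g\in L_\infty^+(X)$, so combined with (b),
\begin{align*}
\int_Y \tr_\psi g\,d\nu = \int_Y \tr_{\hat J_T} g\,d\nu \quad \text{for all } g\in L_\infty^+(X).
\end{align*}
Testing with $g=(h_1\circ T)\cdot\ind_{X_k}$ for arbitrary bounded nonnegative measurable $h_1:Y\to[0,\infty)$, and using that $T|_{X_k}$ is $1$-to-$1$ with inverse $\phi_k:=(T|_{X_k})^{-1}$, each preimage sum collapses to a single term, and the identity becomes
\begin{align*}
\int_{T(X_k)} h_1(y)(1/\psi)(\phi_k(y))\,d\nu(y) = \int_{T(X_k)} h_1(y)(1/\hat J_T)(\phi_k(y))\,d\nu(y).
\end{align*}
Varying $h_1$, a standard truncation argument yields $(1/\psi)\circ\phi_k=(1/\hat J_T)\circ\phi_k$ $\nu$--a.e. on $T(X_k)$. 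Transporting this back through the Jacobian formula $\mu(A)=\int_{T(A)}(1/\hat J_T)\circ\phi_k\,d\nu$ for measurable $A\sub X_k$ gives $\psi=\hat J_T$ $\mu$--a.e. on $X_k$, and summing over $k$ finishes the proof.

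The step I expect to require the most care is the extraction of pointwise $\nu$--a.e.\ equality from the integral identity, because $\psi$ and $\hat J_T$ take values in $[0,\infty]$. I would truncate on $\{u\le n\}$ to rule out $\nu(\{u<v,u\le n\})>0$ (the standard $\int(v-u)=0$ with $v-u>0$ argument), use an independent contradiction with finiteness of the LHS to exclude $\{u\le n, v=\infty\}$ having positive $\nu$--measure, and finally let $n\to\infty$ and symmetrize in $u,v$ to conclude $u=v$ $\nu$--a.e.
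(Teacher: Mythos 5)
Your proof is correct, and both halves of the converse ultimately rest on the same test functions as the paper's argument, but you organize the identification of the Jacobian differently. For quasi--invariance you and the paper do essentially the same thing: feed $g=\ind_{T^{-1}(F)}$ into (b) (the paper phrases it via $\tr_\psi\(h\cdot(f\circ T)\)=f\cdot\tr_\psi(h)$, yielding $\mu(T^{-1}(F))=\int_F\tr_\psi\ind\,d\nu$). For $\hat J_T=\psi$, the paper simply tests (b) with $g=\ind_{T_k^{-1}(A)}$ for measurable $A\sub T(X_k)$, which gives $\mu\(T_k^{-1}(A)\)=\int_A\frac1{\psi\circ(T\rvert_{X_k})^{-1}}\,d\nu$ directly; by uniqueness of Radon--Nikodym derivatives this already exhibits $\psi$ as $\hat J_T$ on $X_k$, with no further work. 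You instead make a detour: you apply the already proved implication $(a)\Rightarrow(b)$ to $\hat J_T$ itself (legitimate, since once quasi--invariance is known $T$ is trivially $\hat J_T$--Bconformal), equate $\int\tr_\psi g\,d\nu=\int\tr_{\hat J_T}g\,d\nu$, and then extract the pointwise $\nu$--a.e.\ equality $(1/\psi)\circ\phi_k=(1/\hat J_T)\circ\phi_k$ on $T(X_k)$ by a truncation argument --- in effect unpacking the Radon--Nikodym uniqueness by hand; your transport back to $X_k$ through quasi--invariance is also fine. Two remarks: the delicacy you anticipate with infinite values largely evaporates, because taking $h_1\equiv\ind$ shows both $(1/\psi)\circ\phi_k$ and $(1/\hat J_T)\circ\phi_k$ are $\nu$--integrable on $T(X_k)$ (their integrals equal $\mu(X_k)\le 1$), hence $\nu$--a.e.\ finite, so the standard $L^1$ argument applies at once; and the paper additionally extends (b) from $L^+_\infty(X)$ to $L^1(\mu)$, which is not needed for the stated equivalence (it is used in Remark~\ref{c1mis.cmi6.1}), so omitting it is harmless.
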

\begin{proof}
	The implication $(a)\Rightarrow(b)$ results directly from \eqref{2mis.cmi1}. For the converse, i.e. $(b)\Rightarrow(a)$, observe that \eqref{1mis.cmi5} yields
	\begin{align*}
	\tr_\psi\(h\cdot(f\circ T)\)=f\cdot\tr_\psi(h).
	\end{align*}
	for every $f\in L^+_\infty(Y)$. Having this, \eqref{1mis.cmi2} goes through unchanged to give
	\begin{align*}
	\mu(T^{-1}(F))=\int_F\tr_\psi\ind d\nu.
	\end{align*}
	In particular, if $\nu(F)=0$, then $\mu(T^{-1}(F))=0$, meaning that $T$ is quasi--invariant. It also follows from $(b)$ that $\tr_\psi(L^+_\infty(X))\sub L^1(\nu)$. Consequently $\tr_\psi(L_\infty(X))\sub L^1(\nu)$ and $(b)$ extends to all functions $g\in L_\infty(X)$. The standard approximation argument then shows that $(b)$ extends to the space $L^1_+(\mu)$ and, by linearity, to $L^1(\mu)$. Also, for every $k\geq 0$ denote $T_k:=T\rvert_{X_k}:X_k\to T(X_k)$. Then for every measurable set $A\sub T(X_k)$, we get 
	\begin{align*}
	\mu\(T_k^{-1}(A)\)
	&=\mu(\ind_{T^{-1}_k(A)})
	=\int_Y\tr_\psi\(\ind_{T_k^{-1}(A)}\)d\nu
	=\int_Y\sum_{x\in T^{-1}(y)}\frac{1}{\psi(x)} 
	 \ind_{T_k^{-1}(A)}(x)\, d\nu(y)\\
	&=\int_A\frac{1}{\psi(T_k^{-1}(y))}\, d\nu(y).
	\end{align*}
Therefore $\hat J_T(x)=\psi(x)$ and the implication that $(b)\Rightarrow(a)$ is established. The proof of the proposition is thus complete.
\end{proof}
\begin{remark}\label{c1mis.cmi6.1}
	It follows from the proof of this proposition that in its context, $\tr_\psi g$ is well--defined $\nu$--almost everywhere for all $g\in L^1(\mu)$ and $\tr_\psi(g)\in L^1(\nu)$. Thus the formula 
	\begin{align*}
	\tr_\psi^*\nu(g)=\nu(\tr_\psi g),\quad g\in L^1(\mu),
	\end{align*}
	defines a finite measure $(\tr_\psi^*\nu(\ind)=\nu(\tr_\psi\ind)<+\infty)$ on $Y$, and both items $(a)$ and $(b)$ in Proposition \ref{p1mis.cmi5} become equivalent to 
	\begin{enumerate}
		\item[(c):] $\tr_\psi^*\nu=\mu$.
	\end{enumerate}
\end{remark}
We shall prove the following. 
\begin{proposition}\label{p1mis.cmi6} 
Let $X$ and $Y$ be compact metrizable spaces. Assume that $T:X\to Y$ is a continuous map of standard type.
Let $\mu$ and $\nu$ be Borel probability measures respectively on $X$ and $Y$. Assume that $\psi:X\to(0,\infty)$ is a continuous function such that 
\begin{enumerate}
\item $\tr_\psi(\ind)$ is bounded and, moreover, 

\item $\tr_\psi(C(X))\sub C(Y)$. 
\end{enumerate}
Denote by $\tr^*_\psi:C^*(Y)\to C^*(X)$ the corresponding dual operator. Then the following are equivalent.
\begin{enumerate}[(a)]
\item $T$ is $\psi$--Bconformal with respect to $\mu$ and $\nu$.

\item $T$ is $\psi$--F conformal with respect to $\mu$ and $\nu$.

\item $T$ is $\psi$--conformal with respect to $\mu$ and $\nu$.

\item $\tr_\psi^*\nu=\mu$ in the sense of item $(c)$ from Remark \ref{c1mis.cmi6.1}. 

\item $\tr_\psi^*\nu=\mu$ in the sense resulting from items (1) and (2) above.
		
\,\fr\fr If any of these conditions holds, then
$$
\hat J_T=\psi=J_T.  
$$
\end{enumerate}
\end{proposition}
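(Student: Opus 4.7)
The plan is to bootstrap from results already established in this section. First I would observe that on the compact metrizable space $X$, the continuous function $\psi:X\to(0,\infty)$ is automatically bounded above and bounded away from zero, so it takes values in a compact subset of $(0,\infty)$. This places us squarely in the hypotheses of Corollary \ref{c3mis.cmi4}, which yields the three-way equivalence of (a), (b), and (c), together with $\hat J_T=\psi=J_T$ whenever any of them holds. Thus the nontrivial remaining task is to tie (d) and (e) into this chain.

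Next I would prove (a) $\iff$ (d). Proposition \ref{p1mis.cmi5} characterizes $\psi$--Bconformality by
\[
\int_Y \tr_\psi g\,d\nu=\int_X g\,d\mu \quad\text{for all } g\in L_\infty^+(X),
\]
and Remark \ref{c1mis.cmi6.1} rephrases exactly this identity as $\tr_\psi^*\nu=\mu$ in the sense of item $(c)$ of that remark, which is precisely (d).

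Finally I would establish (d) $\iff$ (e). The two items differ only in the class of test functions used: (d) tests against all of $L^1(\mu)$ (equivalently, against bounded measurable functions, via the extension built in the proof of Proposition \ref{p1mis.cmi5}), while (e) tests against $C(X)$ via the dual operator $\tr_\psi^*:C^*(Y)\to C^*(X)$, which is well-defined thanks to hypotheses (1) and (2): (2) sends $C(X)$ into $C(Y)$, and (1) gives boundedness via $\|\tr_\psi g\|_\infty\le \|g\|_\infty\|\tr_\psi\ind\|_\infty$. The implication (d) $\Rightarrow$ (e) is immediate because $C(X)\subset L^1(\mu)$. For the converse, both $\mu$ and the measure $\tr_\psi^*\nu$ from (d) are finite Borel measures on the compact metrizable space $X$, and by (e) they agree as positive linear functionals on $C(X)$; the Riesz representation theorem (uniqueness of the representing measure) then forces them to coincide as Borel measures, which gives (d).

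The only step requiring any attention is really bookkeeping: making sure that hypotheses (1) and (2) are exactly what is needed to make sense of the dual operator appearing in (e), and that Riesz representation applies to the resulting positive bounded linear functional on $C(X)$. All of the analytic content has already been absorbed into Corollary \ref{c3mis.cmi4} and Proposition \ref{p1mis.cmi5}, so I do not expect a serious obstacle.
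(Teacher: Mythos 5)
Your proposal is correct and follows essentially the same route as the paper: equivalence of (a), (b), (c) via Corollary \ref{c3mis.cmi4}, the link to (d) via Proposition \ref{p1mis.cmi5} and Remark \ref{c1mis.cmi6.1}, the trivial implication (d) $\Rightarrow$ (e), and (e) $\Rightarrow$ (d) by passing from continuous test functions to all bounded measurable ones. Your appeal to uniqueness in the Riesz representation theorem is just a packaged form of the ``standard approximation procedure'' the paper invokes for that last step, so the two arguments coincide in substance.
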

\begin{proof}
Items	$(a)$, $(b)$ and $(c)$ are equivalent by Corollary~\ref{c3mis.cmi4}. Item (d) is equivalent to them by Remark~\ref{c1mis.cmi6.1}. Of course $(d)$ implies $(e)$. Assuming in turn $(e)$, the standard approximation procedure leads to part $(d)$. The proof is complete.
\end{proof}


\,
\section{Basics of Critically Finite Random Maps of an Interval}
\subsection{The Setting of Random Critically Finite Maps of an Interval}\label{SRIMM}
Given a $C^3$ map $g$ from an interval to $\mathbb R$, we denote the Schwarzian derivative of $g$ by $S(g)$ which is given by 
\begin{align*}
S(g)=\left(\frac{g''}{g'}\right)'-\frac{1}{2}\left(\frac{g''}{g'}\right)^2=\frac{g'''}{g'}-\frac{3}{2}\left(\frac{g''}{g'}\right)^2,
\end{align*}
where $g'$ denotes the usual derivative.

Let $\cG$ be a finite collection of closed intervals with disjoint interiors contained in $I=[0,1]$. We assume that both $0$ and $1$ belong to its union; furthermore we assume that $0$ and $1$ belong to two different elements of $\cG$. Denote them respectively by $\Delta_0$ and $\Delta_1$. Denote:
$$
I_*:=\union_{\Dl\in \cG}\Dl.
$$
For every map $g:I_*\to I$, by $g_0$ and $g_1$ we mean 
$$
g_0:=g\rvert_{\Dl_0} \  \ {\rm  and} \  \ g_1:=g\rvert_{\Dl_1}
$$ 
respectively. Let 
$$
\cG=\cG_C\cup\cG_E \  \ {\rm with} \  \  \cG_C\cap \cG_E=\emptyset.
$$
Let $\kp>1$, let $A>1$, and let $\g:\cG_C\lra\mathbb N=\{1,2,3,,...\}$ be an arbitrary function. Let $\cM(\cG;\kappa, A,\g)$ consist of all continuous maps 
$$
f:I_*\lra I
$$ 
such that the following hold
\begin{enumerate}
\item[\mylabel{(M1)}{(M1)}] For all $\Dl\in \cG$ the map $f\rvert_\Dl$ is $C^3$, injective,  $f(\Dl)=I$, and $S(f\rvert_\Dl) <0$. We denote
$$
f_\Dl^{-1}:=\(f\rvert_\Dl\)^{-1}:I\lra\Dl.
$$
\item[\mylabel{(M2)}{(M2)}] For all $\Dl\in\cG_E$ and for all $x\in\Dl$ we have $\absval{f'(x)}\geq\kp$.
	\item[\mylabel{(M3)}{(M3)}]  For all $\Dl\in\cG_C$ there is a unique point $c_\Dl\in\Dl$, in fact, $c_\Dl\in\partial\Dl$, such that $f'(c_\Dl)=0$.
	\item[\mylabel{(M4)}{(M4)}] For all $\Dl\in\cG_C$, we have $f(c_\Dl)\in\set{0,1}$ and $f(1)=0$ (chosen for ease of exposition).
	\item[\mylabel{(M5)}{(M5)}] Furthermore:
\begin{enumerate}

\item[\mylabel{(M5a)}{(M5a)}]
$$
f(0)=0, \  \ f'(0)\geq\kp>1, \  \ {\rm  and} \  \  f'(1)\leq-\kp,
$$
\item[\mylabel{(M5b)}{(M5b)}]
$$
\|f'\|_\infty\le A,
$$ 
and
\item[\mylabel{(M5c)}{(M5c)}] There exists $s\in (0,1)$ such that
$$
f_0^{-1}\(\max\{|\Dl_0|,|\Dl_1|\}\) \le s,
$$
where $|\Dl|$ denotes the length of an interval $\Dl$, 
and 
$$
\absval{f'(x)}\geq \kp
$$ 
for all $x\in[0,s]\cup [1-s,1]$.
\end{enumerate}
\item [\mylabel{(M6)}{(M6)}] \label{fix: assump: flat crit points}
For every $\Delta\in \cG_C$ there exists a $C^3$ function $A_\Delta:\Delta\to[1/A,A]\cup [-A,-1/A]$ such that 
$$
|A_\Dl'(x)|\in [1/A,A]
$$
and 
$$
f(x)=f(c_\Delta)+A_\Dl(x)(x-c_\Delta)^{1+\g_\Delta}
$$
for every $x\in\Delta$. We will frequently write $\g_c$ for $\g_\Dl$ if $c\in\Dl$.
\end{enumerate}

\begin{figure}[h]\label{graph}
	\centering	
	\begin{tikzpicture}
	\begin{axis}[
	xmin=0,
	xmax=1.001,samples=351,domain=0:1,
	ymin=0,
	ymax=1.001,
	xtick={.15,.3,.4,.5,.6,.68,.7,.8,.9},
	xticklabels={},
	extra x ticks={0,1},
	extra x tick labels={0,1},
	ytick={1},
	xmajorgrids=true,grid style=dashed
	]
	\addplot[domain=0:.3] {1-(1/.15)^2*x*(.3-x)};
	\addplot[domain=.4:.5] {(1/.01)*(x-.5)^2};
	\addplot[domain=.5:.6] {(1/(.1^(1/3)))*(x-.5)^(1/3)};
	\addplot[domain=.6:.7] {10*x^2-(10*.6^2)};
	\addplot[domain=.7:.8] {(1/(.1^(3/4)))*(x-.7)^(3/4)};
	\addplot[domain=.9:1] {(1/(.1^(3/4)))*(1-x)^(3/4)};
	\end{axis}
	\end{tikzpicture}
	\caption{A typical element of $\cM(\cG;\kappa, A,\g)$.} 
\end{figure}

\medskip
We define $f_0^{-1}:I\to\Dl_0$ to be the inverse of the map $f_0:\Delta_0\to I$ and $f_1^{-1}:I\to\Dl_1$ to be the inverse of the map $f_1:\Delta_1\to I$.

\medskip Define 
$$
\Crit(\cG):=\{c_\Dl:\Dl\in \cG_C\}
$$
Further define:
$$
\cG_C^{(0)}:=\big\{\Dl\in \cG_C:f(c_\Dl)=0\big\}
 \  \ \  {\rm and } \  \  \
\cG_C^{(1)}:=\big\{\Dl\in \cG_C:f(c_\Dl)=1\big\},
$$
and finally:
$$
\Crit_0(\cG):=\big\{c_\Dl:\Dl\in \cG_C^{(0)}\big\} 
\  \  \ {\rm and } \  \  \
\Crit_1(\cG):=\big\{c_\Dl:\Dl\in \cG_C^{(1)}\big\}.
$$
In order to avoid the standard case of random expanding systems, whose systematic account can be found for example in \cite{MSU}, we assume that 
$\Crit(\cG)\ne\emptyset$. Furthermore, for the ease and uniformity of exposition, we assume more, namely that 
\begin{enumerate}
\item [\mylabel{(M7)}{(M7)}]
$$
\Crit_0(\cG)\ne\emptyset.
$$ 
\end{enumerate}

Note that with our definition of the space $\cM(\cG;\kappa, A,\g)$, the above critical sets do not depend on the map $f\in \cM(\cG;\kappa, A,\g)$. 
	
\medskip We further assume that

\, \begin{enumerate}
\item[\mylabel{(M8)}{(M8)}] $(\Om,\sF,m)$, a probability space, is given and 
	$$
	\ta:\Om\lra\Om
	$$ 
	is an automorphism preserving measure $m$, i.e. 
	$$
	m\circ\theta^{-1}=m.
	$$
In addition, we assume that the map $\theta:\Om\lra \Om$ is ergodic with respect to $m$, and that the map
 
\, [\mylabel{(M9)}{(M9)}] 
$$
\Om\ni\om\longmapsto T_\om\in\cM(\cG;\kappa, A,\g)
$$
is a measurable map in the sense that for every $x\in I_*$ the map
$$
\Om\ni\om\longmapsto T_\om(x)\in[0,1]
$$
is measurable. 
\end{enumerate}

\noindent Then also the map 
$$
\Om\ni\om\longmapsto T_\om'(x)\in[0,1]
$$
is measurable. The map $\ta:\Om\lra\Om$ is called the base map and the function 
$$
T:\Om\times I_*\lra\Om\times I
$$ 
is the associated skew product map given by
$$
T(\om,x):=(\ta(\om),T_\om(x)).
$$ 
Given an integer $n\ge 1$ and a finite string $\Ga:=(\Ga_0,\Ga_1,\ldots,\Ga_{n-2},\Ga_{n-1}) \in \cG^n:=\prod_{1}^n\cG$ (we will usually denote the elements of the sets $\cG^n$, $n\ge 1$, by $\Ga$), we denote
\begin{equation}
T_{\om,\Ga}^{-n}
:=T_{\om,\Ga_0}^{-1}\circ T_{\th(\om),\Ga_1}^{-1}\ldots T_{\th^{n-2}(\om),\Ga_{n-2}}^{-1}\circ T_{\th^{n-1}(\om),\Ga_{n-1}}^{-1}:I\lra \Gm_0\subset I.
\end{equation}
Then, we define
$$
T_\om^n:\bigcup_{\Ga\in\cG^n}T_{\om,\Ga}^{-n}(I)\lra I
$$
by declaring that for every $\Ga\in\cG^n$ and every $x\in I$,
$$
T_\om^n\big|_{T_{\om,\Ga}^{-n}(I)}(x)=x.
$$
So, for every $\om\in\Om$, every integer $n\ge 0$, and every $x\in\bigcup_{\Ga\in\cG^n}T_{\om,\Ga}^{-n}(I)$, we have
$$
T_\om^n(x)=T_{\th^{n-1}(\om)}\circ T_{\th^{n-2}(\om)}\circ\ldots T_{\th(\om)}\circ T_\om(x).
$$
\begin{definition}
For all $\om\in\Om$ we define the set
	\begin{align*}
		\cJ_\om(T):=\big\{x\in I: T_\om^n(x)\in I_* \;\forall n\geq 0\big\}=\intersect_{n=0}^\infty T_\om^{-n}(I_*).
	\end{align*}
\end{definition}
So, $\cJ_\om(T)$ is a non--empty closed subset of $I_*\sub I$ and the map
$$
\Om\ni\om\longmapsto \cJ_\om(T)\sbt I
$$
is a random closed set. Since for every $\Dl\in\cG$, the map $T_\om\rvert_\Dl:\Dl\lra I$ is bijective, we get that 
\begin{align*}
	T_\om(\cJ_\om(T))&=\union_{\Dl\in\cG}T_\om\left(\Dl\cap T_\om^{-n}(I_*)\right )
	=\union_{\Dl\in\cG}T_\om \left(\intersect_{n=0}^\infty\Dl\cap T_\om^{-1}\left(T_{\ta(\om)}^{-(n-1)}(I_*)\right) \right)\\
	&=\union_{\Dl\in\cG}\intersect_{n=1}^\infty T_{\ta(\om)}^{-(n-1)}(I_*)
	=\intersect_{n=1}^\infty T_{\ta(\om)}^{-(n-1)}(I_*)
	=\intersect_{n=0}^\infty T_{\ta(\om)}^{-n}(I_*)\\
	&=\cJ_{\ta(\om)}(T).
\end{align*}
Setting
\begin{align*}
	\cJ(T):=\union_{\om\in\Om}\set{\om}\times\cJ_\om(T),
\end{align*}
we thus get a topological random dynamical system 
$$
T:\cJ(T)\to\cJ(T)
$$
defined by
$$
T(\om,x):=\left(\ta(\om),T_\om(x)\right).
$$
We call this map a random critically finite map of the interval $[0,1]$.

\, \subsection{Examples}

We shall now describe a very large class of critically finite random maps of an interval. Let $\Om$ be any Borel set contained in $(3,+\infty)$, let $m$ be any Borel probability measure on $\Om$, and let 
$$
\th:\Om\lra\Om
$$
be any invertible Borel measurable map preserving $m$, i.e. $m\circ\th^{-1}=m$. Fix two numbers $0<a<b<1$. Let 
$$
\begin{aligned}
\hat \cG
:=\lt\{\lt[a_j^{(0)},\frac{a_j^{(0)}+b_j^{(0)}}2\rt]\rt\}_{j=1}^s&\cup 
\lt\{\lt[\frac{a_j^{(0)}+b_j^{(0)}}2,b_j^{(0)}\rt]\rt\}_{j=1}^s \cup \\
&\cup \lt\{\lt[a_j^{(1)},\frac{a_j^{(1)}+b_j^{(1)}}2\rt]\rt\}_{j=1}^s\cup 
\lt\{\lt[\frac{a_j^{(1)}+b_j^{(1)}}2,b_j^{(1)}\rt]\rt\}_{j=1}^s
\end{aligned}
$$
be any finite collection of closed subintervals of $[a,b]$ with mutually disjoint interiors. Let 
$$
\cG=\big\{\hat\cG, [0,a], [b,1]\big\}.
$$
For every $\om\in\Om$ define the map 
$$
T_\om:I_*=[0,a]\cup \bu_{j=1}^s[a_j^{(0)},b_j^{(0)}]\cup \bu_{j=1}^s[a_j^{(1)},b_j^{(1)}]\cup [b,1]\lra I
$$
by the following formula:
$$
T_\om(x):=
\begin{cases}
a^{-1}x \  \   &{\rm if }  \  \ 
x\in [0,a] \\ \\
\lt(\frac2{b_j^{(0)}-a_j^{(0)}}\rt)^\om\lt|x-\frac{a_j^{(0)}+b_j^{(0)}}2\rt|^\om 
\  \   &{\rm if }  \  \  
x\in [a_j^{(0)},b_j^{(0)}] \\  \\
1-\lt(\frac2{b_j^{(1)}-a_j^{(1)}}\rt)^\om\lt|x-\frac{a_j^{(1)}+b_j^{(1)}}2\rt|^\om \  \   & {\rm if }  \  \ 
x\in [a_j^{(1)},b_j^{(1)}] \\ \\
\frac{x}{b-1}+\frac{1}{1-b} \  \   &{\rm if }  \  \ 
x\in [b,1].
\end{cases}
$$
It is straightforward (direct calculation) to check that all the maps $T_\om:I_*\to I$ have negative Schwarzian derivative and belong to $\cM(\cG;\kappa, A,\g)$ with appropriate parameters $\kappa$, $A$, $\gamma$. The corresponding skew--product map 
$$
T:\Om\times I_*\lra I
$$
along with the map $\th:\Om\to\Om$ and measure $m$ form a critically finite random map of an interval.

\, \subsection{Preliminaries on Critically Finite Random Maps of an Interval}

Throughout this section always take $\om\in\Om$. For every integer $k\geq 1$, set
\begin{align*}
I_\om(k)
:=[T_{\om,0}^{-k}(1),T_{\om,0}^{-(k-1)}(1)]
 =T_{\om,0}^{-(k-1)}([T_{\ta^{k-1}(\om),0}^{-1}(1),1])
 =T_{\om,0}^{-1}(I_{\ta(\om)}(k-1))\sub\Dl_0, 
\end{align*}
where the last equality sign holds assuming that $k\ge 2$. Furthermore, set
\begin{align*}
	U_\om(k):=\left[T_{\om,0}^{-k}(1),1\right]=\union_{j=1}^kI_\om(j)
\end{align*}
and
\begin{align*}
V_\om(k):=T_{\om,0}^{-k}([0,1])
=\left[0,T_{\om,0}^{-k}(1)\right]
=I\bs\intr{U_\om(k)}
=\set{0}\cup\union_{\ell=k+1}^\infty I_\om(\ell),
\end{align*}
where $\intr{A}$ denotes the interior of $A$.  Note that 
$$
V_\om(1)=\Dl_0.
$$ 
The sequence $(U_\om(k))_{k=1}^\infty$ is 
ascending while $(V_\om(k))_{k=1}^\infty$ is descending with
\begin{align*}
	\union_{k\geq 1}U_\om(k)=(0,1] \spand \intersect_{k\geq 1}V_\om(k)=\set{0}.
\end{align*}
By our conditions \ref{(M1)}--\ref{(M7)}, we have that
\begin{align}
	\absval{I_\om(k)}\comp\absval{(T_\om^{-k})'(0)}^{-1}(1-T_{\ta^{k-1}(\om),0}^{-1}(1))\comp\absval{(T_\om^k)'(0)}^{-1}\label{1mis.po}
\end{align}
and
\begin{align}
	T_{\om,0}^{-k}(1)\comp\absval{(T_\om^k)'(0)}^{-1},\label{2mis.po}
\end{align}
where in here and in the sequel the relation 
$$
A\lek B
$$
means that the exists a constant $C\in(0,+\infty)$ independent of appropriate, always clearly indicated in the context, variables in $A$ and/or $B$ such that
$$
A\le CB.
$$
Analogously, $A\gek B$. Also,
$$
A\comp B
$$
if $A\lek B$ and $B\lek A$. 
 
\, 

Let $\Dl\in\cG_C^{(0)}$. Then for all $x\in\Dl$ we have that,
\begin{align}
	\absval{T_\om'(x)}\comp\absval{x-c_\Dl}^{\gm_\Dl}\spand T_\om(x)\comp\absval{x-c_\Dl}^{1+\gm_\Dl}.\label{3mis.po}
\end{align}
Consequently,
\begin{equation}\label{120180423}
	\absval{T_\om'(x)}\comp T_\om(x)^{\frac{\gm_\Dl}{1+\gm_\Dl}}
\end{equation}
and
\begin{equation}\label{220180423}
	\absval{(T_{\om,\Dl}^{-1})'(z)}\comp z^{-\frac{\gm_\Dl}{1+\gm_\Dl}}
\end{equation}
for every $z\in[0,1]$. Therefore, if $A\sub [0,1]$ and $\diam(A)\comp\dist{0}{A}$, then 
\begin{align*}
	\absval{(T_{\om,\Dl}^{-1})'(x)}\comp(\diam(A))^{-\frac{\gm_\Dl}{1+\gm_\Dl}}
\end{align*}
for every $x\in A$. In particular, for every integer $k\geq 1$ and every $x\in I_{\ta(\om)}(k)$:
\begin{align}
	|(T_{\om,\Dl}^{-1})'(x)|
	\comp\diam(I_{\ta(\om)}(k))^{-\frac{\gm_\Dl}{1+\gm_\Dl}}
	\comp\absval{(T_\om^k)'(0)}^{\frac{\gm_\Dl}{1+\gm_\Dl}}.\label{1mis.po.2}
\end{align}
Likewise for $\Dl\in\cG_C^{(1)}$. If $x\in\Dl$, then\begin{align}
\absval{T_\om'(x)}\comp\absval{x-c_\Dl}^{\gm_\Dl}\spand 1-T_\om(x)\comp\absval{x-c_\Dl}^{1+\gm_\Dl}.\label{3mis.po}
\end{align}
Consequently
\begin{equation}\label{120180423B}
	\absval{T_\om'(x)}\comp (1-T_\om(x))^{\frac{\gm_\Dl}{1+\gm_\Dl}}
\end{equation}
and
\begin{equation}\label{220180423B}
	\absval{(T_{\om,\Dl}^{-1})'(z)}\comp (1-z)^{-\frac{\gm_\Dl}{1+\gm_\Dl}}
\end{equation}
for every $z\in[0,1]$. Therefore, if $A\sub [0,1]$ and $\diam(A)\comp\dist{1}{A}$, then 
\begin{align*}
	\absval{(T_{\om,\Dl}^{-1})'(x)}\comp(\diam(A))^{-\frac{\gm_\Dl}{1+\gm_\Dl}}
\end{align*}
for every $x\in A$.

\,
\subsection{Distortion Properties of Critically Finite Maps of an Interval}

For every bounded interval $J\sbt\mathbb R$ and every $\eta>0$ we denote by $N(J,\eta)$ the interval having the same center as $J$ and whose length is equal to $(1+2\eta)\diam(J)$. In other words, both components of $ N(J,\eta)\sms J$ have lengths equal to $\eta\diam(J)$. We call $N(J,\eta)$ the $\eta$--scaled neighborhood of $J$. Each of the two components of $ N(J,\eta)\sms J$ is called the one--sided $\eta$--scaled collar of $J$.
As a direct consequence of Theorem IV.1.2 (p.277) and Property 4 (p. 273) in \cite{MS} we get the following.

\begin{theorem}[General Bounded Distortion Property]\label{t1mis75}
	Let $\Dl\sub\RR$ be an interval and let $g:\Dl\to\RR$ be a $C^3$--diffeomorphism onto $g(\Dl)$ such that $S(g)<0$ (negative Schwarzian). If $J\sub\Dl$ is a subinterval of $\Dl$ such that $g(\Dl)$ contains an $\eta$--scaled neighborhood  of $g(J)$ then 
	\begin{align*}
	\left(\frac{\eta}{1+\eta}\right)^2\leq\frac{g'(y)}{g'(x)}\leq \left(\frac{1+\eta}{\eta}\right)^2
	\end{align*}
	for all $x,y\in J$. For the ease of notation and expression we denote
	$$
	K_\eta:=\left(\frac{1+\eta}{\eta}\right)^2.
	$$
\end{theorem}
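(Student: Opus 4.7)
The plan is to derive the distortion bound from two consequences of the hypothesis $S(g)<0$ that together form the backbone of the one-dimensional Koebe theory in Chapter IV of \cite{MS}: the strict convexity of $(g')^{-1/2}$ and the Minimum Principle for $|g'|$.

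Without loss of generality assume $g'>0$ on $\Delta$ (otherwise replace $g$ by $-g$) and set $\phi:=(g')^{-1/2}$, so that $g'=\phi^{-2}$. Differentiating this identity and substituting into $S(g)=(g''/g')'-\tfrac12(g''/g')^2$ produces the identity
\[ S(g)\;=\;-2\,\frac{\phi''}{\phi}, \]
from which the hypothesis $S(g)<0$ is equivalent to strict convexity of the positive function $\phi$ on $\Delta$. Independently I would establish the \emph{Minimum Principle}: $|g'|$ has no positive interior local minimum. Indeed at any interior critical point $x_0$ of $|g'|$ one has $g''(x_0)=0$, so $S(g)(x_0)=g'''(x_0)/g'(x_0)<0$, and a short sign chase forces $(|g'|)''(x_0)<0$, making $x_0$ a local maximum rather than a minimum.

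Now the geometric setup. Write $J=[a,b]$ and let $\alpha<a<b<\beta$ be the preimages (existing by hypothesis) with $g(\alpha)=g(a)-\eta|g(J)|$ and $g(\beta)=g(b)+\eta|g(J)|$, so that $g([\alpha,\beta])=N(g(J),\eta)$ and $[\alpha,\beta]\subset\Delta$. Since $g'=\phi^{-2}$, the stated inequality is equivalent to $(\max_{J}\phi)/(\min_{J}\phi)\leq(1+\eta)/\eta$. Convexity of $\phi$ on $[\alpha,\beta]$ immediately puts $\max_J\phi$ at one of the endpoints of $J$, and the Minimum Principle applied to $\phi^{-2}$ forces $\phi$ to be monotone on each collar $[\alpha,a]$ and $[b,\beta]$, so that their image lengths $\eta|g(J)|$ translate into usable control of $(a-\alpha)$ and $(\beta-b)$ against $(b-a)$.

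The final step combines these ingredients via the \emph{cross-ratio expansion principle} associated with negative Schwarzian: for any quadruple $\alpha<u<v<\beta$, the map $g$ does not shrink the cross-ratio $|u-\alpha||v-\beta|/(|v-\alpha||u-\beta|)$. This is Property 4 of \cite{MS} (p.~273) and is the main nontrivial input; its proof proceeds by post-composing $g$ with the Möbius map normalizing three of the four image points (using Möbius-invariance of $S$) and exploiting $\phi''>0$ on the resulting normalized picture, or equivalently by applying the Minimum Principle to a suitable quotient of differences. Specializing $u=x$, $v=y$ for $x,y\in J$ and substituting the collar lengths $\eta|g(J)|$ from the image side yields the bound $g'(y)/g'(x)\leq((1+\eta)/\eta)^2$, and the opposite bound follows by interchanging $x$ and $y$. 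The main obstacle is establishing the cross-ratio expansion with sharp constants; once that is in hand, the Koebe bound $K_\eta=((1+\eta)/\eta)^2$ emerges by direct algebraic manipulation combined with the mean value theorem on $J$.
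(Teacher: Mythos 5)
The paper offers no argument of its own for this theorem; it is stated explicitly as ``a direct consequence of Theorem~IV.1.2 (p.\,277) and Property~4 (p.\,273) in \cite{MS}.'' Your proposal therefore does more than the paper: you attempt to reproduce the underlying textbook proof. The skeleton you describe is exactly the standard de~Melo--van~Strien route, and the two key identities you rely on check out: writing $\phi=(g')^{-1/2}$ one indeed gets $S(g)=-2\phi''/\phi$, so $S(g)<0$ is equivalent to strict convexity of $\phi$, and at an interior critical point of $g'$ (with $g'>0$) the Schwarzian collapses to $g'''/g'<0$, which forces a strict local maximum of $g'$ --- the Minimum Principle.

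There is, however, a genuine error in the cross-ratio step. With $\alpha<u<v<\beta$ and writing $L=u-\alpha$, $M=v-u$, $R=\beta-v$, the quantity you claim $g$ ``does not shrink,'' namely $\dfrac{|u-\alpha|\,|v-\beta|}{|v-\alpha|\,|u-\beta|}=\dfrac{LR}{(L+M)(M+R)}$, equals $1-D(T,[u,v])$, where $D(T,[u,v])=\dfrac{M(L+M+R)}{(L+M)(M+R)}$ is the cross-ratio that Property~4 of \cite{MS} asserts is \emph{expanded} by a map with negative Schwarzian. Since $D$ increases under $g$, your quantity $1-D$ decreases; the inequality points the opposite way from the one you invoke, and the deduction in your final paragraph would not go through as written. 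Correcting this is easy --- work with $D(T,J)$ or with $|J|\,|T|/(|L|\,|R|)$, both of which are genuinely expanded --- but as stated the step fails. The final passage from the cross-ratio inequality with four points to the ratio $g'(y)/g'(x)$ also needs more than ``specializing $u=x$, $v=y$ and the mean value theorem,'' since the cross-ratio involves four function values at distinct points rather than derivatives; the standard argument either lets an inner interval degenerate to a point or uses the Minimum Principle to reduce the supremum/infimum of $|g'|$ on $J$ to the endpoints, and that reduction is where your sketch is thin. The strategy is the right one; the cross-ratio direction is the concrete defect.
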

\begin{corollary}\label{c1mis74}
	With the assumptions of Theorem~\ref{t1mis75} 
	\begin{align*}
 \Dl\bus N\lt(J,\frac{1}{2}\eta^3(2+\eta)^{-2}\rt).
	\end{align*}
\end{corollary}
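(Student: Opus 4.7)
The plan is to establish the conclusion by bounding each of the two one-sided collars of $J$ inside $\Delta$ separately; the two sides are handled by symmetric arguments, so I describe the right one. Write $J=[a,b]$, $\Delta=[a',b']$, $g(J)=[c,d]$ and $g(\Delta)=[c',d']$; the hypothesis of the corollary reads $c-c'\ge\eta(d-c)$ and $d'-d\ge\eta(d-c)$, and the goal is a lower bound of the shape $b'-b\ge\sigma(b-a)$ with $\sigma$ of order $\eta^3$ as $\eta\to 0$.

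My idea is to apply Theorem~\ref{t1mis75} not on $J$ itself (where it already gives the distortion constant $K_\eta$) but on a slightly enlarged subinterval $\tilde J\supset J$ still contained in $\Delta$, chosen so that the enlargement consumes only a fixed fraction of the available collar in $g(\Delta)$. For a parameter $s\in(0,1)$ to be optimized, let $q\in[b,b']$ be the unique point with $g(q)=d+s\eta(d-c)$ (which exists by the intermediate value theorem and the right-collar hypothesis), and set $\tilde J:=[a,q]$. Because the unused part of the right collar of $g(\tilde J)$ in $g(\Delta)$ still has length at least $(1-s)\eta(d-c)$, while the full left collar of length at least $\eta(d-c)$ remains, a short calculation shows that $g(\Delta)$ contains an $\eta_s$-scaled neighborhood of $g(\tilde J)$ with $\eta_s=(1-s)\eta/(1+s\eta)$. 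Applying Theorem~\ref{t1mis75} to $g$ on $\tilde J$ then yields the distortion bound $|g'(y)|/|g'(x)|\le K_{\eta_s}$ for all $x,y\in\tilde J$.

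With distortion on $\tilde J$ in hand, I combine it with the mean value theorem on the two sub-pieces of $\tilde J$: there exists $\xi_0\in J$ with $g'(\xi_0)=(d-c)/(b-a)$, and $\xi_1\in[b,q]$ with $g'(\xi_1)=s\eta(d-c)/(q-b)$. Both points lie in $\tilde J$, so the distortion estimate gives
\begin{equation*}
\frac{s\eta(b-a)}{q-b}
\;=\;\frac{g'(\xi_1)}{g'(\xi_0)}
\;\le\;K_{\eta_s},
\end{equation*}
which rearranges to $q-b\ge s\eta(b-a)/K_{\eta_s}$. Since $q\le b'$, this bound transfers directly to $b'-b$; the parallel construction at the left end of $J$ yields the same lower bound for $a-a'$.

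The main obstacle is purely computational: to match the specific constant $\tfrac{1}{2}\eta^{3}(2+\eta)^{-2}$ stated in the corollary one must pick $s$ carefully and keep track of the distortion factor $K_{\eta_s}=\bigl((1+\eta)/((1-s)\eta)\bigr)^{2}$. A natural choice such as $s=1/2$ already produces an estimate of the desired cubic order $\eta^{3}(b-a)$; refining the set-up slightly---for instance by extending $\tilde J$ symmetrically on both sides of $J$, which replaces a factor $(1+\eta)$ in the denominator by a $(2+\eta)$-type expression---should deliver the precise constant in the statement. In any case, the essential content of the corollary is the correct cubic order in $\eta$, which is exactly what is needed in the subsequent distortion applications.
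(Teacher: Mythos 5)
Your construction is, in substance, the paper's own proof: there one takes a component $\Gamma$ of $\Delta\setminus J$, lets $D$ be the one--sided $(\eta/2)$--scaled collar of $g(J)$ inside $g(\Gamma)$, and bounds $|\Gamma|\ge|g^{-1}(D)|$ from below using Theorem~\ref{t1mis75} applied to $g^{-1}(N(g(J),\eta/2))$ together with a sup/inf comparison of $|g'|$; this is exactly your argument with $s=1/2$, your interval $[b,q]$ being $g^{-1}(D)$ and your mean--value ratio $g'(\xi_1)/g'(\xi_0)\le K$ being the sup/inf step. The only place you fall short of the statement is the constant. Your (careful) computation of the scale for the enlarged interval, $\eta_s=(1-s)\eta/(1+s\eta)$, gives $K_{\eta_s}=\big((1+\eta)/((1-s)\eta)\big)^2$ and hence a collar of length at least $s(1-s)^2\eta^3(1+\eta)^{-2}\diam(J)$; with $s=1/2$ this is $\frac{1}{8}\eta^3(1+\eta)^{-2}\diam(J)$, strictly weaker than the claimed $\frac{1}{2}\eta^3(2+\eta)^{-2}\diam(J)$ for every $\eta>0$, and even the optimal choice $s=1/3$ gives $\frac{4}{27}\eta^3(1+\eta)^{-2}\diam(J)$, which drops below the stated constant once $\eta$ exceeds roughly $0.2$. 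Moreover the symmetric enlargement you propose as the "slight refinement" makes things worse, not better: the denominator becomes $(1+(1+s)\eta)^2$ while the factor $s(1-s)^2$ is unchanged, so no choice of $s$ in your scheme reproduces $\frac{1}{2}\eta^3(2+\eta)^{-2}$ for all $\eta$. The paper reaches that constant by measuring $D$ as an $(\eta/2)$--scaled collar of $g(J)$ itself and invoking Theorem~\ref{t1mis75} with parameter $\eta/2$, i.e. with $K=((2+\eta)/\eta)^2$ -- a piece of bookkeeping which your sharper evaluation of the available scale actually shows to be optimistic. None of this matters for the way the corollary is used: in Proposition~\ref{p2mis77} only the existence of an explicit positive margin $\epsilon(\eta)$ is needed, and your cubic--order bound supplies that. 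So: same method and correct in substance, but as written your argument does not establish the specific constant asserted in the statement, and the refinement you gesture at would not produce it either.
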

\begin{proof}
Let $\Gm$ be one of the two connected components of $\Dl\bs J$. Let $D$ be the one--sided $(\eta/2)$--scaled collar of $g(J)$ contained in $g(\Gm)$. It follows from Theorem \ref{t1mis75} applied to $g^{-1}(N(g(J),\eta/2))$ that
	\begin{align*}
	\absval{\Gm}&\geq\absval{g^{-1}(D)}
\geq\frac{\diam(D)}{\sup\set{\absval{g'(x)}:x\in g^{-1}(D)}} \\
&\geq\left(\frac{\eta/2}{1+(\eta/2)}\right)^2\frac{\diam(D)}{\inf\set{\absval{g'(x)}:x\in g^{-1}(D)}}
	\geq \left(\frac{\eta}{2+\eta}\right)^2\frac{(\eta/2)\diam(J)}{\inf\set{\absval{g'(x)}:x\in J}}\\
	&\geq \frac{1}{2}\eta^3(2+\eta)^{-2}\diam(J).
	\end{align*}
Hence, $\Dl\bus N\(J,\frac{1}{2}\eta^3(2+\eta)^{-2}\)$. The proof is complete.
\end{proof}

\noindent We will mostly, if not always, apply this Theorem~\ref{t1mis75} for the maps of the form
$$
T_\om^n|_{T_{\om,\Ga}^{-n}(I)}:T_{\om,\Ga}^{-n}(I)\lra I, \  \  n\ge 1, \ \Ga\in\cG^n.
$$
Since the composition of any two maps with negative Schwarzian derivative has again negative Schwarzian derivative, as an immediate consequence of the two above facts and \ref{(M1)}, we respectively get the following.

\begin{proposition}[Special Bounded Distortion Property]\label{p120180630}
For every $\eta\in(0,1/2)$ there exists $K_\eta\in[1,+\infty)$ such that for  all $x$ and $y$ in $[\eta, 1-\eta]$, every integer $n\ge 0$, and every $\Gamma\in\cG^n$, we have that
$$
K_\eta^{-1}\le
\frac{\big|\(T_{\om,\Ga}^{-n}\)'(y)\big|}{\big|\(T_{\om,\Ga}^{-n}\)'(x)\big|}
\le K_\eta,
$$
and, as it consequences,
\begin{enumerate}
\item for every interval $J\sbt [\eta, 1-\eta]$,
$$
\begin{aligned}
K_\eta^{-1}\sup\big\{\big|\(T_{\om,\Ga}^{-n}\)'(z)\big|:z\in J\big\}\diam(J)
&\le \inf\big\{\big|\(T_{\om,\Ga}^{-n}\)'(z)\big|:z\in J\big\}\diam(J) \le \\
&\le \diam\(T_{\om,\Ga}^{-n}(J)\) \le \\
\le \sup\big\{\big|\(T_{\om,\Ga}^{-n}\)'(z)\big|:z\in J\big\}\diam(J)
&\le K_\eta\inf\big\{\big|\(T_{\om,\Ga}^{-n}\)'(z)\big|:z\in J\big\}\diam(J),
\end{aligned}
$$

\item for every $z\in [\eta,1-\eta]$ and every $r>0$ such that $B(z,r)\sbt [\eta,1-\eta]$:
$$
B\lt(T_{\om,\Ga}^{-n}(z),K_\eta^{-1}\big|\(T_{\om,\Ga}^{-n}\)'(z)\big|r\rt)
\sbt T_{\om,\Ga}^{-n}\(B(z,r)\)
\sbt B\lt(T_{\om,\Ga}^{-n}(z),K_\eta\big|\(T_{\om,\Ga}^{-n}\)'(z)\big|r\rt).
$$
\end{enumerate}
\end{proposition}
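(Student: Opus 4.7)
The plan is to read Proposition~\ref{p120180630} as a single direct application of the General Bounded Distortion Property (Theorem~\ref{t1mis75}) to the full $n$-fold inverse branch. Concretely, I set
\[
g:=T_\om^n\big|_{T_{\om,\Ga}^{-n}(I)}:T_{\om,\Ga}^{-n}(I)\lra I,
\]
which is the composition $T_{\th^{n-1}(\om)}|_{\Dl_{\Ga_{n-1}}}\circ\cdots\circ T_\om|_{\Dl_{\Ga_0}}$ of the single-step branches. Each factor is a $C^3$-diffeomorphism onto $I$ with $S<0$ by~\ref{(M1)}, and since negative Schwarzian derivative is preserved under composition (the fact recalled in the paragraph preceding the statement), the map $g$ itself is a $C^3$-diffeomorphism from $T_{\om,\Ga}^{-n}(I)$ onto $I=[0,1]$ with $S(g)<0$. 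In particular $g'$ has constant sign on the interior of $T_{\om,\Ga}^{-n}(I)$.

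Next I would verify the geometric hypothesis of Theorem~\ref{t1mis75} with the subinterval $J:=T_{\om,\Ga}^{-n}([\eta,1-\eta])\sbt T_{\om,\Ga}^{-n}(I)$. Its image is $g(J)=[\eta,1-\eta]$, and the $\eta$-scaled neighborhood of $g(J)$ is the interval of length $(1+2\eta)(1-2\eta)=1-4\eta^2$ centered at $1/2$, i.e.\ $[2\eta^2,\,1-2\eta^2]\sbt[0,1]=g(T_{\om,\Ga}^{-n}(I))$; this is precisely where the assumption $\eta<1/2$ enters. Theorem~\ref{t1mis75} therefore yields $K_\eta=((1+\eta)/\eta)^2$ with $g'(u)/g'(v)\in[K_\eta^{-1},K_\eta]$ for all $u,v\in J$. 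Substituting $u=T_{\om,\Ga}^{-n}(x)$, $v=T_{\om,\Ga}^{-n}(y)$ for $x,y\in[\eta,1-\eta]$ and invoking the inverse-function identity $(T_{\om,\Ga}^{-n})'(z)=1/g'(T_{\om,\Ga}^{-n}(z))$ together with the constancy of the sign of $g'$, I obtain the main distortion inequality stated in the proposition.

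For consequence (1), apply the mean value theorem to $T_{\om,\Ga}^{-n}$ on an arbitrary interval $J\sbt[\eta,1-\eta]$: there is $\xi\in J$ with $\diam(T_{\om,\Ga}^{-n}(J))=|(T_{\om,\Ga}^{-n})'(\xi)|\diam(J)$, and the distortion estimate just established sandwiches $|(T_{\om,\Ga}^{-n})'(\xi)|$ between $\inf_J|(T_{\om,\Ga}^{-n})'|$ and $\sup_J|(T_{\om,\Ga}^{-n})'|$ with multiplicative defect at most $K_\eta$. For consequence (2), writing the endpoint images $T_{\om,\Ga}^{-n}(z\pm r)$ via the mean value theorem and inserting the distortion estimate shows each lies at distance between $K_\eta^{-1}|(T_{\om,\Ga}^{-n})'(z)|r$ and $K_\eta|(T_{\om,\Ga}^{-n})'(z)|r$ from $T_{\om,\Ga}^{-n}(z)$, yielding the two ball inclusions.

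There is no serious obstacle: the entire content of the proposition is (a) the composition-stability of negative Schwarzian derivative, which collapses the $n$-step statement to a single diffeomorphism $g$, and (b) the elementary geometric observation that, since $\eta<1/2$, the target interval $[0,1]$ automatically contains the $\eta$-scaled neighborhood of $[\eta,1-\eta]$. Together these let Theorem~\ref{t1mis75} be applied with a constant $K_\eta$ that is uniform in $n$, in $\om\in\Om$, and in $\Ga\in\cG^n$.
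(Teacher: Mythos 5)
Your proposal is correct and follows exactly the route the paper intends: apply the General Bounded Distortion Property (Theorem~\ref{t1mis75}) to the $n$-fold branch $g=T_\om^n|_{T_{\om,\Ga}^{-n}(I)}$, which has negative Schwarzian as a composition of such maps, with $J=T_{\om,\Ga}^{-n}([\eta,1-\eta])$ and noting $g(\Dl)=[0,1]\supset N([\eta,1-\eta],\eta)$; consequences (1) and (2) then follow via the mean value theorem as you describe. (One small imprecision: the inclusion $N([\eta,1-\eta],\eta)=[2\eta^2,1-2\eta^2]\subset[0,1]$ holds for every $\eta>0$; the restriction $\eta<1/2$ is only needed so that $[\eta,1-\eta]$ is a nondegenerate interval, not to secure that containment.)
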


\,
\subsection{Some Selected Dynamical Properties of Critically Finite Maps of an Interval}
\begin{definition}\label{d2mis75}
	Given $\om\in\Om$, we call an interval $J\sub I$ an $\om$--homterval if for every $n\geq 0$, the map $T_\om^n$ is well--defined on $J$ and $T_\om^n(J)\sub J_n$ with some $J_n\in\cG$.  
\end{definition}
\begin{remark}\label{r1mis77}
	Of course an equivalent characterization of an $\om$--homterval $J\sub I$ is that all iterates $T_\om^n:J\to I$, $n\geq 0$, are well--defined and form diffeomorphisms from $J$ onto $T_\om^n(J)$. 
\end{remark}
Our first crucial technical fact is the following.
\begin{proposition}\label{p2mis77}
	If $T:\cJ(T)\to\cJ(T)$ is a critically finite map, then $T$ has no $\om$--homtervals for any $\om\in\Om$. 
\end{proposition}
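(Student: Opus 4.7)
The plan is to argue by contradiction, adapting to the random setting the classical proof for deterministic $C^3$ multimodal maps with negative Schwarzian derivative and non-recurrent critical orbits (Singer; Guckenheimer; Martens--de Melo--van Strien). Fix $\om\in\Om$ and suppose an $\om$-homterval $J$ exists. Enlarging $J$ if necessary, take it to be a maximal open $\om$-homterval; each endpoint of $J$ is then either in $\partial\Dl$ for some $\Dl\in\cG$ or is mapped by some iterate of $T_\om$ to a critical point. Set $J_n:=T_\om^n(J)$ and let $\Dl_n\in\cG$ be such that $J_n\sbt\Dl_n$. Since compositions of maps with negative Schwarzian derivative still have negative Schwarzian, Theorem~\ref{t1mis75} and Corollary~\ref{c1mis74} are available for every iterate $T_\om^n|_J$.

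First I would show that $|J_n|\to 0$. If this fails, fix $\dl>0$ with $|J_{n_k}|\ge\dl$ along a subsequence $(n_k)$; since $\cG$ is finite, infinitely many $J_{n_k}$ share a common element $\Dl_*\in\cG$. Applying Theorem~\ref{t1mis75} to iterates of $T_\om$ connecting two such visits, together with the uniform expansion hypotheses \ref{(M2)} and \ref{(M5c)}, one shows that the restricted iterate must have definite expansion, and telescoping across many visits forces $|J_n|>1$ for some $n$, absurd. Hence $|J_n|\to 0$ and, passing to a subsequence, $J_n$ converges in Hausdorff distance to a single point $x_\infty\in[0,1]$.

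Next I would locate $x_\infty$. If $x_\infty\in[0,s]\cup[1-s,1]$ or $x_\infty$ lies in the interior of some $\Dl\in\cG_E$, then $|T_\om'|\ge\kp$ on a neighborhood of $x_\infty$ by \ref{(M5c)} or \ref{(M2)}, so $|J_{n+1}|\ge\kp|J_n|$ for all large $n$ for which $J_n$ remains in that neighborhood; iterating along consecutive such steps produces macroscopic lengths, contradicting $|J_n|\to 0$. Therefore $x_\infty$ must be a critical point $c_\Dl\in\Crit(\cG)$ for some $\Dl\in\cG_C$. But then $J_{n+1}=T_{\th^n(\om)}(J_n)$ lies close to $T_{\th^n(\om)}(c_\Dl)\in\{0,1\}$ by \ref{(M4)}, hence in $[0,s]\cup[1-s,1]$ for all large $n$; since $0$ is a repelling fixed point with $|T_\om'(0)|\ge\kp$ and $f(1)=0$ by \ref{(M5a)} and \ref{(M4)}, the subsequent iterates $J_{n+2},J_{n+3},\ldots$ remain in $[0,s]\cup[1-s,1]$ for $O(\log(1/|J_{n+1}|))$ further steps while their lengths are multiplied by factors $\ge\kp$ at each step, finally exiting $[0,s]\cup[1-s,1]$ with macroscopic length, again contradicting $|J_n|\to 0$.

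The main obstacle is the flat-critical behavior allowed by \ref{(M6)}: near $c_\Dl$ the contraction can be as severe as $|J_{n+1}|\comp|J_n|^{1+\g_\Dl}$, so $|J_n|$ could in principle collapse extremely fast. Quantitatively balancing this contraction against the ensuing expansion near $0$---showing that one severe contraction step followed by the subsequent expansion steps restores macroscopic length uniformly in $\om$ and in the choice of $\Dl$---relies on the estimates \eqref{1mis.po}--\eqref{1mis.po.2} together with the comparison $|T_\om'(x)|\comp T_\om(x)^{\g_\Dl/(1+\g_\Dl)}$ of \eqref{120180423}. This quantitative balance is the genuinely non-routine part; once it is secured, the remaining pigeonhole and bounded-distortion bookkeeping are standard.
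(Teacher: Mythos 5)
Your route has genuine gaps at its two decisive steps. First, the trichotomy you use to locate $x_\infty$ is incomplete: besides $[0,s]\cup[1-s,1]$, the interiors of $\cG_E$--elements, and the critical points, $x_\infty$ can perfectly well be an interior point of a critical interval $\Dl\in\cG_C$ away from $c_\Dl$, where \ref{(M6)} only gives $|T_\om'|\comp|x-c_\Dl|^{\gm_\Dl}$ with constants from $A$ — a positive lower bound, but possibly $<1$, so no expansion and no contradiction. The same issue undermines your preliminary claim that $|J_n|\to0$: the images of a homterval need not be pairwise disjoint (there is no wandering-interval dichotomy available here), and ``telescoping definite expansion'' between two returns to a common $\Dl_*$ is unjustified when the intermediate itinerary passes through critical intervals where the derivative can be small. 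Second, even granting that $J_{n}$ subconverges to some $c_\Dl$, the final step fails as stated: after the critical passage the image near $0$ has length $\approx d^{\gm_\Dl}\ell$ and distance $\approx d^{1+\gm_\Dl}$ to $0$ (with $d$ the distance of $J_n$ to $c_\Dl$ and $\ell=|J_n|$), so the expansion at the repelling fixed point only restores the \emph{relative} size $\ell/d$; if $\ell\ll d$ the interval exits $[0,s]$ while still tiny, and no contradiction with $|J_n|\to0$ arises. To close this you would need a lower bound on the ratio $\diam(J_n)/\dist(J_n,\{0\})$, i.e.\ exactly the quantitative balance you defer — which is the heart of the matter, not routine bookkeeping.

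For comparison, the paper's proof never shows $|J_n|\to0$ and never analyzes a limit point. It takes $J$ maximal and proves directly that every image $T_\om^n(J)$ admits a $C$--scaled neighborhood inside $(0,1)$: if $T_\om^n(J)$ meets $\Dl_0$ it can straddle at most three consecutive intervals $I_{\ta^n(\om)}(k-1),I_{\ta^n(\om)}(k),I_{\ta^n(\om)}(k+1)$, since otherwise it would contain a full $I_{\ta^n(\om)}(j)$ and a further iterate would cover $[0,1]$, impossible for a homterval; combined with \ref{(M5a)}, \ref{(M5b)} and $T_\tau(1)=0$ this bounds $\dist(0,T_\om^n(J))/\diam(T_\om^n(J))$ and $\dist(1,T_\om^n(J))/\diam(T_\om^n(J))$ below uniformly (formula \eqref{1mis79}). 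Then the negative-Schwarzian Koebe-type Corollary~\ref{c1mis74}, applied to the inverse branch $T_{\om,J}^{-n}$ on $(0,1)$, shows that $N(J,\ep)$ is again an $\om$--homterval, contradicting maximality. If you want to salvage your approach, the straddling argument with the intervals $I_\om(k)$ is precisely the missing ingredient that controls the relative scales your sketch needs.
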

\begin{proof}
	By the way of contradiction, suppose that $J\sub I$ is an $\om$--homterval for some $\om\in\Om$. We may assume without loss of generality that $J$ is a maximal, in the sense of inclusion, $\om$--homterval contained in $I$. Seeking contradiction suppose that
	\begin{align}\label{1mis77}
	\liminf_{n\to\infty}\frac{\dist{0}{T_\om^n(J)}}{\diam\(T_\om^n(J)\)}=0.
	\end{align} 
	Fix an arbitrary $n\geq 1$ such that $T_\om^n(J)\cap\Dl_0\nonempty$. Since $J$ is an $\om$--homterval, we then have that $T_\om^n(J)\sub\Dl_0$. Hence, there exists $k\geq 1$ such that
	\begin{align*}
	T_\om^n(J)\cap\intr{I_{\ta^n(\om)}(k)}\nonempty.
	\end{align*} 
	Assume that $k\geq 3$. If $T_\om^n(\cJ(T))\cap I_{\ta^n(\om)}(j)\nonempty$ for some $j\geq k+2$, then 
	\begin{align*}
		T_\om^n(\cJ(T))\bus I_{\ta^n(\om)}(k+1).	
	\end{align*}
	Hence, 
	\begin{align*}
		T_\om^{n+k+1}(\cJ(T))=T_{\ta^n(\om)}^{k+1}(I_{\ta^n(\om)}(k-1))=[0,1].	
	\end{align*}
	This contradiction shows that 
	\begin{align}\label{2mis77}
	T_\om^n(\cJ(T))\cap\union_{j=k+2}^\infty I_{\ta^n(\om)}(j)=\emptyset.
	\end{align}
	Therefore, 
	\begin{align}\label{3mis77}
	\dist{0}{T_\om^n(J)}\geq \diam\(I_{\ta^n(\om)}(k+2)\).
	\end{align} 
	By the same argument as that leading to \eqref{2mis77}, we get
	\begin{align*}
	T_\om^n(J)\cap\union_{j=1}^{k-2}I_{\ta^n(\om)}(j)=\emptyset.
	\end{align*}
	So, 
	$$
	T_\om^n(J)\sub I_{\ta^n(\om)}(k-1)\cup I_{\ta^n(\om)}(k)\cup I_{\ta^n(\om)}(k+1).
	$$ 
In fact either $T_\om^n(J)$ is contained in the first two terms or in the last two terms of this union. Therefore, by looking up at \ref{(M5b)} to write the second inequality below, we get,
\begin{align}
\diam\(T_\om^n(J)\)
&\leq \diam\(I_{\ta^n(\om)}(k-1)\)+\diam\(I_{\ta^n(\om)}(k)\)+\diam\(I_{\ta^n(\om)}(k+1)\) \\
&\leq C_1\diam\(I_{\ta^n(\om)}(k+1)\)
\end{align}
with some universal constant $C_1>0$. So, applying also \eqref{3mis77}, we conclude that 
	\begin{align*}
	\frac{\dist{0}{T_\om^n(J)}}{\diam\(T_\om^n(J)\)}\geq C_1^{-1}.
	\end{align*}
Hence, we have proved that for every $n\geq 0$,
\begin{align}\label{1mis79}
	\frac{\dist{0}{T_\om^n(J)}}{\absval{T_\om^n(J)}}\geq C_2:=\min\set{C_1^{-1},\inf\set{\dist{0}{I_\tau(2)}:\tau\in\Om}}>0,	
	\end{align}
where the second term in this minimum takes care of the cases $k=1$ or $2$ and the infimum there is positive because of \ref{(M5b)}.
	Since $T_\tau(1)=0$ for every $\tau\in \Omega$, because of \ref{(M5b)}, and because of the last formula of \ref{(M5a)}, formula \eqref{1mis79} yields 
	\begin{align*}
	\frac{\dist{1}{T_\om^n(J)}}{\diam\(T_\om^n(J)\)}\geq C_3>0
	\end{align*}
	with some universal constant $C_3>0$. This implies that for every $n\geq 0$ there exists $H_n$, a $C:=\min\set{C_2,C_3}$--scaled neighborhood of $T_\om^n(J)$ contained in $(0,1)$. Therefore, it follows from Corollary \ref{c1mis74} that if $T_{\om,J}^{-n}:(0,1)\to[0,1]$ is the continuous inverse branch of $T_\om^n$ mapping $T_\om^n(J)$ onto $J$, then 
	\begin{align*}
	T_{\om,J}^{-n}((0,1))\bus N(J,\ep),
	\end{align*}
where $\ep:=\frac{1}{2}C^3(2+C)^{-2}$. But then, all the iterates $T_\om^n$ are well-defined and diffeomorphic on $N(J,\ep)$. So, $N(J,\ep)$ is an $\om$--homterval properly contained in $J$. This contradicts the maximality of $J$ and finishes the proof. 
\end{proof}
\begin{corollary}\label{c1mis81}
	For every $\om\in\Om$ we have 
	\begin{align*}
	\lim_{n\to\infty}\sup\set{\diam(T_{\om,\Ga}^{-n}((0,1))): \Ga\in\cG^n}=0.
	\end{align*} 
\end{corollary}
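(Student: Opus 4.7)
The plan is to derive a contradiction from Proposition~\ref{p2mis77}: if the stated supremum fails to tend to zero, I will extract a non-degenerate $\om$--homterval. Fix $\om\in\Om$ and suppose there exist $\delta>0$, a sequence $n_k\to\infty$, and words $\Ga^{(k)}\in\cG^{n_k}$ with $\diam\(T_{\om,\Ga^{(k)}}^{-n_k}((0,1))\)\ge\delta$ for all $k$. Each inverse branch $T_{\om,\Ga^{(k)}}^{-n_k}:I\to I$ is a homeomorphism onto a closed subinterval, so the closed cylinder $C^{(k)}:=T_{\om,\Ga^{(k)}}^{-n_k}(I)$ differs from $T_{\om,\Ga^{(k)}}^{-n_k}((0,1))$ only in its two endpoints and likewise satisfies $\diam(C^{(k)})\ge\delta$.

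The next step is a K\"onig / iterated pigeonhole extraction. Because the alphabet $\cG$ is finite, for each $m\ge 0$ the set of all length-$m$ initial blocks of the words $\Ga^{(k)}$ (with $n_k\ge m$) is a finite subset of $\cG^m$; iterating pigeonhole in $m$ produces an infinite sequence $\xi=(\xi_0,\xi_1,\dots)\in\cG^{\mathbb N}$ such that for every $m$ the prefix $\xi|_m:=(\xi_0,\dots,\xi_{m-1})$ occurs as the initial block of $\Ga^{(k)}$ for infinitely many $k$. Setting $C_m:=T_{\om,\xi|_m}^{-m}(I)$, composition of inverse branches gives $C^{(k)}\sbt C_m$ whenever $\Ga^{(k)}$ extends $\xi|_m$, which transports the lower bound to $\diam(C_m)\ge\delta$ for every $m$. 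The sequence $(C_m)_{m\ge 0}$ is a nested descending family of closed intervals of diameter at least $\delta$, so
$$
J:=\bi_{m\ge 0} C_m
$$
is a closed interval with $\diam(J)\ge\delta>0$.

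It remains to confirm that $J$ is an $\om$--homterval in the sense of Definition~\ref{d2mis75}. For each $m\ge 0$, the iterate $T_\om^m$ is by construction a diffeomorphism on $C_m\spt J$, and
$$
T_\om^m(J)\sbt T_\om^m(C_{m+1})=T_{\th^m(\om),\,\xi_m}^{-1}(I)=\xi_m\in\cG,
$$
so taking $J_m:=\xi_m$ fulfills the requirement of Definition~\ref{d2mis75}. Proposition~\ref{p2mis77} then yields a contradiction, completing the argument. I expect the only point requiring a little care to be the extraction step: one must arrange the pigeonhole so that the lower bound $\delta$ genuinely propagates from the arbitrarily long words $\Ga^{(k)}$ down to every finite prefix $\xi|_m$. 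Everything else is a routine verification, since the real work has already been done inside Proposition~\ref{p2mis77}.
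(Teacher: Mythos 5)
Your proof is correct and follows essentially the same strategy as the paper: argue by contradiction and reduce to the non-existence of $\om$--homtervals (Proposition~\ref{p2mis77}). The only difference is the extraction device — you build a nested sequence of cylinders via a K\"onig/pigeonhole argument on prefixes and take their intersection, whereas the paper takes the midpoints $x_k$ of the long cylinders, passes to a convergent subsequence $x_k\to x$, and observes that the fixed interval $[x-\ep/4,x+\ep/4]$ lies inside all sufficiently late cylinders, hence is a homterval; both devices are sound and yield the same contradiction.
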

\begin{proof}
	Suppose on the contrary that for some $\om\in\Om$ this limit is not zero. This means that there exists $\ep>0$ and a sequence $\seq[k]{n_k}$ of positive increasing integers such that
	\begin{align}\label{1mis81}
	\diam\(T_{\om,\Ga_{k}}^{-n_k}((0,1))\)\geq \ep
	\end{align}
	for some $\Ga_k\in\cG^{n_k}$ and every $k\in\NN$. For every $k\geq 1$ let $x_k$ be the middle of the interval $T_{\om,\Ga_k}^{-n_k}(0,1)$. Passing to a subsequence if necessary, we may assume that the sequence $\seq[k]{x_k}$ converges to some point $x\in[0,1]$. Because of \eqref{1mis81}, we have that $x\in[\ep/2,1-\ep/2]$, and for all $k\geq 1$ large enough
	\begin{align*}
	\left[x-\frac{\ep}{4},x+\frac{\ep}{4}\right]\sub T_{\om,\Ga_k}^{-n_k}\((0,1)\).
	\end{align*}
	But this means that for all such $k$s, the map $T_\om^{n_k}\rvert_{[x-\ep/4,x+\ep/4]}$ is well--defined and diffeomorphic. Therefore $T_\om^\ell\rvert_{[x-\ep/4,x+\ep/4]}$ is well--defined and diffeomorphic for all $\ell\geq 0$. This however means that $[x-\ep/4,x+\ep,4]$ is an $\om$-homterval. This contradiction with Proposition \ref{p2mis77} finishes the proof.
\end{proof}

\,
\subsection{Fiberwise Invariant Measures and Lyapunov Exponents}

Denote by $M^1_m(T)$ the subset of $M^1_m(\cJ(T))$ consisting of all $T$--invariant probability measures. Denote further by $M_m^{1,0}(T)$ the subset of all elements $\mu\in M_m^1(T)$ for which 
\begin{align*}
\mu(\Om\times\set{0,1})=0.
\end{align*}
Eventually denote by $M_m^1(T)_e$ and $M_m^{1,0}(T)_e$ the respective subsets of ergodic measures in $M_m^1(T)$ and in $M_m^{1,0}(T)$. For every $\mu\in M^1_m(T)$ set
\begin{align*}
\chi_\mu:=\int_{\cJ(T)}\log\absval{T_\om'(z)}d\mu(\om,z)=\int_\Om\int_{\cJ_\om(T)}\log\absval{T_\om'(z)}d\mu_\om(z)dm(\om),
\end{align*}
which we call the \textit{Lyapunov exponent} of $T$ with respect to the measure $\mu$. Note that this integral is well--defined since the function $\cJ(T)\ni(\om,z)\longmapsto\log\absval{T_\om'}$ is, by \ref{(M5b)}, bounded above, although it may however be equal to $-\infty$.

 We shall prove the following. 
\begin{proposition}\label{p1mis83}
	If $\mu\in M_m^{1,0}(T)_e$ then $\chi_\mu>0$.
\end{proposition}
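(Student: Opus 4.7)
The plan is to argue by contradiction: if $\chi_\mu\le 0$, I will manufacture an $\om$-homterval of positive length, contradicting Proposition \ref{p2mis77}. Since $\mu$ is a Borel probability with $\mu(\Om\times\{0,1\})=0$, outer regularity yields $\eta\in(0,s/2)$ (with $s$ as in \ref{(M5c)}) such that $\mu(\Om\times[\eta,1-\eta])>0$. Applying Birkhoff's theorem to the ergodic measure $\mu$ gives, for $\mu$-a.e.\ $(\om,x)$,
$$
\frac{1}{n}\log\absval{(T_\om^n)'(x)}\longrightarrow\chi_\mu
\quad\text{and}\quad
\liminf_{N\to\infty}\frac{\#\{0\le n<N:T_\om^n(x)\in[\eta,1-\eta]\}}{N}\ge\mu(\Om\times[\eta,1-\eta])>0.
$$
Fix a generic such $(\om,x)$ and set $R:=\{n\ge 1:T_\om^n(x)\in[\eta,1-\eta]\}$.

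For each $n\ge 1$, let $V_n=V_n(\om,x)\ni x$ denote the maximal open interval on which $T_\om^n$ is a $C^3$-diffeomorphism onto its image. The sequence is nested, and $|V_n|\to 0$ by Corollary \ref{c1mis81}. By \ref{(M4)} every critical value of every $T_\om$ lies in $\{0,1\}$, while \ref{(M5a)} ensures that both $0$ and $1$ are sent to the fixed source $0$ at the next iterate; consequently, tracing any endpoint of $V_n$ through the chain rule shows $T_\om^n(\partial V_n)\sub\{0,1\}$ for every $n\ge 1$. Therefore, for $n\in R$, the image $T_\om^n(V_n)$ is an interval with endpoints in $\{0,1\}$ containing the interior point $T_\om^n(x)\in(\eta,1-\eta)$, forcing $T_\om^n(V_n)=[0,1]$. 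The subinterval $[\eta,1-\eta]$ has an $\eta/(1-2\eta)$-scaled neighborhood $[0,1]$ inside $T_\om^n(V_n)$, and $T_\om^n$ has negative Schwarzian derivative (preserved under composition), so Theorem \ref{t1mis75} furnishes a distortion constant $K_\eta$, uniform in $n$, on
$$
V_n':=(T_\om^n|_{V_n})^{-1}([\eta,1-\eta])\sub V_n,
\qquad\text{yielding}\qquad
|V_n'|\comp(1-2\eta)\,\absval{(T_\om^n)'(x)}^{-1}.
$$

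From $V_n'\sub V_n$ and $|V_n|\to 0$ I get $|(T_\om^n)'(x)|\to\infty$ along $R$; indeed, if some infinite subsequence $n_k\in R$ had $|(T_\om^{n_k})'(x)|\le M$, then $|V_{n_k}'|\ge(1-2\eta)/(K_\eta M)>0$, and a Hausdorff-convergent sub-subsequence would produce a compact interval $V_\infty\ni x$ of positive length contained in $\overline{V_n}$ for every $n$, contradicting $\bigcap_n\overline{V_n}=\{x\}$. The genuine obstacle, and the crux of the proof, is upgrading this qualitative growth to the quantitative lower bound $\liminf_{n\in R}n^{-1}\log\absval{(T_\om^n)'(x)}>0$, without which one only concludes $\chi_\mu\ge 0$. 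This refinement exploits the uniform expansion $|T_\om'|\ge\kappa>1$ near $\{0,1\}$ from \ref{(M5a)}--\ref{(M5c)}: $T$-invariance of $\mu$ together with $\mu(\Om\times\{0,1\})=0$ prevents the orbit from loitering near $\Crit(\cG)$ long enough to absorb the expansion accumulated during visits to the edge region $[0,s]\cup[1-s,1]$, and a Pliss-type extraction of hyperbolic return times inside $R$ converts the distortion estimate above into exponential growth of $|(T_\om^n)'(x)|$ along a positive-density subset of $R$, delivering $\chi_\mu>0$.
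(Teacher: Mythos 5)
Your setup is sound and close in spirit to the paper's: you pick $\eta$ with $\mu(\Om\times[\eta,1-\eta])>0$, use Birkhoff along the ergodic measure, restrict attention to return times $R$ to $[\eta,1-\eta]$, note that the maximal diffeomorphism interval $V_n$ around $x$ maps onto essentially all of $[0,1]$ at such times (its endpoints land in $\{0,1\}$ by \ref{(M4)}--\ref{(M5a)}), and invoke the negative-Schwarzian distortion theorem to get $|V_n'|\comp|(T_\om^n)'(x)|^{-1}$, whence $|(T_\om^n)'(x)|\to\infty$ along $R$ because $|V_n|\to 0$ by Corollary \ref{c1mis81}. But that only yields $\chi_\mu\ge 0$, and you say so yourself: the "genuine obstacle" of upgrading to $\liminf_{n\in R}n^{-1}\log|(T_\om^n)'(x)|>0$ is exactly the content of the proposition, and you never prove it. The closing sentence --- that invariance of $\mu$ "prevents the orbit from loitering near $\Crit(\cG)$" and that "a Pliss-type extraction of hyperbolic return times" converts the distortion estimate into exponential growth --- is an assertion, not an argument. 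Worse, Pliss-type lemmas take as input a positive lower bound on Birkhoff averages of $\log|T_\om'|$ along the orbit, which is essentially $\chi_\mu>0$; invoking them here is circular, and no quantitative control of the negative contributions of $\log|T_\om'|$ near the critical points is ever established. (Your announced plan of deriving a contradiction by manufacturing an $\om$-homterval from $\chi_\mu\le 0$ is also abandoned midstream.)

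The paper closes precisely this gap with a different, and more uniform, mechanism: by Egorov's theorem applied to Corollary \ref{c1mis81}, there are $q\ge 1$ and a set $\Om_1$ with $m(\Om_1)>3/4$ on which \emph{every} inverse branch $T_{\om,\Ga}^{-n}$, $n\ge q$, contracts $[\eta,1-\eta]$ by a definite factor (via Proposition \ref{p120180630}), so that $|(T_\om^n)'(z)|\ge 2$ whenever $\om\in\Om_1$, $n\ge q$ and $T_\om^n(z)\in[\eta,1-\eta]$. Choosing $\eta$ with $\mu(\Om\times[\eta,1-\eta])>3/4$ gives $\mu(\Om_1\times[\eta,1-\eta])>1/2$; decomposing $\log|(T_\om^{qn_k})'(x)|$ into blocks between consecutive returns of $T^q$ to $\Om_1\times[\eta,1-\eta]$, each block contributes at least $\log 2$, and Birkhoff gives the return frequency $\lim k/n_k>1/2$, whence $\chi_\mu\ge\frac{\log 2}{2q}>0$. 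This per-block multiplicative gain, uniform over a large set of fibers, is the quantitative ingredient your proposal lacks; without it (or an honest substitute), your argument does not establish strict positivity.
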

\begin{proof}
	Since $\mu\in M_m^{1,0}(T)_e$, there exists $\eta\in(0,1/2)$ such that
	\begin{align}\label{2mis83}
	\mu(\Om\times[\eta,1-\eta])>\frac{3}{4}.
	\end{align}
	By virtue of Egorov's Theorem, Corollary \ref{c1mis81} yields a measurable set $\Om_1\sub\Om$ such that 
\begin{align}\label{3mis83} 
m(\Om_1)>3/4
\end{align}
and
$$
\diam\(T_{\om,\Ga}^{-n}\((0,1)\)\)<\frac{1-2\eta}{2K_\eta}
$$	
for all $\om\in\Om_1$, all $n\geq 1$ large enough, say $n\geq q\geq 1$, and all $\Ga\in\cG^n$.
It then follows from Proposition~\ref{p120180630} that
	\begin{align*}
	\frac{1-2\eta}{2K_\eta}
	&\geq\diam\(T_{\om,\Ga}^{-n}\((0,1)\)\) 
	\geq\diam\(T_{\om,\Ga}^{-n}\([\eta,1-\eta]\)\)\\
	&\geq \frac{1-2\eta}{K_\eta}\sup\big\{\absval{(T_{\om,\Ga}^{-n})'(x)}:x\in[\eta,1-\eta]\big\}.
	\end{align*}
	Hence, 
	\begin{align*}
	\sup\big\{\absval{(T_{\om,\Ga}^{-n})'(x)}:x\in[\eta,1-\eta]\big\}\leq \frac{1}{2}.
	\end{align*}
	This means that 
	\begin{align}\label{1mis83}
	\absval{(T_\om^n)'(z)}\geq 2
	\end{align}
	whenever $\om\in\Om_1$, $n\geq q$, and $T_\om^n(z)\in[\eta,1-\eta]$. It follows from \eqref{2mis83} and \eqref{3mis83} that 
	\begin{align*}
	\mu\(\Om_1\times[\eta,1-\eta]\)>\frac{1}{2}.
	\end{align*}
	For every $(\om,x)\in\cJ(T)$, let $\seq[k]{n_k}$ be the sequence of consecutive visits to $\Om_1\times[\eta,1-\eta]$ by the map $T^q$. In particular $n_1\ge 1$ and,  
	\begin{align}\label{1mis85}
	T^{qn_k}(\om,x)\in\Om_1\times[\eta,1-\eta]
	\end{align}
	for every $k\geq 1$.
	In view of Birkhoff's Ergodic Theorem, there exists a measurable set $\cJ_0\sub\cJ(T)$ such that $\mu(\cJ_0)=1$,
	\begin{align*}
	\lim_{k\to\infty}\frac{k}{n_k}=\mu(\Om_1\times[\eta,1-\eta])>\frac{1}{2},
	\end{align*}
	and 
	\begin{align*}
	\lim_{k\to\infty}\frac{1}{qn_k}\log\absval{(T_\om^{qn_k})'(x)}=\chi_\mu
	\end{align*}
	for all $(\om,z)\in\cJ_0$. It then follows from \eqref{1mis83} and \eqref{1mis85} that
	\begin{align*}
	\chi_\mu&=\frac{1}{q}\lim_{k\to\infty}\frac{1}{n_k}\sum_{j=0}^{k-1}\log\absval{(T_{\ta^{qn_j}(\om)}^{q(n_{j+1}-n_j)})'(T_\om^{qn_j}(z))}\\
	&\geq\frac{1}{q}\lim_{k\to\infty}\frac{1}{n_k}\sum_{j=0}^{k-1}\log 2
	=\frac{\log 2}{q}\lim_{k\to\infty}\frac{k}{n_k}\\
	&>\frac{\log 2}{2q}>0.
	\end{align*}
	The proof is complete.
\end{proof}

\section{Conformal Measures and Truncated Systems}
Our closest goal now is to prove the existence of conformal measures with some prescribed properties. Since the the appropriate Perron--Frobenius operators $\tr_t$, $t>0$, are not bounded, not even well--defined (on the the Banach space of random continuous bounded functions), the standard method based on the Schauder--Tichonov  Fixed Point Theorem does not directly apply. In order to remedy this difficulty, we truncate our system by ``chopping off'' some suitable, smaller and smaller, neighborhoods of zero. We then, rather easily, construct conformal measures for such truncated systems, by virtue of, the now applicable, Schauder--Tichonov Fixed Point Theorem. The ``true'' conformal measures, i.e. the ones for the original random critically finite map $T:\cJ(T)\to\cJ(T)$, are then obtained as the weak (in the narrow topology) cluster points of the ``truncated'' measures. We start with the following. For every $\om\in\Omega$ and every integer $k\geq 1$, we define 
\begin{align*}
\cJ_\om^{(k)}(T):=\intersect_{n=0}^\infty T_\om^{-n}\left(I_*\cap U_{\ta^n(\om)}(k)\right).
\end{align*}
Because of Corollary~\ref{c1mis81}, for every $\Ga\in \mathcal G^\NN$, the intersection
$$
\pi_\om(\Ga):=\bigcap_{n=0}^\infty T_{\om,\Ga|_n}^{-n}(I)
$$
is a singleton. Obviously,
$$
\pi_\om\((\cG\setminus \{\Dl_0\})^\NN\)\subset \cJ_\om^{(k)}(T)
$$
for all $k\ge 1$. Then, in particular,
$$
\cJ_\om^{(k)}(T)\nonempty.
$$ 
We now shall prove the following result which is necessary for all our further considerations.

\begin{lemma}\label{l1mis.cm1}
	For all $k\geq 1$ and all $\om\in\Omega$, the set $\cJ_\om^{(k)}(T)$ is closed. Furthermore, the set 
	\begin{align*}
		\cJ^{(k)}(T):=\union_{\om\in\Omega}\set{\om}\times\cJ_\om^{(k)}(T)
	\end{align*}
	is a closed random $T$--invariant subset of $\cJ(T)$, invariance meaning that
	$$
	T(\cJ^{(k)}(T))\subset \cJ^{(k)}(T).
	$$
\end{lemma}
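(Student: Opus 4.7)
The plan is to establish the three assertions—fiberwise closedness, forward $T$-invariance, and random closedness of $\cJ^{(k)}(T)$—by straightforward unwinding of the definition and invocation of the continuity and measurability properties of the fiber maps $T_\om$ supplied by \ref{(M1)} and \ref{(M9)}, together with Crauel's criterion recorded in Proposition \ref{p120180619}. Note first that $\cJ^{(k)}(T)\subset\cJ(T)$ is immediate from the definition, since $I_*\cap U_{\th^n(\om)}(k)\subset I_*$.

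For fiberwise closedness, first I would note that $U_\tau(k)=[T_{\tau,0}^{-k}(1),1]$ is a closed subinterval of $[0,1]$ for every $\tau\in\Om$, so $I_*\cap U_{\th^n(\om)}(k)$ is closed in $[0,1]$ for every $n\ge 0$. By \ref{(M1)}, each $T_\om:I_*\to I$ is continuous (the $C^3$ pieces being glued continuously at their shared endpoints), and by induction on $n$ the iterate $T_\om^n$ is a continuous map defined on the closed subset $D_n(\om):=\bigcap_{j=0}^{n-1}T_\om^{-j}(I_*)$ of $I_*$. Hence $T_\om^{-n}\bigl(I_*\cap U_{\th^n(\om)}(k)\bigr)$ is closed in $[0,1]$, and $\cJ_\om^{(k)}(T)$ is closed as a countable intersection of closed sets.

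Forward $T$-invariance is immediate: if $x\in\cJ_\om^{(k)}(T)$ and $y:=T_\om(x)$, then for every $n\ge 0$,
$$T_{\th(\om)}^n(y)=T_\om^{n+1}(x)\in I_*\cap U_{\th^{n+1}(\om)}(k)=I_*\cap U_{\th^n(\th(\om))}(k),$$
so $y\in\cJ_{\th(\om)}^{(k)}(T)$, that is, $T\bigl(\cJ^{(k)}(T)\bigr)\subset\cJ^{(k)}(T)$.

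Finally, with fiberwise closedness in hand, Proposition \ref{p120180619} reduces the random-closedness claim to $\sB\otimes\sF$-measurability of the graph $\cJ^{(k)}(T)$. Writing
$$\cJ^{(k)}(T)=\cJ(T)\cap\bigcap_{n=0}^\infty\bigl\{(\om,x)\in\Om\times I_*:T_\om^n(x)\ge T_{\th^n(\om),0}^{-k}(1)\bigr\},$$
measurability follows because $\cJ(T)$ is already a random closed set, the map $(\om,x)\mapsto T_\om^n(x)$ is jointly measurable on its domain (by iteration of \ref{(M9)} together with the measurability of $\th$), and $\om\mapsto T_{\om,0}^{-k}(1)$ is measurable, being the unique $y\in\Dl_0$ satisfying $T_\om^k(y)=1$ and hence a measurable selector from the measurably varying strictly monotone branch $T_{\om,0}^k:\Dl_0\to I$. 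This last bookkeeping of measurability is the only mildly delicate step; everything else falls out directly from the definitions.
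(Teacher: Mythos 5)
Your proposal is correct and follows essentially the same route as the paper: fiberwise closedness from continuity of $T_\om^n$ and closedness of $I_*\cap U_{\th^n(\om)}(k)$, invariance from the identity $T_{\th(\om)}^n\circ T_\om=T_\om^{n+1}$ (your pointwise check is just the element-wise version of the paper's set computation), and random closedness via Crauel's criterion. The only cosmetic difference is that you verify graph measurability directly through Proposition \ref{p120180619}, whereas the paper simply invokes the fact (Proposition \ref{p220180619b}, using completeness of $m$) that a countable intersection of random closed sets is a random closed set.
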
 
\begin{proof}
	The closedness of the sets $\cJ_\om^{(k)}(T)$ follows immediately from the fact that both sets $I_*$ and $U_{\ta^n(\om)}(k)$ are closed and the maps $T^n_\om$ are continuous. Recalling that the measure $m$ is complete, we see that $\cJ^{(k)}(T)$ is a random closed set as it is a countable intersection of random closed sets. Checking $T$--invariance of $\cJ^{(k)}(T)$, we write 
	\begin{align*}
		T_\om(\cJ_\om^{(k)}(T))&=T_\om\left(\intersect_{n=0}^\infty T_\om^{-n}\left(I_*\cap U_{\ta^n(\om)}(k)\right)\right)
		\sub\intersect_{n=0}^\infty T_\om\left(T_\om^{-n}\left(I_*\cap U_{\ta^n(\om)}(k)\right)\right)\\
		&\sub\intersect_{n=1}^\infty T_\om\left(T_\om^{-n}\left(I_*\cap U_{\ta^n(\om)}(k)\right)\right)\\
		&\sub \intersect_{n=1}^\infty T_{\ta(\om)}^{-(n-1)}\left(I_*\cap U_{\ta^n(\om)}(k)\right)\\
		&=\intersect_{n=1}^\infty T_{\ta(\om)}^{-(n-1)}\left(I_*\cap U_{\ta^{n-1}(\ta(\om))}(k)\right)   \\
		&=\intersect_{n=0}^\infty T_{\ta(\om)}^{-n}\left(I_*\cap U_{\ta^{n}(\ta(\om))}(k)\right)   \\
		&=\cJ_{\ta(\om)}^{(k)}(T).
	\end{align*} 
	The proof is now complete.
\end{proof}

Since $\cJ^{(k)}(T)$ is a closed random set, the space $M^1_m(\cJ^{(k)}(T))$ is 
well--defined, and, we recall, it consists of all measures $\mu$ in $M^1_m(\cJ(T))$ for which $\mu(\cJ^{(k)}(T))=1$.

Because of Lemma~\ref{l1mis.cm1}, we can consider the random dynamical system 
\begin{align*}
T|_k:=	T|_{\cJ^{(k)}(T)}:\cJ^{(k)}(T)\lra\cJ^{(k)}(T).
\end{align*}
It follows directly from the definition of the sets $\cJ^{(k)}(T)$ and our conditions \ref{(M1)}--\ref{(M7)} that for every $k\geq 1$ there exists $\delta_k>0$ such that
$$
\cJ_\om^{(k)}(T)\cap B(\Crit(\cG),\delta_k)=\es
$$
for all $\om\in\Om$ and then that there exists $\eta_k>0$ such that
\begin{equation}\label{120180416}
	\absval{T_\om'\rvert_{\cJ^{(k)}(T)}}\geq\eta_k
\end{equation}
for all $\om\in\Om$.
As already said, we first look for conformal measures for the maps $T|_k:\cJ^{(k)}(T)\to \cJ^{(k)}(T)$. Also, as was already mentioned, the most transparent advantage of working with those truncated maps is that the abstract Perron--Frobenius operators, defined below, are continuous. 

Fix $t\geq 0$. For every $g\in C(\cJ_\om^{(k)}(T))$ and every $x\in\cJ_{\ta(\om)}^{(k)}(T)$, we define (abstract, i.e. not related at the beginning to any quasi--invariant measure) operators
\begin{align}\label{trunc transf op}
	\tr_{t,k,\om}(g)(x):=\sum_{y\in (T|_k)_\om^{-1}(x)}g(y)\absval{T_\om'(y)}^{-t}.
\end{align} 
We also call them Perron--Frobenius operators. Our goal is to produce $t$--conformal measures for the maps $T|_k:\cJ^{(k)}(T)\to\cJ^{(k)}(T)$ and to show that the corresponding Perron--Frobenius operators are in fact given by the starting abstract formula \eqref{trunc transf op} above. Noting that no points  $T_{\om,0}^{-\ell}(1)$ belong to $\cJ_\om^{(k)}(T)$ for any $\om\in\Om$ and any integer $\ell\ge 0$, it follows from our definition of $\cJ^{(k)}(T)$, that $\tr_{t,k,\om}(g)\in C_b(\cJ_{\ta(\om)}^{(k)}(T))$, and so the linear operator 
\begin{align*}
	\tr_{t,k,\om}:C_b(\cJ_{\om}^{(k)}(T))\lra C_b(\cJ_{\ta(\om)}^{(k)}(T))
\end{align*}
is continuous. We can therefore consider the dual operators
\begin{align*}
	\tr_{t,k,\om}^*:C_b^*(\cJ_{\ta(\om)}^{(k)}(T))\lra C_b^*(\cJ_\om^{(k)}(T)),
\end{align*}
which are linear and bounded. From now on we suppress the parameter $t$ and abbreviate $\tr_{k,\om}$ for $\tr_{t,k,\om}$. 

The global Perron--Frobenius operator $\tr_k:C_b(\cJ^{(k)}(T))\lra C_b(\cJ^{(k)}(T))$  defined by 
\begin{align*}
	(\tr_kg)_\om(x):=\tr_{k,\ta^{-1}(\om)}g_{\ta^{-1}(\om)}(x),
\end{align*}
is obviously linear and bounded. Recall that the global dual Perron--Frobenius operator $\tr_k^*:C_b^*(\cJ^{(k)}(T))\lra C_b^*(\cJ^{(k)}(T))$ given by 
\begin{align*}
	(\tr_ks)_\om^*:=\tr_{k,\ta(\om)}^*(s_{\ta(\om)}),
\end{align*}
is also obviously linear and bounded. The dual space $C_b^*(\cJ^{(k)}(T))$ endowed with the weak (narrow) topology becomes a locally convex topological vector space and the dual Perron--Frobenius operator $\tr_k^*$ is, with this topology, continuous.

Fix $\Delta\in\mathcal G\setminus\{\Dl_0\}$. Noting that for every $k\geq 1$ and for every $x\in \cJ_{\ta(\om)}^{(k)}(T)$, $T_{\om,\Delta}^{-1}(x)\in\cJ_\om^{(k)}(T)$, then recalling that the set $\mathcal G$ is finite, and invoking \eqref {120180416} along with \ref{(M5b)}, we obtain the following. 

\begin{observation}\label{o1mis.cm3}
For every integer $k\ge 1$ and for all $\om\in\Omega$, the function $\tr_{k,\om}\ind:\cJ_{\om}^{(k)}(T)\lra\RR$ is everywhere positive on $\cJ_{\ta(\om)}^{(k)}(T)$. Furthermore there exist constants $\beta_1\in(0,+\infty)$, independent of $k$, and $H_k\geq 1$ such that 
	\begin{align*}
		\beta_1\leq \tr_{k,\om}\ind(x)\leq H_k
	\end{align*} 
	for all $\om\in\Om$ and all $x\in\cJ_\om^{(k)}(T)$. 
\end{observation}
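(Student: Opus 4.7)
The plan is to read off both bounds directly from the uniform derivative estimates already in place, using the finiteness of $\cG$. There is no real technical obstacle; the result is a bookkeeping consequence of the preparation in the paragraph preceding the statement.

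For positivity and the lower bound, I would exploit the containment established just before the observation: for every $\Dl\in\cG\setminus\{\Dl_0\}$ and every $x\in\cJ_{\ta(\om)}^{(k)}(T)$, the inverse branch value $T_{\om,\Dl}^{-1}(x)$ belongs to $\cJ_\om^{(k)}(T)$. Consequently the sum defining $\tr_{k,\om}\ind(x)$ has at least $|\cG|-1\geq 1$ nonzero terms (indeed $\Dl_1\in\cG\setminus\{\Dl_0\}$ by assumption). Each such term satisfies $\absval{T_\om'(T_{\om,\Dl}^{-1}(x))}^{-t}\geq A^{-t}$ by the uniform upper bound $\|T_\om'\|_\infty\leq A$ coming from \ref{(M5b)}. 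Setting $\beta_1:=(|\cG|-1)A^{-t}$ yields
\begin{equation*}
\tr_{k,\om}\ind(x)\geq \beta_1 > 0,
\end{equation*}
a bound independent of both $k$ and $\om$, which at the same time gives everywhere positivity of $\tr_{k,\om}\ind$ on $\cJ_{\ta(\om)}^{(k)}(T)$.

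For the upper bound, I would argue that the total number of summands is at most $|\cG|$: by \ref{(M1)} the map $T_\om$ is injective on each $\Dl\in\cG$, so $(T|_k)_\om^{-1}(x)$ has at most one preimage in each element of $\cG$. For each such preimage $y\in\cJ_\om^{(k)}(T)$, the estimate \eqref{120180416} gives $\absval{T_\om'(y)}\geq \eta_k$, whence $\absval{T_\om'(y)}^{-t}\leq \eta_k^{-t}$. Therefore
\begin{equation*}
\tr_{k,\om}\ind(x)\leq |\cG|\,\eta_k^{-t}=:H_k,
\end{equation*}
which depends on $k$ through $\eta_k$ but is uniform in $\om$. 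The two inequalities together furnish the required bounds, completing the proof.
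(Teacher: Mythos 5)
Your proof is correct and follows essentially the same route as the paper: the lower bound comes from the observation that each inverse branch $T_{\om,\Dl}^{-1}$, $\Dl\in\cG\setminus\{\Dl_0\}$, sends $\cJ_{\ta(\om)}^{(k)}(T)$ into $\cJ_\om^{(k)}(T)$ together with $\|T_\om'\|_\infty\le A$ from \ref{(M5b)}, and the upper bound from the finiteness of $\cG$ and the estimate \eqref{120180416}. The only cosmetic difference is that the paper fixes a single branch $\Dl\in\cG\setminus\{\Dl_0\}$ (yielding $\beta_1=A^{-t}$), whereas you sum over all such branches to get $(\#\cG-1)A^{-t}$; both give a $k$--independent $\beta_1$, so nothing essential changes.
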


\noindent Hence, the formula
\begin{align*}
(F_k(\nu))_\om:=\frac{\tr_{k,\om}^*(\nu_{\ta(\om)})}{\tr_{k,\om}^*(\nu_{\ta(\om)})(\ind)},
\end{align*}
correctly defines a map 
$$
F_k:\cM^1(\cJ^{(k)}(T))\lra\cM^1(\cJ^{(k)}(T)).
$$
We shall prove the following.
\begin{lemma}\label{l2mis.cm3}
For all $k\geq 1$ large enough the map $F_k:M^1_m(\cJ^{(k)}(T))\to M^1_m(\cJ^{(k)}(T))$ is continuous in the narrow topology.
\end{lemma}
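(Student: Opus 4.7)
The narrow topology on $M^1_m(\cJ^{(k)}(T))$ is metrizable since $\cJ^{(k)}(T)$ is a compact-valued closed random subset of the Polish space $[0,1]$, so it suffices to establish sequential continuity. Assume $\nu^{(n)} \to \nu$ narrowly and fix $g \in C_b(\cJ^{(k)}(T))$. Introducing the bounded measurable functions
$$
a_\nu(\om) := \nu_{\ta(\om)}(\tr_{k,\om}g_\om), \qquad b_\nu(\om) := \nu_{\ta(\om)}(\tr_{k,\om}\ind),
$$
we may write $\int g\, dF_k(\nu) = \int_\Om a_\nu(\om)/b_\nu(\om)\, dm(\om)$, so the task is to control this expression along $\nu^{(n)}$.

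The first step is to verify $\tr_k g \in C_b(\cJ^{(k)}(T))$. The finite-sum formula $\tr_{k,\om}g_\om(y) = \sum_{z \in (T|_k)_\om^{-1}(y)}g(z)|T_\om'(z)|^{-t}$ can fail to be continuous in $y$ only where a preimage branch enters or leaves $\cJ^{(k)}_\om(T)$; the only candidate is the $\Dl_0$-branch, whose transition point is $y = T_{\ta(\om),0}^{-(k-1)}(1)$. As noted just before the statement, this point does not belong to the closed set $\cJ^{(k)}_{\ta(\om)}(T)$, so a neighborhood of it in $[0,1]$ is disjoint from $\cJ^{(k)}_{\ta(\om)}(T)$ and the potential jump is invisible on the fiber of interest; measurability in $\om$ and boundedness are immediate from the finiteness of $\cG$ and \ref{(M5b)}. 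Consequently, for every bounded measurable $\chi:\Om \to \RR$, the function $(\om,y) \mapsto (\chi \circ \ta^{-1})(\om)\,(\tr_k g)_\om(y)$ also lies in $C_b(\cJ^{(k)}(T))$; testing $\nu^{(n)} \to \nu$ against such functions (using the $\ta$-invariance of $m$ for a change of variable) yields $\int_\Om \chi\, a_{\nu^{(n)}}\, dm \to \int_\Om \chi\, a_\nu\, dm$ for every $\chi \in L^\infty(m)$, i.e.\ $a_{\nu^{(n)}} \to a_\nu$ weakly in $L^1(m)$; specialising $g \equiv \ind$ gives the analogous weak $L^1$ limit $b_{\nu^{(n)}} \to b_\nu$. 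By Observation~\ref{o1mis.cm3}, $\beta_1 \leq b_\nu \leq H_k$ uniformly in $\nu$, so $1/b_\nu \in L^\infty(m)$, whence the weak $L^1$ convergence of $a_{\nu^{(n)}}$ already yields $\int_\Om a_{\nu^{(n)}}/b_\nu\, dm \to \int_\Om a_\nu/b_\nu\, dm$.

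The principal technical obstacle is then controlling $\int_\Om a_{\nu^{(n)}}(1/b_{\nu^{(n)}} - 1/b_\nu)\, dm$, since the mere weak $L^1$ convergence of $b_{\nu^{(n)}}$ is too weak to master the product against the also weakly convergent $a_{\nu^{(n)}}$. The plan is to circumvent this by a closed-graph argument exploiting the compactness of $M^1_m(\cJ^{(k)}(T))$ in the narrow topology (Crauel): pass to a subsequence along which $F_k(\nu^{(n)}) \to \mu$ narrowly, and show $\mu = F_k(\nu)$. Applying the same weak $L^1$ reduction to $F_k(\nu^{(n)}) \to \mu$ gives $(F_k(\nu^{(n)}))_\om(h_\om) \to \mu_\om(h_\om)$ weakly in $L^1(m)$ for every $h \in C_b(\cJ^{(k)}(T))$. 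Combining this with the pointwise identity
$$
(F_k(\nu^{(n)}))_\om(h_\om)\, b_{\nu^{(n)}}(\om) = \nu^{(n)}_{\ta(\om)}(\tr_{k,\om}h_\om),
$$
the uniform bounds $\beta_1 \leq b_{\nu^{(n)}} \leq H_k$ (Observation~\ref{o1mis.cm3}), and a dominated-convergence plus Egorov-type extraction to upgrade the product of the weak $L^1$ limits, one identifies
$$
\mu_\om(h_\om)\, b_\nu(\om) = \nu_{\ta(\om)}(\tr_{k,\om}h_\om) = (F_k(\nu))_\om(h_\om)\, b_\nu(\om)
$$
for $m$-a.e.\ $\om$ and every $h \in C_b(\cJ^{(k)}(T))$; dividing by $b_\nu \geq \beta_1$ forces $\mu = F_k(\nu)$. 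The hypothesis that $k$ be sufficiently large enters both by separating the transition points $T_{\om,0}^{-(k-1)}(1)$ from $\cJ^{(k)}(T)$ (needed for $\tr_k g \in C_b$) and by securing the uniform lower bound $\beta_1 > 0$ of Observation~\ref{o1mis.cm3}, without which the division in the last display is illegitimate.
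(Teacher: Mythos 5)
The decisive step of your closed--graph argument is missing. After extracting a limit $\mu$ of $F_k(\nu^{(n)})$, everything hinges on passing to the limit in the pointwise identity $(F_k(\nu^{(n)}))_\om(h_\om)\,b_{\nu^{(n)}}(\om)=\nu^{(n)}_{\ta(\om)}(\tr_{k,\om}h_\om)$, where each factor on the left converges only \emph{weakly} in $L^1(m)$ (to $\mu_\om(h_\om)$ and $b_\nu(\om)$ respectively) and the right--hand side converges weakly to $\nu_{\ta(\om)}(\tr_{k,\om}h_\om)$. But the weak limit of a product is in general not the product of the weak limits, and ``dominated convergence plus an Egorov--type extraction'' cannot repair this: weak $L^1$ convergence provides no a.e.\ convergent subsequence (for instance $\om\mapsto\sin(2\pi n\om)$ tends to $0$ weakly while $\sin^2(2\pi n\om)$ tends to $1/2$ weakly, and no subsequence converges a.e.). So the identification $\mu_\om(h_\om)\,b_\nu(\om)=\nu_{\ta(\om)}(\tr_{k,\om}h_\om)$, which \emph{is} the assertion $\mu=F_k(\nu)$, is claimed rather than proved; you have merely relocated the very obstacle you named at the outset. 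This is exactly where the paper injects a genuine structural input: it works with the (net of) measures $h_{\lam}\nu^{\lam}$, where $h_{\om,\lam}=1/\nu^{\lam}_\om(\tr_{k,\ta^{-1}(\om)}\ind)$ depends on $\om$ only and is pinched between $H_k^{-1}$ and $\beta_1^{-1}$, and invokes Lemma~2.9 of \cite{RoyUrb2011} to conclude that every accumulation point of this family has the form $h\nu$ with $h$ a function of $\om$ alone; continuity of $\tr_k^*$ together with the fact that $\tr_k^*(h_\lam\nu^\lam)=F_k(\nu^\lam)$ are random \emph{probability} measures then forces $h_\om=1/\tr^*_{k,\ta^{-1}(\om)}(\nu_\om)(\ind)$, hence the limit is $F_k(\nu)$. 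Your proof needs this lemma, or an equivalent substitute, to close; without it the argument fails at its central point.

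A second, more technical, gap is the opening reduction to sequences. Metrizability of the narrow topology is not governed by $[0,1]$ being Polish but by the base: for a general complete probability space $(\Om,\sF,m)$ the space $C_b(\cJ^{(k)}(T))$ is typically non--separable and the narrow topology on $M^1_m(\cJ^{(k)}(T))$ need not be metrizable, which is precisely why the paper argues with nets throughout (and notes that the cited Lemma~2.9 carries over to nets). Since the continuity of $F_k$ is fed into the Schauder--Tichonov theorem, sequential continuity is not sufficient. The parts of your proposal that do work --- the verification that $\tr_k g\in C_b(\cJ^{(k)}(T))$ and the weak--$L^1$ convergence of $a_{\nu^{(n)}}$ and $b_{\nu^{(n)}}$ obtained by testing against functions $\chi(\ta^{-1}(\om))(\tr_k g)_\om$ --- are correct and parallel to the paper's setup, but they do not reach the conclusion.
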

\begin{proof}
	Let $\Lm$ be a directed set and let $(\nu^\lm)_{\lm\in\Lm}$ be a net of measures in $\cM^1_m(\cJ^{(k)}(T))$ converging to some measure $\nu\in \cM^1_m(\cJ^{(k)}(T))$ in the narrow topology. If 
$$
h_{\om,\lam} := \frac1{\nu^\lam _\om (\tr_{k,\theta^{-1}(\om)} \ind )},
$$
then
$$ 
F_k(\nu^\lam)_\om
= \cL^*_{k,\om}\left( \frac{1}{\nu^\lam_{\theta(\om)} (\tr_{k,\om}\ind )} \nu^\lam_{\theta(\om)} \right)
=\cL^*_{k,\om}\left( h_{\theta(\om),\lam}\;  \nu^\lam_{\theta(\om)} \right)\,.
$$ 
Because of Observation~\ref{o1mis.cm3} we have
\begin{equation}\label{3.17}
H_k^{-1}\leq h_{\theta(\om),\lam}\leq \beta_1^{-1}.
\end{equation}
The measures $h_{\lam}\nu^\lam$, $\lam\in\Lambda$, need not be random probability measures but, due to \eqref{3.17}, their fiber total values $h_{\theta(\om),\lam}\nu^\lam_{\theta(\om)}(\cJ^{(k)}(T))$ are uniformly bounded away from zero and $\infty$. Since $\cM_m(\cJ^{(k)}(T))$ is compact, it follows that $\{h_{\lam}\nu^\lam:\lam\in\Lambda\}$ is a tight family. Let $\mu$ be an arbitrary accumulation point of this net.
It has been shown (as a matter of fact for sequences but the same argument works for all nets) in Lemma~2.9 of \cite{RoyUrb2011} that then 
\begin{equation}\label{5_2018_01_12} 
\mu = h \nu
\end{equation}
for some measurable function $h:\Om\lra (0,+\infty )$. Since the dual operator 
$$
\tr_k^*:C_b^*(\cJ_{\theta(\om)}^{(k)}(T))\to C_b^*(\cJ_\om^{(k)}(T))
$$ 
is continuous, the measure $\tr_k^*\mu$ is an accumulation point of the net $\(\tr_k^*\(h_{\lam}\nu^\lam\):\lam\in\Lambda\)$. But all elements of this net belong to $\cM_m^1(\cJ^{(k)}(T))$, whence $\tr_k^*\mu\in \cM_m^1(\cJ^{(k)}(T))$. This means that the disintegrations of this measure 
$$
\tr^*_{k,\om}\mu_{\theta(\om)} = h_{\theta(\om)} \tr^*_{k,\om} ( \nu _{\theta(\om)} ), \ x\in X,
$$
are all (Borel) probability measures on $\cJ_\om^{(k)}(T)$. Therefore, 
$$
1= \tr^*_{k,\om} \mu_{\theta(\om)} (\ind)
= h_{\theta(\om)} \tr^*_{k,\om}( \nu _{\theta(\om)} )(\ind).
$$
This means that
$$
h_\om= \frac{1}{\tr^*_{k,\theta^{-1}(\om)} (\nu _\om)(\ind)}.
$$
Along with \eqref{5_2018_01_12} this implies that 
$$
\mu_\om 
= \frac{1}{\tr^*_{k,\theta^{-1}(\om)} (\nu_\om) )(\ind)} \nu_\om, \quad \om\in \Om.
$$
This means that 
$$
\tr_k^*\mu=F_k(\nu).
$$
Since also 
$$
\(\tr_k^*\(h_{\lam}\nu^\lam\):\lam\in\Lambda\)=\(F_k(\nu^\lam):\lam\in\Lambda\), 
$$
we thus conclude that the net $\(F_k(\nu^\lam):\lam\in\Lambda\)$ converges to $F_k(\nu)$. The proof of continuity of the map $F_k$ is complete. 
\end{proof}

\medskip Since $\cM^1_m(\cJ^{(k)}(T))$ is a convex compact subset of the locally convex topological vector space $C_b^*(\cJ^{(k)}(T))$ and by Lemma \ref{l2mis.cm3} the map $F_k:\cM^1_m(\cJ^{(k)}(T))\lra \cM^1_m(\cJ^{(k)}(T))$ is continuous, applying the Schauder--Tichonov Fixed Point Theorem, we get the following. 
\begin{lemma}\label{l1mis.cm5}
Each map $F_k:M^1_m(\cJ^{(k)}(T))\to M^1_m(\cJ^{(k)}(T))$, $k\geq 1$, has a fixed point.
\end{lemma}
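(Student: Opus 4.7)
The plan is to apply the Schauder--Tichonov Fixed Point Theorem directly, with the preparation already built up in the excerpt. Recall the theorem in the form needed here: if $K$ is a non--empty convex compact subset of a Hausdorff locally convex topological vector space $V$ and $F:K\to K$ is continuous, then $F$ has a fixed point. I would apply this with
$$
V=C_b^*(\cJ^{(k)}(T))
$$
endowed with the narrow topology, $K=M^1_m(\cJ^{(k)}(T))$, and $F=F_k$.

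First I would check that $V$ is a Hausdorff locally convex topological vector space. This is standard: the narrow topology on $C_b^*(\cJ^{(k)}(T))$ is, by definition, the weak--$*$ topology generated by the bounded random continuous functions $g\in C_b(\cJ^{(k)}(T))$, and so it is locally convex, with Hausdorffness following from the fact that $C_b(\cJ^{(k)}(T))$ separates points of $C_b^*(\cJ^{(k)}(T))$.

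Next I would verify the hypotheses on $K$. Convexity of $M^1_m(\cJ^{(k)}(T))$ is immediate: if $\nu^{(1)},\nu^{(2)}\in M^1_m(\cJ^{(k)}(T))$ and $\alpha\in[0,1]$, then the fiberwise disintegration $(\alpha\nu^{(1)}_\om+(1-\alpha)\nu^{(2)}_\om)_{\om\in\Om}$ again consists of Borel probability measures supported on $\cJ_\om^{(k)}(T)$ and satisfies the measurability conditions from Section~\ref{sec: prelim on rand meas}, and its marginal is $\alpha m+(1-\alpha)m=m$. Compactness in the narrow topology is the key point and is the place where one uses that the underlying space $\cJ^{(k)}(T)\subset \Om\times[0,1]$ has compact fibers, that $\cJ^{(k)}(T)$ is a random closed set (Lemma~\ref{l1mis.cm1}), and that $(\Om,m)$ is a complete probability space; the compactness of $M^1_m(\cJ^{(k)}(T))$ in the narrow topology is proved in Crauel's monograph \cite{Crauel} (the disintegration/marginal conditions are stable under narrow limits, and tightness is automatic over the compact fibers). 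I would cite this directly rather than reprove it. Finally, $M^1_m(\cJ^{(k)}(T))$ is non--empty: it contains, for example, any measure obtained by pushing forward an $m$--measurable selection $\om\mapsto x_\om\in\cJ^{(k)}_\om(T)$, whose existence is guaranteed by the measurable selection theorem applied to the random closed set $\cJ^{(k)}(T)$ (for instance, $x_\om:=\pi_\om((\Dl_1,\Dl_1,\Dl_1,\ldots))$ works since $\Dl_1\ne \Dl_0$, giving $x_\om\in\cJ_\om^{(k)}(T)$ for all $k$).

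With these ingredients in place, continuity of $F_k$ was already established in Lemma~\ref{l2mis.cm3}, and the fact that $F_k$ takes $M^1_m(\cJ^{(k)}(T))$ into itself follows from the normalization built into its definition together with the positivity and boundedness of $\tr_{k,\om}\ind$ (Observation~\ref{o1mis.cm3}). Thus the Schauder--Tichonov Fixed Point Theorem applies and yields a measure $\nu_k\in M^1_m(\cJ^{(k)}(T))$ with $F_k(\nu_k)=\nu_k$, completing the proof. The main point requiring care is the compactness of $M^1_m(\cJ^{(k)}(T))$ in the narrow topology, but since $\cJ^{(k)}_\om(T)\subset[0,1]$ is compact for each $\om$ and the marginal constraint $\nu\circ\pi_\Om^{-1}=m$ is preserved by narrow limits, this reduces to a standard result of Crauel; I would not redo it here.
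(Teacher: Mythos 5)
Your proposal is correct and follows essentially the same route as the paper: the paper likewise notes that $M^1_m(\cJ^{(k)}(T))$ is a convex compact subset of the locally convex space $C_b^*(\cJ^{(k)}(T))$ with the narrow topology, invokes the continuity of $F_k$ from Lemma~\ref{l2mis.cm3}, and applies the Schauder--Tichonov Fixed Point Theorem. The extra details you supply (non--emptiness via a measurable selection and the reference to Crauel for narrow compactness) are consistent with, and merely flesh out, the paper's argument.
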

Denote a fixed point, produced by this lemma, for $k\geq 1$, by $\nu_k$. Being such a fixed point means that there exists a measurable function $\lm_k:\Om\to(0,\infty)$ such that 
\begin{align}\label{2mis.cm5}
	\tr_{k,\ta(\om)}^*\nu_{\ta(\om)}=\lm_{k,\om}\nu_{k,\om}.
\end{align}
Iterating \eqref{2mis.cm5}, we get 
\begin{align}\label{0mis.cm5}
	\tr_{k,\ta^n(\om)}^{*n}\nu_{k,\ta^n(\om)}=\lm_{k,\om}^n\nu_{k,\om},
\end{align}
where, 
\begin{align*}
\tr_{k,\ta^n(\om)}^{*n}
:=\tr_{k,\ta(\om)}^*\circ\ldots\circ\tr_{k,\ta^{n-2}(\om)}^*\circ\tr_{k,\ta^{n-1}(\om)}^*
:C_b^*(\cJ_{\th^n(\om)}^{(k)}(T))\lra C_b^*(\cJ_\om^{(k)}(T))
\end{align*}
and
\begin{align}\label{-1mis.cm5}
\lm_{k,\om}^n:=\lm_{k,\om}\lm_{k,\ta(\om)}\cdots\lm_{k,\ta^{n-1}(\om)}.
\end{align}
An equivalent form of \eqref{0mis.cm5} is
\begin{align}\label{1mis.cm5}
	\nu_{k,\om}(T_{k,\om,\Ga}^{-n}(A))=\lm_{k,\om}^{-n}\int_A\absval{(T_{k,\om,\Ga}^{-n})'}^t d\nu_{k,\ta^n(\om)}
\end{align}
for every integer $n\geq 0$, every $\Ga\in\cG^n$, and every Borel set $A\sub\cJ_{\ta^n(\om)}^{(k)}(T)$. Since each map $T_\om:\cJ_\om^{(k)}(T)\to \cJ_{\ta(\om)}^{(k)}(T)$ is obviously of standard type, using Proposition~\ref{p1mis.cmi6}, we can compactly summarize all the above in the following.

\begin{lemma}\label{l1_2018_03_23}
Given $t\ge 0$, for every integer $k\ge 1$ the random measure $\nu_k=\nu_{t,k}$ on $\cJ^{(k)}(T)$, produced in Lemma~\ref{l1mis.cm5}, is such that for $m$--a.e. $\om\in\Omega$ the map $T_\om:\cJ_\om^{(k)}(T)\to\cJ_{\ta(\om)}^{(k)}(T)$ is 

\,

\centerline{$\lam_\om|T_\om'|^t$--conformal}

\,
\fr with respect to the pair of measures $\nu_{k,\om}$ and $\nu_{k,\ta(\om)}$. We will then simply say that the map $T_\om:\cJ_\om^{(k)}(T)\to\cJ_{\ta(\om)}^{(k)}(T)$ is 

\centerline{$t$--conformal.}
\end{lemma}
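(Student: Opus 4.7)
My plan is to apply Proposition~\ref{p1mis.cmi6} fiberwise, with $X := \cJ_\om^{(k)}(T)$, $Y := \cJ_{\ta(\om)}^{(k)}(T)$, $T := T_\om$, the measures $\mu := \nu_{k,\om}$ and $\nu := \nu_{k,\ta(\om)}$, and the weight
$$
\psi_\om(x) := \lam_{k,\om}\absval{T_\om'(x)}^t.
$$
The claim in the lemma is precisely condition $(c)$ of that proposition, so once the hypotheses are checked the result drops out.

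First I would verify the structural hypotheses. Both $\cJ_\om^{(k)}(T)$ and $\cJ_{\ta(\om)}^{(k)}(T)$ are closed subsets of $[0,1]$ by Lemma~\ref{l1mis.cm1}, hence compact metrizable. The partition $(\Dl\cap\cJ_\om^{(k)}(T))_{\Dl\in\cG}$ is countable (in fact finite), each piece is measurable, $T_\om$ restricted to each piece is injective by \ref{(M1)}, and $T_\om(\Dl\cap\cJ_\om^{(k)}(T))$ is closed (its image equals $T_\om(\Dl)\cap \cJ_{\ta(\om)}^{(k)}(T)=I\cap\cJ_{\ta(\om)}^{(k)}(T)$ when $\Dl$ occurs, up to a set coming from the $U_\om(k)$ truncation). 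Thus $T_\om$ is continuous of standard type. The weight $\psi_\om$ is continuous because $T_\om'$ is, and it is strictly positive because of \eqref{120180416}, which gives $\absval{T_\om'}\ge \eta_k>0$ on $\cJ_\om^{(k)}(T)$.

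Next I would verify conditions (1) and (2) of Proposition~\ref{p1mis.cmi6}. Observing that
$$
\tr_{\psi_\om}(g)(y)=\sum_{x\in T_\om^{-1}(y)\cap\cJ_\om^{(k)}(T)}\frac{g(x)}{\psi_\om(x)}=\lam_{k,\om}^{-1}\tr_{k,\om}(g)(y),
$$
the boundedness of $\tr_{\psi_\om}(\ind)$ is a restatement of Observation~\ref{o1mis.cm3}, while the inclusion $\tr_{\psi_\om}(C(\cJ_\om^{(k)}(T)))\subset C(\cJ_{\ta(\om)}^{(k)}(T))$ was already established in the paragraph preceding Observation~\ref{o1mis.cm3}, where the key point was that the points $T_{\om,0}^{-\ell}(1)$ (the only potential discontinuities of the branch structure over $y$) do not lie in $\cJ_\om^{(k)}(T)$, so the number of inverse branches contributing to $\tr_{k,\om}(g)(y)$ is locally constant in $y\in\cJ_{\ta(\om)}^{(k)}(T)$.

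Finally I would verify condition (e) of Proposition~\ref{p1mis.cmi6}, namely $\tr_{\psi_\om}^*\nu_{k,\ta(\om)}=\nu_{k,\om}$. This is just a reformulation of the fixed point identity \eqref{2mis.cm5}: for every $g\in C(\cJ_\om^{(k)}(T))$,
$$
\tr_{\psi_\om}^*\nu_{k,\ta(\om)}(g)
=\int \lam_{k,\om}^{-1}\tr_{k,\om}(g)\,d\nu_{k,\ta(\om)}
=\lam_{k,\om}^{-1}\tr_{k,\om}^*\nu_{k,\ta(\om)}(g)
=\nu_{k,\om}(g).
$$
Proposition~\ref{p1mis.cmi6} then yields that $T_\om$ is $\psi_\om$--conformal with respect to the pair $(\nu_{k,\om},\nu_{k,\ta(\om)})$, which is exactly the statement of the lemma. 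I do not anticipate a serious obstacle: the proof is essentially bookkeeping, translating the abstract fixed-point property of $\nu_k$ into the pointwise Jacobian identities via the general machinery of Section~3; the only nontrivial inputs, namely the continuity and the positivity of the weight on the truncated fibres, are already packaged in Observation~\ref{o1mis.cm3} and \eqref{120180416}. Measurability of $\lam_{k,\om}$ in $\om$, needed to speak of a $t$--conformal \emph{random} measure, is inherited from measurability of $\om\mapsto\tr_{k,\om}^*\nu_{k,\ta(\om)}(\ind)$.
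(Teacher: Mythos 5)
Your proposal is correct and follows essentially the same route as the paper: the authors also pass from the fixed-point identity \eqref{2mis.cm5} to the conclusion by observing that $T_\om:\cJ_\om^{(k)}(T)\to\cJ_{\ta(\om)}^{(k)}(T)$ is of standard type and invoking Proposition~\ref{p1mis.cmi6}, with the continuity, boundedness and positivity of the weight supplied by \eqref{120180416} and Observation~\ref{o1mis.cm3}. Your write-up merely spells out the hypothesis checks that the paper leaves implicit.
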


Frequently, we will refer to these measures as truncated $t$--conformal measures. The word truncated alludes to the fact that these measures are supported on $\cJ_\om^{(k)}(T)$ rather than on the full set $\cJ_\om(T)$. Applying \eqref{2mis.cm5} and Observation~\ref{o1mis.cm3}, gives 
\begin{align}\label{1mis.cm5.1}
\lm_{k,\om}
=\lm_{k,\om}\nu_\om(\ind)
=\tr_{k,\om}^*\nu_{\ta(\om)}(\ind)
=\nu_{\ta(\om)}(\tr_{k,\om}\ind)
\geq\nu_{\ta(\om)}(\bt_1\ind)=\bt_1.
\end{align}
Correspondingly, we can use \eqref{0mis.cm5} to rewrite \eqref{-1mis.cm5} as
\begin{align*}
	\lm_{k,\om}^n=\nu_{\ta^n(\om)}(\tr_{k,\om}^n\ind).
\end{align*}
\medskip \section{Estimates of truncated conformal measures}
In this section we continue dealing with the truncated $t$--conformal measures $\nu_{k,\om}$ of the previous section and we establish several estimates of the values of measures $\nu_{k,\om}$ applied to some significant subsets of $\cJ_\om^{(k)}(T)$. We start by recording the following immediate consequence of items \ref{(M1)}--\ref{(M7)}, $t$--conformality of $\nu$, and the Special Bounded Distortion Property, i.e. Proposition~\ref{p120180630}. For every $2\leq j\leq k$ we have 
\begin{align}\label{1mis.cm6}
	\nu_{k,\ta(\om)}(I_{\ta(\om)}(j-1))\comp\lm_{k,\om}\absval{T_\om'(0)}^t\nu_{k,\om}(I_\om(j)).
\end{align}
It also follows from \eqref{1mis.po.2} that for any $\Dl\in \cG_C^{(0)}$,  which is non--empty by \ref{(M7)}, and every $\ell\geq 1$,
\begin{align}\label{2mis.cm6B}
1\geq 
\nu_{k,\om}(T_{\om,\Dl}^{-1}(I_{\ta(\om)}(\ell)))
\comp\lm_{k,\om}^{-1}\absval{(T_\om^\ell)'(0)}^{\frac{t\gm_\Dl}{1+\gm_\Dl}}\nu_{k,\ta(\om)}(I_{\ta(\om)}(\ell)).
\end{align}
Hence, 
\begin{align}\label{2mis.cm6}
	\nu_{k,\ta(\om)}\left(I_{\ta(\om)}(\ell)\right)
	\comp \lm_{k,\om}\absval{(T_\om^\ell)'(0)}^{-\frac{t\gm_\Dl}{1+\gm_\Dl}}\nu_{k,\ta(\om)}\left(T_{\om,\Dl}^{-1}\left(I_{\ta(\om)}(\ell)\right)\right).
\end{align}
Fix a measurable function $\ep:\Om\to(0,+\infty)$ and for every $2\leq j\leq k$, define
\begin{align}
	\Om_k(j,\ep):=\set{\om\in\Om:\nu_{k,\om}(I_\om(j))\leq \ep(\om)}.
\end{align}
Assume that $\Om_k^c(j,\ep)\nonempty$. Fix $\om\in\Om_k^c(j,\ep)$. Then by \eqref{1mis.cm6} we have 
\begin{align*}
	\lm_{k,\om}
\lek \frac{1}{\ep(\om)}\absval{T_\om'(0)}^{-t}\nu_{k,\ta(\om)}(I_{\ta(\om)}(j-1)).
\end{align*}
Inserting this into \eqref{2mis.cm6} with $\ell=j-1$, yields
\begin{align*}
\nu_{k,\ta(\om)}(I_{\ta(\om)}(j-1))
\lek\frac{1}{\ep(\om)}\nu_{k,\ta(\om)}(I_{\ta(\om)}(j-1))\absval{(T_\om^{j-1})'(0)}^{-\frac{t\gm_\Dl}{1+\gm_\Dl}}\cdot\nu_{k,\om}(T_{\om,\Dl}^{-1}(I_{\ta(\om)}(j-1))).
\end{align*}
Therefore
\begin{align*}
	\ep(\om)
\lek\absval{(T_\om^{j-1})'(0)}^{-\frac{t\gm_\Dl}{1+\gm_\Dl}}\nu_{k,\om}(T_{\om,\Dl}^{-1}(I_{\ta(\om)}(j-1)))).
\end{align*}
This implies that if 
\begin{align*}
	\ep(\om):=\Gamma \absval{(T_\om^{j-1})'(0)}^{-\frac{t\gm_\Dl}{1+\gm_\Dl}}\nu_{k,\om}(T_{\om,\Dl}^{-1}(I_{\ta(\om)}(j-1))),
\end{align*}
with a sufficiently large constant $\Gamma$, then $\Om_k^c(j,\ep)=\emptyset$. Equivalently, $\Om_k(j,\ep)=\Om$, and therefore, 
\begin{align}\label{1mis.cm7}
	\nu_{k,\om}(I_\om(j))\le \Gamma \absval{(T_\om^{j-1})'(0)}^{-\frac{t\gm_\Dl}{1+\gm_\Dl}}\nu_{k,\om}(T_{\om,\Dl}^{-1}(I_{\ta(\om)}(j-1)))
\end{align}
for every $j\geq 2$ and all $\om\in\Om$. Along with \eqref{1mis.cm5.1}, this immediately gives
\begin{equation}\label{2mis.cm7}
\begin{aligned}
\nu_{k,\om}(T_{\om,1}^{-1}(I_{\ta(\om)}(j)))
&\comp\lm_{k,\om}^{-1}\absval{T_\om'(1)}^{-t}\nu_{k,\ta(\om)}(I_{\ta(\om)}(j))
\lek \nu_{k,\ta(\om)}(I_{\ta(\om)}(j)) \\
&\le \Gamma\absval{(T_{\ta(\om)}^{j-1})'(0)}^{-\frac{t\gm_\Dl}{1+\gm_\Dl}}\nu_{k,\ta(\om)}(T_{\ta(\om),\Dl}^{-1}(I_{\ta(\om)}(j-1)).
\end{aligned}
\end{equation}
Formulas \eqref{1mis.cm7} and \eqref{2mis.cm7} along with \ref{(M5a)} immediately entail the following respective two formulas: for all integers $j\geq 2$ we have that
\begin{align}\label{1mis.cm7.1}
\nu_{k,\om}(I_\om(j))
\lek\kp^{-\frac{t\gm_{0,+}}{1+\gm_{0,+}}j},
\end{align} 
and 
\begin{align}\label{2mis.cm7.1}
\nu_{k,\om}(T_{\om,1}^{-1}(I_{\ta(\om)}(j)))
\lek\kp^{-\frac{t\gm_{0,+}}{1+\gm_{0,+}}j},
\end{align}
where 
$$
\gm_{0,+}:=\max\big\{\gm_\Dl:\Dl\in\cG_C^{(0)}\big\}\ge 1.
$$
The four formulas \eqref{1mis.cm7}--\eqref{2mis.cm7.1} entail the following four formulas ($\ell\geq 2$):
\begin{align}
	\nu_{k,\om}(V_\om(\ell))
	&\lesssim\sum_{j=l}^k\absval{(T_\om^{j-1})'(0)}^{-\frac{t\gm_\Dl}{1+\gm_\Dl}}\nu_{k,\om}(T_{\om,\Dl}^{-1}(I_{\ta(\om)}(j-1))\label{1mis.cm7.1.1},\\
	\nu_{k,\om}(V_\om(\ell))
	&\lesssim \sum_{j=l}^k\kp^{-\frac{t\gm_{0,+}}{1+\gm_{0,+}}j}
	\lesssim \kp^{-\frac{t\gm_{0,+}}{1+\gm_{0,+}}l},   \label{2mis.cm7.1.1}
\end{align}
and
\begin{align}
\nu_{k,\om}(T_{\om,1}^{-1}(V_\om(\ell)))
&\lesssim\sum_{j=l}^k\absval{(T_\om^{j-1})'(0)}^{-\frac{t\gm_\Dl}{1+\gm_\Dl}}\nu_{k,\om}(T_{\om,\Dl}^{-1}(I_{\ta(\om)}(j-1))\label{3mis.cm7.1.1},\\
\nu_{k,\om}(T_{\om,1}^{-1}(V_\om(\ell)))
&\lesssim \kp^{-\frac{t\gm_{0,+}}{1+\gm_{0,+}}l}. \label{4mis.cm7.1.1}
\end{align}
\begin{remark}\label{r1mis.cm7.1.1}
Observe that obviously for all $j\geq 3$, the same respective formulas as \eqref{1mis.cm7}, \eqref{2mis.cm7}, and \eqref{1mis.cm7.1} hold also for the sets $I_{\om}(j-1)\cup I_{\om}(j)\cup I_{\om}(j+1)$ and $I_{\ta(\om)}(j-1)\cup I_{\ta(\om)}(j)\cup I_{\ta(\om)}(j+1)$ respectively in place of $I_\om(j)$ and $I_{\ta(\om)}(j)$; only the constants on the right--hand sides of these formulas may differ, i.e. may increase. 
\end{remark}
In the formula \eqref{1mis.cm5.1} we have recorded a straight forward lower bound for $\lm_{\om}$s. We do also need upper bounds. These are more involved and we will produce them now. Keep $k\ge 2$. From Lemma \ref{l1mis.cm5}, we get the following
\begin{equation}\label{3mis.cm7.1}
\begin{aligned}
	\lm_{k,\om}
	&=\lm_{k,\om}\nu_{k,\om}(\ind)
	=\tr_{k,\om}^*\nu_{k,\ta(\om)}(\ind)
	=\nu_{k,\ta(\om)(\tr_{k,\om}\ind)}\\
	&=\int_{I_{\ta(\om)}(1)}\tr_{k,\om}\ind d\nu_{k,\ta(\om)}
	+\sum_{j=2}^k\int_{I_{\ta(\om)}(j)}\tr_{k,\om}\ind d\nu_{k,\ta(\om)}.
\end{aligned}
\end{equation}
Let us estimate first the second summand. We denote it by $\Sg_2(\om)$. The inverse images coming from non--critical inverse branches give uniformly bounded (above) contributions to $\tr_{k,\om}\ind$; therefore, using the definition \eqref{trunc transf op} and \eqref{1mis.po.2} and \eqref{1mis.cm7} we have
\begin{align}
	\Sg_2(\om)&:=\sum_{j=2}^k\int_{I_{\ta(\om)}(j)}\tr_{k,\om}\ind d\nu_{k,\ta(\om)}\\
	&\lek\left(1 +\sum_{j=2}^k\sum_{\Dl\in\cG_C^{(0)}}\absval{(T_\om^j)'(0)}^{\frac{t\gm_\Dl}{1+\gm_\Dl}}\right)\nu_{k,\ta(\om)}(I_{\ta(\om)}(j))\nonumber\\
	&\lek \left(1+\sum_{j=2}^k\sum_{\Dl\in\cG_C^{(0)}}\absval{(T_\om^j)'(0)}^{\frac{t\gm_\Dl}{1+\gm_\Dl}}\right)\absval{(T_\om^j)'(0)}^{-\frac{t\gm_\Dl}{1+\gm_\Dl}}\cdot\nu_{k,\ta(\om)}(T_{\ta(\om),\Dl}^{-1}(I_{\ta^2(\om)}(j-1)))\nonumber\\
	&\lesssim 1+\sum_{j=2}^k\nu_{k,\ta(\om)}\left(\union_{\Dl\in\cG_C^{(0)}}T_{\ta(\om),\Dl}^{-1}(I_{\ta^2(\om)}(j-1))\right)\nonumber\\
	&\lesssim 1+\nu_{k,\ta(\om)}\left(\union_{\Dl\in\cG_C^{(0)}}T_{\ta(\om),\Dl}^{-1}(V_{\ta^2(\om)}(1))\right)\lesssim 1.\label{2mis.cm7.2}
\end{align}
Likewise, $\Sg_1(\om)$, the first summand of \eqref{3mis.cm7.1} can be estimated above by 
\begin{align}\label{1mis.cm7.2}
	\Sg_1(\om)&:=\int_{I_{\ta(\om)}(1)}\tr_{k,\om}\ind d\nu_{k,\ta(\om)}\leq\nonumber\\
	&\leq\const+\int_{\cup_{j=2}^k T_{\om,1}^{-1}(I_{\ta(\om)}(j))}\tr_{k,\om}\ind d\nu_{k,\ta(\om)}
	=\const+\sum_{j=2}^k \int_{ T_{\ta(\om),1}^{-1}(I_{\ta^2(\om)}(j))}\tr_{k,\om}\ind d\nu_{k,\ta(\om)}, 
\end{align}
where the ``const'' term in this formula is responsible for the integration along the part of $I_{\ta(\om)}(1)$ not covered  by $\cup_{j=2}^k T_{\om,1}^{-1}(I_{\ta(\om)}(j))$. Denote the second summand of this formula by $\Sg_1^*$. Using \eqref{2mis.cm7}, we then get
\begin{align*}
	\Sg_1^*(\om)&:=\sum_{j=2}^k \int_{ T_{\ta(\om),1}^{-1}(I_{\ta^2(\om)}(j))}\tr_{k,\om}\ind d\nu_{k,\ta(\om)}\lek\\
	&\lek \left(1 +\sum_{j=2}^k \sum_{\Dl\in\cG_C^{(1)}}\diam\left(T_{\ta(\om),1}^{-1}(I_{\ta^2(\om)}(j))\right)\right)^{-\frac{t\gm_\Dl}{1+\gm_\Dl}}\nu_{k,\ta(\om)}\left(T_{\ta(\om),1}^{-1}(I_{\ta^2(\om)}(j))\right)\\
	&\lesssim \left(1+\sum_{\Dl\in\cG_C^{(1)}}\sum_{j=2}^k\absval{(T_{\ta(\om)}^j)'(0)}^{\frac{t\gm_\Dl}{1+\gm_\Dl}}\right)\absval{(T_{\ta^2(\om)}^{j-1})'(0)}^{-\frac{-\gm_\Dl}{1+\gm_\Dl}}\nu_{k,\ta^2(\om)}\left(T_{\ta^2(\om),\Dl}^{-1}(I_{\ta^3(\om)}(j-1))\right)\\
	&\lek 1+\sum_{j=2}^k\nu_{k,\ta^2(\om)}\left(\union_{\Dl\in\cG_C^{(1)}}T_{\ta^2(\om),\Dl}^{-1}(I_{\ta^3(\om)}(j-1))\right)\\
	&\lek 1+\nu_{k,\ta^2(\om)}\left(\union_{\Dl\in\cG_C^{(1)}}T_{\ta^2(\om),\Dl}^{-1}(V_{\ta^3(\om)}(1))\right)\lesssim 1.
\end{align*}
Combining this with \eqref{3mis.cm7.1}, \eqref{1mis.cm7.2}, and \eqref{2mis.cm7.2}, we get that 
\begin{align*}
	\lm_{k,\om}\lesssim 1.
\end{align*}
Combining in turn this with \eqref{1mis.cm5.1}, we get the following.
\begin{lemma}\label{l1mis.cm7.2}
There exist two constants $0<\bt_1\leq \bt_2<\infty$ such that 
	\begin{align}
		\bt_1\leq\lm_{k,\om}\leq\bt_2
	\end{align}
	for all $k\geq 2$ and all $\om\in\Om$.
\end{lemma}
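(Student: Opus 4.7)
The plan is to prove the two bounds separately. The lower bound is already in hand from \eqref{1mis.cm5.1}: since $\mathcal{L}_{k,\om}\mathbf{1} \geq \beta_1$ pointwise by Observation~\ref{o1mis.cm3} with a constant $\beta_1$ independent of $k$, integrating against $\nu_{k,\ta(\om)}$ gives $\lm_{k,\om} = \nu_{k,\ta(\om)}(\mathcal{L}_{k,\om}\mathbf{1}) \geq \beta_1$. So only the upper bound requires genuine work.

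For the upper bound I would start from the same identity and split $[0,1]$ along the partition of $\Dl_0$ into the level sets $I_{\ta(\om)}(j)$, writing $\lm_{k,\om} = \Sigma_1(\om) + \Sigma_2(\om)$ as in \eqref{3mis.cm7.1}, where $\Sigma_1$ is the integral over $I_{\ta(\om)}(1)$ and $\Sigma_2$ is the sum over $2 \leq j \leq k$. On each $I_{\ta(\om)}(j)$ the noncritical inverse branches contribute a uniformly bounded amount to $\mathcal{L}_{k,\om}\mathbf{1}$ by \ref{(M5c)} and the finiteness of $\cG$, so they are absorbed into an absolute constant at the end.

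The heart of the matter is the contribution from each $\Dl \in \cG_C^{(0)}$ to $\Sigma_2$. On $I_{\ta(\om)}(j)$ the factor $|(T_{\om,\Dl}^{-1})'|^t$ is comparable to $|(T_\om^j)'(0)|^{t\gm_\Dl/(1+\gm_\Dl)}$ by \eqref{1mis.po.2}, a quantity which blows up with $j$. This blow-up is exactly cancelled by substituting the a priori estimate \eqref{1mis.cm7} for $\nu_{k,\om}(I_\om(j))$, which carries the reciprocal factor $|(T_\om^{j-1})'(0)|^{-t\gm_\Dl/(1+\gm_\Dl)}$. After cancellation the $j$-th summand becomes $\nu_{k,\ta(\om)}\bigl(T_{\ta(\om),\Dl}^{-1}(I_{\ta^2(\om)}(j-1))\bigr)$, and since these sets are pairwise disjoint for different $j$ and all contained in $T_{\ta(\om),\Dl}^{-1}(V_{\ta^2(\om)}(1)) \sbt [0,1]$, summation over $j$ and over the finite set $\cG_C^{(0)}$ produces $\Sigma_2(\om) \lek 1$. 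For $\Sigma_1(\om)$ I would first subdivide $I_{\ta(\om)}(1)$ into a bounded remainder plus $\bu_{j \geq 2} T_{\om,1}^{-1}(I_{\ta^2(\om)}(j))$, then rerun the argument with $\cG_C^{(1)}$ in place of $\cG_C^{(0)}$ and \eqref{2mis.cm7} in place of \eqref{1mis.cm7}, again reaching $\Sigma_1(\om) \lek 1$. Combining the two gives $\lm_{k,\om} \lek 1$ with a constant $\bt_2$ that is independent of $k$ and $\om$.

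The main obstacle is purely bookkeeping: every critical preimage produces a large factor $|(T_\om^j)'(0)|^{t\gm_\Dl/(1+\gm_\Dl)}$ in $\mathcal{L}_{k,\om}\mathbf{1}$, and one must verify that each such factor is \emph{exactly} matched by the right side of \eqref{1mis.cm7} or \eqref{2mis.cm7}, not merely dominated. Once the substitution, the disjointness of the single-branch preimages across $j$, and the finiteness of $\cG_C$ are properly lined up, the constants emerging depend only on the data $\kp, A, \g$ of the class $\cM(\cG;\kp,A,\g)$, which is precisely what the lemma asserts.
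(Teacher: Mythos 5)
The proposal is correct and reproduces the paper's own proof essentially step for step: it establishes the lower bound from \eqref{1mis.cm5.1} and Observation~\ref{o1mis.cm3}, and it obtains the upper bound by the same decomposition $\lm_{k,\om}=\Sg_1(\om)+\Sg_2(\om)$ of \eqref{3mis.cm7.1}, pairing the critical blow-up factor from \eqref{1mis.po.2} with the compensating estimates \eqref{1mis.cm7} and \eqref{2mis.cm7} and then summing over $j$ via disjointness of the single-branch preimages. One minor imprecision worth noting is that the two powers of the derivative at $0$ do not cancel exactly but only up to a uniformly bounded factor $|T_\om'(0)|^{t\gm_\Dl/(1+\gm_\Dl)}\le A^{t\gm_\Dl/(1+\gm_\Dl)}$ (because of the index shift $j\leftrightarrow j-1$); this is harmless since the leftover factor is absorbed into the implicit constant, but the phrase ``exactly matched'' should be read as ``matched up to a uniformly bounded factor,'' which is all that the summation argument requires.
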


We need also different estimates. Conformality of $\nu_k$ gives that
\begin{align}\label{4mis118}
\nu_\om(I_\om(j))\lesssim\lm_\om^{-j}\absval{(T_\om^j)'(0)}^{-t}
\end{align}
for all $j\geq 2$ and all $\om\in\Om$. Consequently, using Lemma~\ref{l1mis.cm7.2}, we get for all $\Dl\in\cG_C$ that, 

\begin{align} 
\nu_\om\left(T_{\om,\Dl}^{-1}(I_{\ta(\om)}(j)\right)
&\lesssim\lm_{\ta(\om)}^{-j}\absval{(T_{\ta(\om)}^j)'(0)}^{-t}\lm_\om^{-1}\absval{T_{\ta(\om)}^{-j}(1)}^{-\frac{\gm_\Dl}{1+\gm_\Dl}t}\nonumber\\
&\comp\lm_\om^{-(j+1)}\absval{(T_{\ta(\om)}^j)'(0)}^{-t}\cdot\absval{(T_{\ta(\om)}^{j})'(0)}^{\frac{\gm_\Dl}{1+\gm_\Dl}t}  \nonumber\\
&=\lm_\om^{-(j+1)}\absval{(T_{\ta(\om)}^j)'(0)}^{-\frac{1}{1+\gm_\Dl}t}\comp\lm_\om^{-j}\absval{(T_\om^j)'(0)}^{-\frac{t}{1+\gm_\Dl}}\nonumber\\
&\leq\lm_\om^{-j}\absval{(T_\om^j)'(0)}^{-\frac{t}{1+\gm_+}},\label{3mis118}
\end{align}
where
$$
\gm_+:=\max\set{\gm_\Dl:\Dl\in\cG_C}\ge 1.
$$
It also follows from \eqref{4mis118} that
\begin{align}\label{?:6}
\nu_\om(T_{\om,1}^{-1}(I_{\ta(\om)}(j)))\lesssim\lm_\om^{-j}\absval{(T_\om^j)'(0)}^{-t},	
\end{align}
and, consequently, by the same argument as the one leading to \eqref{3mis118}, we get for all $\Dl\in\cG$, that
\begin{align}\label{1mis118}
\nu_\om(T_{\om,\Dl}^{-1}\circ T_{\ta(\om),1}^{-1}(I_{\ta^2(\om)}(j)))
\lesssim\lm_\om^{-j}\absval{(T_\om^j)'(0)}^{-\frac{t}{1+\gm_+}}.
\end{align}

\, 

\section{The Existence of Conformal Measures}
Similarly as in the last two sections, in this section we deal with abstract Perron--Frobenius operators, this time however, for the full skew--product map $T:\cJ(T)\to\cJ(T)$. The goal is to use the results of the two previous sections to show that the weak limit points, in the narrow topology of $\cM_m^1(\cJ(T))$, of the measures $\nu_k$, $k\ge 1$, are conformal for the map $T:\cJ(T)\to\cJ(T)$, and the corresponding Perron--Frobenius operator is in fact given by the starting abstract formula. We also derive several properties of these conformal measures. 
We start with the following definition.

\begin{definition}
For every $g\in C_b(\cJ(T))$ and every $\om\in\Om$, we define
	\begin{align*}
		\tr_\om(g)(w):=\sum_{z\in T_{\om}^{-1}(w)}g(z)\absval{T_\om'(z)}^{-t}.
	\end{align*}
We still call $\tr_\om$ the transfer Perron--Frobenius operator even though $\tr_\om(g)$ need not belong to $C_b\(\cJ_{\th(\om)}(T)\)$ and the space $C_b(\cJ(T))$ is not in general preserved by the corresponding global operator.
\end{definition}

Fix $t\ge 0$, and let $\nu_{t,k}=\nu_k$, $k\ge 1$, be the truncated $t$--conformal measures produced in the previous section; see Lemma~\ref{l1mis.cm5} and Lemma~\ref{l1_2018_03_23}. We now treat them as elements of $M^1_m(\cJ(T))$. Since $M^1_m(\cJ(T))$ is a compact set with respect to the narrow topology, we will consider weak limits of the sequence $(\nu_k)_{k=1}^\infty$. Although convergence in the narrow topology does not in general entail weak convergence in fibers, nevertheless, as a consequence of Remark \ref{r1mis.cm7.1.1} and the fact that 
\begin{align*}
	I_\om(j)\sub\intr{I_{\om}(j-1)\cup I_\om(j)\cup I_{\om}(j+1)}
\end{align*} 
(and likewise for $\ta(\om)$), we obtain the following.
\begin{lemma}\label{l1mis.cm7.1.1}
	The formulas \eqref{1mis.cm7} -- \eqref{4mis.cm7.1.1}, hold also for $\nu$, any weak limit of the sequence $(\nu_k)_{k=1}^\infty$.
\end{lemma}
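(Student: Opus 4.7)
The plan is to pass the uniform-in-$k$ upper bounds on the truncated measures $\nu_{k,\om}$ to their weak (narrow) limit $\nu$ using the Portmanteau inequality for random open sets, combined with the enlargement trick made explicit in Remark~\ref{r1mis.cm7.1.1}. The point is that every set $A_\om$ appearing on the left-hand side of one of \eqref{1mis.cm7}--\eqref{4mis.cm7.1.1} is closed in $[0,1]$ and yet sits inside the interior of a slightly enlarged closed set $\tilde A_\om$ for which the same type of estimate, with possibly larger absorbed universal constant, still holds.

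First, I would fix the base sets appearing on both sides of \eqref{1mis.cm7}--\eqref{2mis.cm7.1}, namely $I_\om(j)$, $T_{\om,1}^{-1}(I_{\ta(\om)}(j))$ and $T_{\om,\Dl}^{-1}(I_{\ta(\om)}(j-1))$, and to each of them associate the enlargement obtained by adjoining the two neighboring intervals $I_\om(j\pm 1)$ (and their analogous preimages under $T_{\om,1}^{-1}$, $T_{\om,\Dl}^{-1}$). By Remark~\ref{r1mis.cm7.1.1}, the estimates \eqref{1mis.cm7}--\eqref{1mis.cm7.1} continue to hold for these enlargements uniformly in $k$, with possibly larger constants. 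Since $A_\om\sub\intr{\tilde A_\om}$, the random set
$$
U:=\bigcup_{\om\in\Om}\{\om\}\times\intr{\tilde A_\om}
$$
is a measurable open random set in $\cJ(T)$, the measurability being inherited from the measurable dependence of the endpoints $T_{\om,0}^{-\ell}(1)$ on $\om$ via Propositions~\ref{p220180619} and~\ref{p220180619b}.

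Next, I would invoke the Portmanteau inequality for narrow convergence of random probability measures (as in \cite{Crauel}): for every random open set $U$ and every bounded nonnegative measurable $\phi:\Om\to[0,\infty)$, one has
\begin{align*}
\int_\Om \phi(\om)\,\nu_\om(U_\om)\,dm(\om)\le \liminf_{k\to\infty}\int_\Om \phi(\om)\,\nu_{k,\om}(U_\om)\,dm(\om).
\end{align*}
Combined with $A_\om\sub\intr{\tilde A_\om}$ and the uniform-in-$k$ bound on $\nu_{k,\om}(\tilde A_\om)$, this yields the integrated form of each desired upper estimate; letting $\phi$ range over a countable separating family then promotes each inequality to a fiberwise statement valid for $m$-a.e.\ $\om\in\Om$. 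For the two formulas \eqref{1mis.cm7}--\eqref{2mis.cm7} in which the right-hand side is itself a $\nu_{k,\om}$-measure of a closed set $B_\om$, the closed-set half of Portmanteau gives $\limsup_k \nu_{k,\om}(B_\om)\le \nu_\om(B_\om)$ a.e.\ in $\om$, so that chaining it with the $\liminf$ lower-bound direction for the left-hand side transfers each inequality to $\nu$ with the same multiplicative constant. The four geometric-sum formulas \eqref{1mis.cm7.1.1}--\eqref{4mis.cm7.1.1} then follow from the single-interval bounds by monotone summation on $j$, which commutes with the narrow limit.

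The main difficulty is the double-sided nature of the Portmanteau comparison: on the left one needs to enlarge to an open superset to obtain an upper bound for $\nu_\om(A_\om)$, while on the right one needs the set to stay closed to obtain a useful $\limsup$ estimate. Remark~\ref{r1mis.cm7.1.1} is tailor-made to reconcile these two opposing requirements within a single universal constant, and once this is in place the extraction of fiberwise inequalities from the integrated ones is essentially the same disintegration argument used in the proof of Lemma~\ref{l2mis.cm3}.
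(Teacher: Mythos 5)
Your proposal is correct and is essentially the paper's own (very terse) argument: enlarge each interval to the triple union covered by Remark~\ref{r1mis.cm7.1.1}, use the inclusion $I_\om(j)\sub\intr{I_{\om}(j-1)\cup I_\om(j)\cup I_{\om}(j+1)}$ together with Portmanteau's theorem for the narrow topology, and obtain the tail estimates \eqref{1mis.cm7.1.1}--\eqref{4mis.cm7.1.1} by summing over $j$. One small caution: keep the closed-set half of Portmanteau in the weighted/integrated form too (narrow convergence gives no fiberwise statement, as the paper itself notes), chain the two halves at the integrated level, and only then promote to an $m$-a.e.\ fiberwise inequality, exactly as you already do for the open-set half.
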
 
As an immediate consequence of this lemma, we get the following.
\begin{proposition}\label{p1mis.cm7.1.2}
	For $\nu$, any weak limit of the sequence $(\nu_k)_{k=1}^\infty$, we have that 
	\begin{align*}
		\nu(\Om\times\set{0,1})=0.
	\end{align*} 
\end{proposition}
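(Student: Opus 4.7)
The plan is to show that $\nu_\om(\{0\})=\nu_\om(\{1\})=0$ for $m$-a.e.\ $\om\in\Omega$, from which the conclusion follows by integration against $m$, since
$$
\nu(\Omega\times\{0,1\})=\int_\Omega \nu_\om(\{0,1\})\,dm(\om).
$$
So the task reduces to controlling the fiber measures at the two boundary points of $I=[0,1]$.

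First I would handle the point $0$. By the definition of $V_\om(\ell)$ we have $\{0\}=\bigcap_{\ell\ge 1}V_\om(\ell)$ with the sequence $(V_\om(\ell))_{\ell\ge 1}$ decreasing. By Lemma~\ref{l1mis.cm7.1.1} the bound \eqref{2mis.cm7.1.1} is inherited by the weak limit $\nu$, so
$$
\nu_\om(V_\om(\ell))\;\lesssim\;\kp^{-\frac{t\g_{0,+}}{1+\g_{0,+}}\ell},
$$
uniformly in $\om$. Letting $\ell\to\infty$ and invoking continuity of measure from above gives $\nu_\om(\{0\})=0$ for every $\om\in\Omega$ (this uses $t>0$; the case $t=0$ does not appear here since we are working with weak limits of the $t$--conformal measures $\nu_k=\nu_{t,k}$ with $t>0$).

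For the point $1$, I would use that (M4) gives $T_\om(1)=0$, and since $T_\om|_{\Dl_1}$ is a bijection onto $I$, the inverse branch $T_{\om,1}^{-1}$ satisfies $T_{\om,1}^{-1}(0)=1$. Because $0\in V_{\ta(\om)}(\ell)$ for every $\ell\ge 1$, this yields
$$
\{1\}\;=\;T_{\om,1}^{-1}(\{0\})\;\sbt\;T_{\om,1}^{-1}\bigl(V_{\ta(\om)}(\ell)\bigr)
$$
for all $\ell\ge 1$. Appealing to Lemma~\ref{l1mis.cm7.1.1} again, this time to transfer \eqref{4mis.cm7.1.1} to the limit, we get
$$
\nu_\om\bigl(T_{\om,1}^{-1}(V_{\ta(\om)}(\ell))\bigr)\;\lesssim\;\kp^{-\frac{t\g_{0,+}}{1+\g_{0,+}}\ell}\;\xrightarrow[\ell\to\infty]{}\;0,
$$
so $\nu_\om(\{1\})=0$ for every $\om\in\Omega$.

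The main obstacle in the proof has already been overcome in establishing Lemma~\ref{l1mis.cm7.1.1}, which allows the geometric estimates on the truncated measures $\nu_k$ to pass to the weak limit in spite of the fact that narrow convergence does not generally imply fiberwise weak convergence; the present proposition is then a clean corollary via the two shrinking families of sets $V_\om(\ell)\downarrow\{0\}$ and $T_{\om,1}^{-1}(V_{\ta(\om)}(\ell))\downarrow\{1\}$.
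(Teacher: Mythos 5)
Your argument is exactly the one the paper intends: Proposition~\ref{p1mis.cm7.1.2} is stated there as an immediate consequence of Lemma~\ref{l1mis.cm7.1.1}, namely passing the bounds \eqref{2mis.cm7.1.1} and \eqref{4mis.cm7.1.1} to the weak limit and letting $\ell\to\infty$ along the shrinking families $V_\om(\ell)\downarrow\{0\}$ and $T_{\om,1}^{-1}\bigl(V_{\ta(\om)}(\ell)\bigr)\downarrow\{1\}$, just as you do. The caveat you flag about $t>0$ (the exponential bound being vacuous for $t=0$) is equally implicit in the paper's own one-line justification, so your proof matches its approach.
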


Keeping a random critically finite map $T:\cJ(T)\to \cJ(T)$ and a real number $t\geq 0$, a random measure $\nu=\nu_t$ on $\cJ(T)$ is called $t$--\textit{Fconformal} if there exists a measurable function $\lm:\Omega\to(0,\infty)$ such that
\begin{align}\label{eqn: conformal measure}
\nu_{\ta(\om)}(T_\om(A))=\lm_\om\int_A\absval{T'_\om}^td\nu_\om
\end{align}
for every special set $A\sub \cJ_\om(T)$ and $m$-a.e. $\om\in\Omega$. In other words, the random measure $\nu$ is $t$--Fconformal if for $m$-a.e. $\om\in\Omega$, the map $T_\om:\cJ_\om(T)\to \cJ_{\ta(\om)}(T)$ is $\lm_\om\absval{T_\om'(z)}^t$--Fconformal with respect to the pair of measures $\nu_\om$ and $\nu_{\th(\om)}$. We then also call each map $T_\om:\cJ_\om(T)\to \cJ_{\ta(\om)}(T)$, $\om\in\Omega$, $t$--\textit{Fconformal} with respect to the pair of measures $\nu_\om$ and $\nu_{\th(\om)}$.

Iterating \eqref{eqn: conformal measure}, we see that for each $n\in\NN$ we have
\begin{align*}
	\nu_{\ta^n(\om)}(T_{\om}^{n}(A))=\lm_{\om}^{n}\int_A\absval{(T_{\om}^{n})'}^t d\nu_{\om}
\end{align*} 
where
\begin{align*}
	\lm_\om^n:=\lm_{\om}\lm_{\ta(\om)}\cdots\lm_{\ta^{n-1}(\om)}.
\end{align*}

\medskip
A random measure $\nu\in M_m^1(\cJ(T))$ is called \textit{$t$--Bconformal} if  there exists a measurable function $\lm:\Omega\to(0,\infty)$ such that $m$--a.e. map $T_\om:\cJ_\om(T)\to \cJ_{\ta(\om)}(T)$ is $\lm_\om\absval{T_\om'(z)}^t$--Bconformal with respect to the pair of measures $\nu_\om$ and $\nu_{\ta(\om)}$. 
We then also call each map $T_\om:\cJ_\om(T)\to \cJ_{\ta(\om)}(T)$, $\om\in\Omega$, $t$--\textit{Bconformal} with respect to the pair of measures $\nu_\om$ and $\nu_{\th(\om)}$. We then have
\begin{equation}\label{120180405}
\nu_\om\(T_{\om,\Ga}^{-n}(A)\)
=\lm_{\om}^{-n}\int_A\absval{(T_{\om,\Ga}^{-n})'}^t d\nu_{\th^n(\om)}
\end{equation}
for every $\om\in\Om$, every integer $n\ge 0$, every $\Ga\in\cG^n$ and every Borel set $A\sbt [0,1]$.

A random measure $\nu\in M_m^1(\cJ(T))$ is called \textit{$t$--conformal} if it is both $t$--Bconformal and $t$--Bconformal. Each map $T_\om:\cJ_\om(T)\to \cJ_{\ta(\om)}(T)$, $\om\in\Omega$, is then called $t$--conformal with respect to the pair of measures $\nu_\om$ and $\nu_{\th(\om)}$. 

We shall now prove the following basic result.
\begin{theorem}\label{t1mis.cm7}
	For every $t\geq 0$, every measure $\nu^{(t)}$ which is a weak limit of the sequence $(\nu_{t,k})_{k=1}^\infty$, is $t$--Fconformal for the map $T:\cJ(T)\to\cJ(T)$. If $\nu^{(t)}(\Om\times\Crit(\cG))=0$, then $\nu^{(t)}$ is $t$--conformal. In particular,
	\begin{align*}
		\tr_{t,\om}^*\nu_{\ta(\om)}^{(t)}=\lm_\om^{(t)}\nu_\om^{(t)}	
	\end{align*}  
	for every $\om\in\Om$, where $\lm^{(t)}:\Om\to(0,\infty)$ is some measurable function; the numbers $\lm_\om^{(t)}$, $\om\in\Om$, are respectively called the generalized eigenvalues of the dual operators $\tr_{t,\om}^*$, $\om\in\Om$. In addition, there exists a measurable function $\lm^{(t)}:\Om\to(0,\infty)$ 
witnessing $t$--Fconformality of $\nu^{(t)}$ for which there are constants $0<\bt_1\leq \bt_2<\infty$ such that 
	\begin{align*}
		\bt_1\leq\lm_\om^{(t)}\leq\bt_2
	\end{align*}
	for $m$--a.e. $\om\in\Om$. 
\end{theorem}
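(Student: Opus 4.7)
The strategy is to exploit compactness twice, pass to the limit in the truncated conformality identity against a generous class of test functions, and then use the $k$-uniform decay estimates of Section~5 to absorb the remaining special sets. First, since $M^1_m(\cJ(T))$ is compact in the narrow topology, I may extract a subsequence, still denoted $(\nu_k)$, with $\nu_k\to\nu^{(t)}$ narrowly. By Lemma~\ref{l1mis.cm7.2}, the sequence $\om\mapsto\lm_{k,\om}$ lies in the closed ball of radius $\bt_2$ in $L^\infty(\Om,m)=(L^1(\Om,m))^*$, so a further Banach--Alaoglu extraction yields $\lm_{k,\cdot}\to\lm^{(t)}$ in the weak-$*$ topology. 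The essential bounds $\bt_1\le\lm^{(t)}_\om\le\bt_2$ are then automatic, since the set of $L^\infty$ functions with values in $[\bt_1,\bt_2]$ is convex and weak-$*$ closed.

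Second, I pass to the limit in the truncated Fconformality relation. Lemma~\ref{l1_2018_03_23} gives, for every $k$ and every special Borel $B\sub\cJ^{(k)}_\om$,
\begin{align*}
\nu_{k,\ta(\om)}(T_\om(B))=\lm_{k,\om}\int_B|T'_\om|^t\,d\nu_{k,\om}.
\end{align*}
In dual form, testing against $g\in C_b(\cJ(T))$ and integrating against $h\in L^1(\Om,m)$ yields
\begin{align*}
\int_\Om h(\om)\lm_{k,\om}\int g_\om\,d\nu_{k,\om}\,dm(\om)=\int_\Om h(\om)\int\tr_{k,\om}g_\om\,d\nu_{k,\ta(\om)}\,dm(\om).
\end{align*}
Restricting to those $g$ whose fibres are supported in $\{x:\mathrm{dist}(x,\{0,1\})\ge\varepsilon\}$ for some fixed $\varepsilon>0$ ensures that, for all $k$ large enough, $\tr_{k,\om}g_\om$ coincides with the bounded random continuous function $\tr_{t,\om}g_\om$ of the full system. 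Narrow convergence on fibres, combined with weak-$*$ convergence of $\lm_{k,\cdot}$ against the uniformly bounded function $\om\mapsto h(\om)\int g_\om\,d\nu^{(t)}_\om$, then passes the identity to the limit, giving the same relation with $(\lm^{(t)},\nu^{(t)},\tr_{t,\om})$.

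Third, to upgrade this from test functions to arbitrary special $A\sub\cJ_\om(T)$, I excise from $A$ a $\delta$-neighborhood $W_\om(\delta)$ of $\{0,1\}\cup\Crit(\cG)$. Lemma~\ref{l1mis.cm7.1.1} transfers the $k$-uniform mass estimates \eqref{2mis.cm7.1.1}--\eqref{4mis.cm7.1.1} verbatim to $\nu^{(t)}$, so both sides of the Fconformality identity for $A$ and for $A\sms W_\om(\delta)$ differ by a quantity that vanishes as $\delta\to 0$; the integrability of $|T'_\om|^t$ on the right-hand side is ensured by \eqref{3mis118} combined with the lower bound $\bt_1\le\lm^{(t)}_\om$. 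Letting $\delta\to 0$ gives the identity for the original $A$ and completes the proof of $t$-Fconformality. Assuming finally $\nu^{(t)}(\Om\times\Crit(\cG))=0$, the fibre density $\lm^{(t)}_\om|T'_\om|^t$ is $\nu^{(t)}_\om$-a.e.\ positive on $\cJ_\om(T)$, so Corollary~\ref{c2mis.cmi4} promotes fibrewise Fconformality to full fibrewise conformality; the dual identity $\tr_{t,\om}^*\nu^{(t)}_{\ta(\om)}=\lm^{(t)}_\om\nu^{(t)}_\om$ is then merely the Bconformality relation rephrased via Remark~\ref{c1mis.cmi6.1} and Proposition~\ref{p1mis.cmi6}.

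The main obstacle is the unboundedness of $\tr_{t,\om}$ on $C_b(\cJ(T))$: inverse branches along $\cG_C^{(0)}$ produce arbitrarily large $|T'|^{-t}$-factors near the critical points in $\Crit_0(\cG)$ (where $|T'|$ vanishes), and $T_\om^{-1}(w)\cap\cJ_\om(T)$ may be an infinite set accumulating at $0$. Cutting to $\cJ^{(k)}(T)$ removes both difficulties at the cost of working with a truncated operator, so recovering the limit at the level of $\cJ(T)$ rests entirely on the $k$-uniform tail estimates of Section~5; everything else is soft compactness and the general conformality dictionary of Section~3.
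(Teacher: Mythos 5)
Your overall architecture (truncated conformal measures, narrow limit, excision of the bad set) parallels the paper's, but two steps fail as written. First, you extract $\lm^{(t)}$ as a weak-$*$ limit of $\lm_{k,\cdot}$ in $L^\infty(\Om,m)$ and then pass to the limit in $\int_\Om h(\om)\,\lm_{k,\om}\,\nu_{k,\om}(g_\om)\,dm(\om)$ by pairing weak-$*$ convergence of $\lm_{k,\cdot}$ with narrow convergence of $\nu_k$. This is a product of two sequences each of which converges only weakly: narrow convergence gives $\int_\Om h\,\nu_{k,\om}(g_\om)\,dm\to\int_\Om h\,\nu^{(t)}_\om(g_\om)\,dm$ for a \emph{fixed} weight $h$, but it gives no $m$-a.e.\ or $L^1$ convergence of the fibre integrals $\om\mapsto\nu_{k,\om}(g_\om)$, so the integral of the product $\lm_{k,\om}\nu_{k,\om}(g_\om)$ need not converge to the integral of the product of the limits. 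This is precisely the point where the paper invokes Lemma~2.9 of \cite{RoyUrb2011}: one takes limits of the measures $\lm_k\nu_k=\tr_k^*\nu_k$ themselves and identifies any accumulation point as $\lm\nu^{(t)}$ with a measurable $\lm\in[\bt_1,\bt_2]$. Without this (or a substitute giving stronger convergence of $\lm_{k,\cdot}$), your $\lm^{(t)}$ is not tied to $\nu^{(t)}$ by any limiting identity, and the claimed eigenvalue relation has no proof.

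Second, your test class is too large for the narrow-convergence step: if $g$ is only required to vanish near $\{0,1\}$, then $\tr_{t,\om}g_\om$ is \emph{not} a bounded random continuous function. The inverse branches through $\cG_C$ contribute terms $g(y)\absval{T_\om'(y)}^{-t}$ with $y$ close to a critical point $c_\Dl$, which in general lies far from $\{0,1\}$, so $g(y)$ need not vanish there, while by \eqref{220180423} the factor $\absval{T_\om'(y)}^{-t}$ blows up as $x\to T_\om(c_\Dl)\in\{0,1\}$; hence $\tr_\om g_\om$ is unbounded on $\cJ_{\ta(\om)}(T)$ and cannot be used as a test function for narrow convergence. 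Repairing this forces $g$ to vanish near $\Crit(\cG)$ as well, which only yields the restricted identity the paper obtains unconditionally; upgrading it to the full equation $\tr^*\nu^{(t)}=\lm^{(t)}\nu^{(t)}$ (hence to conformality) requires uniform-in-$k$ smallness of the $\nu_k$-mass of shrinking neighbourhoods of $\Crit(\cG)$, which is exactly where the hypothesis $\nu^{(t)}(\Om\times\Crit(\cG))=0$ is used in the paper (cf.\ \eqref{5_2018_02_07}). Your step~3 instead appeals to Lemma~\ref{l1mis.cm7.1.1} and \eqref{3mis118}, but these control mass near $\{0,1\}$ and preimages of neighbourhoods of $0$, not mass near $\Crit(\cG)$, so the excision error there is not controlled by them (for the Fconformality identity on special sets the excision can be closed fibrewise, since $T_\om(\Crit(\cG))\sub\{0,1\}$ carries no mass and $\absval{T'_\om}^t$ vanishes at critical points, which is the paper's final argument — but that is a different justification from the one you give, and the remark about integrability via \eqref{3mis118} is moot since $\absval{T_\om'}^t\le A^t$). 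Finally, a small point: Corollary~\ref{c2mis.cmi4} requires $\psi>0$ everywhere, whereas $\lm^{(t)}_\om\absval{T_\om'}^t$ vanishes on $\Crit(\cG)$; the correct tool for the last upgrade is Proposition~\ref{p1mis.cmi3}(b) together with $\nu^{(t)}_\om(\Crit(\cG))=0$.
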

\begin{proof}
	We shall suppress the sub and superscripts $``t"$ and write $\nu$ for $\nu^{(t)}$, $\nu_k$ for $\nu_{t,k}$ and $\lm$ for $\lm^{(t)}$. Fix $k\geq 2$ and $\om\in\Om$.
	
	Given a function $g\in C_b(\cJ(T))$ and $k\geq 2$, we begin with estimating the following quantity:
	\begin{align*}
		\Dl_k^{(1)}g(\om)
:&=\absval{\left(\tr_{k,\om}^*-\tr_\om^*\right)(\nu_{k,\ta(\om)})(g_\om)}
		=\absval{\nu_{k,\ta(\om)}\left(\tr_{k,\om}(g_\om\rvert_{\cJ_\om^{(k)}(T)})-\tr_\om(g_\om)\right) } \\
		&=\absval{\int_{\cJ_\om^{(k)}(T)} \left(\tr_{k,\om}-\tr_\om\right)(g_\om)d\nu_{k,\ta(\om)}}\\
		&=\absval{\int_{I_{\ta(\om)}(k)} -\absval{T_\om'(T_{\om,0}^{-1}(x))}^{-t}g_\om(T_{\om,0}^{-1}(x)) d\nu_{k,\ta(\om)}(x)}\\
		&\leq\int_{I_{\ta(\om)}(k)}\absval{T_\om'(T_{\om,0}^{-1}(x))}^{-t}\norm{g_\om}_\infty d\nu_{k,\ta(\om)}\\
		&=\norm{g_\om}_\infty\int_{I_{\ta(\om)}(k)}\absval{T_\om'(T_{\om,0}^{-1}(x))}^{-t}d\nu_{k,\ta(\om)}\\
		&\comp\norm{g_\om}_\infty\nu_{k,\ta(\om)}(I_{\ta(\om)}(k)).
	\end{align*}
	Now applying \eqref{1mis.cm7.1}, we get 
	\begin{align}\label{1mis.cm8}
		\Dl_k^{(1)}g(\om)\leq\norm{g_\om}_\infty \kp^{-\frac{t\gm_{0,+}}{1+\gm_{0,+}}k}.
	\end{align}
	Therefore, 
$$
		\Dl_k^{(1)}(g)
:=\absval{\int_{\Om}\left(\tr_{k,\om}^*-\tr_{\om}^*\right)(\nu_{k,\ta(\om)})(g_\om)dm(\om) }
		\leq \int_{\Om}\Dl_k^{(1)}(g)(\om)dm(\om)\leq\norm{g}_\infty\kp^{-\frac{t\gm_{0,+}}{1+\gm_{0,+}}k}.
$$
	Hence, there exists $k_0\geq 3$ such that for all $k\geq k_0$ and all $g\in C_b(\cJ(T))$, we have 
	\begin{align}\label{1mis.cm8.1}
		\Dl_k^{(1)}(g)\leq \frac{\ep}{4}\norm{g}_\infty.
	\end{align}
	Now given an integer $\ell\geq 3$, we want to estimate the quantity
	\begin{align*}
		\int_{H_\om(\ell)}\tr_\om\ind d\eta_\om,
	\end{align*}
	where 
	\begin{align*}
		H_\om(\ell):=V_\om(\ell)\cup T_{\om,1}^{-1}(V_{\ta(\om)}(\ell))
	\end{align*}
	and $\eta$ is either $\nu$ or $\nu_k$. Indeed, using Proposition \ref{p1mis.cm7.1.2} and Lemma \ref{l1mis.cm7.1.1}, we proceed in exactly the same way as in formula \eqref{2mis.cm7.2}, to get 
	\begin{align*}
		\int_{H_\om(\ell)}\tr_\om\ind d\eta_\om\lesssim \eta_{\ta(\om)}(W_{\ta(\om)}^0(\ell))+\eta_{\ta^2(\om)}(W_{\ta^2(\om)}^1(\ell)),
	\end{align*}
	where 
	\begin{align*}
		W_{\ta(\om)}^0(j):=\union_{\Dl\in\cG_C^{(0)}}T_{\ta(\om),\Dl}^{-1}(V_{\ta^2(\om)}(\ell-1))
	\end{align*}
	and 
	\begin{align*}
		W_{\ta^2(\om)}^1(j):=\union_{\Dl\in\cG_C^{(1)}}T_{\ta^2(\om),\Dl}^{-1}(V_{\ta^3(\om)}(\ell-1)),
	\end{align*}
with  
	$$
	V_{\ta^2(\om)}^*(\ell-1):=V_{\ta^2(\om)}(\ell-1)\bs\set{0}.
	$$
Therefore, invoking \eqref{2mis.po} and \eqref{3mis.po}, we further get 
	\begin{align*}
		\int_{H_\om(\ell)}\tr_\om\ind d\eta_\om
		&\leq \eta_{\ta(\om)}\left(\union_{\Dl\in\cG_C^{(0)}}\Dl\lr(\absval{\lr(T_{\ta^2(\om)}^{\ell-1} \rl)'(0)}^{-\frac{1}{1+\gm_\Dl}}\rl) \right)+ \\ & \  \  \  \  \  \  \  \  \  \  \  \  \ +\eta_{\ta^2(\om)}\left(\union_{\Dl\in\cG_C^{(1)}}\Dl\lr(\absval{\lr(T_{\ta^3(\om)}^{\ell-1} \rl)'(0)}^{-\frac{1}{1+\gm_\Dl}}\rl) \right)\\
		&\leq \eta_{\ta(\om)}\left(\union_{\Dl\in\cG_C^{(0)}}\Dl\Big(\kp^{-\frac{\ell-1}{1+\gm_\Dl}}\Big)\right)+\eta_{\ta^2(\om)}\left(\union_{\Dl\in\cG_C^{(1)}}\Dl\Big(\kp^{-\frac{\ell-1}{1+\gm_\Dl}}\Big)\right),
	\end{align*}
	where
	\begin{align*}
		\Dl(r)=\Dl\cap\left(B\big(c_\Dl,r\big)\bs\set{c_\Dl}\right).
	\end{align*}
	Therefore, 
	\begin{align*}
		\Sg_\ell^{(2)}(g)
		:&= \int_{\Om}\int_{H_\om(\ell)}\absval{\tr_\om g_\om}d\eta_\om dm(\om)
			\leq \int_\Om\int_{H_\om(\ell)}\tr(\absval{g_\om})d\eta_\om dm(\om)\\
			&\leq \int_{\Om}\norm{g_\om}_\infty\int_{H_\om(\ell)}\tr_\om(\ind)d\eta_\om dm(\om)\\
			&\lesssim\norm{g_\om}_\infty\int_\Om\left(\eta_{\ta(\om)}\!\!
			\left(\union_{\Dl\in\cG_C^{(0)}}\!\!\!\Dl\Big(\kp^{-\frac{\ell-1}{1+\gm_\Dl}}\Big)\right)+\eta_{\ta^2(\om)}\!\!\left(\union_{\Dl\in\cG_C^{(1)}}\!\!\!\Dl\Big(\kp^{-\frac{\ell-1}{1+\gm_\Dl}}\Big)\right)\right) dm(\om)\\
			&=\norm{g_\om}_\infty \eta\left(\Om\times \left(\union_{\Dl\in\cG_C}\!\!\!\Dl\Big(\kp^{-\frac{\ell-1}{1+\gm_\Dl}}\Big)\right)\right).
	\end{align*}
	Distinguish now between $\nu$ and $\nu_k$ writing respectively $\Sg_\ell^{(2)}(\nu,g)$ and $\Sg_\ell^{(2)}(\nu_k,g)$. Fix $\ep>0$. Since $\nu$ is a finite measure there then exists $\ell_0\geq 2$ such that for every $g\in C_b(\cJ(T))$ and every $\ell\geq\ell_0$,
	\begin{align}\label{6_2018_02_07}
		\Sg_\ell^{(2)}(\nu,g)\leq \frac{\ep}{4}\norm{g}_\infty.
	\end{align}  
Hence, 
\begin{align}\label{1mis.cm9}
\int_{\Om}\int_{\Int(H_\om(\ell))}\absval{\tr_\om g_\om}d\eta_\om dm(\om)
\le \Sg_\ell^{(2)}(\nu,g)\leq \frac{\ep}{4}\norm{g}_\infty.
	\end{align}  
	Assume now that 
	\begin{align*}
		\nu(\Om\times\Crit(\cG))=0.
\end{align*}  
	Then there exists $\ell_1\geq\ell_0$ and $k_1\geq k_0$ such that for all $k\geq k_1$ and all $\ell\geq \ell_1$, we have that   
	\begin{align*}
		\nu_k\left(\Om\times \left(\union_{\Dl\in\cG_C}\Dl\Big(\kp^{-\frac{\ell_1}{1+\gm_\Dl}}\Big)\right)\right)\leq\frac{\ep}{4}.
	\end{align*}
Consequently, 
	\begin{align}\label{5_2018_02_07}
		\Sg_\ell^{(2)}(\nu_k,g)\leq\frac{\ep}{4}\norm{g}_\infty.
	\end{align}
Assume now for a moment that
$$
g\ge 0
$$
everywhere on $\cJ(T)$. Then, combining \eqref{5_2018_02_07}, \eqref{1mis.cm9} and \eqref{1mis.cm8.1}, we get for all $k\geq k_1$, $\ell\geq \ell_1$, and $g\in C_b(\cJ(T))$ that
\begin{equation}
\begin{aligned}	
\Sg_k(g)
:&=\tr_k^*\nu_k(g)-\tr^*\nu(g)
=(\tr_k^*-\tr^*)\nu_k(g)+\tr^*(\nu_k-\nu)g \\
&\leq \absval{\nu_k(\tr_k g)-\nu_k(\tr g)}+\nu_k(\tr g)-\nu(\tr g)\\
&\leq \Dl_k^{(1)}(g)+\Sg_\ell^{(2)}(\nu_k,g)+\\ 
& \  \  \  \  \  \  \  \  \  \  \  \  \  \ +\int_\Om\left(\int_{(\Int(H_\om(\ell))^c}(\tr g)_{\ta(\om)}d \nu_{k,\ta(\om)}-\int_{(\Int(H_\om(\ell))^c}(\tr g)_{\ta(\om)}d\nu_{\ta(\om)}\right) dm(\om)\\
&\leq \frac{1}{2}\ep\norm{g}_\infty+\int_\Om\left(\int_{(\Int(H_\om(\ell))^c}(\tr g)_{\ta(\om)}d \nu_{k,\ta(\om)}-\int_{H_\om^c(\ell)}(\tr g)_{\ta(\om)}d\nu_{\ta(\om)}\right) dm(\om)\label{1mis.cm10}
\end{aligned}  
\end{equation}
Then, since $(\Int(H_\om(\ell))^c_{\om\in\Om}$ is a closed random set and the function $\tr g\rvert_{(\Int(H_\om(\ell))^c}$ is continuous, and since $\nu_k$ converges to $\nu$ weakly, it follows from Portmanteau's Theorem (see \cite{Crauel}, Theorem 3.17) that 
	\begin{align}\label{2mis.cm10}
		\int_\Om\left(\int_{(\Int(H_\om(\ell))^c}(\tr g)_{\ta(\om)}d \nu_{k,\ta(\om)}-\int_{(\Int(H_\om(\ell))^c}(\tr g)_{\ta(\om)}d\nu_{\ta(\om)}\right) dm(\om)\leq \frac{\ep}{2}\norm{g}_\infty	
	\end{align} 
	for all $k\geq k_1$, large enough, say that $k\geq k_2\geq k_1$. Inserting this into \eqref{1mis.cm10}, we get that 
	\begin{align}\label{3mis.cm10}
		\Sg_k(g)\leq \ep\norm{g}_\infty.		
	\end{align}   
	Likewise, with the same $k$ and $\ell$, by combining \eqref{6_2018_02_07},
	 \eqref{1mis.cm9} and \eqref{1mis.cm8.1}, we get that
\begin{equation}
\begin{aligned}\label{1_2018_021_06}
		\Sg_k(g)
		&\geq -\absval{\nu_k(\tr_k g)-\nu_k(\tr g)}+\nu_k(\tr g)-\nu(\tr g)\\
		&\geq -\Dl_k^{(1)}(g)-\Sg_\ell^{(2)}(\nu, g)+ \\
& \  \  \  \  \  \  \  \  \  \  \  \  \ +\int_{\Om}\left(\int_{H_\om^c(\ell)}(\tr g\)_{\ta(\om)}d\nu_{k,\ta(\om)}-\int_{H_\om^c(\ell)}(\tr g)_{\ta(\om)}d\nu_{\ta(\om)}\right)dm(\om) \\
&\geq -\frac{1}{2}\ep\norm{g}_\infty +\int_{\Om}\left(\int_{H_\om^c(\ell)}(\tr g\)_{\ta(\om)}d\nu_{k,\ta(\om)}-\int_{H_\om^c(\ell)}(\tr g)_{\ta(\om)}d\nu_{\ta(\om)}\right)dm(\om).
\end{aligned}
\end{equation}
But since $(H_\om^c(\ell))_{\om\in\Om}$ is a random open set, by the same argument as leading to \eqref{2mis.cm10}, we conclude that 
	\begin{align*}
\int_{\Om}\left(\int_{H_\om^c(\ell)}(\tr g)_{\ta(\om)}d\nu_{k,\ta(\om)}-\int_{H_\om^c(\ell)}(\tr g)_{\ta(\om)}d\nu_{\ta(\om)}\right)dm(\om)\geq -\frac{\ep}{2}\norm{g}_\infty
	\end{align*}
for all $k\geq k_2$ large enough, say $k\geq k_3\geq k_2$. Inserting this into \eqref{1_2018_021_06}, we get that 
	\begin{align*}
	\Sg_k(g)\geq -\ep\norm{g}_\infty.		
	\end{align*} 
	In conjunction with \eqref{3mis.cm10}, this gives that 
	\begin{align}\label{1mis.cm11}
		\absval{\Sg_k(g)}\leq\ep\norm{g}_\infty
	\end{align}
	for all non-negative functions $g\in C_b(\cJ(T))$ and all $k\geq k_3$. Now if $g\in C_b(\cJ(T))$ is arbitrary, we represent  it as $g=g_+-g_-$, where $g_+=\max(g,0)$ and $g_-=\max(-g,0)$. By applying \eqref{1mis.cm11}, we then obtain
	\begin{align}\label{2_2018_021_06}
\absval{\Sg_k(g)}
=\absval{\Sg_k(g_+)-\Sg_k(g_-)}
\le\absval{\Sg_k(g_+)}+\absval{\Sg_k(g_-)}
\le \ep\norm{g_+}_\infty+\ep\norm{g_-}_\infty
\leq 2\ep\norm{g}_\infty.
	\end{align}

Now, having Lemma \ref{l1mis.cm7.2}, Lemma 2.9 from \cite{RoyUrb2011} tells us that there exists a measurable function $\Lm:\Om\to(0,\infty)$ such that the sequence $(\lm_k\nu_k)_{k=1}^\infty$ converges weakly to $\lm\nu$ and 
	\begin{align}\label{2mis.cm11}
		\bt_1\leq\lm(\om)\leq\bt_2
	\end{align}
	for $m$--a.e. $\om\in\Om$. But since also $\lm_k\nu_k=\tr_k^*\nu_k$, by letting $k\to\infty$, formula \eqref{2_2018_021_06} gives us that 
	$$
	\absval{\tr^*\nu(g)-\lm\nu(g)}\leq2\ep\norm{g}_\infty.
	$$
Since $\ep>0$ was arbitrary, we thus conclude that $\tr^*\nu(g)=\lm\nu(g)$ for all $g\in C_b(\cJ(T))$. Hence 
 $$
 \tr^*\nu=\lm\nu,
 $$
which, along with Proposition~\ref{p1mis.cmi5} and Remark~\ref{c1mis.cmi6.1}, shows that the measure $\nu$ is $t$--Bconformal. Together with \eqref{2mis.cm11}, this completes the proof in the case when $\nu(\Om\times\Crit(\cG))=0$; $t$--Fconformality is still to be proved without this hypothesis. Indeed, what follows from our considerations without it, is that 
	\begin{align*}
		\tr^*\nu(g)=\lm\nu(g)
	\end{align*}   
	for every $g\in C_b(\cJ(T))$ such that 
	\begin{align*}
		g|_{\Om\times B(\Crit(\cG),\gm)}\equiv 0
	\end{align*}
	for some $\gm>0$. Fiberwise, this means that 
	\begin{align}\label{1mis.cm12}
		\tr_\om^*\nu_{\ta(\om)}(g_\om)=\lm_\om\nu_\om(g_\om)
	\end{align}
	for every $g_\om\in C(\cJ_\om(T))$ with $g_\om\rvert_{B(\Crit(\cG),\gm)}\equiv 0$ for some $\gm>0$. By the standard approximation procedure, we thus conclude that \eqref{1mis.cm12} continues to hold for all functions $g_\om\in L^1(\nu_\om)$ with  $g_\om\rvert_{B(\Crit(\cG),\gm)}\equiv 0$ for some $\gm>0$. Therefore, the calculation of \eqref{1mis.cmi3} can be performed for all special sets $A\sub\cJ_\om(T)$, disjoint from some open neighborhood of $\Crit(\cG)$, to give
	\begin{align}\label{2mis.cm12}
		\nu_{\ta(\om)}(T_\om(A))=\lm_\om\int_A\absval{T_\om'}^td\nu_\om.
	\end{align} 
	Since every special set disjoint from $\Crit(\cG)$ can be represented as a disjoint union of special sets disjoint from neighborhoods of $\Crit(\cG)$, formula \eqref{2mis.cm12} holds for all such sets, that is all special sets disjoint from $\Crit(\cG)$. But, by virtue of Proposition \ref{p1mis.cm7.1.2}, we have for every $c\in\Crit_0(\cG)$ that 
	\begin{align*}
		\nu_{\ta(\om)}(T_\om(c))
		=\nu_{\ta(\om)}(0)=0=\lm_\om\absval{T_\om'(0)}^t\nu_\om(c)
		=\lm_\om\int_c\absval{T_\om'}^td\nu_\om.
	\end{align*} 
The same is also true for $c\in\Crit_1(\cG)$. So, we have proved that the measure $\nu$ is $t$--Fconformal. The proof of Theorem \ref{t1mis.cm7} is complete. 
\end{proof}

\section{$t$--Fconformal, $t$--Bconformal, and $t$-conformal Measures}\label{FBCM}

This section is devoted to establishing and clarifying relationships between $t$--Fconformal, $t$--Bconformal and $t$--conformal random measures for a random critically finite map and, to describe the structure they form. We start with the following.
\begin{proposition}\label{p1mis.cmii1}
	Fix $t\geq 0$. Then 
	\begin{enumerate}[(a)]
		\item every $t$--Bconformal measure $\nu$ is $t$--conformal;
		\item a $t$--Fconformal measure $\nu$ is $t$--conformal if and only if $\nu(\Omega\times\Crit(\cG))=0$ (assuming $t>0$ for the ``$\Rightarrow$" implication).
	\end{enumerate}
\end{proposition}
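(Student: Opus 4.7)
The plan is to reduce both assertions to the abstract Jacobian results of Section~3, applied fiberwise. For $m$-a.e.\ $\om$ set $\psi_\om : \cJ_\om(T) \to [0,\infty)$, $\psi_\om(x) := \lm_\om |T_\om'(x)|^t$. Because the partition $\{\cJ_\om(T) \cap \Dl : \Dl \in \cG\}$ of $\cJ_\om(T)$ is finite and each $T_\om|_\Dl$ is a bijection, the map $T_\om : \cJ_\om(T) \to \cJ_{\ta(\om)}(T)$ is of standard type. Using the inverse-function identity $|(T_{\om,\Dl}^{-1})'(y)|^t = |T_\om'(T_{\om,\Dl}^{-1}(y))|^{-t}$, a direct comparison of definitions shows that $\nu$ is $t$--Bconformal (respectively $t$--Fconformal) precisely when, for $m$-a.e.\ $\om$, $T_\om$ is $\psi_\om$--Bconformal (respectively $\psi_\om$--Fconformal) with respect to $(\nu_\om, \nu_{\ta(\om)})$ in the abstract sense of Section~3.

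For part (a), condition \ref{(M5b)} gives $\psi_\om \le \lm_\om A^t < +\infty$ everywhere on $\cJ_\om(T)$, so $\psi_\om$ is finite-valued; Corollary~\ref{c1mis.cmi4} then yields fiberwise $\psi_\om$--conformality, hence $t$--conformality of $\nu$. For the ``$\Leftarrow$'' direction of part~(b), assume $\nu(\Om \times \Crit(\cG)) = 0$, so $\nu_\om(\Crit(\cG) \cap \cJ_\om(T)) = 0$ for $m$-a.e.\ $\om$. Since $\lm_\om > 0$ and $T_\om'$ vanishes precisely on $\Crit(\cG)$, the zero set of $\psi_\om$ is contained in $\Crit(\cG)$ when $t>0$ and is empty when $t=0$; in either case $\nu_\om(\psi_\om^{-1}(0)) = 0$. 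Proposition~\ref{p1mis.cmi3}(b) then upgrades $\psi_\om$--Fconformality to $\psi_\om$--conformality, so $\nu$ is $t$--conformal.

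For the ``$\Rightarrow$'' direction of (b) with $t > 0$, fix $\om$ for which the conformality relations hold and a critical point $c_\Dl \in \Crit(\cG) \cap \cJ_\om(T)$, where $c_\Dl \in \Dl\in\cG_C$. Apply $t$--Fconformality to the (trivially special) singleton $A = \{c_\Dl\}$:
\begin{equation*}
\nu_{\ta(\om)}(\{T_\om(c_\Dl)\}) \;=\; \lm_\om |T_\om'(c_\Dl)|^t\, \nu_\om(\{c_\Dl\}) \;=\; 0,
\end{equation*}
since $T_\om'(c_\Dl) = 0$ and $t > 0$. Now apply $t$--Bconformality to the null set $A = \{T_\om(c_\Dl)\}\sbt\{0,1\}$ for this same $\Dl$:
\begin{equation*}
\nu_\om(\{c_\Dl\}) \;=\; \nu_\om\(T_{\om,\Dl}^{-1}(\{T_\om(c_\Dl)\})\) \;=\; \lm_\om^{-1} \int_{\{T_\om(c_\Dl)\}} |(T_{\om,\Dl}^{-1})'|^t\, d\nu_{\ta(\om)} \;=\; 0,
\end{equation*}
the integral vanishing because $\nu_{\ta(\om)}$ charges the integration domain with zero mass. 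Since $\cG_C$ is finite, summing over $c_\Dl$ and integrating over $\om$ yields $\nu(\Om \times \Crit(\cG)) = 0$. The one delicate point is this last display: $|(T_{\om,\Dl}^{-1})'|$ blows up at $T_\om(c_\Dl)$ (with the conventions $1/0 = +\infty$), so the integrand is effectively $+\infty$ on the integration domain. That is precisely why the Fconformality step must come first: it furnishes the null set that neutralizes the singularity of the reciprocal Jacobian, allowing the Bconformality identity to be read off safely.
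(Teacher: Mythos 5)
Your proposal is correct and takes essentially the same route as the paper: parts (a) and the ``$\Leftarrow$'' half of (b) are reduced fiberwise to Corollary~\ref{c1mis.cmi4} and Proposition~\ref{p1mis.cmi3}(b), and the ``$\Rightarrow$'' half combines $t$--Fconformality at a critical point (which kills $\nu_{\ta(\om)}(\{T_\om(c_\Dl)\})$ since $|T_\om'(c_\Dl)|^t=0$) with the Bconformality pull-back of null sets. The only cosmetic difference is that you conclude $\nu_\om(\{c_\Dl\})=0$ directly from \eqref{120180405}, whereas the paper reaches the same contradiction through the operator identity $\nu_{\ta(\om)}(\hat{\tr}_{t,\om}g_\om)=\nu_\om(g_\om)$ with $g_\om=\ind_{c}$.
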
 
\begin{proof}
	Since $\lm_\om\absval{T_\om'(z)}$ never equals $+\infty$, $(a)$ follows immediately from Corollary \ref{c1mis.cmi4}. The part $``\Leftarrow"$ of Proposition \ref{p1mis.cmii1} follows immediately from Proposition \ref{p1mis.cmi3} (b). 
	
Proving the implication $``\Rightarrow"$ with $t>0$, we assume toward a contradiction that $\nu(\Omega\times\Crit(\cG))>0$. Then there exists $c\in\Crit(\cG)$ and $Z\sub\Omega$, a measurable set, such that $m(Z)>0$ and 
	\begin{align*}
		\nu_\om(c)>0
	\end{align*}
for all $\om\in Z$. Note that $t$--Fconformality yields
\begin{equation}\label{520180405}
\nu_\om(T_\om(\Crit(\cG)))=0
\end{equation}
for all $\om\in\Om$. Let $g:\cJ(T)\to[0,1]$ be defined by the formula
	\begin{align*}
		g_\om(z)=\ind_c(z).
	\end{align*}
Then, employing \eqref{520180405}, we have for every $\om\in\Omega$ that
	\begin{align*}
		\nu_{\ta(\om)}(\hat{\tr}_{t,\om}g_\om)
		&=\lm_\om^{-1}\int_{\cJ_{\ta(\om)}(T)}\tr_{t,\om}g_\om d\nu_{\ta(\om)}
		=\lm_{\om}^{-1}\int_{\cJ_{\ta(\om)}(T)}\sum_{x\in T^{-1}_\om(y)}\ind_c(x)\absval{T_\om'(x)}^{-t}d\nu_{\ta(\om)}(y)\\
		&=\lm_{\om}^{-1}\absval{T_\om'(c)}^{-t}\nu_{\ta(\om)}(T_\om(c))
		=\infty\cdot 0=0,
	\end{align*}
	and, for all $\om\in Z$,
	\begin{align*}
		\nu_\om(g_\om)=\int_{\cJ_\om(T)}\ind_c(z)d\nu_\om(z)=\nu_\om(c)>0.
	\end{align*}
	This contradiction finishes the proof. 
\end{proof}

\noindent As an immediate consequence of this proposition, we get the following.

\begin{corollary}\label{c520180405}
If $t\ge 0$, then a random measure $\nu$ on $\cJ(T)$ is $t$--conformal if and only if it is $t$--Bconformal. If this holds, and, in addition, $t>0$, then also
$$
\nu(\Omega\times\Crit(\cG))=0.
$$
\end{corollary}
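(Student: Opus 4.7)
My plan is to read both claims off directly from Proposition~\ref{p1mis.cmii1}, which has already done the substantive work.

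For the equivalence, recall that $t$--conformality of $\nu$ means, by definition, that for $m$--a.e.\ $\om\in\Om$ the map $T_\om$ is simultaneously $\lm_\om|T_\om'|^t$--Fconformal and $\lm_\om|T_\om'|^t$--Bconformal with respect to the pair $(\nu_\om,\nu_{\th(\om)})$. Hence the implication ``$t$--conformal $\Rightarrow$ $t$--Bconformal'' is immediate from the definition. The converse is exactly the content of Proposition~\ref{p1mis.cmii1}(a); the key point there is that the candidate Jacobian $\psi_\om=\lm_\om|T_\om'|^t$ never takes the value $+\infty$, so Corollary~\ref{c1mis.cmi4} applies fiberwise to upgrade $t$--Bconformality to full $t$--conformality.

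For the additional conclusion when $t>0$, assume $\nu$ is $t$--conformal. In particular $\nu$ is $t$--Fconformal, so the ``$\Rightarrow$'' direction of Proposition~\ref{p1mis.cmii1}(b) applies and yields
$$
\nu(\Om\times\Crit(\cG))=0.
$$
The hypothesis $t>0$ enters precisely here: the argument of (b) uses $|T_\om'(c)|^{-t}=+\infty$ at critical points $c$ to force the transfer operator applied to $\ind_c$ to vanish, which, together with the use of $\nu(T_\om(\Crit(\cG)))=0$ (a consequence of $t$--Fconformality, see \eqref{520180405}), contradicts $\nu_\om(c)>0$ on a positive-measure set of fibers.

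I expect no genuine obstacle; everything substantive is already proved in Proposition~\ref{p1mis.cmii1}, and the corollary is essentially a bookkeeping repackaging. The only minor point requiring attention is the appeal to Corollary~\ref{c1mis.cmi4} in (a), where one must verify that the Jacobian function is finite-valued rather than merely nonnegative, and the use of $t>0$ in (b), which cannot be dropped without invalidating the infinity argument.
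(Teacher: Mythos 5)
Your proposal is correct and follows the same route as the paper, which derives the corollary as an immediate consequence of Proposition~\ref{p1mis.cmii1}: the equivalence is definition plus part (a) (via Corollary~\ref{c1mis.cmi4} and finiteness of $\lm_\om|T_\om'|^t$), and the vanishing on $\Om\times\Crit(\cG)$ for $t>0$ is the ``$\Rightarrow$'' direction of part (b). Your extra remarks on where finiteness of the Jacobian and the hypothesis $t>0$ enter are accurate bookkeeping of what the proposition already established.
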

For every $\om\in\Omega$ let 
\begin{align*}
	\Crit_-(\cG,\om):=\union_{n=0}^\infty T_\om^{-n}(\Crit(\cG))\spand \Crit_-(\cG):=\union_{\om\in\Omega}\set{\om}\times\Crit_-(\cG,\om).
\end{align*}

We shall prove the following elementary, but quite important result.

\begin{lemma}\label{l1cmii3.2}
	Fix $t>1$. If $\nu=(\nu_\om)_{\om\in\Om}$ is a $t$--conformal measure then there exists $\eta_*>0$ such that
	\begin{align*}
\inf \set{\nu_\om([\eta_*,1-\eta_*]):\om\in\Om}
\ge Q_\cG(t):=\frac{1}{2}(\kp A^{-1})^{2t}>0,
	\end{align*}
where, we recall, both $\kappa>1$ and $A>1$ come from conditions  \ref{(M1)}--\ref{(M7)} in Section~\ref{SRIMM}.
\end{lemma}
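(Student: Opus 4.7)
The plan is to combine a $t$-Bconformality pull-back through the two boundary branches $T_{\om,0}^{-1}$ and $T_{\om,1}^{-1}$ with complementary upper and lower bounds on the eigenvalues $\lm_\om$ extracted from $t$-Fconformality, and close the estimate via a short dichotomy together with at most two iterations of the pull-back.

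First I record the eigenvalue estimates. Applying $t$-Fconformality to the special set $\Dl_0\cap\cJ_\om(T)$ yields the identity $1=\lm_\om\int_{\Dl_0}|T_\om'|^t\,d\nu_\om$, so the uniform bound $|T_\om'|\le A$ from \ref{(M5b)} immediately gives $\lm_\om\ge A^{-t}$. Restricting the same integral to $\Dl_0\cap[0,s]$, where $|T_\om'|\ge\kp$ by \ref{(M5a)}--\ref{(M5c)}, yields the conditional upper bound $\lm_\om\le 1/\bigl(\kp^t\nu_\om([0,s])\bigr)$, and the symmetric argument using $\Dl_1$ gives $\lm_\om\le 1/\bigl(\kp^t\nu_\om([1-s,1])\bigr)$; combining, $\lm_\om\le 2/\kp^t$ whenever $\nu_\om([0,s])+\nu_\om([1-s,1])\ge 1/2$.

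Next, set $\eta_*:=s/A^2$. Since $T_\om(0)=0$, $T_\om(1)=0$ and $|T_\om'|\le A$, a direct check shows that the preimages $T_{\om,0}^{-1}([A\eta_*,1-A\eta_*])$ and $T_{\om,1}^{-1}([A\eta_*,1-A\eta_*])$ are disjoint subsets of $[\eta_*,1-\eta_*]$. Applying $t$-Bconformality on each branch and bounding the integrand below by $|(T_{\om,\Dl}^{-1})'|^t\ge A^{-t}$ yields the pull-back inequality
\[
\nu_\om([\eta_*,1-\eta_*])\;\ge\;2\lm_\om^{-1}A^{-t}\,\nu_{\th(\om)}([A\eta_*,1-A\eta_*]).
\]
I then perform a dichotomy: call $\om$ \emph{tight} if $\nu_\om([s,1-s])\ge 1/2$, otherwise \emph{loose}. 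If $\om$ is tight, then $[s,1-s]\sbt [\eta_*,1-\eta_*]$ already gives $\nu_\om([\eta_*,1-\eta_*])\ge 1/2\ge Q_\cG(t)$. In the loose case, the conditional upper bound from Step~1 forces $\lm_\om\le 2\kp^{-t}$, and the pull-back sharpens to $\nu_\om([\eta_*,1-\eta_*])\ge(\kp/A)^t\,\nu_{\th(\om)}([A\eta_*,1-A\eta_*])$. Because $A\eta_*\le s$ and $A^2\eta_*=s$, iterating the loose-case pull-back at most twice and invoking tightness at some $\th^j(\om)$ with $j\in\{1,2\}$ gives
\[
\nu_\om([\eta_*,1-\eta_*])\;\ge\;(\kp/A)^{2t}\cdot\tfrac12\;=\;Q_\cG(t),
\]
where $\kp/A\le 1$ is used to control the easier $j=1$ case by the $j=2$ case.

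The main obstacle will be the extremal possibility that every forward iterate $\om,\th(\om),\th^2(\om),\ldots$ is loose. In this situation the bound $\lm_{\th^j(\om)}\le 2\kp^{-t}$ holds along the entire orbit; combined with the a priori lower bound $\lm_\om\ge A^{-t}$ from Step~1 and Birkhoff's Ergodic Theorem applied to $\log\lm_\om$, the ergodicity of $\th$ forces this configuration to occur only on an $m$-null subset of $\Om$. Since $t$-conformality of $\nu$ is imposed only $m$-a.e., one may redefine $\nu_\om$ on this null set without destroying the conformality conditions, thereby promoting the $m$-a.e.\ estimate to the claimed uniform bound $\nu_\om([\eta_*,1-\eta_*])\ge Q_\cG(t)$ for every $\om\in\Om$. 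I expect this final passage from an $m$-a.e.\ bound to a pointwise uniform one — and in particular, rigorously excluding the ``always loose'' orbit — to be the most delicate step of the write-up.
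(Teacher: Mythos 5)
Your eigenvalue bounds in Step~1 and the two-branch pull-back inequality in Step~2 are both sound in spirit (modulo small constant discrepancies: looseness gives only that the sum $\nu_\om([0,s])+\nu_\om([1-s,1])$ exceeds $1/2$, hence $\lm_\om\le 4\kp^{-t}$ rather than $2\kp^{-t}$), and the tight/loose dichotomy is a reasonable organizing device. But the proof as written has a genuine gap precisely where you flag it, and I do not think the gap is bridgeable along the route you chose.

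The problem is structural. Your pull-back inequality sends you from $\om$ to $\th(\om)$, and at each loose step you pay a factor $(\kp/A)^t$ while the inner radius contracts from $\eta_*$ to $A\eta_*$. With $\eta_*=s/A^2$ the iteration is capped at two steps; after that the inner radius hits $s$ and the mechanism stalls. So the argument only closes if tightness occurs at $\th(\om)$ or $\th^2(\om)$. Your attempted rescue — that an ``always loose'' orbit is $m$-null — does not follow from the ingredients you cite. The set of $\om$ whose entire forward orbit is loose is $\th$-invariant modulo $m$-null sets, so by ergodicity it has measure $0$ \emph{or} $1$, and nothing singles out $0$. The two bounds you want to play off each other, $\lm_\om\ge A^{-t}$ and $\lm_\om\lesssim\kp^{-t}$ on loose fibers, are mutually compatible because $\kp\le A$; and Birkhoff only computes $\int_\Om\log\lm_\om\,dm$, whose sign is unconstrained here (for $t$ large, $\cEP(t)<0$). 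So no contradiction is available, and the always-loose scenario cannot be ruled out. Finally, even if it could, you would obtain an $m$-a.e.\ bound, and modifying $\nu_\om$ on an $m$-null set of fibers to repair it is not a legitimate move for a disintegration.

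The paper sidesteps all of this by arranging the eigenvalues to \emph{cancel} rather than be estimated. It pushes the mass near $\{0,1\}$ forward via $T_\om^2$ (accumulating factors $\kp^{2t}\lm_\om^2$ and landing near $0$, since $T_\rho(0)=T_\rho(1)=0$), and then pulls the resulting neighborhood of $0$ back via $T_{\om,1}^{-2}$ into the middle of $\Dl_1$, specifically into $[1-|\Dl_1|,1-|\Dl_1|/2]$ (accumulating factors $A^{-2t}\lm_\om^{-2}$). The $\lm_\om^{\pm 2}$ cancel exactly, giving the single-fiber inequality
\begin{equation*}
\nu_\om([\eta_0,\eta_1])\;\ge\;(\kp A^{-1})^{2t}\bigl(\nu_\om([0,\eta_0])+\nu_\om([\eta_1,1])\bigr),
\end{equation*}
which holds for \emph{every} $\om$ with no reference to $\th^j(\om)$, no eigenvalue bounds, and no appeal to ergodicity. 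The dichotomy then closes the argument immediately: either $\nu_\om([\eta_0,\eta_1])>1/2$ outright, or the right-hand side is $\ge\frac12(\kp A^{-1})^{2t}$. To repair your write-up you would need to discover this cancellation mechanism (forward–then–backward at the \emph{same} base point so that $\lm_\om^n$ meets $\lm_\om^{-n}$), not just a sharper ergodic-theoretic lemma.
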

\begin{proof}
Fix $\om\in\Om$. By \ref{(M5b)} and \ref{(M5c)}, for every $x\in[0,s]$, we have that
	\begin{align}\label{1cmii3.2}
		T_{\om}([0,x])\sub[0,Ax]
	\end{align}
and 
	\begin{align}\label{2cmii3.2}
		\nu_{\ta(\om)}\(T_{\om}([0,x])\)\geq \kp^t\lm_\om\nu_\om([0,x]).	
	\end{align}
	By the same token, for every $x\in[0,s/A]$, we have that
	\begin{align}\label{3cmii3.2}
		T_\om^2([0,x])\sub[0,A^2x]
	\end{align}
	and 
	\begin{align}\label{4cmii3.2}
		\nu_{\ta^2(\om)}\(T_{\om}^2([0,x])\)\geq \kp^{2t}\lm_\om\lm_{\ta(\om)}\nu_\om([0,x]).
	\end{align}
	Also, by the same token, for every $y\in[0,\kp s]$,
	\begin{align}\label{6cmii3.2}
		T_{\om,1}^{-1}([0,y])\sub [1-\kp^{-1}y,1]
	\end{align}
	and 
	\begin{align}\label{7cmii3.2}
		\nu_\om(T_{\om,1}^{-1}([0,y]))\geq A^{-t}\lm_{\om}^{-1}\nu_{\ta(\om)}([0,y]).
	\end{align}
	Moreover, this yields 
	\begin{align}\label{1cmii3.2.1}
		\nu_\om(T_{\om,1}^{-2}([0,y]))\geq A^{-2t}\lm_\om^{-2}\nu_{\ta^2(\om)}([0,y]).
	\end{align}
Furthermore, for every $z\in[1-s,1]$,
	\begin{align}\label{9cmii3.2}
T_{\om}([z,1])\sub [0,A(1-z)]
	\end{align}
	and 
	\begin{align}\label{10cmii3.2}
		\nu_{\ta(\om)}(T_{\om,1}[z,1])\geq \kp^t\lm_\om\nu_\om([z,1]).
	\end{align}
	Finally, for every $\xi\in(1-A^{-1},1)$
	\begin{align}\label{12cmii3.2}
		T_{\om,1}^{-1}([\xi,1])\sub\left[1-\absval{\Dl_1},1-\absval{\Dl_1}+\max\set{1-\frac{\ep}{\kp}, (A(1-\xi))^{\frac{1}{1+\gm_+}}}\right]
	\end{align}
	and 
	\begin{align}\label{13cmii3.2}
		\nu_\om(T_{\om,1}^{-1}([\xi,1]))\geq A^{-t}\lm_\om^{-1}\nu_{\ta(\om)}([\xi,1]).
	\end{align}
	Now fix $\xi_1\in\left(\max\set{1-(\absval{\Dl_1}/2),1-A^{-1}},1\right)$ so close to 1 that
	\begin{align*}
		\max\set{(1-\xi_1)/\kp, (A(1-\xi_1))^{\frac{1}{1+\gm_+}}}\leq\absval{\Dl_1}/2.
	\end{align*} 
	It then follows from \eqref{12cmii3.2} that for every $\xi\in[\xi_1,1]$
	\begin{align}\label{1cmii3.3.1}
		T_{\om,1}^{-1}([\xi,1])\sub [1-\absval{\Dl_1},1-(\absval{\Dl_1}/2)].
	\end{align}
	Fix further $\eta_1\in(\max\set{\xi_1,1-s},1)$ so close to 1 that 
	\begin{align*}
		A(1-\eta_1)&\leq s,\\
		A^2(1-\eta_1)&\leq \kp s,
	\end{align*}
	and 
	\begin{align*}
		1-\kp^{-1}A^2(1-\eta_1)\geq \xi_1.
	\end{align*}
	Then by these choices, by \eqref{9cmii3.2}, \eqref{1cmii3.2}, \eqref{6cmii3.2}, \eqref{1cmii3.3.1}, \eqref{10cmii3.2}, \eqref{2cmii3.2}, and \eqref{1cmii3.2.1}, we get that 
	\begin{align*}
		T_{\om}([\eta_1,1])&\sub [0,A(1-\eta_1)]\sub [0,s],\\
		T_{\om}^2([\eta_1,1])&\sub [0,A^2(1-\eta_1)]\sub [0,\kp s],\\
		T_{\om,1}^{-1}(T_\om^2([\eta_1,1]))&\sub [1-\kp^{-1}A^2(1-\eta_1),1]\sub [\xi_1,1],\\
		T_{\om,1}^{-2}(T_\om^2([\eta_1,1]))&\sub [1-\absval{\Dl_1},1-(\absval{\Dl_1}/2)]\sub [1-\absval{\Dl_1},\xi_1],
	\end{align*}
	and
	\begin{align*}
		\nu_\om(T_{\om,1}^{-2}(T_\om^2([\eta_1,1])))&\geq A^{-2t}\lm_\om^{-2}\nu_{\ta^2(\om)}(T_\om^2([\eta_1,1]))\\
		&\geq A^{-2t}\lm_\om^{-2}\kp^t\lm_{\ta(\om)}\nu_{\ta(\om)}(T_\om([\eta_1,1]))\geq A^{-2t}\kp^t\lm_\om^{-2}\lm_{\ta(\om)}\kp^t\lm_\om\nu_\om([\eta_1,1])\\
		&=(\kp A^{-1})^{2t}\lm_\om^{-2}\lm_\om^2\nu_\om([\eta_1,1])=(\kp A^{-1})^{2t}\nu_\om([\eta_1,1]).
	\end{align*}
	Therefore, 
	\begin{align}\label{1cmii3.3}
		\nu_\om([1-\absval{\Dl_1},\eta_1])\geq (\kp A^{-1})^{2t}\nu_\om([\eta_1,1]).
	\end{align}
	Now, fix $\eta_0\in (0,s)$ so small that 
	\begin{align*}
		A^2\eta_0\leq \kp s,
	\end{align*}
	and 
	\begin{align*}
		1-\kp^{-1}A^2\eta_0\geq \xi_1.
	\end{align*}
	Then, by these choices, \eqref{3cmii3.2}, \eqref{6cmii3.2}, \eqref{1cmii3.3.1}, \eqref{4cmii3.2}, and \eqref{1cmii3.2.1}, we get that 
	\begin{align*}
		T_\om^2([0,\eta_0])&\sub [0,A^2\eta_0]\sub[0,\kp s],\\
		T_{\om,1}^{-1}(T_\om^2([0.\eta_0]))&\sub [1-A^{2}\eta_0,1]\sub[\xi_1,1],\\
		T_{\om,1}^{-2}(T_\om^2([0,\eta_0]))&\sub[1-\absval{\Dl_1},1-(\absval{\Dl_1}/2)]\sub[1-\absval{\Dl_1},\eta_1],
	\end{align*} 
	and
	\begin{align*}
		\nu_\om(T_{\om,1}^{-2}(T_\om^2([0,\eta_0])))&\geq A^{-2t}\lm_\om^{-2}\nu_{\ta^2(\om)}(T_\om^2([0,\eta_0]))\\
		&\geq A^{-2t}\lm_\om^{-2}\kp^{2t}\lm_\om^2\nu_\om([0,\eta_0])=(\kp A^{-1})^{2t}\nu_\om([0,\eta_0]).
	\end{align*}
	Therefore,
	\begin{align*}
		\nu_\om([1-\absval{\Dl_1},\eta_1])\geq (\kp A^{-1})^{2t}\nu_\om([0,\eta_0]).
	\end{align*}
	Along with \eqref{1cmii3.3} and since $\eta_0<s\leq 1-\absval{\Dl_1}$, this gives
	\begin{align}\label{1cmii3.4}
		\nu_\om([\eta_0,\eta_1])\geq \nu_\om([1-\absval{\Dl_1},\eta_1])\geq (\kp A^{-1})^{2t}\left(\nu_\om([0,\eta_0])+\nu_\om([\eta_1,1])\right).
	\end{align}
	Now if $\nu_\om([\eta_0,\eta_1])\leq 1/2$, then $\nu_\om([0,\eta_0])+\nu_\om([\eta_1,1])\geq 1/2$, hence \eqref{1cmii3.4} yields
	\begin{align*}
		\nu_\om([\eta_0,\eta_1])\geq \frac{1}{2}(\kp A^{-1})^{2t}.
	\end{align*}
	Since $\kp\leq A$, this implies, in either case, that 
	\begin{align*}
		\nu_\om([\eta_0,\eta_1])\geq \frac{1}{2}(\kp A^{-1})^{2t}.
	\end{align*}
	Setting $\eta_*:=\min\set{\eta_0,1-\eta_1}$, we thus get that 
	\begin{align*}
		\nu_\om([\eta_*,1-\eta_*])\geq \frac{1}{2}(\kp A^{-1})^{2t}.
	\end{align*}
	The proof of Lemma \ref{l1cmii3.2} is complete.
	\end{proof}

We shall prove the following.
\begin{lemma}\label{l1mis.cmii2}
	Let $t>0$. Let $\nu'$ and $\nu''$ be two $t$--conformal measures for $T:\cJ(T)\to\cJ(T)$. Then 
	\begin{enumerate}[(a)]
		\item $\nu'(\Crit_-^c(\cG))=\nu''(\Crit_-^c(\cG))=1$,
		\item $\nu'\comp\nu''$ and, equivalently, $\nu_\om'\comp\nu_\om''$ for $m$--a.e. $\om\in\Omega$,
		\item for $m$--a.e. $\om\in\Omega$ there exists an unbounded sequence $\(n_\om(k)\)_{k=1}^\infty$ such that the limit below exists and
\begin{align*}
0<\lim_{k\to\infty}\frac{(\lm_{t,\om}')^{n_\om(k)}}{(\lm_{t,\om}'')^{n_\om(k)}}
<+\infty,
\end{align*}	
\end{enumerate}
	where $\lm_t':\Om\to (0,+\infty)$ and $\lm_t'':\Om\to (0,+\infty)$ are measurable functions witnessing $t$--conformality respectively of $\nu'$ and $\nu''$.
\end{lemma}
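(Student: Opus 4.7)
The plan is to prove (a), (b), (c) in order, with (b) the crux and (c) following from it by a recurrence argument. For (a), since $t>0$, Corollary~\ref{c520180405} already gives $\nu'(\Om\times\Crit(\cG))=\nu''(\Om\times\Crit(\cG))=0$. For every $n\ge 0$ and $\Ga\in\cG^n$, the $t$--Bconformality identity \eqref{120180405} with $A=\Crit(\cG)$ yields
\[
\nu'_\om\big(T_{\om,\Ga}^{-n}(\Crit(\cG))\big)
=(\lm')_\om^{-n}\int_{\Crit(\cG)}\big|(T_{\om,\Ga}^{-n})'\big|^t\,d\nu'_{\th^n(\om)}=0,
\]
since the integrand is finite and the target carries zero $\nu'_{\th^n(\om)}$--mass for $m$--a.e.\ $\om$. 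Summing over the finite family $\cG^n$ and then countably over $n$ gives $\nu'_\om(\Crit_-(\cG,\om))=0$ for $m$--a.e.\ $\om$, and the same argument handles $\nu''$.

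\medskip

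For (b), I would work on $\Crit_-^c(\cG)$, which by (a) carries full mass for both measures, and take the fiberwise Lebesgue decomposition $\nu'_\om=\phi_\om\,\nu''_\om+\sigma_\om$ with $\sigma_\om\perp\nu''_\om$. For any special set $A\sub\cJ_\om(T)$ disjoint from $\Crit(\cG)$, Fconformality of both measures produces
\[
\nu'_{\th(\om)}(T_\om A)=\lm'_\om\!\int_A|T'_\om|^t\,d\nu'_\om,\qquad
\nu''_{\th(\om)}(T_\om A)=\lm''_\om\!\int_A|T'_\om|^t\,d\nu''_\om.
\]
Decomposing the left-hand sides against $\nu''_{\th(\om)}$, using injectivity of $T_\om|_A$ to change variables, and matching absolutely continuous versus singular parts on the image, one obtains the cocycle identity
\[
\phi_{\th(\om)}\circ T_\om=\frac{\lm'_\om}{\lm''_\om}\,\phi_\om\quad\nu''_\om\text{-a.e.},
\qquad
\phi_{\th^n(\om)}\circ T^n_\om=\frac{(\lm')^n_\om}{(\lm'')^n_\om}\,\phi_\om,
\]
while the singular part $\sigma_\om$ is pushed forward into the singular part of $\nu'_{\th(\om)}$.

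\medskip

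The remaining task in (b) is to show that $\phi_\om$ is $\nu''_\om$--essentially bounded above and below by positive constants independent of $\om$; this simultaneously forces $\sigma_\om\equiv 0$ and gives $\nu'\comp\nu''$. I would pull back the ``bulk'' reference interval $J:=[\eta_*,1-\eta_*]$ supplied by Lemma~\ref{l1cmii3.2} along every inverse branch $T_{\om,\Ga}^{-n}$ whose image lies in $\cJ_\om(T)\cap\Crit_-^c(\cG,\om)$. Combining the Special Bounded Distortion Property (Proposition~\ref{p120180630}) with the Bconformality formula \eqref{120180405} gives
\[
\nu'_\om\big(T_{\om,\Ga}^{-n}(J)\big)\comp(\lm')_\om^{-n}\,\big|(T^n_\om)'(\xi)\big|^{-t}\,\nu'_{\th^n(\om)}(J),
\]
and the analogous estimate for $\nu''$, with the same $\xi$ and the same distortion constant. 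Lemma~\ref{l1cmii3.2} keeps $\nu'_{\th^n(\om)}(J),\nu''_{\th^n(\om)}(J)$ bounded below, so the ratio $\nu'_\om(T_{\om,\Ga}^{-n}(J))/\nu''_\om(T_{\om,\Ga}^{-n}(J))$ is pinched to $(\lm')^n_\om/(\lm'')^n_\om$ times a bounded factor. Since the pullbacks $T_{\om,\Ga}^{-n}(J)$ have diameters shrinking to zero (Corollary~\ref{c1mis81}) and generate the Borel $\sigma$-algebra on $\cJ_\om(T)\cap\Crit_-^c(\cG,\om)$, a Vitali / density-point argument together with the cocycle identity (which relates the eigenvalue ratio to $\phi$ values on the same orbit) forces $\phi_\om\in[c,C]$ $\nu''_\om$--a.e.\ with $c,C>0$ independent of $\om$. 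Part (c) is then immediate: the cocycle identity reads $(\lm')^n_\om/(\lm'')^n_\om=\phi_{\th^n(\om)}(T^n_\om x)/\phi_\om(x)$, and by Poincar\'e recurrence applied to the skew product $T$ (e.g.\ via an invariant probability equivalent to $\nu''$ constructed later in the paper, or directly through ergodicity of $\th$ on $(\Om,m)$ combined with bounded distortion), for $\nu''$--a.e.\ $(\om,x)$ there is an unbounded sequence $n_\om(k)$ with $T^{n_\om(k)}(\om,x)\in\{\phi\in[c,C]\}$, whence $(\lm')^{n_\om(k)}_\om/(\lm'')^{n_\om(k)}_\om\in[c/C,C/c]$ and any convergent subsequence yields the required limit in $(0,+\infty)$.

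\medskip

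The main obstacle is the uniform boundedness of $\phi_\om$ in (b). Two issues complicate the clean cylinder argument sketched above: inverse branches that pass close to $\Crit(\cG)$ suffer unbounded distortion, so one must restrict to branches that avoid critical neighborhoods long enough for Proposition~\ref{p120180630} to apply, and then patch together the resulting partial statements using the estimates \eqref{1mis.cm7}--\eqref{4mis.cm7.1.1}; moreover, Lemma~\ref{l1cmii3.2} supplies the necessary reference lower bound only for $t>1$, so the case $0<t\le 1$ requires an alternative route, most naturally by approximation through the uniformly hyperbolic truncated systems $\cJ^{(k)}(T)$ of Lemma~\ref{l1_2018_03_23}, where a direct conformal comparison is available and can be transferred to the limit measure via Theorem~\ref{t1mis.cm7}.
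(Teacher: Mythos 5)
Your proof of (a) matches the paper's in spirit and is correct. The body of the argument, however, diverges from the paper and contains a genuine gap.

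For (b) you propose a Lebesgue decomposition \(\nu'_\om=\phi_\om\nu''_\om+\sigma_\om\) and a cocycle identity for \(\phi_\om\). This is a reasonable alternative structure, but the heart of the statement --- that \(\phi_\om\) is bounded above and below by positive constants (which simultaneously kills \(\sigma_\om\) and gives \(\nu'\comp\nu''\)) --- is exactly the hard part, and in your write-up it is not proved. You say it ``follows from a Vitali / density-point argument together with the cocycle identity,'' but that is just restating the problem: on the set \(\cJ_\om^*(T)\) the pull-back intervals \(H_n(\om,x)=T_{\om,x}^{-n}([\eta,1-\eta])\) shrink to points with bounded eccentricity only along the infinite subsequence \(N_\om(x)\) of returns to the bulk, and to deduce absolute continuity from the cylinder comparison one needs a genuine Besicovitch-type covering lemma (the paper invokes Morse's theorem) precisely because the family \(\{H_n(\om,x)\}\) is not a Vitali family in the usual sense. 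The paper actually never factors out a Radon--Nikodym derivative at all: it bounds \(\nu''_\om(E)\) directly in terms of \(\nu'_\om(E)\) for every compact \(E\subset\cJ_\om^*(T)\) using the covering and the uniform lower bound \(\nu'_{\ta^n(\om)}([\eta,1-\eta])\ge Q_\cG(t)\), and then appeals to regularity. Supplying that covering argument is what your proposal is missing.

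Your derivation of (c) is also problematic. Appealing to ``an invariant probability equivalent to \(\nu''\) constructed later in the paper'' would be circular: Theorem~\ref{t2mis119} (existence and uniqueness of the invariant version) uses Lemma~\ref{l1mis.cmii2}(b) to prove ergodicity. Ergodicity of \(\ta\) on \((\Om,m)\) alone does not give recurrence of the fiber coordinate either. The paper instead obtains (c) as an output of the very covering estimate \eqref{3mis139}: if \(\liminf_k(\lm'_\om)^{n_\om(1/k)}/(\lm''_\om)^{n_\om(1/k)}=0\) then every compact \(E\subset\cJ_\om^*(T)\) has \(\nu''_\om(E)=0\), contradicting (a) via regularity; the symmetric argument bounds the \(\limsup\), and a convergent subsequence then exists. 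No invariant measure is needed.

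Finally, your worry about the case \(0<t\le 1\) on the grounds that Lemma~\ref{l1cmii3.2} is stated with ``Fix \(t>1\)'' is spurious. Inspecting its proof, the hypothesis \(t>1\) is never used: every inequality there (of the type \(\nu_{\ta(\om)}(T_\om([0,x]))\ge\kp^t\lm_\om\nu_\om([0,x])\) or \(\nu_\om(T_{\om,1}^{-1}([0,y]))\ge A^{-t}\lm_\om^{-1}\nu_{\ta(\om)}([0,y])\)) holds for all \(t\ge 0\), and the conclusion \(Q_\cG(t)=\tfrac12(\kp A^{-1})^{2t}>0\) is meaningful for all \(t\ge 0\). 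Indeed Lemma~\ref{l1cmii3.2B}, stated ``exactly the same proof,'' holds for all \(t\ge 0\), and the paper's proof of Lemma~\ref{l1mis.cmii2} applies the bound freely for every \(t>0\). The detour through truncated systems you suggest for \(t\le 1\) is unnecessary.
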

\begin{proof}
Since $t>0$, item $(a)$ immediately follows from Corollary~\ref{c520180405} and $t$--conformality of $\nu'$ and $\nu''$. By $t$-conformality of $\nu'$ and $\nu''$, we also have that 
	\begin{align*}
\nu_\om'(\{0\})
=\lm_{\ta^{-1}(\om)}\absval{T_{\ta^{-1}(\om)}'(c)}^t\nu_{\ta^{-1}(\om)}(\{c\})
=\lm_{\ta^{-1}(\om)}\cdot 0\cdot \nu_{\ta^{-1}(\om)}(\{c\})=0,
	\end{align*}
	where $c$ is any critical point in $\Crit_0(\cG)$. By the same token, $\nu_\om'(\{1\})=0$; so
	\begin{align*}
		\nu_\om'(\set{0,1})=\nu_\om''(\set{0,1})=0
	\end{align*}
	for $m$-a.e. $\om\in\Omega$. Employing $t$--conformality again, we get that
	\begin{align}\label{1mis.cmii3}
		\nu_\om'(\set{0,1}^-_*(\om))=\nu_\om''(\set{0,1}^-_*(\om))=0	
	\end{align}
	for $m$-a.e. $\om\in\Omega$, where
	\begin{align*}
		\set{0,1}^-_*(\om)=\Crit_-^c(\cG,\om)\cap\union_{n=0}^\infty T_\om^{-n}(\set{0,1}).
	\end{align*}
	Let $\Omega_0$ be the corresponding set of measure $m$ equal to 1. Fix $\om\in\Omega_0$ and
	\begin{align*}
		x\in \cJ_\om^*(T):=\cJ_\om(T)\bs\left(\Crit_-(\cG,\om)\cup\union_{n=0}^\infty T_\om^{-n}(\set{0,1})\right)=\cJ_\om(T)\bs\left(\Crit_-(\cG,\om)\cup\set{0,1}^-_*(\om) \right).
	\end{align*}
	Since $T_\om(1)=0$ and since 0 is a uniformly expanding fixed point, for all $\om\in\Omega$ there must exist $\eta\in(0,\eta_*]$ such that the set 
	\begin{align*}
		N_\om(x):=\set{n\geq 0: T_\om^n(x)\in (\eta,1-\eta)}
	\end{align*}
	is infinite. Fix $n\in N_\om(x)$ and let  
	\begin{align}\label{2mis135}
		H_{n}(\om,x):=T_{\om,x}^{-n}([\eta,1-\eta]).
	\end{align}
	It then follows from Proposition~\ref{p120180630} and $t$--conformality of both measures $\nu'$ and $\nu''$ that 
	\begin{align*}
		\nu_\om''(H_n(\om,x))
		\leq K^t_{\eta}(\lm_\om'')^{-n}\absval{(T_\om^n)'(x)}^{-t}\nu_{\ta^n(\om)}''([\eta,1-\eta])\leq K^t_{\eta}(\lm_\om'')^{-n}\absval{(T_\om^n)'(x)}^{-t}
	\end{align*} 
and 
	\begin{align}\label{?:11}
		\nu_\om'(H_n(\om,x))
		&\geq K^{-t}_\eta(\lm_\om')^{-n}\absval{(T_\om^n)'(x)}^{-t}\nu_{\ta^n(\om)}'([\eta,1-\eta])\\
		&\geq Q_\cG(t)K_\eta^{-t}(\lm_\om')^{-n}\absval{(T_\om^n)'(x)}^{-t}.
	\end{align}
	Moreover,
	\begin{align*}
		\ol{B}\left(x,K_\eta^{-1}\absval{(T_\om^n)'(x)}^{-1}\eta\right)\sub H_n(\om,x)\sub\ol{B}\left(x,K_\eta\absval{(T_\om^n)'(x)}^{-1}\eta\right).
	\end{align*}
	The first two give 
	\begin{align}\label{2mis137}
		\nu_\om''(H_n(\om,x))\leq K_\eta^{2t}Q_\cG^{-1}(t)\frac{(\lm_\om')^n}{(\lm_\om'')^n}\nu_\om'(H_n(\om,x)),
	\end{align}
	while the third one can be rewritten in the form
	\begin{align}\label{1mis137}
		\ol{B}\left(x,r_n(\om,x)\right)\sub H_n(\om,x)\sub\ol{B}\left(x,Mr_n(\om,x)\right)
	\end{align}
	where $r_n(\om,x):=	K_\eta^{-1}\absval{(T_\om^n)'(x)}^{-1}(1-2\eta)$ and $M:=K_\eta^2$.
	
	Keep $\om\in\Om_0$ and fix $E\sub\cJ^*_\om(T)$, an arbitrary compact set, as well as $\dl>0$. In view of \eqref{2mis135} and Corollary \ref{c1mis81} there exists $n_\om(\delta)\in N_\om(x)$ such that for every $x\in E$ we have that 
\begin{align}\label{3mis137}
\diam(H_{n_\om(\dl)}(\om,x))<\dl 
\end{align}
and 
\begin{align}\label{3mis137B}
\nu_\om'\left(\union_{x\in E}\ov B(x,Mr_{n_\om}(\om,x))\right)<\dl+\nu_\om'(E) \
{\rm and } \ \nu_\om''\left(\union_{x\in E}\ov B(x,Mr_{n_\om}(\om,x))\right)<\dl+\nu_\om''(E).
\end{align}
	Since all the sets $H_{n_\om(\delta)}(\om,x)$ are closed and convex and since \eqref{1mis137} holds, all the hypotheses of Morse's Theorem from \cite{Morse} (see also p. 6 of \cite{Guzman}) are satisfied, and it gives us a natural number $\ell\geq 1$ depending only on $M$, and a countable set $D\sub E$ such that the family $\set{H_{n_\om(\delta)}(\om,x)}_{x\in D}$ is a cover of $E$ (with diameters $\leq \dl$) and $D$ can be split into $\ell$ disjoint subsets $D_1,D_2,\dots,D_\ell$ such that for each $1\leq k\leq \ell$ the family $\set{H_{n_\om(\delta)}(\om,x)}_{x\in D_k}$ consists of mutually disjoint sets. Using \eqref{1mis137}, \eqref{2mis137}, and \eqref{3mis137B} we then get 
	\begin{align*}
		\nu_\om'\left(\union_{x\in D}H_{n_\om(\delta)}(\om,x)\right)\leq\nu_\om'\left(\union_{x\in D}\ov B(x,Mr_{n_\om(\delta)}(\om,x))\right)<\dl+\nu_\om'(E)
	\end{align*}
	and 
	\begin{align}
		\nu_\om''(E)
		&\leq \nu_\om''\left(\union_{x\in D}H_{n_\om(\delta)}(\om,x)\right)
		\leq\sum_{k=1}^\ell\sum_{x\in D_k}\nu_\om''(H_{n_\om(\delta)}(\om,x))\nonumber\\
		&\leq K_\eta^{2t}Q_\cG^{-1}(t)\frac{(\lm_\om')^{n_\om(\delta)}}{(\lm_\om'')^{n_\om(\delta)}}\sum_{k=1}^\ell\sum_{x\in D_k}\nu_\om'(H_{n_\om(\delta)}(\om,x))\nonumber\\
		&=K_\eta^{2t}Q_\cG^{-1}(t)\frac{(\lm_\om')^{n_\om(\delta)}}{(\lm_\om'')^{n_\om(\delta)}}\sum_{k=1}^\ell \nu_\om'\left(\union_{x\in D_k}H_{n_\om(\delta)}(\om,x)\right)\nonumber\\
		&\leq \ell K_\eta^{2t}Q_\cG^{-1}(t)\frac{(\lm_\om')^{n_\om(\delta)}}{(\lm_\om'')^{n_\om(\delta)}}\nu_\om'\left(\union_{x\in D}H_{n_\om(\delta)}(\om,x)\right)\nonumber\\
		&\leq \ell K_\eta^{2t}Q_\cG^{-1}(t)\frac{(\lm_\om')^{n_\om(\delta)}}{(\lm_\om'')^{n_\om(\delta)}}(\nu_\om'(E)+\dl).\label{3mis139}
	\end{align}	
If 
$$
\liminf_{k\to\infty}\frac{(\lm_{t,\om}')^{n_\om(1/k)}}{(\lm_{t,\om}'')^{n_\om(1/k)}}=0
$$
then considering a subsequence of the sequence $\(n_\om(1/k)\)_{k=1}^\infty$ witnessing the above lower limit to be $0$, we would get that $\nu_\om''(E)=0$. Since $\nu_\om''$ is a regular measure, this would yield $\nu_\om(\cJ_\om^*(T))=1$, contrary to $(a)$ and \eqref{1mis.cmii3}. Thus 
	\begin{align*}
\liminf_{k\to\infty}\frac{(\lm_{t,\om}')^{n_\om(1/k)}}{(\lm_{t,\om}'')^{n_\om(1/k)}}>0
	\end{align*}
	for all $\om\in\Omega_0$. Exchanging the roles of $\nu'$ and $\nu''$, we thus also have that 
\begin{align*}
\liminf_{k\to\infty}\frac{(\lm_{t,\om}'')^{n_\om(1/k)}}{(\lm_{t,\om}')^{n_\om(1/k)}}>0.
\end{align*}	
Equivalently,
	\begin{align}\label{2mis.cmii3}
		M(\om):=\limsup_{k\to\infty}\frac{(\lm_{t,\om}')^{n_\om(1/k)}}{(\lm_{t,\om}'')^{n_\om(1/k)}}<\infty.
	\end{align}
So, taking a subsequence of the sequence $\(n_\om(1/k)\)_{k=1}^\infty$ witnessing \eqref{2mis.cmii3}, we conclude both that item (c) holds, and, also with the help of \eqref{3mis139}, that 
	$$
	\nu_\om''(E)\leq K_\eta^{2t} Q_\cG^{-1}(t)M(\om)\ell\nu_\om'(E)
	$$
for every $\om\in\Omega_0$ and every compact set $E\sub \cJ_\om^*(T)$. 
Since $\nu_\om''$ is a regular measure this inequality extends to all Borel sets yielding $\nu_\om''<<\nu_\om'$. Thus $\nu_\om'\comp\nu_\om''$ for all $\om\in\Omega_0$, and $(b)$ is proved. The proof of Lemma \ref{l1mis.cmii2} is therefore complete. 
\end{proof}
The case of $t=0$ is even clearer and the picture is complete. 
\begin{proposition}\label{p1cmii4}
	If $t=0$, then there exists a unique 0--conformal measure for the random map $T:\cJ(T)\to\cJ(T)$. Denote it by $\nu^0$. We then have that 
	\begin{align*}
		\nu_\om^0(T_{\om,\Gm}^{-n}(\cJ_{\ta^n(\om)}(T)))=\lm_\om^{-n}=(\#\cG)^{-n}
	\end{align*} 
	for every $\om\in\Omega$, every $n\geq 0$, and every $\Gm\in \cG^n$. 
	In particular, $\lm_\om=\#\cG$ and, the measure $\nu^0$ on $[0,1]$ is atomless. Moreover, $\nu^0$ is $0$--conformal.
\end{proposition}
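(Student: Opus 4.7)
The plan is to prove uniqueness (with the explicit cylinder formula) first, then existence, and finally atomlessness.

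First I would extract the value of $\lm_\om$ and the cylinder measures from any $0$-conformal measure $\nu$. For each $\Dl\in\cG$, the set $A_\Dl:=\cJ_\om(T)\cap\Dl$ is special (since $T_\om|_\Dl$ is injective) and $T_\om(A_\Dl)=\cJ_{\ta(\om)}(T)$, so $0$-Fconformality gives $1=\nu_{\ta(\om)}(\cJ_{\ta(\om)}(T))=\lm_\om\nu_\om(A_\Dl)$, hence $\nu_\om(A_\Dl)=\lm_\om^{-1}$. Summing over $\Dl\in\cG$ and noting that the sets $A_\Dl$ have pairwise intersections consisting of finitely many boundary points (which are atom--free: if $\alpha_\om:=\nu_\om(\{0\})>0$, then $0$-Fconformality applied to the fixed point at $0$ would give $\alpha_{\ta(\om)}=\lm_\om\alpha_\om$, so by $\ta$-invariance of $m$ the function $\om\mapsto\log\alpha_\om$ satisfies $\log\alpha_{\ta(\om)}-\log\alpha_\om=\log\lm_\om\geq 0$, forcing $\lm_\om=1$ $m$-a.e., contradicting $\lm_\om\geq\#\cG\geq 2$; similarly for $\nu_\om(\{1\})$ and all their preimages on the boundaries of elements of $\cG$), we conclude $1=\#\cG\cdot\lm_\om^{-1}$, i.e.\ $\lm_\om=\#\cG$. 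Iterating $0$-Bconformality along any $\Gm\in\cG^n$ yields $\nu_\om(T_{\om,\Gm}^{-n}(\cJ_{\ta^n(\om)}(T)))=\lm_\om^{-n}=(\#\cG)^{-n}$.

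For uniqueness I would invoke Corollary~\ref{c1mis81}: the cylinders $\{T_{\om,\Gm}^{-n}(\cJ_{\ta^n(\om)}(T))\}_{\Gm\in\cG^n,\,n\ge 0}$ form a semi--algebra of sets whose diameters tend to $0$ uniformly in $\Gm$, hence generate the Borel $\sg$-algebra of $\cJ_\om(T)$; two measures agreeing on a generating semi--algebra and on its complements (handled by finite summation over $\Gm\in\cG^n$) must coincide. This gives uniqueness of $\nu^0_\om$ for $m$-a.e.\ $\om$.

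For existence I would proceed by the explicit symbolic construction, which is the cleanest at $t=0$. By Corollary~\ref{c1mis81} the coding map $\pi_\om:(\cG\setminus\{\Dl_0\})^\NN\cup\cG^\NN\to\cJ_\om(T)$ given by $\pi_\om(\Gm):=\bigcap_{n\ge 0}T_{\om,\Gm|_n}^{-n}(I)$ is well-defined and continuous. Let $\mathfrak{m}$ be the Bernoulli measure on $\cG^\NN$ giving each symbol mass $(\#\cG)^{-1}$, and set $\nu^0_\om:=\mathfrak{m}\circ\pi_\om^{-1}$. Measurability of $\om\mapsto\nu^0_\om$ follows from measurability of $\om\mapsto T_\om$ together with the uniform convergence in Corollary~\ref{c1mis81}. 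Direct verification on cylinders shows that $\nu^0_\om(T_{\om,\Gm}^{-n}(\cJ_{\ta^n(\om)}(T)))=(\#\cG)^{-n}$ for all $\Gm\in\cG^n$, and together with the fact that $T_\om$ conjugates $\pi_\om$ to the shift, this yields both $0$-Bconformality and $0$-Fconformality with $\lm_\om=\#\cG$ (alternatively, one can simply apply Theorem~\ref{t1mis.cm7} at $t=0$ — the truncated operators $\tr_{0,k,\om}$ are straightforward counting operators, and for $t=0$ the assumption $\nu^{(0)}(\Om\times\Crit(\cG))=0$ is automatic since critical points contribute to the problem only through the $|T'|^t$ factor, which now equals $1$). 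Atomlessness is then immediate: each $x\in\cJ_\om(T)$ lies in some cylinder of length $n$, which has measure $(\#\cG)^{-n}\to 0$, so $\nu^0_\om(\{x\})=0$.

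The one technical point requiring care — and the closest thing to an obstacle — is the atom-at-boundary argument used to justify the summation yielding $\lm_\om=\#\cG$. The argument above, based on the cocycle relation $\alpha_{\ta(\om)}=\lm_\om\alpha_\om$ and the $\ta$-invariance of $m$, is the cleanest way to rule it out uniformly in $\om$; once this is in place, the rest of the proof is essentially bookkeeping on cylinders.
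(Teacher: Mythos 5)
Your route is essentially the paper's: compute the mass of every cylinder $T_{\om,\Gm}^{-n}(\cJ_{\ta^n(\om)}(T))$ from conformality, sum to get $\lm_\om=\#\cG$, deduce uniqueness from the shrinking cylinder diameters (Corollary~\ref{c1mis81}), and construct $\nu^0$ explicitly (your push-forward of the uniform Bernoulli measure under $\pi_\om$ is the paper's Kolmogorov-extension construction in different clothing; your fallback via Theorem~\ref{t1mis.cm7} also works, the cleaner reason being that at $t=0$ the weight $\lm_\om\absval{T_\om'}^0=\lm_\om$ is finite and positive, so Fconformality already gives conformality by Corollary~\ref{c3mis.cmi4} with no condition on $\Crit(\cG)$). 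The one step that fails as written is exactly the one you flagged: ruling out atoms at the shared boundary points. To upgrade $1=\nu_\om\bigl(\bigcup_\Dl A_\Dl\bigr)\le\#\cG\cdot\lm_\om^{-1}$ to an equality you need those overlap points to be $\nu_\om$-null, and your contradiction ends with ``contradicting $\lm_\om\ge\#\cG\ge 2$''. But at that stage the only bounds available are $\lm_\om\ge 1$ (from $\nu_\om(A_\Dl)=\lm_\om^{-1}\le 1$) and $\lm_\om\le\#\cG$ (from the covering and subadditivity); the lower bound $\lm_\om\ge\#\cG$ is precisely what the atom-free claim is needed to establish, so the argument is circular. (For what it is worth, the paper's own proof silently treats the cylinder families as partitions and never addresses the possible overlap atoms, so your instinct that something must be said here is sound; only the justification needs replacing.)

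The gap is local and repairable. Every shared endpoint $p$ of two elements of $\cG$ satisfies $T_\om(p)\in\{0,1\}$, and Fconformality gives $\nu_\om(\{p\})=\lm_\om^{-1}\nu_{\ta(\om)}(\{T_\om(p)\})$, so it suffices to kill atoms at $0$ and $1$. Your cocycle computation, carried out with the bounded function $\al_\om:=\nu_\om(\{0\})$ itself rather than $\log\al_\om$ (which need not be integrable), together with $\lm_\om\ge 1$ and $\ta$-invariance of $m$, yields $\lm_\om=1$ for a.e.\ $\om$ in the $\ta$-invariant set $\{\al>0\}$, hence a.e.\ by ergodicity. Now contradict $\lm\equiv 1$ directly rather than via $\lm_\om\ge\#\cG$: if $\lm\equiv 1$, then by Bconformality every cylinder of every level has full $\nu_\om$-measure; but the level-$2$ cylinders through the branches $(\Dl_0,\Dl_0)$ and $(\Dl_1,\Dl_1)$ are disjoint (the only candidate common point is a shared endpoint $p$ of $\Dl_0$ and $\Dl_1$, and $T_\om(p)=1\notin\Dl_0$, so $p$ is not in the first cylinder), and two disjoint sets cannot both have measure $1$. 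Hence $\lm_\om>1$ on a set of positive measure, so $\int\log\lm\,dm>0$, and since $0$ lies in the single level-$\ell$ cylinder $T_{\om,0}^{-\ell}(\cJ_{\ta^\ell(\om)}(T))$ of measure $\lm_\om^{-\ell}\to 0$, one gets $\nu_\om(\{0\})=0$ a.e.; the atom at $1$ and at all boundary overlap points then vanish by the one-step relation above. With this substitution the summation is exact, $\lm_\om=\#\cG$, and the rest of your proposal (uniqueness via the generating shrinking cylinders, existence, atomlessness) goes through.
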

\begin{proof}
	Let $\nu$ be an arbitrary $0$--conformal measure. The formula $\nu_\om(T_{\om,\Gm}^{-n}(\cJ_{\ta^n(\om)}(T)))=\lm_\om^{-n}$ follows immediately from $0$--conformality. And further, from this:
	\begin{align*}
		1=\nu(\cJ_\om(T))=\sum_{\Gm\in\cG^n}\nu_\om(T_{\om,\Gm}^{-n}(\cJ_{\ta^n(\om)}(T)))=\sum_{\Gm\in\cG^n}\lm_\om^{-n}=(\#\cG)^n\lm_\om^{-n}.
	\end{align*}
	Whence $\lm_\om^n=(\#\cG)^n$. Uniqueness now follows since the finite collections of sets 
	\begin{align*}
		\left(\set{T_{\om,\Gm}^{-n}(\cJ_{\ta^n(\om)}(T)):\Gm\in\cG^n} \right)_{n=0}^\infty
	\end{align*} 
	form a sequence of finer and finer partitions of $\cJ_\om(T)$, whose diameters, due to Corollary \ref{c1mis81} converge to zero. The existence part of the proof is also straightforward; just declaring 
	\begin{align*}
		\nu_\om^0(T_{\om,\Gm}^{-n}(\cJ(T))):=(\#\cG)^{-n}
	\end{align*}
defines a Borel probability measure on $\cJ(T)$ due to Kolmogorov's Extension Theorem. As $0$--conformality of $\nu^0$ is obvious, the proof is now complete. 
\end{proof}
We denote by $CM(T)$ the set of those reals $t\geq 0$ for which a $t$--conformal measure exists.

\section{Expected Pressure 1}\label{sec: Topological Pressure}
Let $\NN_\infty=\NN\cup\set{\infty}=\set{1,2,\dots,\infty}$. For every $t\geq 0$ and every $k\in\NN$, let $\nu_k$ and $\lm_k$ be the objects resulting from Lemma \ref{l1mis.cm5}; see also formula \eqref{2mis.cm5} and Lemma~\ref{l1_2018_03_23}. Define 
\begin{align*}
	\cEP_k(t):=\int_{\Om}\log\lm_k\, dm,
\end{align*}
which is well defined because of Lemma~\ref{l1mis.cm7.2}. Define further 
\begin{align*}
	\ul{\cE}\P_\infty(t):=\liminf_{k\to\infty}\cEP_k(t)\spand \ol{\cE}\P_\infty(t):=\limsup_{k\to\infty}\cEP_k(t).
\end{align*}
If $\ul{\cE}\P_\infty(t)=\ol{\cE}\P_\infty(t)$, denote their common value by 
\begin{align*}
	\cEP_\infty(t).
\end{align*}
We also define 
\begin{align*}
	\chi_0:=\int_{\Om}\log\absval{T_\om'(0)}dm(\om)\geq\log\kp>0
\end{align*}
and call this quantity the \textit{Lyapunov exponent} of $T$ at 0.
\begin{definition}\label{d1tp3}
	We say that the number $t\geq 0$ is $A$(asymptotically)--admissible if 
	\begin{align*}
		\ol{\cE}\P_\infty(t)>-\frac{1}{1+\gm_+}\chi_0t.
	\end{align*}
	We denote the set of all $A$--admissible parameters $t$ by $AA(T)$.
\end{definition}

Frequently, for the sake of uniform exposition, we will denote the objects $\nu$ and $\lm$ (both depending on $t$) produced in Theorem~\ref{t1mis.cm7} respectively by $\nu_\infty$ and $\lm_\infty$. Exactly the same proof as that of Lemma~\ref{l1cmii3.2} gives the following.

\begin{lemma}\label{l1cmii3.2B}
There exists a constant $\eta_*\in (0,\min\set{1/2,1-(|\Dl_1|/2)})$ such that
	\begin{align*}
\nu_{k,\om}([\eta_*,1-\eta_*])\geq Q_\cG(t)=\frac12(\kappa A^{-1})^{2t}
	\end{align*}
for all $t\ge 0$, all $\om\in\Om$ and all $k\in\NN_\infty$.
\end{lemma}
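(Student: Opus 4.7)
The plan is to run the argument of Lemma~\ref{l1cmii3.2} verbatim for each $\nu_{k,\om}$, including the limiting case $k=\infty$ where $\nu_\infty:=\nu^{(t)}$ and $\lm_\infty:=\lm^{(t)}$ are the measure and eigenvalue function produced by Theorem~\ref{t1mis.cm7}. First I would observe that the geometric inclusions \eqref{1cmii3.2}, \eqref{3cmii3.2}, \eqref{6cmii3.2}, \eqref{9cmii3.2}, and \eqref{12cmii3.2} in that proof depend only on the deterministic constants $\kp$, $A$, $s$, $|\Dl_1|$, $\gm_+$ fixed by conditions \ref{(M1)}--\ref{(M7)}. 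Hence the subsequent choices of $\xi_1$, $\eta_1$, $\eta_0$, and finally $\eta_*:=\min\{\eta_0,\,1-\eta_1\}$, can be made once and for all, independently of both $k\in\NN_\infty$ and $\om\in\Om$.

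Next I would verify that every measure-theoretic estimate in that proof survives the transition to the truncated setting and to weak limits. The forward-direction inequalities such as $\nu_{k,\ta(\om)}(T_\om([0,x]))\geq \kp^t\lm_{k,\om}\nu_{k,\om}([0,x])$ follow immediately from $t$--Fconformality, since $[0,x]\sbt\Dl_0$ is a special set: $\nu_{k,\ta(\om)}(T_\om([0,x]))=\lm_{k,\om}\int_{[0,x]}|T_\om'|^t d\nu_{k,\om}$, and the uniform lower bound $|T_\om'|\geq\kp$ on $[0,s]$ from \ref{(M5c)} yields the conclusion. The ``backward'' estimates of the form $\nu_{k,\om}(T_{\om,1}^{-1}([0,y]))\geq A^{-t}\lm_{k,\om}^{-1}\nu_{k,\ta(\om)}([0,y])$ also follow from $t$--Fconformality alone: the set $B:=T_{\om,1}^{-1}([0,y])\sbt\Dl_1$ is special with $T_\om(B)=[0,y]$, so Fconformality gives $\nu_{k,\ta(\om)}([0,y])=\lm_{k,\om}\int_B|T_\om'|^t d\nu_{k,\om}\leq \lm_{k,\om}A^t\nu_{k,\om}(B)$ by \ref{(M5b)}. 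This subsumption is the one point worth checking carefully, because for the weak-limit case $k=\infty$ only $t$--Fconformality is guaranteed unconditionally by Theorem~\ref{t1mis.cm7}; for finite $k$, the full $t$--conformality from Lemma~\ref{l1_2018_03_23} a fortiori suffices.

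Chaining two forward iterates under $T_\om$ with two backward iterates under $T_{\om,1}^{-1}$ exactly as in the original proof, all factors of $\lm_{k,\om}$ and $\lm_{k,\ta(\om)}$ telescope and cancel, producing
\begin{align*}
\nu_{k,\om}([\eta_0,\eta_1])\geq (\kp A^{-1})^{2t}\(\nu_{k,\om}([0,\eta_0])+\nu_{k,\om}([\eta_1,1])\).
\end{align*}
The standard case dichotomy that closes the proof of Lemma~\ref{l1cmii3.2}, namely distinguishing whether $\nu_{k,\om}([\eta_0,\eta_1])\leq 1/2$ or $>1/2$ and invoking $\kp\leq A$ in the second case, then delivers $\nu_{k,\om}([\eta_0,\eta_1])\geq \tfrac{1}{2}(\kp A^{-1})^{2t}=Q_\cG(t)$, and the stated lemma follows at once from the inclusion $[\eta_0,\eta_1]\sbt[\eta_*,1-\eta_*]$. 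I do not anticipate any genuine obstacle: all inputs (deterministic geometry, $t$--Fconformality of $\nu_{k,\om}$, and the bounded range $\bt_1\le\lm_{k,\om}\le\bt_2$ from Lemma~\ref{l1mis.cm7.2}, though the last is not even needed once the $\lm$'s cancel) are uniform in $k$ and $\om$.
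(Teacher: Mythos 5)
Your proposal is correct and coincides with the paper's own argument: the paper proves this lemma with the single remark that exactly the same proof as Lemma~\ref{l1cmii3.2} applies, which is precisely your plan of rerunning that proof with $\nu_{k,\om}$ and $\lm_{k,\om}$ in place of $\nu_\om$ and $\lm_\om$, the choices of $\xi_1,\eta_1,\eta_0,\eta_*$ being purely deterministic and hence uniform in $k$ and $\om$. Your explicit check that every estimate there (including the backward-type ones such as \eqref{7cmii3.2} and \eqref{13cmii3.2}) needs only $t$--Fconformality, which Theorem~\ref{t1mis.cm7} guarantees unconditionally for $k=\infty$, together with the telescoping of the $\lm$-factors, is exactly the verification that makes the paper's one-line proof legitimate.
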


\noindent As an immediate consequence of this lemma, we get the following.

\begin{lemma}\label{l1cmii3.2BC}
For every $\eta\in(0,\eta_*]$ we have that
	\begin{align*}
		\nu_{k,\om}(I_\om(1)\cap[0,1-\eta])\geq Q_\cG(t)=\frac12(\kappa A^{-1})^{2t}
	\end{align*}
for all $\om\in\Om$ and all $k\geq 1$.
\end{lemma}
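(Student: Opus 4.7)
The plan is to derive the inequality by identifying an interval inside $I_\om(1)\cap[0,1-\eta]$ on which the preimage construction from the proof of Lemma~\ref{l1cmii3.2B} already supplies mass at least $Q_\cG(t)$. First I would note that, since $\Dl_0$ and $\Dl_1$ are the elements of $\cG$ containing $0$ and $1$ respectively, with mutually disjoint interiors in $[0,1]$, one has $T_{\om,0}^{-1}(1)=\absval{\Dl_0}\le 1-\absval{\Dl_1}$, so $I_\om(1)=[\absval{\Dl_0},1]\spt[1-\absval{\Dl_1},1]$. Next, the auxiliary constants in the proof of Lemma~\ref{l1cmii3.2} satisfy $\eta_1>\xi_1>1-\absval{\Dl_1}/2$, and therefore $\eta_*=\min\{\eta_0,1-\eta_1\}\le 1-\eta_1<\absval{\Dl_1}/2$. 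Hence for every $\eta\in(0,\eta_*]$ we get $1-\eta\ge 1-\eta_*\ge\eta_1$, giving the key geometric inclusion
$$
[1-\absval{\Dl_1},\,\eta_1]\sbt I_\om(1)\cap[0,1-\eta].
$$

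The main analytic step is to extract from the proof of Lemma~\ref{l1cmii3.2B} (run verbatim with $\nu_{k,\om}$ in place of $\nu_\om$) the intermediate chain
$$
\nu_{k,\om}\big([1-\absval{\Dl_1},\,\eta_1]\big)\ge(\kp A^{-1})^{2t}\big(\nu_{k,\om}([0,\eta_0])+\nu_{k,\om}([\eta_1,1])\big),
$$
obtained by placing the two double preimages $T_{\om,1}^{-2}(T_\om^2([0,\eta_0]))$ and $T_{\om,1}^{-2}(T_\om^2([\eta_1,1]))$ into $[1-\absval{\Dl_1},1-\absval{\Dl_1}/2]\sbt[1-\absval{\Dl_1},\eta_1]$ and combining the conformality estimates \eqref{2cmii3.2}, \eqref{4cmii3.2}, \eqref{7cmii3.2}, \eqref{1cmii3.2.1}. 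I would then apply the same dichotomy as in that proof: either $\nu_{k,\om}([\eta_0,\eta_1])\le 1/2$, in which case the right--hand side above is at least $\frac12(\kp A^{-1})^{2t}=Q_\cG(t)$; or $\nu_{k,\om}([\eta_0,\eta_1])>1/2\ge Q_\cG(t)$ (using $\kp\le A$, so $(\kp A^{-1})^{2t}\le 1$). Chaining with the geometric inclusion yields the lemma.

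The step I expect to be the main obstacle is the second alternative in the dichotomy when $\eta_0<\absval{\Dl_0}$: then the portion $[\eta_0,\absval{\Dl_0})\sbt\Dl_0^\circ$ sits outside $I_\om(1)$, so a large $\nu_{k,\om}([\eta_0,\eta_1])$ does not immediately produce mass inside $I_\om(1)\cap[0,1-\eta]$. To handle this I would push the mass of $[\eta_0,\absval{\Dl_0})$ forward under the expanding branch $T_\om\rvert_{\Dl_0}$ and reapply conformality (using the uniform bounds on $\lm_{k,\om}$ from Lemma~\ref{l1mis.cm7.2}) to obtain a competing lower bound on $\nu_{k,\om}([\absval{\Dl_0},\eta_1])\sbt I_\om(1)\cap[0,1-\eta]$, or, alternatively, absorb a constant factor into $Q_\cG(t)$ by shrinking $\eta_*$ so that $\eta_*<\absval{\Dl_0}$ is forced, in which case the direct containment $[\eta_*,1-\eta_*]\sbt I_\om(1)\cap[0,1-\eta]$ combined with Lemma~\ref{l1cmii3.2B} closes the argument.
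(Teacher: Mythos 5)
Your analysis of the geometry is correct and valuable: you rightly observe that nothing in the construction of $\eta_*$ in the proof of Lemma~\ref{l1cmii3.2} forces $\eta_*\ge\absval{\Dl_0}$ (indeed, when $\absval{\Dl_0}>1/3$ one automatically has $\eta_*\le 1-\eta_1<\absval{\Dl_1}/2\le(1-\absval{\Dl_0})/2<\absval{\Dl_0}$, and $\eta_0$ is chosen small as well), and hence the naïve inclusion $[\eta_*,1-\eta_*]\sbt I_\om(1)\cap[0,1-\eta]$ cannot be the end of the story. The paper presents Lemma~\ref{l1cmii3.2BC} as an ``immediate consequence'' of Lemma~\ref{l1cmii3.2B}, which suggests precisely the inclusion argument, but your scrutiny exposes that this step needs more justification than the paper provides, at least for $k\ge 2$ (for $k=1$ the support of $\nu_{1,\om}$ already lies in $I_\om(1)$). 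Retreating into the proof of Lemma~\ref{l1cmii3.2} and pulling out the interval $[1-\absval{\Dl_1},\,\eta_1]\sbt I_\om(1)\cap[0,1-\eta]$, which carries the conformal lower bound, is the right instinct and disposes of the first alternative of the dichotomy.

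However the second alternative remains genuinely open, and neither of your two proposed repairs is satisfactory. Your second fix is stated backwards: shrinking $\eta_*$ so that $\eta_*<\absval{\Dl_0}$ is precisely what makes $[\eta_*,1-\eta_*]\not\sbt I_\om(1)=[\absval{\Dl_0},1]$; the containment you invoke would require $\eta_*\ge\absval{\Dl_0}$, which demands enlarging $\eta_*$, and this is foreclosed by the construction of $\eta_*$ as a minimum of several smallness conditions. Your first fix is underspecified: pushing mass of $[\eta_0,\absval{\Dl_0})$ forward under $T_\om\rvert_{\Dl_0}$ produces information about $\nu_{k,\ta(\om)}$ at a different fiber, not about $\nu_{k,\om}$, and to return that mass to the fiber $\om$ inside $I_\om(1)\cap[0,1-\eta]$ you would need a controlled backward step (e.g.\ through $T_{\om,1}^{-1}$ into $\Dl_1$ as in the proof of Lemma~\ref{l1cmii3.2}), whose constants you have not tracked; it is not clear the resulting bound is $\ge Q_\cG(t)$ rather than some degraded $cQ_\cG(t)$ with $c<1$. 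So the proof as proposed is incomplete in the second case, and the direction of the work needed is to either (i) show that the mass of $\nu_{k,\om}$ on $V_\om(1)\sms\{0\}=[0,\absval{\Dl_0})$ is bounded away from $1$ uniformly in $k$ and $\om$, forcing enough mass into $I_\om(1)$, or (ii) run a reinforced version of the two-step push--pull argument of Lemma~\ref{l1cmii3.2} directly on an interval whose preimage lands inside $I_\om(1)\cap[0,1-\eta]$ in both cases of the dichotomy. Neither of these appears in the proposal.
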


Having $\eta\in(0,\eta_*]$, put 	
\begin{equation}\label{220180529}
	I_{\om}(j,\eta):=
	\begin{cases}
	I_\om(j) &\text{ if } j\geq 2,\\
	I_\om(1)\cap[0,1-\eta] &\text{ if } j=1
	\end{cases}
\end{equation}
and recall that 
$$
I_\om(1)\cap[0,1-\eta]=I_\om(1)\cap[\eta,1-\eta].
$$
We shall prove the following. 

\begin{lemma}\label{l1cmii3.2BD}
For all $\eta\in(0,\eta_*]$ and all $j\in\NN$ there exists $Q_j\geq 1$ such that for all $k\in\set{j,j+1,\dots,\infty}$, all $1\leq i\leq j$, and all $\om\in\Om$
	\begin{align*}
		\nu_{k,\om}(I_{\om}(i,\eta))\geq Q_j.
	\end{align*} 
\end{lemma}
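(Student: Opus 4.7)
Plan. I would split into the case $i=1$, which is immediate: Lemma~\ref{l1cmii3.2BC} gives $\nu_{k,\om}(I_\om(1,\eta)) \ge Q_\cG(t)$ uniformly in $k \in \NN_\infty$ and $\om \in \Om$. For the substantive range $2 \le i \le j$, where by definition $I_\om(i,\eta) = I_\om(i) = T_{\om,0}^{-(i-1)}(I_{\ta^{i-1}(\om)}(1))$, the strategy is to push the uniform lower bound $\nu_{k,\ta^{i-1}(\om)}(I_{\ta^{i-1}(\om)}(1,\eta)) \ge Q_\cG(t)$ back along the $i-1$ iterated non-critical inverse branches, picking up only controlled multiplicative losses.

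Concretely, for $2 \le i \le j$ I would set
\begin{align*}
A := T_{\om,0}^{-(i-1)}\bigl(I_{\ta^{i-1}(\om)}(1,\eta) \cap \cJ_{\ta^{i-1}(\om)}^{(k)}(T)\bigr) \subset I_\om(i)
\end{align*}
(replacing $\cJ_{\ta^{i-1}(\om)}^{(k)}(T)$ by $\cJ_{\ta^{i-1}(\om)}(T)$ when $k=\infty$). Because $T_\om^{i-1}$ restricted to $I_\om(i)$ coincides with the monotone branch composition $T_{\om,0}^{i-1}$ and is therefore injective, $A$ is a special set for $T_\om^{i-1}$. Iterating the $t$-Fconformality of $\nu_k$, available for every $k \in \NN_\infty$ by Lemma~\ref{l1_2018_03_23} when $k<\infty$ and by Theorem~\ref{t1mis.cm7} when $k=\infty$, yields
\begin{align*}
\nu_{k,\ta^{i-1}(\om)}\bigl(I_{\ta^{i-1}(\om)}(1,\eta)\bigr) = \lm_{k,\om}^{i-1}\int_A \bigl|(T_\om^{i-1})'\bigr|^t \, d\nu_{k,\om}.
\end{align*}
Combining the pointwise bound $|(T_\om^{i-1})'|^t \le A^{(i-1)t}$ coming from \ref{(M5b)}, the uniform upper bound $\lm_{k,\om} \le \bt_2$ from Lemma~\ref{l1mis.cm7.2} (extended to $k=\infty$ by Theorem~\ref{t1mis.cm7}), and Lemma~\ref{l1cmii3.2B}, rearrangement produces
\begin{align*}
\nu_{k,\om}(I_\om(i,\eta)) \ge \nu_{k,\om}(A) \ge \lm_{k,\om}^{-(i-1)} A^{-(i-1)t} \cdot Q_\cG(t) \ge \bt_2^{-(i-1)} A^{-(i-1)t} Q_\cG(t).
\end{align*}
Setting $Q_j := Q_\cG(t) \cdot \min\{1,\bt_2^{-(j-1)} A^{-(j-1)t}\} > 0$ then finishes the proof.

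The one piece of bookkeeping I expect to spend real care on, and where the hypothesis $k \ge j \ge i$ enters crucially, is the verification that $A \subset \cJ_\om^{(k)}(T)$, so that the truncated Fconformality indeed applies to $A$. Unwinding definitions, one needs to check that for any $y \in I_{\ta^{i-1}(\om)}(1,\eta) \cap \cJ_{\ta^{i-1}(\om)}^{(k)}(T)$ and any $0 \le m \le i-2$, the intermediate iterate $T_\om^m(T_{\om,0}^{-(i-1)}(y)) = T_{\ta^m(\om),0}^{-(i-1-m)}(y)$ lies in $U_{\ta^m(\om)}(k)$. Monotonicity of the non-critical branch reduces this to the inequality $y \ge T_{\ta^{i-1}(\om),0}^{-(k-i+1+m)}(1)$, and since $y$ already lies in $I_{\ta^{i-1}(\om)}(1) = [T_{\ta^{i-1}(\om),0}^{-1}(1),1]$, this holds whenever $k - i + 1 + m \ge 1$, i.e.\ whenever $k \ge i - m$. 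The worst case $m=0$ demands $k \ge i$, which is guaranteed. Everything else in the argument is routine.
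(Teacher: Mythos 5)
Your argument is correct and is essentially the paper's own proof: both treat $i=1$ via Lemma~\ref{l1cmii3.2BC} and, for $2\le i\le j$, pull that uniform lower bound back through the non-critical branch $T_{\om,0}^{-(i-1)}$ using $t$--conformality of $\nu_k$ (Lemma~\ref{l1_2018_03_23}, Theorem~\ref{t1mis.cm7}), the derivative bound from \ref{(M5b)}, and the uniform bounds on $\lm_{k,\om}$ from Lemma~\ref{l1mis.cm7.2}. Your phrasing via Fconformality on a special set, and your explicit check that the pulled-back set lies in $\cJ_\om^{(k)}(T)$ (which is exactly where $k\ge j\ge i$ is used), are just more careful renderings of the same computation.
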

\begin{proof}
It follows from Lemma~\ref{l1cmii3.2BC} that
\begin{equation}\label{120180428}
\nu_{k,\om}(I_\om(1))
\ge \nu_{k,\om}(I_{\om}(1,\eta))
\geq Q_\cG(t).
\end{equation}
Having this and applying \ref{(M5b)} along with Lemma~\ref{l1mis.cm7.2} and the last assertion of Theorem~\ref{t1mis.cm7}, we get for all $2\le i\le j$ that
$$
\begin{aligned}
\nu_{k,\om}(I_{\om}(i,\eta))
&=\nu_{k,\om}\(T_{\om,0}^{-(i-1)}\(I_{\theta^{i-1}(\om)}(1)\)\)
\ge A^{-t(i-1)}\lm_{k,\om}^{-(i-1)}\nu_{k,\theta^{i-1}(\om)}\(I_{\theta^{i-1}(\om)}(1))\) \\
&\ge Q_\cG(t) A^{-t(i-1)}\beta_1^{-(i-1)} \\
&\ge Q_\cG(t) A^{-t(j-1)}\min\{1,\beta_1^{-(j-1)}\}.
\end{aligned}
$$
Along with \eqref{120180428} this completes the proof.
\end{proof}

Now we can prove the following. 

\begin{lemma}\label{l2tp3}
Let $t\ge 0$. Then for every $q\in\NN$ there exists a constant $\Ga_t(q)\ge 1$ such that for all $k\ge q$, all $n\in\NN$, and all $\om\in\Om$, we have
	\begin{align*}
		\lm_{k,\om}^n\geq \Ga_t^{-1}(q)\lm_{q,\om}^n.
	\end{align*}
\end{lemma}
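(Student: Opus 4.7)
The plan is to exploit the inclusion $\cJ_\om^{(q)}(T)\sbt\cJ_\om^{(k)}(T)$ valid for $k\ge q$, together with the $k$-conformality of the measures $\nu_k$ established in Lemma \ref{l1_2018_03_23}. The starting observation is that $T_\om^n$ maps $\cJ_\om^{(q)}(T)$ into $\cJ_{\th^n(\om)}^{(q)}(T)$, and conversely, for $x\in\cJ_{\th^n(\om)}^{(q)}(T)$, every preimage of $x$ lying in $\cJ_\om^{(k)}(T)\cap\cJ_\om^{(q)}(T)=\cJ_\om^{(q)}(T)$ is precisely a preimage under the $q$-truncated dynamics, while for $x\notin\cJ_{\th^n(\om)}^{(q)}(T)$ no such preimage exists. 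Consequently, with the convention that $\tr_{q,\om}^n\ind$ is extended by zero outside $\cJ_{\th^n(\om)}^{(q)}(T)$, one has the pointwise equality
\[
\tr_{k,\om}^n\ind_{\cJ_\om^{(q)}(T)}=\tr_{q,\om}^n\ind
\]
on $\cJ_{\th^n(\om)}^{(k)}(T)$. Applying the iterated $k$-conformality identity $\tr_{k,\th^n(\om)}^{*n}\nu_{k,\th^n(\om)}=\lm_{k,\om}^n\nu_{k,\om}$ to the test function $\ind_{\cJ_\om^{(q)}(T)}$ then yields the identity
\[
\lm_{k,\om}^n\,\nu_{k,\om}\(\cJ_\om^{(q)}(T)\)=\nu_{k,\th^n(\om)}\(\tr_{q,\om}^n\ind\).
\]
Since $\nu_{k,\om}\(\cJ_\om^{(q)}(T)\)\le 1$, the lemma now reduces to establishing a uniform inequality $\nu_{k,\th^n(\om)}\(\tr_{q,\om}^n\ind\)\ge\Ga_t(q)^{-1}\lm_{q,\om}^n$, and since $\nu_{q,\th^n(\om)}\(\tr_{q,\om}^n\ind\)=\lm_{q,\om}^n$ by conformality, this amounts to comparing the integrals of the single function $\tr_{q,\om}^n\ind$ against the two measures $\nu_{k,\th^n(\om)}$ and $\nu_{q,\th^n(\om)}$.

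To establish this comparison, I fix $\eta\in(0,\eta_*]$ and consider the ``slab'' $W_\om:=\bigcup_{i=1}^{q}I_\om(i,\eta)$, which by Lemma \ref{l1cmii3.2BD} satisfies the uniform lower bound $\nu_{k,\om}(W_\om)\ge Q_q>0$ for all $k\ge q$ and all $\om\in\Om$. Combining the Special Bounded Distortion Property (Proposition \ref{p120180630}) applied to the inverse branches $T_{\om,\Ga}^{-n}$ with the tail estimates \eqref{1mis.cm7.1.1}--\eqref{2mis.cm7.1.1} for the $q$-conformal measure, I will show that
\[
\tr_{q,\om}^n\ind(x)\,\ge\,c_q\,\lm_{q,\om}^n
\]
for every $x\in W_{\th^n(\om)}\cap\cJ_{\th^n(\om)}^{(q)}(T)$, with $c_q>0$ depending only on $q$. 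Integrating this pointwise lower bound against $\nu_{k,\th^n(\om)}$ and using the slab measure estimate then yields the sought inequality with $\Ga_t(q):=(c_q Q_q)^{-1}$.

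The principal technical obstacle is that the collection of inverse branches $\Ga\in\cG^n$ contributing to the sum $\tr_{q,\om}^n\ind(x)$ varies with $x$: a branch $\Ga$ is admissible at $x$ precisely when $T_{\om,\Ga}^{-n}(x)\in\cJ_\om^{(q)}(T)$, a condition depending on the position of $x$. I address this by partitioning the length-$n$ branches into ``bulk'' branches, whose inverse image of the full slab $W_{\th^n(\om)}$ lies safely inside $\cJ_\om^{(q)}(T)$, and ``boundary'' branches whose image meets $\Dl_0\cap V_\om(q)$. Bounded distortion applied on $W_{\th^n(\om)}$ shows that the contribution of the bulk branches to $\tr_{q,\om}^n\ind$ is essentially independent of $x\in W_{\th^n(\om)}$, and after comparison with $\nu_q$-integration becomes comparable, with $q$-dependent multiplicative constants, to the $\nu_q$-average $\lm_{q,\om}^n$; meanwhile the exponential decay of $\nu_q(V_\om(\ell))$ supplied by \eqref{2mis.cm7.1.1} forces the total weight of the boundary branches to be at most a controlled fraction of the bulk, uniformly in $n$.
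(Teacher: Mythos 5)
There is a genuine gap, and it sits exactly at the point where you extend $\tr_{q,\om}^n\ind$ \emph{by zero} outside $\cJ_{\ta^n(\om)}^{(q)}(T)$ and then integrate against $\nu_{k,\ta^n(\om)}$. Your identity $\lm_{k,\om}^n\,\nu_{k,\om}\big(\cJ_\om^{(q)}(T)\big)=\nu_{k,\ta^n(\om)}\big(\tr_{q,\om}^n\ind\big)$ is correct, but the function on the right vanishes off $\cJ_{\ta^n(\om)}^{(q)}(T)$ (by forward invariance of $\cJ^{(q)}(T)$, no point outside it has a preimage inside $\cJ_\om^{(q)}(T)$), while the measure $\nu_{k,\ta^n(\om)}$ lives on the much larger set $\cJ_{\ta^n(\om)}^{(k)}(T)$. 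Lemma \ref{l1cmii3.2BD} gives you $\nu_{k,\ta^n(\om)}(I_{\ta^n(\om)}(i,\eta))\ge Q_q$, i.e.\ mass of whole intervals, \emph{not} mass of their intersection with $\cJ_{\ta^n(\om)}^{(q)}(T)$; nothing in the paper (or in your sketch) gives a lower bound for $\nu_{k,\om}\big(\cJ_\om^{(q)}(T)\big)$ uniform in $k$, $n$, $\om$, and in fact for $k>q$ this quantity is expected to vanish (typical $\nu_k$--points enter $V(q)$ under iteration), in which case your reduced inequality $\nu_{k,\ta^n(\om)}(\tr_{q,\om}^n\ind)\ge \Ga_t^{-1}(q)\lm_{q,\om}^n$ is simply false, since $\lm_{q,\om}^n\ge\beta_1^n>0$. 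So the final step ``integrate the pointwise bound over the slab and use $\nu_{k}(W)\ge Q_q$'' does not go through: the integrand is (potentially) zero $\nu_k$--almost everywhere on the slab.

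The paper's proof repairs precisely this: it extends $\tr_{q,\om}^n\ind$ to all of $[0,1]$ not by zero but by the \emph{finite--time} truncation, $\tr_{q,\om}^n\ind(x):=\tr_\om^n\ind_{\Ga(q,n)}(x)$ with $\Ga(q,n)=\bigcap_{\ell=0}^nT_\om^{-\ell}(I_*\cap U_{\ta^\ell(\om)}(q))$ (formula \eqref{120180529}). This extension agrees with the truncated operator on $\cJ_{\ta^n(\om)}^{(q)}(T)$, is monotone in the truncation level, and — crucially — is strictly positive on whole intervals $I_{\ta^n(\om)}(j,\eta_q)$ that carry definite $\nu_{k}$--mass. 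Moreover, instead of your (harder) pointwise lower bound $\tr_{q,\om}^n\ind\ge c_q\lm_{q,\om}^n$ at \emph{every} slab point, which forces the bulk/boundary branch analysis you sketch, the paper uses a cheap maximizer argument: a point $x_{\om,q,n}\in\cJ_{\ta^n(\om)}^{(q)}(T)$ maximizing $\tr_{q,\om}^n\ind$ automatically satisfies $\tr_{q,\om}^n\ind(x_{\om,q,n})\ge\int\tr_{q,\om}^n\ind\,d\nu_{q,\ta^n(\om)}=\lm_{q,\om}^n$, and the distortion bound \eqref{1tp4} on the single interval $I_{\ta^n(\om)}(j,\eta_q)$ containing it, together with Lemma \ref{l1cmii3.2BD}, transfers this to $\int\tr_{q,\om}^n\ind\,d\nu_{k,\ta^n(\om)}\ge \hat K_{\eta_q}^{-t}Q_q\lm_{q,\om}^n\,$. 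If you adopt the paper's extension in place of the zero extension, your reduction becomes the paper's first inequality and the rest of your plan collapses into (and is superseded by) this maximizer-plus-distortion step.
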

\begin{proof}
For every $k\in\NN$ and $n\in\NN$ extend the function $\tr_{k,\om}^n\ind:\cJ_{\ta^n(\om)}^{(k)}(T)\lra (0,+\infty)$ to the whole interval $[0,1]$ by setting
\begin{equation}\label{120180529}
\tr_{k,\om}^n\ind(x):=\tr_{\om}^n\ind_{\Ga(k,n)}(x)
\end{equation}
for all $x\in[0,1]$, where
$$
\Ga(k,n):=\bigcap_{\ell=0}^nT_\om^{-\ell}\(I_*\cap U_{\theta^\ell(\om)}(k)\).
$$
Of course
$$
\tr_{k,\om}^n\ind(x)\le \tr_{\ell,\om}^n\ind(x)
$$
for all $\om\in\Om$, all $1\le k\le \ell\le\infty$, all $n\in\NN$ and all $x\in[0,1]$. 
In addition, by applying Proposition~\ref{p120180630} and observing that
$$
1-T_{\om,0}^{-u}\gek |I_\om(u)|
$$
for all $u\ge 1$ and all $\om\in\Om$, we see that for all $\eta\in(0,\eta_*]$ there exists $\hat K_\eta\geq 1$ such that for all $j\ge 1$, all $1\leq i\leq j$, all $k\in\NN_\infty$, all $n\in\NN$, all $\om\in\Om$, and all $x,y\in I_{\ta^n(\om)}(i,\eta)$
\begin{align}\label{1tp4}
\hat K_\eta^{-t}\leq\frac{\tr_{k,\om}^n\ind(y)}{\tr_{k,\om}^n\ind(x)}\leq \hat K_\eta^t.
\end{align}
Also, for every $k\ge 1$ there exists $\eta_k\in(0,\eta_*]$ such that
$$
\cJ_\om^{(k)}(T)\subset [\eta_k,1-\eta_k]
$$
for all $\om\in\Om$. By formula \eqref{2mis.cm5} and Lemma~\ref{l1_2018_03_23}, for all $k,n\in\NN$ and all $\om\in\Om$, we have that
	\begin{align*}
		\lm_{k,\om}^n
=\int_{\cJ_{\ta^n(\om)}^{(k)}(T)}\tr_{k,\om}^n\ind d\nu_{k,\ta^n(\om)}
=\int_0^1\tr_{k,\om}^n\ind(x) d\nu_{k,\ta^n(\om)}(x).
	\end{align*}
By Theorem~\ref{t1mis.cm7} and Proposition~\ref{p1mis.cm7.1.2}, we have
$$
\begin{aligned}
\int_0^1\tr_{\infty,\om}^n\ind(x) d\nu_{\ta^n(\om)}(x)
&=\int_{\cJ_{\ta^n(\om)}(T)}\tr_{\infty,\om}^n\ind d\nu_{\ta^n(\om)} =\int_{\cJ_{\ta^n(\om)}(T)\bs\set{0,1}}\tr_{\infty,\om}^n\ind d\nu_{\ta^n(\om)}\\
&=\lm_{\infty,\om}^n\int_{T_\om^{-1}(\cJ_{\ta^n(\om)}(T)\bs\set{0,1})}\ind d\nu_{\om} 
=\lm_{\infty,\om}^n\nu_\om(T_\om^{-1}(\cJ_{\ta^n(\om)}(T)\bs\set{0,1}))\\
&\leq \lm_{\infty,\om}^n.
\end{aligned}
$$
	
Fix $q\in\NN$. Fix also arbitrary $n\in\NN$ and $\om\in\Om$. Let $x_{\om,q,n}\in\cJ_{\ta^n(\om)}^{(q)}(T)$ be any point maximizing the function 
$$
\tr_{q,\om}^n\ind:\cJ_{\ta^n(\om)}^{(q)}(T)\lra(0,\infty).
$$
Then $x_{\om,q,n}\in I_{\ta^n(\om)}(j,\eta_q)$ for some $1\leq j\leq q$. Hence, using Lemma~\ref{l1cmii3.2BD}, we get for every $k\geq q$ ($k=\infty$ is allowed) that 
\begin{align*}
\lm_{k,\om}^n
&\geq\int_0^1\tr_{k,\om}^n\ind(x) d\nu_{k,\ta^n(\om)}(x)
 \geq \int_0^1\tr_{q,\om}\ind(x) d\nu_{k,
   \ta^n(\om)}(x)
\geq \int_{I_{\ta^n(\om)}(j,\eta_q)}\tr_{q,\om}\ind d\nu_{k,\ta^n(\om)} \\
&\geq  \hat K_{\eta_q}^{-t}\tr_{q,\om}^n\ind(x_{\om,q,n})\nu_{k,\ta^n(\om)}\(I_{\ta^n(\om)}(j,\eta_q)\)
\geq \hat K_{\eta_q}^{-t}Q_q\tr_{q,\om}^n\ind(x_{\om,q,n}) \\
&\geq \hat K_{\eta_q}^{-t}Q_q\int_{\cJ_{\ta^n(\om)}^{(q)}(T)}\tr_{q,\om}^n\ind d\nu_{q,\ta^n(\om)}\\
		&=\hat K_{\eta_q}^{-t}Q_q\lm_{q,\om}^n.
\end{align*}
The proof is complete.
\end{proof}
	
As an immediate consequence of this lemma and Birkhoff's Ergodic Theorem we get the following. 
\begin{corollary}\label{c1tp5}
	For every $t\geq 0$ the sequence $(\cEP_k(t))_{k=1}^\infty$ is weakly increasing. In consequence $\ul{\cE}\P_\infty(t)=\ol{\cE}\P_\infty(t)$, and we recall that this common value is denoted by $\cEP_\infty(t)$.	
\end{corollary}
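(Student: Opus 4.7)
The plan is to deduce monotonicity of the sequence $\bigl(\cEP_k(t)\bigr)_{k=1}^\infty$ directly from Lemma~\ref{l2tp3} by passing through Birkhoff's Ergodic Theorem. Fix $q\in\NN$ and $k\ge q$. Recalling the multiplicative cocycle
$$
\lm_{k,\om}^n=\lm_{k,\om}\lm_{k,\th(\om)}\cdots\lm_{k,\th^{n-1}(\om)},
$$
Lemma~\ref{l2tp3} gives, for every $n\in\NN$ and every $\om\in\Om$,
$$
\frac1n\sum_{j=0}^{n-1}\log\lm_{k,\th^j(\om)}
=\frac1n\log\lm_{k,\om}^n
\ge \frac1n\log\lm_{q,\om}^n-\frac1n\log\Ga_t(q)
=\frac1n\sum_{j=0}^{n-1}\log\lm_{q,\th^j(\om)}-\frac1n\log\Ga_t(q).
$$

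Next, by Lemma~\ref{l1mis.cm7.2} we have $\beta_1\le\lm_{k,\om}\le\beta_2$ for every $k\ge 2$ and every $\om\in\Om$, so the functions $\log\lm_k$ are bounded on $\Om$ and hence in $L^1(m)$. Since $\th$ is ergodic and preserves $m$, applying Birkhoff's Ergodic Theorem twice yields, for $m$--a.e. $\om\in\Om$,
$$
\lim_{n\to\infty}\frac1n\sum_{j=0}^{n-1}\log\lm_{k,\th^j(\om)}=\cEP_k(t)
\quad\text{and}\quad
\lim_{n\to\infty}\frac1n\sum_{j=0}^{n-1}\log\lm_{q,\th^j(\om)}=\cEP_q(t).
$$
The additive error $\tfrac1n\log\Ga_t(q)\to 0$, so passing to the limit in the inequality above gives $\cEP_k(t)\ge\cEP_q(t)$ for all $k\ge q\ge 1$; that is, $\bigl(\cEP_k(t)\bigr)_{k=1}^\infty$ is weakly increasing.

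Being weakly increasing, the sequence has a limit in $(-\infty,+\infty]$, so $\ul\cE\P_\infty(t)=\ol\cE\P_\infty(t)$, and moreover the upper bound $\lm_{k,\om}\le\beta_2$ forces $\cEP_k(t)\le\log\beta_2$, so the common value $\cEP_\infty(t)$ is finite. There is essentially no obstacle here: the only subtlety is ensuring $\log\lm_k\in L^1(m)$ so that Birkhoff applies, and this is exactly what Lemma~\ref{l1mis.cm7.2} provides.
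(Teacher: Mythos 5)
Your argument is correct and is exactly the route the paper intends: the paper derives the corollary as an immediate consequence of Lemma~\ref{l2tp3} together with Birkhoff's Ergodic Theorem, and you have simply spelled out the details (logarithm of the cocycle inequality, integrability of $\log\lm_k$ via Lemma~\ref{l1mis.cm7.2}, vanishing of the error $\tfrac1n\log\Ga_t(q)$). No gaps; the finiteness remark at the end is a harmless bonus.
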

\begin{remark}\label{r2tp5}
Together with Birkhoff's Ergodic Theorem, the proof of Lemma \ref{l2tp3}, more precisely its last displayed formula taken with $k=q$, also shows that the value of $\cEP_k(t)$ is independent of the conformal measure $\nu_k(t)$; it depends only on $t$ and $k$.
\end{remark}
Having Lemma \ref{l2tp3} at our disposal, we can prove the following. 

\begin{theorem}\label{t1tp5}
	$AA(T)\sub CM(T)$.
\end{theorem}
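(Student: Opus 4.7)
The plan is to combine the already-established existence of a $t$-Fconformal limit measure (Theorem~\ref{t1mis.cm7}) with the criterion for $t$-conformality from Proposition~\ref{p1mis.cmii1}(b): a $t$-Fconformal measure $\nu$ with $t>0$ is $t$-conformal if and only if $\nu(\Om\times\Crit(\cG))=0$. Fix $t\in AA(T)$. If $t=0$, Proposition~\ref{p1cmii4} directly produces a $0$-conformal measure. Assume $t>0$, let $\nu=\nu_t$ be a weak limit, in the narrow topology, of the truncated $t$-conformal measures $\nu_{t,k}$ of Lemma~\ref{l1mis.cm5}. By Theorem~\ref{t1mis.cm7}, $\nu$ is $t$-Fconformal with generalized eigenvalues $\lm_\om\in[\bt_1,\bt_2]$, so it only remains to show $\nu(\Om\times\Crit(\cG))=0$.

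Since $\Crit(\cG)$ is finite, it suffices to fix $c=c_\Dl\in\Crit_0(\cG)$ (the case $\Crit_1(\cG)$ being handled analogously via $T_{\om,1}^{-1}$) and show $\nu(\Om\times\{c\})=0$. Set $G_\om(\ell):=T_{\om,\Dl}^{-1}(V_{\ta(\om)}(\ell))$, a closed one-sided neighborhood of $c$ in $\Dl$ shrinking to $\{c\}$ as $\ell\to\infty$. For the truncated measure $\nu_k$ with $k>\ell$, since $\nu_{k,\om}(\{c\})=0$ (because $c\notin\cJ_\om^{(k)}(T)$), the decomposition $V_{\ta(\om)}(\ell)=\{0\}\cup\bu_{j\ge\ell+1}I_{\ta(\om)}(j)$ together with the estimate \eqref{3mis118} give
$$
\nu_{k,\om}(G_\om(\ell))\lek\sum_{j=\ell+1}^{k}\lm_{k,\om}^{-j}\big|(T_\om^j)'(0)\big|^{-t/(1+\gm_+)}.
$$
The essential uniformity in $k$ is provided by the freshly-proved Lemma~\ref{l2tp3}: for any $q\ge 1$ and all $k\ge q$, $\lm_{k,\om}^{-j}\le \Ga_t(q)\lm_{q,\om}^{-j}$, so the right-hand side is bounded by $\Ga_t(q)F_q(\om,\ell)$ where
$$
F_q(\om,\ell):=\sum_{j\ge\ell+1}\lm_{q,\om}^{-j}\big|(T_\om^j)'(0)\big|^{-t/(1+\gm_+)}.
$$

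Because $t\in AA(T)$ and $\cEP_q(t)\nearrow\cEP_\infty(t)$ by Corollary~\ref{c1tp5}, one can fix $q$ with $\cEP_q(t)+t\chi_0/(1+\gm_+)>2\dl>0$. Birkhoff's ergodic theorem applied to the cocycle $\log\lm_q+\frac{t}{1+\gm_+}\log|T_\cdot'(0)|$ yields, for $m$-a.e.\ $\om$, the exponential decay $\lm_{q,\om}^{-j}|(T_\om^j)'(0)|^{-t/(1+\gm_+)}\le e^{-\dl j}$ for all sufficiently large $j$, hence $F_q(\om,\ell)\to 0$ as $\ell\to\infty$ for $m$-a.e.\ $\om$. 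Egorov's theorem then upgrades this to uniform decay on a set $\Om_\ep\subset\Om$ with $m(\Om_\ep)>1-\ep$.

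To convert the uniform-in-$k$ bound on $\nu_{k,\om}(G_\om(\ell))$ into a bound on $\nu$, enlarge $G_\om(\ell)$ slightly to an open random set $\widetilde G_\om(\ell)$ satisfying $G_\om(\ell)\sbt \widetilde G_\om(\ell)\sbt G_\om(\ell-1)$; then $\Om\times\{c\}\sbt\widetilde G(\ell)$. Portmanteau's Theorem (Theorem~3.17 of~\cite{Crauel}) yields
$$
\nu(\Om\times\{c\})\le\nu(\widetilde G(\ell))\le\liminf_{k\to\infty}\int_\Om\nu_{k,\om}(G_\om(\ell-1))\,dm(\om),
$$
and by the Egorov splitting the last integral is at most $\ep+\Ga_t(q)\int_{\Om_\ep}F_q(\om,\ell-1)\,dm<2\ep$ for $\ell$ large. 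Letting $\ep\to 0$ gives $\nu(\Om\times\{c\})=0$, and summing over the finitely many critical points yields $\nu(\Om\times\Crit(\cG))=0$, so $\nu\in CM(T)$. The main obstacle is the directional mismatch in Portmanteau (which controls $\nu$ of open sets from above by $\liminf$ of $\nu_k$, while the natural estimates live on the closed sets $G_\om(\ell)$); this is resolved by the open enlargement $\widetilde G$, and the uniformity of the cocycle-sum estimate across all $k\ge q$ is supplied precisely by Lemma~\ref{l2tp3}.
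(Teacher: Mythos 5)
Your proposal is correct and follows essentially the same route as the paper's proof: the admissibility margin $\dl>0$, Lemma~\ref{l2tp3} to make the Birkhoff/Egorov exponential decay of $\lm_{q,\om}^{-j}\absval{(T_\om^j)'(0)}^{-t/(1+\gm_+)}$ uniform over all $k\ge q$, the tail estimates \eqref{3mis118} and \eqref{1mis118} to control $\nu_k$ near $\Crit(\cG)$, and a Portmanteau passage to conclude $\nu(\Om\times\Crit(\cG))=0$, after which Theorem~\ref{t1mis.cm7} gives $t$--conformality. The only blemish is that your sandwich $G_\om(\ell)\sbt\widetilde G_\om(\ell)\sbt G_\om(\ell-1)$ with $\widetilde G_\om(\ell)$ open in $[0,1]$ and containing $c$ cannot hold literally when $c=c_\Dl\in\partial\Dl$ is interior to $[0,1]$ (any such open set must leave $\Dl$); the repair is immediate, since the other side of $c$ either lies outside $I_*$ and is $\nu_k$--null, or lies in a neighboring element of $\cG$ where the same one--branch tail estimates (critical or expanding, cf.\ \eqref{2mis.cm7.1.1}, \eqref{4mis.cm7.1.1}) apply — this is exactly the enlargement device the paper invokes via Remark~\ref{r1mis.cm7.1.1} at its own terse ``letting $k\to\infty$'' step.
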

\begin{proof}
The case of $t=0$ is entirely taken care of by Proposition \ref{p1cmii4}. So, assume from now on that $t>0$. According to Theorem \ref{t1mis.cm7} in order to complete the proof, we are to show that 
$$
\nu\(\Om\times\Crit(\cG)\)=0
$$
with $\nu=\nu_k$ being a weak limit measure resulting from Lemma \ref{l1mis.cm5}; see also formula \eqref{2mis.cm5} and Lemma~\ref{l1_2018_03_23}. We will do it now. Denote 

\begin{equation}\label{120180525}
		\dl:=\frac{1}3\left(\cEP_\infty(t)+\frac{1}{1+\gm_+}\chi_0t\right)>0,
	\end{equation}
	due to $A$--admissibility of $t$. Take then $q^*\in\NN$ so large (see Corollary \ref{c1tp5}) that
	\begin{equation}\label{220180525}
		\cEP_q(t)+\frac{1}{1+\gm_+}\chi_0t>2\dl
	\end{equation}
for all $q\ge q^*$. Fix such a $q$.	Since, by Birkhoff's Ergodic Theorem and the last assertion of Theorem \ref{t1mis.cm7}, 
	\begin{align*}
		\cEP_q(t)=\limty{n}\frac{1}{n}\log\lm_{q,\om}^n
	\end{align*}
	for $m$--a.e. $\om\in\Om$, say $\om\in\Om^*$ with $m(\Om^*)=1$, we have for every $\om\in\Om^*$ that
	\begin{align*}
		\frac{1}{n}\log\lm_{q,\om}^n+\frac{1}{n}\frac{t}{1+\gm_+}\log\absval{(T_\om^n)'(0)}\geq 2\dl
	\end{align*}
	for all $n\geq 1$ large enough, say $n\geq N_1(\om)\geq 1$. By virtue of Lemma \ref{l2tp3} this gives 
	\begin{align*}
		\frac{1}{n}\log\lm_{k,\om}^n+\frac{1}{n}\frac{t}{1+\gm_+}\log\absval{(T_\om^n)'(0)}\geq \dl
	\end{align*}
	for all $\om\in\Om^*$, all $k\geq q$, and all $n\geq N_1(\om)$ large enough, say $n\geq N_2(\om)\geq N_1(\om)$. Equivalently, 
\begin{equation}\label{220180526}
\lm_{k,\om}^{-n}\absval{(T_\om^n)'(0)}^{-\frac{t}{1+\gm_+}}\leq e^{-\dl n}.
\end{equation}
It therefore follows from formula \eqref{3mis118} and \eqref{1mis118} respectively that for all $\Dl\in\cG$ we have 
	\begin{equation}\label{120180523}
		\nu_{k,\om}(T_{\om,\Dl}^{-1}(V_{\ta(\om)}(n)))\leq\sum_{l=n}^\infty e^{-\dl l}=(1-e^{-\dl})^{-1}e^{-\dl n}
	\end{equation}
	and for all $\Dl\in\cG$ we have 
	\begin{equation}\label{220180523}
		\nu_{k,\om}(T_{\om,\Dl}^{-1}\circ T_{\ta(\om),1}^{-1}(V_{\ta^2(\om)}(n)))\leq\sum_{l=n}^\infty e^{-\dl l}=(1-e^{-\dl})^{-1}e^{-\dl n}
	\end{equation}
	for all $\om\in\Om^*$, all $k\geq q$, and all $n\geq N_2(\om)$. Since we could have obviously chosen the function $\Om^*\ni\om\mapsto N_2(\om)$ in a measurable way, for every $\ep>0$ there exists $N_\ep\geq 1$ and a measurable set $\Om_\ep^*\sub\Om$ such that $m(\Om_\ep^*)\geq 1-\ep$,
	\begin{equation}\label{320180523}
		\nu_{k,\om}(T_{\om,\Dl}^{-1}(V_{\ta(\om)}(n)))
		\leq (1-e^{-\dl})^{-1}e^{-\dl n}
	\end{equation}		
and	
\begin{equation}\label{420180523}
\nu_{k,\om}(T_{\om,\Dl}^{-1}\circ T_{\ta(\om),1}^{-1}(V_{\ta^2(\om)}(n)))
		\leq (1-e^{-\dl})^{-1}e^{-\dl n}
	\end{equation}
for all $\Dl\in\cG$, all $n\geq n_\ep\geq 1$ and all $\om\in\Om_\ep^*$. Letting $k\to\infty$ we thus conclude that 
$$
\begin{aligned}
\nu\(\Om_\ep^*\times\Crit(\cG))
&\le \nu\left(\bigcup_{\om\in\Om_\ep^*}\{\om\}\times \left(\bigcup_{\Dl\in\cG_C^{(0)}}\!\!\!T_{\om,\Dl}^{-1}\(V_{\ta(\om)}(n)\)
\cup \!\!\!\bigcup_{\Dl\in\cG_C^{(1)}}\!\!\!T_{\om,\Dl}^{-1}\circ T_{\ta(\om),1}^{-1}\(V_{\ta^2(\om)}(n)\)\right)\right) \\
&\le \#\cG_C (1-e^{-\dl})^{-1}e^{-\dl n}.
\end{aligned}
$$
Letting in turn $n\to\infty$, we thus conclude that 
$$
\nu\(\Om_\ep^*\times\Crit(\cG))=0.
$$
Finally letting $\ep\to 0$, we get that 
$$
\nu\(\Om^*\times\Crit(\cG))=0.
$$
So,
$$
\nu\(\Om\times\Crit(\cG))=0.
$$
The proof is complete. 
\end{proof}

\begin{remark}\label{r120180526}
By a straightforward enhancement of the proof of Theorem~\ref{t1tp5}, we get that for every $\ell\ge 0$,

$$
\lim_{\eta\to 0}\int_\Om\sup\big\{\nu_{k,\om}\(B\(T_\om^{-\ell}(\{0,1\}),\eta\)\):q^*\le k\le+\infty\big\}\,dm(\om)=0.
$$
Thus
$$
\lim_{\eta\to 0}\sup_{q^*\le k\le+\infty}\lt\{\nu_{k}\lt(\bu_{\om\in\Om}\{\om\}\times B\(T_\om^{-\ell}(\{0,1\}),\eta\)\rt)\rt\}=0.
$$
\end{remark}

\begin{remark}\label{r1tp6.2}
We have in fact shown in the proof of Theorem \ref{t1tp5} that if $t\in AA(T)$, then any weak limit of measures $(\nu_k)_{k=1}^\infty$ is $t$--conformal. 
\end{remark}
\begin{definition}\label{d1tp6}
Because of Theorem \ref{t1tp5} for each $t\in AA(T)$ there exists a $t$--conformal measure $\nu$ for $T$. Furthermore, because of Birkhoff's Ergodic Theorem and Lemma \ref{l1mis.cmii2} (c), the number 
	\begin{align*}
		\cEP(t):=\int_{\Om}\log\lm_{t,\nu,\om}dm(\om)
	\end{align*} 
is independent of the choice of $t$--conformal measure $\nu$. We call it the \textit{expected topological pressure} of the parameter $t$.
\end{definition}
As an immediate consequence of Lemma \ref{l2tp3} and Birkhoff's Ergodic Theorem, we get the following. 
\begin{lemma}\label{l1tp6}
	If $t\in AA(T)$, then $\cEP(t)\geq \cEP_\infty(t)$. 
\end{lemma}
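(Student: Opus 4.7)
The plan is to deduce this directly from Lemma~\ref{l2tp3}, which, as explicitly stated in its proof, is valid for $k=\infty$ as well. Since the measure $\nu=\nu_\infty$ produced by Theorem~\ref{t1mis.cm7} is $t$--conformal with generalized eigenvalue function $\lam_\om=\lam_{t,\nu,\om}=\lam_{\infty,\om}$, Lemma~\ref{l2tp3} applied with $k=\infty$ yields, for every $q\ge 1$ (large enough so that $\nu_q$ exists), every $n\in\NN$, and every $\om\in\Om$,
\begin{equation*}
\lam_{\infty,\om}^n \geq \Ga_t^{-1}(q)\,\lam_{q,\om}^n.
\end{equation*}

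Taking logarithms and dividing by $n$ gives
\begin{equation*}
\frac{1}{n}\log\lam_{\infty,\om}^n \ \geq\ -\frac{1}{n}\log\Ga_t(q)\ +\ \frac{1}{n}\log\lam_{q,\om}^n.
\end{equation*}
Now I would invoke Birkhoff's Ergodic Theorem, whose hypotheses are satisfied because $\th$ preserves $m$ and is ergodic, and because both $\log\lam_{\infty,\om}$ and $\log\lam_{q,\om}$ are $m$--integrable (indeed, by Lemma~\ref{l1mis.cm7.2} and the last assertion of Theorem~\ref{t1mis.cm7}, all these eigenvalues lie in the fixed compact interval $[\bt_1,\bt_2]\sbt(0,\infty)$, so their logarithms are bounded). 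Passing to the limit $n\to\infty$ for $m$--a.e.\ $\om\in\Om$, the left--hand side converges to $\cEP(t)$ and the right--hand side converges to $\cEP_q(t)$, so we obtain
\begin{equation*}
\cEP(t)\ \geq\ \cEP_q(t)
\end{equation*}
for every sufficiently large $q\in\NN$.

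Finally, letting $q\to\infty$ and using Corollary~\ref{c1tp5}, which asserts that $\cEP_q(t)\nearrow \cEP_\infty(t)$, yields $\cEP(t)\ge\cEP_\infty(t)$, as required. There is no genuine obstacle here: the entire work is done by the uniform comparison of eigenvalue products established in Lemma~\ref{l2tp3} (which is valid through $k=\infty$) together with the boundedness from Lemma~\ref{l1mis.cm7.2} and Theorem~\ref{t1mis.cm7} that legitimizes the application of Birkhoff's Ergodic Theorem.
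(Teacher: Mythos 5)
Your proof is correct and is essentially the argument the paper intends: the paper derives this lemma directly as "an immediate consequence of Lemma~\ref{l2tp3} and Birkhoff's Ergodic Theorem," and you have simply written out that deduction, correctly noting that the proof of Lemma~\ref{l2tp3} allows $k=\infty$, that the boundedness $\bt_1\le\lm_{k,\om}\le\bt_2$ legitimizes Birkhoff, and that Corollary~\ref{c1tp5} handles the passage $q\to\infty$. Nothing is missing.
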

Let 
$$
p_2:\Om\times[0,1]\longmapsto [0,1]
$$
be the natural projection on the second coordinate, i.e.
$$
p_2(\om,x)=x. 
$$ 
Let $\eta_*\in (0,\min\{1/2,1-(|\Dl_1|/2))$ be the number produced in Lemma~\ref{l1cmii3.2B}. Fix an arbitrary $\eta\in(0,\eta_*]$. Let $A\sub \cJ(T)$ be an arbitrary measurable set such that 
	\begin{align*}
		p_2(A)\sub[\eta,1-\eta].
	\end{align*}
It then follows from Proposition~\ref{p120180630} and conformality of the random measures $\nu_k$, $1\le k\le +\infty$, that
$$
\begin{aligned}
K_{\eta}^{-t}\nu_{k,\om}\(T^{-j}_{\om}([\eta,1-\eta])\)&\frac{\nu_{k,\ta^j(\om)}(A_{\ta^j(\om)})}{\nu_{k,\ta^j(\om)}([\eta_*,1-\eta_*])}
\le  \\
&\le \nu_{k,\om}\(T^{-j}_{\om}(A_{\ta^j(\om)})\) \le \\
&\le K_{\eta}^t\nu_{k,\om}\(T^{-j}_{\om}([\eta,1-\eta])\)\frac{\nu_{k,\ta^j(\om)}(A_{\ta^j(\om)})}{\nu_{k,\ta^j(\om)}([\eta,1-\eta])}\\
&\le K_{\eta}^t\nu_{k,\om}\(T^{-j}_{\om}([\eta,1-\eta])\)\frac{\nu_{k,\ta^j(\om)}(A_{\ta^j(\om)})}{\nu_{k,\ta^j(\om)}([\eta_*,1-\eta_*])}
\end{aligned}
$$
for all $j\geq 0$ and all $\om\in\Om$. So, applying Lemma~\ref{l1cmii3.2B}, we get 
\begin{equation}\label{1mis121}
\begin{aligned}
K_{\eta}^{-t}\nu_{k,\om}\(T^{-j}_{\om}([\eta,1-\eta_*])\)&\nu_{k,\ta^j(\om)}(A_{\ta^j(\om)})\le  \\
&\le \nu_{k,\om}\(T^{-j}_{\om}(A_{\ta^j(\om)})\) \le \\
&\le K_{\eta}^tQ_\cG^{-1}(t)\nu_{k,\om}\(T^{-j}_{\om}([\eta_*,1-\eta_*])\)\nu_{k,\ta^j(\om)}(A_{\ta^j(\om)})\\
&\le K_{\eta}^tQ_\cG^{-1}(t)\nu_{k,\ta^j(\om)}(A_{\ta^j(\om)})
\end{aligned}
\end{equation}
for all $1\le k\le +\infty$, all $\om\in\Om$ and all $j\geq 0$.

Now we shall prove the following technical, but very useful fact. 

\begin{lemma}\label{l2tp6}
If $t\in AA(T)$, then 
\begin{align*}
\lim_{\eta\to 0}\sup_{n\ge 0}
\int_{\Om}\sup\left\{\nu_{k,\om}\(T_\om^{-n}\([\eta,1-\eta]^c\):q^*\leq k\le\infty\right\} dm(\om)=0.
\end{align*}
\end{lemma}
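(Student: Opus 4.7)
The plan is to decompose $T_\om^{-n}(W_\eta)$, where $W_\eta := [\eta, 1-\eta]^c = [0,\eta)\cup(1-\eta,1]$, according to the last time at which the orbit of $x$ visits the ``good region'' $[\eta_*, 1-\eta_*]$. I take $\eta_*\in(0,s)$ from Lemma~\ref{l1cmii3.2B}, chosen small enough that $A\eta_* < 1-\eta_*$. For $x \in T_\om^{-n}(W_\eta)$ with $\eta < \eta_*$, set
\begin{align*}
j(x) := \max\{0 \le j \le n : T_\om^j(x) \in [\eta_*, 1-\eta_*]\},
\end{align*}
with $j(x) := -1$ if no such $j$ exists, and split $T_\om^{-n}(W_\eta) = A_n^\eta(\om) \sqcup B_n^\eta(\om)$ with $B_n^\eta(\om) := \{x : j(x) = -1\}$. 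The base case $n=0$ is covered by Lemma~\ref{l1mis.cm7.1.1} together with \eqref{2mis.cm7.1.1} and \eqref{4mis.cm7.1.1}, which give $\sup_k \nu_{k,\om}(W_\eta) \lek \eta^{\alpha}$ for some $\alpha>0$, uniformly in $k \ge q^*$ and $\om$.

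For the ``always bad'' set $B_n^\eta(\om)$, the condition $A\eta_* < 1-\eta_*$ forces $T_\om(W_{\eta_*})\cap W_{\eta_*} \subset [0,\eta_*)$, so the orbit lives in $[0,\eta_*) \subset [0,s]$ from step $1$ onward. Expansion $|T_\om'|\ge \kappa$ on $[0,s]$ combined with the mean value theorem, $T_\om(0)=0$, $T_\om(1)=0$, and $|T_\om'(1)| \ge \kappa$ (from \ref{(M5a)}), yields $T_\om^n(x) \ge \kappa^n x$ if $x \in [0,\eta_*)$, and $T_\om^n(x) \ge \kappa^n(1-x)$ if $x \in (1-\eta_*,1]$ (in the latter case one first notes $T_\om(x) \ge \kappa(1-x)$ and then applies the former case to $T_\om(x)$ with $n-1$ iterates). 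Forcing $T_\om^n(x) \in W_\eta$ thus gives $x \in W_{\eta \kappa^{-n}} \subset W_\eta$, so $B_n^\eta(\om) \subset W_\eta$ and this contribution is absorbed by the base case uniformly in $n$.

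For the complement $A_n^\eta(\om)$, partition by $j(x) = j \in \{0,1,\ldots,n-1\}$ (the case $j=n$ is empty for $\eta < \eta_*$). Setting $r:=n-j \ge 1$, $y := T_\om^j(x)$, $\tau := \ta^j(\om)$, and $z := T_\tau(y)$, the orbit $z, T_\tau(z), \ldots, T_\tau^{r-1}(z)$ lies in $W_{\eta_*}$ with $T_\tau^{r-1}(z) \in W_\eta$, so the $B_n^\eta$ expansion argument applied to $z$ gives $z \in W_{\eta\kappa^{-(r-1)}}$, whence
\begin{align*}
y \in E_{\tau,r,\eta} := T_\tau^{-1}(W_{\eta \kappa^{-(r-1)}}) \cap [\eta_*,1-\eta_*].
\end{align*}
Since $E_{\tau,r,\eta} \subset [\eta_*, 1-\eta_*]$, the pull-back estimate \eqref{1mis121} (used with $\eta_*$ in place of $\eta$ there) applies; together with $\ta$-invariance of $m$ and summing the disjoint pieces over $j$ (equivalently $r$), I obtain
\begin{align*}
\int_\Om \sup_k \nu_{k,\om}(A_n^\eta(\om))\,dm(\om) \lek \sum_{r=1}^\infty \int_\Om \sup_k \nu_{k,\om}(T_\om^{-1}(W_{\eta \kappa^{-(r-1)}}))\,dm(\om).
\end{align*}

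The main obstacle is to show the right-hand side tends to $0$ as $\eta \to 0$, which demands a quantitative, summable-in-$r$ rate for $\Phi(\eta') := \int_\Om \sup_k \nu_{k,\om}(T_\om^{-1}(W_{\eta'}))\,dm(\om)$. Remark~\ref{r120180526} with $\ell=1$ supplies only the qualitative $\Phi(\eta')\to 0$. To upgrade this to a polynomial bound $\Phi(\eta') \lek (\eta')^\beta$ for some $\beta>0$, I would revisit the proof of Theorem~\ref{t1tp5}, particularly \eqref{320180523} and \eqref{420180523}, which deliver an exponential bound in $\ell_\eta \sim \log(1/\eta')$ (hence polynomial in $\eta'$) on an Egorov-type set $\Om_\ep^*$ of measure $\ge 1-\ep$, and carefully balance the exceptional-set size $\ep$ against the rate (typically by choosing $\ep$ as a power of $\eta'$). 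With such a polynomial bound, the geometric series sums to $O(\eta^\beta)$ independently of $n$, completing the proof.
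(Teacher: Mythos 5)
Your decomposition by the last visit to $[\eta_*,1-\eta_*]$ is essentially the same bookkeeping as the paper's identity \eqref{2mis123} (a terminal block of the orbit trapped near $\{0,1\}$, preceded by one passage through a good region), and the steps up to your reduction $\int_\Om\sup_k\nu_{k,\om}\big(A_n^\eta(\om)\big)\,dm(\om)\lesssim\sum_{r\ge1}\Phi\big(\eta\kappa^{-(r-1)}\big)$ are sound: the pull-back estimate \eqref{1mis121} with $\eta_*$, the $\ta$-invariance of $m$, and the expansion argument near $0$ and $1$ all work (only note that your base-case bound $\sup_k\nu_{k,\om}(W_\eta)\lesssim\eta^{\alpha}$ uses $t>0$, so $t=0$ needs a separate remark). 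The genuine gap is the final summation, and the repair you sketch cannot work. The only source of decay for the mass that $\nu_{k,\om}$ puts near $\Crit(\cG)$ (which is what dominates $\Phi$) is the admissibility estimate \eqref{320180523}--\eqref{420180523}, i.e.\ the inequality \eqref{220180526}, and this holds only for $\om$ with $N_2(\om)\le\ell$, where $N_2$ comes from Birkhoff's theorem and Egorov's theorem \emph{without any rate}: $m(\{N_2>\ell\})\to0$, but arbitrarily slowly. Consequently there is no way to "balance $\varepsilon$ against $\eta'$" — for a given $\eta'$ you cannot quantify how small the exceptional set can be taken — and no uniform bound $\Phi(\eta')\lesssim(\eta')^{\beta}$ follows from the paper's estimates (uniformly in $\om$, the critical-branch mass is controlled only through quantities like $\lm_{\om}^{-j}\absval{(T_\om^j)'(0)}^{-t/(1+\gm_+)}$ as in \eqref{3mis118}, which become exponentially small only along $m$-typical orbits). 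Worse, without a rate your reduction may already be vacuous: if $\Phi(\eta')$ decays like $1/\log(1/\eta')$, then $\Phi(\eta\kappa^{-(r-1)})\sim 1/r$ and the series diverges for every fixed $\eta$.

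What makes the paper's proof of Lemma~\ref{l2tp6} work is the order of quantifiers, which removes any need for a rate. Fix $\varepsilon>0$ first; choose a cut-off $s$ in your depth parameter $r$ so large that $m(\{N_2>s\})<\varepsilon/4$ (formula \eqref{120180528}) and the tail $\sum_{r\ge s}e^{-\delta r}$ produced by \eqref{320180523}--\eqref{420180523} is $<\varepsilon/4$, uniformly in $\eta$, $n$ and $k\ge q^*$, for all $\om$ outside that exceptional set (whose contribution to the integral is at most its measure, since the integrand is $\le1$); only then, with $s$ fixed, are there finitely many head terms $r<s$ left, and for these the purely qualitative Remark~\ref{r120180526} lets you choose $\eta$ small last. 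If you restructure your final step this way — estimating the tail $r\ge s$ of your sum by the exponential bounds on the complement of the exceptional set (note that your sets $E_{\tau,r,\eta}$ sit inside critical-branch preimages of $V$-type sets of depth about $r+\log(1/\eta)/\log\kappa$, so those bounds apply verbatim), instead of by a hoped-for uniform rate for $\Phi$ — your decomposition does yield the lemma, and it then becomes essentially the paper's argument.
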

\begin{proof}
First note that there exists $\eta_1\in(0,\eta_*)$ such that
\begin{align}\label{1mis123}
\union_{\Dl\in \cG\sms\{\Dl_0,\Dl_1\}}T_{\om,\Dl}^{-1}([0,1])
\cup\union_{\Dl\in \cG}T_{\om,\Dl}^{-1}(\Dl_1)
\sub[\eta_1,1-\eta_1]
\end{align}
for all $\om\in\Om$. For every $n\geq 0$, all $0\leq\ell\leq n$, and every set $A\sub [0,1]$ we can write 
\begin{align}
T_\om^{-n}(A)
=T_{\om,0}^{-n}(A)&\cup
\union_{j=0}^{n-1}T_\om^{-(n-j-1)}\left(\union_{\Dl\in \cG\sms\{\Dl_0,\Dl_1\}} T_{\ta^{n-(j+1)}(\om),\Dl}^{-1}\left(T_{\ta^{n-j}(\om),0}^{-j}(A)\right)\right)\cup \nonumber\\
&
\cup T_{\om,1}^{-1}\left(T_{\ta(\om),0}^{-(n-1)}(A)\right)\cup\nonumber\\
&
\cup\union_{j=0}^{n-2} T_\om^{-(n-j-2)}\left(\union_{\Dl\in \cG}T_{\ta^{n-(j+2)}(\om),\Dl}^{-1}\left(T_{\ta^{n-(j+1)}(\om),1}^{-1}\left(T_{\ta^{n-j}(\om),0}^{-j}(A)\right)\right)\right).\label{2mis123}
\end{align}

Fix $\ep>0$. Keep $\delta>0$ defined by \eqref{120180525} and and recall that $q^*\ge 1$ is determined by \eqref{220180525}, i.e. both are the same as in the proof of Theorem~\ref{t1tp5}. Fix an integer $s_1\ge 0$ so large that for every $s\ge s_1$ we have that
\begin{equation}\label{120180528}
m\(\{\omega\in\Omega:N_2(\om)> s\}\)<\ep/4,
\end{equation}
where $N_2(\om)$ also comes from the proof of Theorem~\ref{t1tp5}. Denote this set by $\Omega_s(\ep)$.

Therefore, applying \eqref{1mis121}, \eqref{1mis123} and \eqref{120180523}, we get for every $k\geq q^*$, every $n\ge s+1$, and every $\omega\in\Omega_s^c(\ep)$, that
\begin{align}
\Sigma_n^{(1)}(k,s;\omega)&:=
\sum_{j=s}^{n-1} \nu_{k,\om}\left(T_\om^{-(n-j-1)}\left(\union_{\Dl\in \cG\sms\{\Dl_0,\Dl_1\}}T^{-1}_{\ta^{n-(j+1)}(\om),\Dl}\left(T_{\ta^{n-j}(\om),0}^{-j}([0,1])\right)\right)\right)\nonumber\\
	&=\sum_{j=s}^{n-1}\sum_{\Dl\in \cG\sms\{\Dl_0,\Dl_1\}}\nu_{k,\om}\left(T_\om^{-(n-j-1)}\left(T_{\ta^{n-(j+1)}(\om),\Dl}^{-1}\(V_{\ta^{n-j}(\om)}(j)\)\right)\right)\nonumber\\
	&\leq K_{\eta_1}^tQ_\cG^{-1}(t)\sum_{j=s}^{n-1}\sum_{\Dl\in \cG\sms\{\Dl_0,\Dl_1\}} \nu_{k,\ta^{n-(j+1)}(\om)}\left(T_{\ta^{n-(j+1)}(\om),\Dl}^{-1}\(V_{\ta^{n-j}(\om)}(j)\)\right)          \nonumber\\
	&\leq K_{\eta_1}^tQ_\cG^{-1}(t)\sum_{j=s}^{n-1}\sum_{\Dl\in \cG\sms\{\Dl_0,\Dl_1\}} (1-e^{-\dl})^{-1}e^{-\dl j}  \nonumber\\
	&\leq \#\cG K_{\eta_1}^tQ_\cG^{-1}(t)(1-e^{-\dl})^{-1}\sum_{j=s}^{n-1}e^{-\dl j}\nonumber\\
	&\leq \#\cG K_{\eta_1}^tQ_\cG^{-1}(t)(1-e^{-\dl})^{-2}e^{-\dl s}.\label{1mis127}
\end{align}
Likewise, applying \eqref{1mis121}, \eqref{1mis123} and \eqref{220180523}, we get for every $k\geq q^*$, every $n\ge s+2$, and every $\omega\in\Omega_s^c(\ep)$, that
\begin{equation}\label{2mis127}
\begin{aligned}
\Sigma_n^{(2)}(k,s;\omega):&=
\sum_{j=s}^{n-2}\nu_{k,\om}\left(T_\om^{-(n-j-2)}\left(\union_{\Dl\in \cG}T_{\ta^{n-(j+2)}(\om),\Dl}^{-1}\left(T_{\ta^{n-(j+1)}(\om),1}^{-1}\left(T_{\ta^{n-j}(\om),0}^{-j}([0,1])\right)\right)\right)\right)\\
&=\sum_{j=s}^{n-2}\sum_{\Dl\in \cG}\nu_{k,\om}\left(T_\om^{-(n-j-2)}\left(T_{\ta^{n-(j+2)}(\om),\Dl}^{-1}\left(T_{\ta^{n-(j+1)}(\om),1}^{-1}\(V_{\ta^{n-j}(\om)}(j)\)\right)\right)\right)\\
&\leq K_{\eta_1}^tQ_\cG^{-1}(t)\sum_{j=s}^{n-2}\sum_{\Dl\in \cG}
\nu_{k,\ta^{n-(j+2)}(\om)}\left(T_{\ta^{n-(j+2)}(\om),\Dl}^{-1}\left(T_{\ta^{n-(j+1)}(\om),1}^{-1}\(V_{\ta^{n-j}(\om)}(j)\)\right)
\right)\\
&\leq K_{\eta_1}^tQ_\cG^{-1}(t)\sum_{j=s}^{n-2}\sum_{\Dl\in \cG}(1-e^{-\dl})^{-1}e^{-\dl j}\\
&\leq \#\cG K_{\eta_1}^tQ_\cG^{-1}(t)(1-e^{-\dl})^{-1}\sum_{j=s}^{n-2}e^{-\dl j}\\
&\leq \#\cG K_{\eta_1}^tQ_\cG^{-1}(t)(1-e^{-\dl})^{-2}e^{-\dl s}.
\end{aligned}
\end{equation}
We also declare 
$$
\Sigma_n^{(1)}(k,s;\omega):=0
$$
for all $0\le n\le s$ and  
$$
\Sigma_n^{(1)}(k,s;\omega)=\Sigma_n^{(2)}(k,s;\omega):=0
$$
for all $0\le n\le s+1$. With these declarations both \eqref{1mis127} and \eqref{2mis127} become true for every $k\geq q^*$, every $n\ge 0$, and every $\omega\in\Omega_s^c(\ep)$.

Now there are only left four terms in \eqref{2mis123} to take care of. Two of them are easy. Indeed, invoking \eqref{220180526}, we get for every $k\geq q^*$, every $n\ge s$, and every $\omega\in\Omega_s^c(\ep)$, that
\begin{equation}\label{120180526}
\begin{aligned}
\Sigma_n^{(3)}(k,s;\omega)
:&=\nu_{k,\om}\left(T_{\om,0}^{-n}([0,1])\right)
\le C \lm_{k,\om}^{-n}\absval{(T_\om^n)'(0)}^{-t}
\le C \lm_{k,\om}^{-n}\absval{(T_\om^n)'(0)}^{-\frac{t}{1+\gm_+}} \\
&\leq Ce^{-\dl n}
\leq Ce^{-\dl s},
\end{aligned}
\end{equation}
and
\begin{equation}\label{320180526}
\begin{aligned}
\Sigma_n^{(4)}(k,s;\omega)
:&=\nu_{k,\om}\left(T_{\om,1}^{-1}\left(T_{\ta(\om),0}^{-(n-1)}([0,1])\right)\right)
\le C \lm_{k,\om}^{-n}\absval{(T_\om^n)'(0)}^{-t} \\
&\le C \lm_{k,\om}^{-n}\absval{(T_\om^n)'(0)}^{-\frac{t}{1+\gm_+}} \\
&\leq Ce^{-\dl n}
\leq Ce^{-\dl s},
\end{aligned}
\end{equation}
with some constant $C\in [1,\infty)$. Now fix an integer $s\ge s_1$ so large
that 
\begin{equation}\label{420180526}
2Ce^{-\dl s}+2\#\cG K_{\eta_1}^tQ_\cG^{-1}(t)(1-e^{-\dl})^{-2}e^{-\dl s}
<\ep/4.
\end{equation}
Now we deal with the last two, somewhat more involved terms. Take any $\eta\in(0,\eta_1]$. Then 
\begin{equation}\label{520180526}
\begin{aligned}
\Sigma_n^{(5)}(k,s;\omega):&=
\sum_{j=0}^{s-1} \nu_{k,\om}\left(T_\om^{-(n-j-1)}\left(\union_{\Dl\in \cG\sms\{\Dl_0,\Dl_1\}}T^{-1}_{\ta^{n-(j+1)}(\om),\Dl}\left(T_{\ta^{n-j}(\om),0}^{-j}([\eta,1-\eta]^c)\right)\right)\right)\\
	&=\sum_{j=0}^{s-1}\sum_{\Dl\in \cG\sms\{\Dl_0,\Dl_1\}}\nu_{k,\om}\left(T_\om^{-(n-j-1)}\left(T_{\ta^{n-(j+1)}(\om),\Dl}^{-1}\left(T_{\ta^{n-j}(\om),0}^{-j}([\eta,1-\eta]^c)\right)\right)\right)\\
	&\leq K_{\eta_1}^tQ_\cG^{-1}(t)\sum_{j=0}^{s-1}\sum_{\Dl\in \cG\sms\{\Dl_0,\Dl_1\}} \nu_{k,\ta^{n-(j+1)}(\om)}\left(T_{\ta^{n-(j+1)}(\om),\Dl}^{-1}\left(T_{\ta^{n-j}(\om),0}^{-j}([\eta,1-\eta]^c)\right)\right)       
\end{aligned}
\end{equation}
and
\begin{equation}\label{620180526}
\begin{aligned}
\Sigma_n^{(6)}(k,&s;\omega)
:= \\
&=\sum_{j=0}^{s-1}\nu_{k,\om}\left(T_\om^{-(n-j-2)}\left(\union_{\Dl\in \cG}T_{\ta^{n-(j+2)}(\om),\Dl}^{-1}\left(T_{\ta^{n-(j+1)}(\om),1}^{-1}\left(T_{\ta^{n-j}(\om),0}^{-j}([\eta,1-\eta]^c)\right)\right)\right)\right)\\
&=\sum_{j=0}^{s-1}\sum_{\Dl\in \cG}\nu_{k,\om}\left(T_\om^{-(n-j-2)}\left(T_{\ta^{n-(j+2)}(\om),\Dl}^{-1}\left(T_{\ta^{n-(j+1)}(\om),1}^{-1}\left(T_{\ta^{n-j}(\om),0}^{-j}([\eta,1-\eta]^c)\right)\right)\right)\right)\\
&\leq K_{\eta_1}^tQ_\cG^{-1}(t)\sum_{j=0}^{s-1}\sum_{\Dl\in \cG}
\nu_{k,\ta^{n-(j+2)}(\om)}\left(T_{\ta^{n-(j+2)}(\om),\Dl}^{-1}\left(T_{\ta^{n-(j+1)}(\om),1}^{-1}\left(T_{\ta^{n-j}(\om),0}^{-j}([\eta,1-\eta]^c)\right)\right)\right)\\
\end{aligned}
\end{equation}
for every $k\geq q^*$, every $n\ge s$, and every $\omega\in\Omega$. Now, in view of Remark~\ref{r120180526} there exists $\eta_2\in(0,\eta_1]$ such that for every $k\geq q^*$ and every $\eta\in(0,\eta_2]$, we have that 
\begin{equation}\label{720180526}
K_{\eta_1}^tQ_\cG^{-1}(t)\sum_{j=0}^{s-1}\sum_{\Dl\in \cG\sms(\Dl_0\cup\Dl_1)} \nu_{k,\ta^{n-(j+1)}(\om)}\left(T_{\ta^{n-(j+1)}(\om),\Dl}^{-1}\left(T_{\ta^{n-j}(\om),0}^{-j}([\eta,1-\eta]^c)\right)\right)
<\ep/4
\end{equation}
and
\begin{equation}\label{820180526}
K_{\eta_1}^tQ_\cG^{-1}(t)\sum_{j=0}^{s-1}\sum_{\Dl\in \cG}
\nu_{k,\ta^{n-(j+2)}(\om)}\left(T_{\ta^{n-(j+2)}(\om),\Dl}^{-1}\left(T_{\ta^{n-(j+1)}(\om),1}^{-1}\left(T_{\ta^{n-j}(\om),0}^{-j}([\eta,1-\eta]^c)\right)\right)\right)<\ep/4.
\end{equation}
Put:
\begin{equation}\label{520180528}
\Xi_n(\eta,\om)
:=\sup\left\{\nu_{k,\om}\(T_\om^{-n}\([\eta,1-\eta]^c\):q\leq k\le\infty\right\}
\end{equation}
and 
$$
\Sigma_n^{(i)}(s;\omega)
:=\sup\big\{\Sigma_n^{(i)}(k,s;\omega):q\leq k\le\infty\big\}
$$
for every $i=1,2,\ldots,6$. As the last ingredient, by virtue of \eqref{120180528}, we get that
\begin{equation}\label{220180528}
\int_{\Om_s(\ep)}\Xi_n(\eta,\om)\,dm(\om)
\le m(\Om_s(\ep))<\ep/4,
\end{equation}
for all $n\ge 1$. Finally, taking the fruits of all the above estimates, i.e. using \eqref{2mis123}, \eqref{1mis127}, \eqref{2mis127}, \eqref{120180526}, \eqref{320180526}, \eqref{420180526}, \eqref{520180526}, \eqref{620180526}, \eqref{720180526}, 
\eqref{820180526}, and \eqref{220180528}, we get for all $k\ge q$, all $n\ge s$ and every $\eta\in(0,\eta_2]$, that
$$
\begin{aligned}
\int_{\Om}\Xi_n(\eta,\om)\, dm(\om)
&\le \int_{\Om_s(\ep)}\Xi_n(\eta,\om)dm(\om)+\sum_{i=1}^6\int_{\Om_s^c(\ep)}
\Sigma_n^{(i)}(s;\omega)\,dm(\om) \\
&<\frac{\ep}4+\frac{\ep}4+\frac{\ep}4+\frac{\ep}4
=\ep.
\end{aligned}
$$
The proof of Lemma \ref{l2tp6} is complete.
\end{proof}	

Let us record the following immediate corollary of Lemma~\ref{l2tp6}.

\begin{corollary}\label{c520180606}
If $t\in AA(T)$, then 
\begin{align*}
\lim_{\eta\to 0}\sup_{n\ge 0}\sup_{q^*\leq k\le\infty}
\Big\{\nu_{k}\(T^{-n}\(\Om\times [\eta,1-\eta]^c\)\)\Big\}=0.
\end{align*}
\end{corollary}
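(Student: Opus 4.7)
The plan is to derive the corollary directly from Lemma~\ref{l2tp6} via fiberwise disintegration of $\nu_k$ together with the elementary inequality $\sup\int\le\int\sup$. First, I would record that since $T^n(\om,x)=(\th^n(\om),T_\om^n(x))$, the $\om$-fiber of the set $T^{-n}(\Om\times A)$ is precisely $T_\om^{-n}(A)$. Applying this with $A=[\eta,1-\eta]^c$, and using that $(\nu_{k,\om})_{\om\in\Om}$ are disintegrations of the random measure $\nu_k$ over $m$, I obtain
\begin{align*}
\nu_k\(T^{-n}\(\Om\times [\eta,1-\eta]^c\)\)
=\int_\Om\nu_{k,\om}\(T_\om^{-n}\([\eta,1-\eta]^c\)\)\,dm(\om)
\end{align*}
for every $n\ge 0$ and every $k\in\{q^*,q^*+1,\dots,\infty\}$.

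Next, abbreviating $f_{k,n}(\om):=\nu_{k,\om}\(T_\om^{-n}([\eta,1-\eta]^c)\)$, the trivial pointwise bound $f_{k,n}(\om)\le\sup_{q^*\le j\le\infty}f_{j,n}(\om)$ (valid for every $k$ and every $\om$), integrated in $\om$ and then maximized over $k$, yields
\begin{align*}
\sup_{q^*\le k\le\infty}\nu_k\(T^{-n}\(\Om\times [\eta,1-\eta]^c\)\)
\le\int_\Om \sup_{q^*\le k\le\infty}\nu_{k,\om}\(T_\om^{-n}([\eta,1-\eta]^c)\)\,dm(\om).
\end{align*}
Taking $\sup_{n\ge 0}$ on both sides preserves this inequality, and the resulting right-hand side is exactly the quantity shown to vanish (as $\eta\to 0$) in Lemma~\ref{l2tp6}. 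Hence the left-hand side tends to zero as well, which is the content of Corollary~\ref{c520180606}.

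This step is essentially bookkeeping—there is no substantive obstacle—because the essential analytic work (the uniform control of hitting probabilities for boundary neighborhoods across both the truncation level $k$ and the iterate $n$) has already been carried out inside the integrand in Lemma~\ref{l2tp6}. The only mild point worth emphasizing is the direction of the inequality $\sup_k\int f_k\,dm\le\int\sup_k f_k\,dm$, which is precisely what allows one to pull the supremum over $k$ outside the integral and thus convert the fiberwise statement of Lemma~\ref{l2tp6} into the announced global statement about the random measures $\nu_k$ on $\cJ(T)$.
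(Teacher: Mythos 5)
Your proposal is correct and is essentially the same argument the paper intends, since the paper presents Corollary~\ref{c520180606} as an ``immediate corollary'' of Lemma~\ref{l2tp6} without spelling out a proof: you disintegrate $\nu_k$ along fibers, identify the $\om$-fiber of $T^{-n}(\Om\times A)$ as $T_\om^{-n}(A)$, and pull the (countable) supremum over $k$ inside the integral. The only point worth noting in passing is that $\om\mapsto\sup_{q^*\le k\le\infty}\nu_{k,\om}(T_\om^{-n}([\eta,1-\eta]^c))$ is measurable because the supremum runs over a countable index set, so the integral inequality is well-posed; this is exactly what makes the bookkeeping go through.
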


Now, because of Lemma~\ref{l2tp6}, we are in a position to prove the following technical lemma which is a significant strengthening of Lemma~\ref{l2tp3}.

\begin{lemma}\label{l1tp8}
If $t\in AA(T)$, then for all $\ep>0$ there exist $c_\ep\geq1$ and a measurable set $\Om_\ep\subset \Om$ such that $m\(\Om_\ep\)>1-\ep$ and for all integers $q\ge q^*$ ($q^*\ge 1$ being determined by \eqref{220180525}), all integers $k\geq q$, all $n\geq 0$, and all $\om\in\Om_\ep$ we have that 
	\begin{align*} 
		\lm_{k,\om}^n\geq c_\ep^{-1}\lm_{q,\om}^n.
	\end{align*}	  
\end{lemma}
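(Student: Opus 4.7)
My plan is to refine Lemma~\ref{l2tp3}, whose constant $\Ga_t(q)$ degenerates as $q\to\infty$, by replacing its point--max argument by an averaging argument on a $q$--independent window $[\eta_0,1-\eta_0]$ where the distortion constant stays bounded. First I fix $\eta_0\in(0,\eta_*]$ with $\eta_0>\sup_{\om\in\Om} T_{\om,0}^{-q^*}(1)$, which is possible because \ref{(M5a)} and \ref{(M5c)} give the bound $T_{\om,0}^{-q^*}(1)\le\kp^{-q^*}$ uniformly in $\om$. Then $[\eta_0,1-\eta_0]\sbt U_{\ta^n(\om)}(q)$ for every $q\ge q^*$, $n\ge 0$ and $\om\in\Om$, so the set of branches of $T_\om^n$ contributing to $\tr_{q,\om}^n\ind(x)$ is common to every $x\in[\eta_0,1-\eta_0]$, and Proposition~\ref{p120180630} yields the $q$--uniform oscillation estimate $\tr_{q,\om}^n\ind(x)/\tr_{q,\om}^n\ind(y)\in[K_{\eta_0}^{-2t},K_{\eta_0}^{2t}]$ for $x,y\in[\eta_0,1-\eta_0]$. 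Combining this with the conformality identity $\int_{[\eta_0,1-\eta_0]}\tr_{q,\om}^n\ind\,d\nu_{q,\ta^n(\om)}=\lm_{q,\om}^n\,\nu_{q,\om}(T_\om^{-n}([\eta_0,1-\eta_0]))$, with the lower bound $\nu_{k,\ta^n(\om)}([\eta_0,1-\eta_0])\ge Q_\cG(t)$ from Lemma~\ref{l1cmii3.2B}, and with $\tr_{k,\om}^n\ind\ge\tr_{q,\om}^n\ind$ for $k\ge q$, a routine averaging produces
$$
\lm_{k,\om}^n\ge K_{\eta_0}^{-2t}\,Q_\cG(t)\,\nu_{q,\om}\(T_\om^{-n}([\eta_0,1-\eta_0])\)\,\lm_{q,\om}^n.
$$

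So the lemma reduces to exhibiting $\Om_\ep$ with $m(\Om_\ep)>1-\ep$ and $\al_\ep>0$ such that $\nu_{q,\om}\(T_\om^{-n}([\eta_0,1-\eta_0])\)\ge\al_\ep$ for all $q\ge q^*$, $n\ge 0$, $\om\in\Om_\ep$; one then sets $c_\ep:=K_{\eta_0}^{2t}/(Q_\cG(t)\al_\ep)$. To produce such a set, I plan to rework the proof of Lemma~\ref{l2tp6} rather than invoke its conclusion (which controls only $\sup_n\int_\Om\ldots\,dm$, not $\int_\Om\sup_n\ldots\,dm$). Namely, decomposing $T_\om^{-n}([\eta_0,1-\eta_0]^c)$ via \eqref{2mis123} into a late--iterate block ($j\ge s$) and an early--iterate block ($j<s$), the late block is bounded pointwise, uniformly in $n$, $q\ge q^*$ and $k\ge q$ for $\om$ in the set $\Om_s^c(\ep/2)$ of the proof of Theorem~\ref{t1tp5}, by the exponential estimates \eqref{1mis127}--\eqref{320180526}; choosing $s=s(\ep)$ large makes this contribution $\le 1/4$. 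The early block is a finite sum of $\ta$--shifts of finitely many fixed measurable functions of the form $\om\mapsto \sup_{q^*\le k\le\infty}\nu_{k,\om}\(B(T_\om^{-\ell}(\{0,1\}),\tilde\eta)\)$ with $\ell\le s$, and Remark~\ref{r120180526} makes their $m$--integrals arbitrarily small as $\eta_0\to 0$, so after shrinking $\eta_0$---accepting a larger $K_{\eta_0}$ to be absorbed into $c_\ep$---their $L^1$ norms are, say, $<\ep^2/(4s\#\cG)$.

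The last and hardest step is to upgrade this $L^1$--in--$\om$ smallness of the early--block functions to a uniform--in--$n$ pointwise control of their $\ta$--shifts on a set of $m$--measure $>1-\ep$; a naive union over $n$ has divergent measure, so one cannot argue $n$ by $n$. My plan is to replace the union over $n$ by a finite union over the indices $r,\ell\le s$ labelling the base functions of the early block: applying a Markov-type inequality to each of these finitely many base functions (rather than to each of their $\ta$--iterates) and then intersecting with an Egorov refinement of $\Om_s^c(\ep/2)$ that controls the corresponding Birkhoff sums should carve out an $\Om_\ep$ of measure $>1-\ep$ on which the early block is at most $1/4$ for every $n\ge 0$. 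This step is the principal technical obstacle of the argument. Once it yields $\nu_{q,\om}(T_\om^{-n}([\eta_0,1-\eta_0]^c))\le 1/2$ uniformly in $q\ge q^*$, $n\ge 0$ and $\om\in\Om_\ep$, the first half of the plan immediately supplies the constant $c_\ep$ and closes the proof.
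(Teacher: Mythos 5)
There is a genuine gap, and you have flagged it yourself: everything funnels into the claim that on a single set $\Om_\ep$ with $m(\Om_\ep)>1-\ep$ one has $\nu_{q,\om}\(T_\om^{-n}([\eta_0,1-\eta_0]^c)\)\le 1/2$ for \emph{every} $n\ge 0$ and every $q\ge q^*$, and the scheme you sketch for this step (Markov inequality for the finitely many early--block base functions plus an Egorov/Birkhoff refinement) cannot deliver it. After the decomposition \eqref{2mis123}, the early--block terms at time $n$ are values of finitely many fixed measurable functions (of the type $\om'\mapsto\sup_k\nu_{k,\om'}\(T_{\om',\Dl}^{-1}\(T_{\ta(\om'),0}^{-j}([\eta_0,1-\eta_0]^c)\)\)$, $j<s$) evaluated at $\om'=\ta^{n-j-1}(\om)$; as $n$ runs through $\NN$ these evaluation points sweep the entire forward orbit of $\om$. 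Remark~\ref{r120180526} makes such base functions small only in $L^1(m)$, not essentially bounded, so the set where one of them exceeds your threshold $1/4$ in general has positive measure, and by ergodicity of $\ta$ the forward orbit of $m$--a.e.\ $\om$ visits that set infinitely often. Controlling Birkhoff averages bounds the \emph{frequency} of such visits, not their occurrence, so no set of positive measure can make the early block small simultaneously for all $n$ by these means. The step you defer is thus not a technicality but the crux, and the route you propose for it would fail.

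The paper does not attempt any such orbitwise control; it uses exactly the conclusion of Lemma~\ref{l2tp6} that you set aside. For the given $\ep$ it chooses $\eta_\ep$ with $\int_\Om\Xi_n(\eta_\ep,\om)\,dm(\om)<\ep/2$ for all $n$, applies Tchebyshev's inequality to $\Xi_n$ to get $\nu_{k,\om}\(T_\om^{-n}([\eta_\ep,1-\eta_\ep])\)\ge 1/2$ off a set of measure $<\ep$, and then, since $[\eta_\ep,1-\eta_\ep]$ meets at most $\ell_{\eta_\ep}$ of the intervals $I_{\ta^n(\om)}(i)$ (uniformly in $\om$, by \eqref{2tp10}), pigeonholes onto a single piece $I_{\ta^n(\om),\eta_\ep}(j_n)$ with $\nu_{q,\om}\(T_\om^{-n}(I_{\ta^n(\om),\eta_\ep}(j_n))\)\ge(2\ell_{\eta_\ep})^{-1}$; the comparison of $\lm_{k,\om}^n$ with $\lm_{q,\om}^n$ is then carried out on that one piece using the distortion bound \eqref{1tp4} and the lower bound of Lemma~\ref{l1cmii3.2BD}. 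Strictly speaking the Tchebyshev exceptional set there depends on $n$, which is the legitimate concern behind your $\sup_n\!\int$ versus $\int\sup_n$ remark, but an $n$--dependent exceptional set of measure $<\ep$ is all that the application in Proposition~\ref{p1tp12} requires, so chasing full $n$--uniformity by reworking Lemma~\ref{l2tp6} is both unnecessary and, as explained above, out of reach of $L^1$ estimates. A secondary caveat on your first half: the oscillation bound for $\tr_{q,\om}^n\ind$ on the whole window $[\eta_0,1-\eta_0]$ does not follow merely from $[\eta_0,1-\eta_0]\sbt U_{\ta^n(\om)}(q)$, because admissibility of an inverse branch in the truncated sum is also constrained at the intermediate times $0\le\ell<n$, and the corresponding admissibility interval in $x$ may terminate inside the window, so the contributing branch set is not common to all $x$ there; this is precisely why the paper states \eqref{1tp4} only on the pieces $I_{\ta^n(\om)}(i,\eta)$ and pigeonholes onto one of them instead of averaging over the full window.
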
	
\begin{proof}
	For every $\eta\in (0,1)$ and $\om\in\Om$ let
	\begin{align*}
		\NN_\eta(\om):=\set{\ell\in\NN: I_\ell(\om)\cap [\eta,1-\eta]\nonempty}
	\end{align*}
	and let 
	\begin{align*}
		\NN_\eta^*(\om):=N_\eta(\om)\cap\set{2,3,\dots}=\NN_\eta(\om)\bs\set{1}.
	\end{align*}
	Let $\ell_\eta(\om)$ be the largest element in $\NN_\eta(\om)$. Of course 
	\begin{align*}
		\NN_\eta(\om)=\set{1,2,\dots,\ell_\eta(\om)}\spand\NN_\eta^*(\om)=\set{2,3,\dots,\ell_\eta(\om)}.
\end{align*}
By conditions \ref{(M5c)} of the definition of $T$, 
\begin{equation}\label{520180529}
T_{\om,0}^{-j}(1)\leq C\kp^{-j}
\end{equation}
for all $j\geq 0$ with some constant $C\ge 1$. In particular, $T_{\om,0}^{-(\ell_\eta(\om)-1)}(1)\leq C\kp^{-(\ell_\eta(\om)-1)}$. But, on the other hand, $T_{\om,0}^{-(\ell_\eta(\om)-1)}(1)\geq\eta$. Hence, $\kp^{-(\ell_\eta(\om)-1)}\geq\eta/C$. This gives
	\begin{align}\label{2tp10}
		\ell_\eta(\om)\leq 1+\frac{\log(C/\eta)}{\log\kp}:=\ell_\eta
	\end{align}
	for all $\om\in\Om$.
Now, fix $\ep\in(0,1)$. Then, by virtue of Lemma \ref{l2tp6}, with $\Xi_n(\eta,\om)$ defined by \eqref{520180528} there exists $\eta_\ep\in(0,1/2)$ so small that 
$$
\int_{\Om}\Xi_n(\eta_\ep,\om)\,dm(\om)<\frac{\ep}{2}
$$	
for all integers $n\ge 0$. Hence, by Tchebyshev's Inequality
$$
m\(\big\{\om\in\Om:\Xi_n(\eta_\ep,\om)>1/2\big\}\)<\ep
$$
for all integers $n\ge 0$. So, denoting 
$$
\Om_\ep:=\Om\sms \big\{\om\in\Om:\Xi_n(\eta_\ep,\om)>1/2\big\},
$$
we get that
$$
m\(\Om_\ep\)>1-\ep
$$
and
\begin{align}\label{1tp11}
\inf\left\{\nu_{k,\om}\(T_\om^{-n}\([\eta_\ep,1-\eta_\ep]\)\):q\leq k\le\infty,\,  \om\in \Om_\ep\right\}\geq\frac{1}{2}
\end{align} 
for all integers $n\ge 0$. For the rest of the proof keep $\om\in \Om_\ep$. By \eqref{1tp11} and \eqref{2tp10} for every integer $n\ge 0$ there exists $1\leq j_n\leq \ell_{\eta_\ep}$ such that 
	\begin{align}\label{2tp11}
		\nu_{k,\om}\left(T_\om^{-n}\left(I_{\ta^n(\om),\eta_\ep}(j_n)\right)\right)
		\geq (2\ell_{\eta_\ep})^{-1}
	\end{align} 
for all $k\ge q^*$. As in the proof of Lemma~\ref{l2tp3}, we treat all the functions $\cL_{k,\om}^n\ind$, $n\ge 0$, as defined on the whole interval $[0,1]$ according to the definition \eqref{120180529}. Having any $q\ge q^*$, let $x(q,\om,j_n)\in I_{\ta^n(\om),\eta_\ep}(j_n)$ be a point maximizing the value of $\cL^n_{q,\om}\ind$ restricted to the set $I_{\ta^n(\om),\eta_\ep}(j_n)$, which, we recall, is defined by the formula \eqref{220180529}. Using now \eqref{2tp11} (with $k$ being $q$) along with Lemma~\ref{l1cmii3.2BD} and \eqref{1tp4}, we obtain
	\begin{align*}
		\lm_{k,\om}^n
		&\geq \int_{\cJ_{\ta^n(\om)}(T)}\cL_{k,\om}^n\ind d\nu_{k,\ta^n(\om)}
		\geq\int_{I_{\ta^n(\om),\eta_\ep}(j_n)}\cL_{k,\om}^n\ind d\nu_{k,\ta^n(\om)}\geq \int_{I_{\ta^n(\om),\eta_\ep}(j_n)}\cL_{q,\om}\ind d\nu_{k,\ta^n(\om)}\\
		&\geq K^{-t}_{\eta_\ep}\cL_{q,\om}^n\ind\(x(q,\om,j_n)\)\nu_{k,\ta^n(\om)}(I_{\ta^n(\om),\eta_\ep}(j_n))
		\geq K_{\eta_\ep}^{-t}Q_{\ell_{\eta_\ep}}\cL_{q,\om}^n\ind\(x(q,\om,j)\)\\
		&\geq K_{\eta_\ep}^{-t}Q_{\ell_{\eta_\ep}}\frac{1}{\nu_{q,\ta^n(\om)}(I_{\ta^n(\om),\eta_\ep}(j_n))}\int_{I_{\ta^n(\om),\eta_\ep}(j)}\cL_{q,\om}^n\ind d\nu_{q,\ta^n(\om)}\\
		&\geq K_{\eta_\ep}^{-t}Q_{\ell_{\eta_\ep}}\int_{I_{\ta^n(\om),\eta_\ep}(j_n)}\cL_{q,\om}^n\ind d\nu_{q,\ta^n(\om)}
		=K_{\eta_\ep}^{-t}Q_{\ell_{\eta_\ep}}\lm_{q,\om}^n\int_{I_{\ta^n(\om),\eta_\ep}(j_n)}\left(\lm_{q,\om}^{-n}\cL_{q,\om}^n\right)\ind d\nu_{q,\ta^n(\om)}\\
		&=K_{\eta_\ep}^{-t}Q_{\ell_{\eta_\ep}}\lm_{q,\om}^n\nu_{q,\om}\left(T_\om^{-n}\left(I_{\ta^n(\om),\eta_\ep}(j_n)\right)\right) \\
		&\geq K_{\eta_\ep}^{-t}Q_{\ell_{\eta_\ep}}(2\ell_{\eta_\ep})^{-1}\lm_{q,\om}^n.
	\end{align*}
	The proof of Lemma \ref{l1tp8} is complete.   
\end{proof}

We are now in a position to prove the following. 
\begin{proposition}\label{p1tp12}
	If $t\in AA(T)$, then $\cEP(t)=\cEP_\infty(t)$. 
\end{proposition}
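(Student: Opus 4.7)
The plan is to prove the reverse inequality $\cEP(t)\le \cEP_\infty(t)$, since Lemma~\ref{l1tp6} already provides $\cEP(t)\ge \cEP_\infty(t)$.

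By iterating the dual eigenequation $\tr_\om^*\nu_{\ta(\om)}=\lm_\om\nu_\om$ and using the extension of $\tr_{q,\om}^n\ind$ introduced in \eqref{120180529}, which makes $\tr_{q,\om}^n\ind=\tr_\om^n\ind_{\Gamma_\om(q,n)}$ with
$$
\Gamma_\om(q,n):=\bi_{\ell=0}^n T_\om^{-\ell}\(I_*\cap U_{\ta^\ell(\om)}(q)\),
$$
one obtains the identity
$$
\int_{[0,1]}\tr_{q,\om}^n\ind\,d\nu_{\ta^n(\om)}=\lm_\om^n\,\nu_\om\(\Gamma_\om(q,n)\),
$$
to be compared with $\lm_{q,\om}^n=\int \tr_{q,\om}^n\ind\, d\nu_{q,\ta^n(\om)}$: both are integrals of the \emph{same} function $\tr_{q,\om}^n\ind$ against two different $t$--conformal measures.

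Using the special bounded distortion of Proposition~\ref{p120180630} on $[\eta,1-\eta]$ for $\eta\in(0,\eta_*]$, the uniform lower bound $\nu_{k,\om}([\eta_*,1-\eta_*])\ge Q_\cG(t)$ of Lemma~\ref{l1cmii3.2B} (applicable both for $k=q$ and for the weak limit $k=\infty$), and Corollary~\ref{c520180606} for the uniform--in--$n$ smallness of the complementary mass, I will bound
$$
\int \tr_{q,\om}^n\ind\, d\nu_{\ta^n(\om)} \le K_\eta^t Q_\cG(t)^{-1}\lm_{q,\om}^n + \lm_\om^n\,\nu_\om\(T_\om^{-n}\([\eta,1-\eta]^c\)\).
$$
Substituted into the identity and rearranged,
$$
\lm_\om^n\,\lt(\nu_\om(\Gamma_\om(q,n))-\nu_\om(T_\om^{-n}([\eta,1-\eta]^c))\rt)\le K_\eta^t Q_\cG(t)^{-1}\lm_{q,\om}^n,
$$
and then taking logarithms, dividing by $n$, and applying Birkhoff's ergodic theorem to $\log\lm$ and $\log\lm_q$ (both in $L^\infty(m)$ by Lemma~\ref{l1mis.cm7.2} and Theorem~\ref{t1mis.cm7}) yields, for $m$--a.e. $\om$,
$$
\cEP(t)-\cEP_q(t)\le \limsup_{n\to\infty}\lt(-\frac{1}{n}\log\lt(\nu_\om(\Gamma_\om(q,n))-\nu_\om(T_\om^{-n}([\eta,1-\eta]^c))\rt)\rt).
$$

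By Corollary~\ref{c1tp5}, letting $q\to\infty$ will give the desired $\cEP(t)\le\cEP_\infty(t)$ provided the right--hand side tends to $0$. Since $\Gamma_\om(q,n)\spt\cJ_\om^{(q)}$, it suffices to establish $\nu_\om(\cJ_\om^{(q)})>0$ for $m$--a.e. $\om$ and all $q$ sufficiently large: the event $\{\om:\nu_\om(\cJ_\om^{(q)})>0\}$ is $\ta$--invariant (thanks to $T$--invariance of $\cJ^{(q)}$ and $t$--conformality of $\nu$), so by ergodicity of $\ta$ the task reduces to showing the integrated statement $\nu(\Om\times\cJ^{(q)})>0$ for large $q$. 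The main obstacle is precisely this: the naive union bound $\nu_\om(\cJ_\om^{(q),c})\le\sum_{j\ge 0}\nu_\om(T_\om^{-j}(V_{\ta^j(\om)}(q)))$ is not summable using only Corollary~\ref{c520180606}, and one must instead exploit the exponential decay in $q$ of the $\nu_\om$--masses of preimages near the critical set, encoded in the estimates \eqref{1mis.cm7.1} and \eqref{120180523}--\eqref{420180523} from the proof of Theorem~\ref{t1tp5} (which fundamentally rely on the $A$--admissibility of $t$), and then control the propagation of this decay along the forward orbit.
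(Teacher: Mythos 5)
Your reduction does not close, and the sufficient condition you propose to reduce to is in fact false. The identity $\int_{[0,1]}\tr_{q,\om}^n\ind\,d\nu_{\ta^n(\om)}=\lm_\om^n\,\nu_\om(\Gamma_\om(q,n))$ and the distortion comparison on $[\eta,1-\eta]$ are fine, and Lemma~\ref{l1tp6} settles one inequality; but after your rearrangement everything hinges on showing that $\nu_\om(\Gamma_\om(q,n))-\nu_\om\(T_\om^{-n}([\eta,1-\eta]^c)\)$ decays subexponentially in $n$ once $q$ is large, and you propose to guarantee this by proving $\nu_\om(\cJ_\om^{(q)}(T))>0$ for all large $q$. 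That statement fails for every $q$: by Theorem~\ref{t2mis119} there is an ergodic $T$--invariant random measure $\mu_t$ equivalent to $\nu_t$; by $t$--Fconformality $\nu_\om\(V_\om(q+1)\cap\cJ_\om(T)\)>0$ (this special set is mapped by $T_\om^{q+1}$ onto a set containing $\cJ_{\ta^{q+1}(\om)}(T)$), so the random open set $G_q:=\bu_{\om\in\Om}\{\om\}\times[0,T_{\om,0}^{-q}(1))$ has positive $\mu_t$--measure; ergodicity of $T$ with respect to $\mu_t$ then forces $\mu_t$--a.e.\ point to enter $G_q$ under some iterate, whereas no point of $\cJ_\om^{(q)}(T)$ ever does. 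Hence $\mu_t(\cJ^{(q)}(T))=0$, and therefore $\nu_t(\cJ^{(q)}(T))=0$ for every $q$. (The proof of Theorem~\ref{t2mis119} does not use Proposition~\ref{p1tp12}, so this is not a circularity quibble but a genuine obstruction to your reduction.)

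What your route actually needs is the finite--time statement that the escape rate through the hole around $0$ vanishes as the hole shrinks, i.e.\ that $\limsup_{n\to\infty}\(-\frac{1}{n}\log\nu_\om(\Gamma_\om(q,n))\)$ tends to $0$ as $q\to\infty$; and your own identity shows this is essentially equivalent to the proposition itself. Indeed, inserting the identity into the chain of inequalities in the proof of Lemma~\ref{l2tp3} taken with $k=\infty$ gives $\nu_\om(\Gamma_\om(q,n))\geq \hat K_{\eta_q}^{-t}Q_q\,\lm_{q,\om}^n/\lm_\om^n$, so by Birkhoff the escape rate is at most $\cEP(t)-\cEP_q(t)$, while your computation bounds $\cEP(t)-\cEP_q(t)$ by the escape rate. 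Thus the one--step estimates \eqref{1mis.cm7.1} and \eqref{120180523}--\eqref{420180523} do not ``propagate'' to the required $n$--step lower bound in any routine way; this is the missing idea, not a technicality, and you acknowledge not supplying it. The paper avoids lower--bounding survivor sets altogether: it applies Tchebyshev's inequality (using $\bt_1\le\lm_{k,\om}\le\bt_2$) to find, for each $n$, a set of $\om$ of measure bounded below on which $\lm_{k,\om}^n\le e^{(\cEP_\infty(t)+\eta)n}$ along a subsequence of $k$, transfers this to the fixed level $q$ via Lemma~\ref{l1tp8}, gets $\lm_\om^n\ge e^{(\cEP(t)-\eta)n}$ on a large set from Birkhoff's theorem, and then relates the two by integrating over such sets and using Portmanteau's theorem together with monotone convergence to obtain $\liminf_{q\to\infty}\int_\Gm\lm_{q,\om}^n\,dm\ge\int_\Gm\lm_\om^n\,dm$. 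To salvage your fiberwise scheme you would in effect have to reprove Lemma~\ref{l1tp8} and this weak--convergence step anyway.
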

\begin{proof}
	In view of Lemma \ref{l1tp6}, it suffices to prove that 
	\begin{align}
		\cEP(t)\leq \cEP_\infty(t).\label{1tp12}
	\end{align}
	For every $\eta>0$ and all $k,n\in\NN$, put
	\begin{align*}
		\Dl_{k,n}(\eta):=\set{\om\in\Om: \log\lm_{k,\om}^n\leq(\cEP_\infty(t)+\eta)n}.
	\end{align*}
By virtue of Theorem~\ref{t1mis.cm7}, we have $\log(\lm_{k,\om}^n/\bt_1^n)\geq 0$ for all $\om\in\Om$. Hence, applying Tchebyshev's Inequality along with Corollary \ref{c1tp5} and Theorem~\ref{t1mis.cm7}, we get
	\begin{align*}
		m(\Dl_{k,n}^c(\eta))&=m\left(\set{\om\in\Om: \log(\lm_{k,\om}^n/\bt^n_1)\geq (\cEP_\infty(t)-\log\bt_1+\eta)n}\right)\\
		&\leq \frac{\int_{\Om}\log(\lm_{k,\om}^n/\bt_1^n)dm(\om)}{(\cE P_\infty(t)-\log\bt_1+\eta)n}
		=\frac{(\cE P_k(t)-\log\bt_1)n}{(\cEP_\infty(t)-\log\bt_1+\eta)n}
		=\frac{\cE P_k(t)-\log\bt_1}{\cEP_\infty(t)-\log\bt_1+\eta}\\
		&\leq \frac{\cEP_\infty(t)-\log\bt_1}{\cEP_\infty(t)-\log\bt_1+\eta}
		 =1-\frac{\eta}{\cEP_\infty(t)-\log\bt_1+\eta}\\
		&\leq 1-\frac{\eta}{\log\bt_2-\log\bt_1+\eta}.
	\end{align*}
	Therefore, 
	\begin{align*}
		m(\Dl_{k,n}(\eta))\geq \frac{\eta}{\log\bt_2-\log\bt_1+\eta}.
	\end{align*}
	Keeping $\eta>0$ and $n\in\NN$ fixed, set
	\begin{align*}
		\Dl_n^\infty:=\intersect_{j=1}^\infty\union_{k=j}^\infty\Dl_{k,n}(\eta).
	\end{align*}
	Of course, 
	\begin{align}
		m(\Dl_n^\infty(\eta))\geq \frac{\eta}{\log\bt_2-\log\bt_1+\eta}>0\label{1tp13}
	\end{align}
	and 
	\begin{align}
		\Dl_n^\infty(\eta)\sub \set{\om\in\Om:\varliminf_{k\to\infty} \lm_{k,\om}^n\leq \exp((\cEP_\infty(t)+\eta)n)}.\label{2tp13}
	\end{align}

Now fix an arbitrary $\ep\in(0,\frac{1}{3}\eta(\log\bt_2-\log\bt_1+\eta)^{-1})$. Let the set $\Om_\ep\sub\Om$ and the constant $c_\ep\ge 1$ be those produced by Lemma \ref{l1tp8}. It then follows from this lemma along with \eqref{1tp13} and \eqref{2tp13} that
	\begin{align}
		\lm_{q,\om}^n\leq c_\ep\exp((\cEP_\infty(t)+\eta )n)\label{3tp13}	
	\end{align}
	for all $q\geq q^*$, $n\geq 0$, and $\om\in\Gm_n^*(\eta):=\Om_\ep\cap\Dl_n^\infty(\eta)$, and also that
	\begin{align*}
		m(\Gm_n^*(\eta))\geq\frac{2}{3}\eta(\log\bt_2-\log\bt_1+\eta)^{-1}.
	\end{align*} 
	Since $\cEP(t)=\int_{\Om}\log\lm_\om dm(\om)$, it follows from Birkhoff's Ergodic Theorem, that there exist an integer $n_\ep\geq 1$ and a measurable set $\hat\Om_\eta\sub\Om$ such that 
	\begin{align*}
		m(\hat\Om_\eta)\geq 1-\frac{1}{3}\eta(\log\bt_2-\log\bt_1+\eta)^{-1}
	\end{align*} 
	and 
	\begin{align*}
		\frac{1}{n}\log\lm_\om^n\geq \cEP(t)-\eta
	\end{align*}
	for all $\om\in \hat\Om_\eta$ and all $n\geq n_\ep$. Letting 
	\begin{align*}
		\Gm_n(\eta):=\hat\Om_\eta\cap\Gm_n^*(\eta),
	\end{align*}
	we then have that 
	\begin{align}
		m(\Gm_n(\eta))\geq \frac{1}{3}\eta (\log\bt_2-\log\bt_1+\eta)^{-1}>0\label{2tp14}
	\end{align}
	and 
	\begin{align}
		\lm_\om^n\geq \exp((\cEP(t)-\eta)n)\label{1tp14}
	\end{align}
for all $\om\in\Gm_n(\eta)$ and all $n\geq n_\ep$. Now for every $j\geq 3$ put
	\begin{align*}
		\cJ_\om(T,j):=\cJ_\om(T)\cap (1/j,1-1/j).
	\end{align*}
	
	Then for all $q\geq 1$ large enough, say $q\geq q_{j,n}\ge q^*$,
	\begin{align*}
		\cL_{q,\om}^n\ind\rvert_{\cJ_\om(T,j)}=\cL_\om^n\ind\rvert_{\cJ_\om(T,j)} 
	\end{align*}
	for all $\om\in\Om$. Therefore, if $\Gm\sub\Om$ is an arbitrary measurable set, we get for every $q\geq q_{j,n}$ that
	\begin{align*}
		\int_{\Gm}\lm_{q,\om}^n dm(\om)
		&=\int_{\Gm}\int_{\cJ_{\ta^n(\om)}(T)}\cL_{q,\om}^n\ind d\nu_{q,\ta^n(\om)}dm(\om)
		\geq \int_{\Gm}\int_{\cJ_{\ta^n(\om)}(T,j)}\cL_{q,\om}^n\ind d\nu_{q,\ta^n(\om)}dm(\om)\\
		&=\int_{\Gm}\int_{\cJ_{\ta^n(\om)}(T,j)}\cL_{\om}^n\ind d\nu_{q,\ta^n(\om)}dm(\om) \\
		&=\int_{\Om}\int_{\cJ_{\om}(T,j)}\ind_{\ta^{n}(\Gm)}\cL_{\ta^{n}(\om)}^n\ind d\nu_{q,\om}dm(\om).
	\end{align*}
	Now, since 
$\left(\cJ_\om(T,j)\right)_{\om\in\Om}$ is an open random set, since 
$$
\bu_{\tau\in\Om}\{\tau\}\times\cJ_\tau(T,j)\ni (\om,x)\longmapsto\ind_{\ta^{n}(\Gm)}\cL_{\ta^{-n}(\om)}^n\ind(x)\in[0,+\infty)
$$
is a random continuous function, and since the sequence $(\nu_q)_{q=1}^\infty$ converges weakly to $\nu$, applying Portmanteau's Theorem (see \cite{Crauel}), we get
	\begin{align*}
\liminfty{q}\int_{\Gm} \lm_{q,\om}^n dm(\om)
		&\geq\liminfty{q} \int_{\Om}\int_{\cJ_\om(T,j)} \ind_{\ta^{n}(\Gm)}\cL_{\ta^{-n}(\om)}^n\ind d\nu_{q,\om}dm(\om)\\
		&\geq \int_{\Om}\int_{\cJ_\om(T,j)} \ind_{\ta^{n}(\Gm)}\cL_{\ta^{-n}(\om)}^n\ind d\nu_{\om}dm(\om)\\
		&=\int_{\ta^{n}(\Gm)}\int_{\cJ_\om(T)}\ind_{\cJ_\om(T,j)}\cL_{\ta^{-n}(\om)}^n\ind d\nu_{\om}dm(\om).
	\end{align*} 
But since the sequence of non--negative measurable functions 
	\begin{align*}
		\left(\ind_{\cJ_\om(T,j)}\cL_{\ta^{-n}(\om)}^n\ind\right)_{j\geq 3}
	\end{align*}
converges weakly increasingly $m$--a.e. (remember that $\nu(\Om\times\set{0,1})=0$) to the function $(\om,x)\mapsto\cL_{\ta^{-n}(\om)}^n\ind(x)$ as $j\to\infty$, by virtue of the Lebesgue Monotone Convergence Theorem, we get that 
	\begin{align*}
		\liminfty{q}\int_{\Gm}\lm_{q,\om}^ndm(\om)
		&\geq \int_{\ta^{n}(\Gm)}\int_{\cJ_\om(T)}\cL^n_{\ta^{-n}(\om)}\ind d\nu_\om dm(\om)
		=\int_\Gm\int_{\cJ_{\ta^n(\om)}(T)}\cL_\om^n\ind d\nu_{\ta^n(\om)}dm(\om)\\
		&=\int_\Gm\lm_\om^n dm(\om).
	\end{align*}
	Substituting into this formula $\Gm_n(\eta)$ for $\Gm$ and using \eqref{3tp13} along with \eqref{1tp14}, we obtain
	\begin{align*}
		c_\ep\exp((\cEP_\infty(t)+\eta)n)m(\Gm_n(\eta))
		&\geq \liminfty{q}\int_{\Gm_n(\eta)}\lm_{q,\om}^ndm(\om)
		\geq \int_{\Gm_n(\eta)}\lm_\om^n dm(\om)\\
		&\geq \exp((\cEP(t)-\eta)n)m(\Gm_n(\eta)). 
	\end{align*}
	Since by \eqref{2tp14}, $m(\Gm_n(\eta))>0$, we thus get
	\begin{align*}
		c_\ep\exp((\cEP_\infty(t)+\eta)n)\geq \exp((\cEP(t)-\eta)n).
	\end{align*} 
	Equivalently, we have 
	\begin{align*}
		\cE _\infty(t)+\eta+\frac{1}{n}\log c_\ep\geq \cEP(t)-\eta.
	\end{align*}
	Letting $n\to\infty$, this gives $\cEP(t)\leq \cEP_\infty(t)+2\eta$. Finally, letting $\eta\to0$, formula \eqref{1tp12} follows and the proof of Proposition~\ref{p1tp12} is complete.  
\end{proof}

\section{Expected Pressure 2}
For every $(\om,x)\in \cJ(T)$ let 
$$
\underline\chi_\om(x)
:=\varliminf_{n\to\infty}\frac1n\log\big|\(T_\om^n\)'(x)\big|
\  \  \  {\rm and} \  \   \
\overline\chi_\om(x)
:=\varlimsup_{n\to\infty}\frac1n\log\big|\(T_\om^n\)'(x)\big|.
$$
The numbers $\underline\chi_\om(x)$ and $\overline\chi_\om(x)$ are respectively called the lower and the upper Lyapunov exponent of $T$ at the point $(\om,x)\in\cJ(T)$. If 
$$
\underline\chi_\om(x)=\overline\chi_\om(x),
$$
then this common value is simply called the Lyapunov exponent of $T$ at the point $(\om,x)\in\cJ(T)$. For all integers $1\le k<+\infty$, we set
$$
\ul{\chi}(k):=\inf\big\{\underline\chi_\om(x):(\om,x)\in \cJ^{(k)}(T)\big\}>0,
$$
where positivity follows immediately from Corollary~\ref{c1mis81} and Proposition~\ref{p120180630}. We recall that given $t\ge 0$ and an integer $1\le k\le\infty$, 
$$
\nu_{t,k}=\big\{\nu_{t,k,\om}\big\}_{\om\in\Om}
$$
is the $t$--conformal measure for the map $T|_{\cJ^{(k)}(T)}:\cJ^{(k)}(T)\to\cJ^{(k)}(T)$ defined in Lemmas~\ref{l1mis.cm5}, \ref{l1_2018_03_23}, and  Theorem~\ref{t1mis.cm7}.

All of our investigations of the expected pressure function are based on the following technical result.
\begin{lemma}\label{l1ep1}
	If $0\leq s\leq t$ and $k\in\NN$, then 
	\begin{align*}
		(t-s)\ul{\chi}(k)\leq\cEP_k(s)-\cEP_k(t)\leq(t-s)\log\norm{T'}_\infty.
	\end{align*}
\end{lemma}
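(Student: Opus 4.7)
My strategy is to reduce both inequalities to pointwise comparisons of the Perron--Frobenius operators $\cL_{s,k,\om}^n$ and $\cL_{t,k,\om}^n$ on the truncated fiber $\cJ_{\th^n(\om)}^{(k)}(T)$, and then recover the expected pressure via Birkhoff's Ergodic Theorem applied (as in Remark~\ref{r2tp5}) to the cocycle $n\mapsto \log\lm_{k,\om}^n$, using that $\cEP_k(u)=\lim_{n\to\infty}\tfrac{1}{n}\log\lm_{k,\om}^n(u)$ for $m$--a.e.\ $\om$ and each $u\in\{s,t\}$.

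The pointwise comparison rests on the identity
\[
	|(T_\om^n)'(y_\Ga)|^{-s}
	=|(T_\om^n)'(y_\Ga)|^{\,t-s}\cdot|(T_\om^n)'(y_\Ga)|^{-t},
\]
taken over all inverse branches $y_\Ga=T_{\om,\Ga}^{-n}(x)\in\cJ_\om^{(k)}(T)$ with $x\in\cJ_{\th^n(\om)}^{(k)}(T)$. Since $t-s\ge 0$ and $|T_\om'|\le\|T'\|_\infty$, the factor $|(T_\om^n)'(y_\Ga)|^{t-s}$ is bounded above by $\|T'\|_\infty^{\,n(t-s)}$ uniformly in $\Ga$, which gives the pointwise inequality $\cL_{s,k,\om}^n\ind(x)\le \|T'\|_\infty^{\,n(t-s)}\cL_{t,k,\om}^n\ind(x)$. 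For the matching lower bound I will invoke uniform expansion on $\cJ^{(k)}(T)$: because $T|_k$ is a distance--expanding random map in the sense of \cite{MSU}, there is a constant $C_k\ge 1$ such that
\[
	|(T_\om^n)'(y)|\ge C_k^{-1}\exp\!\big(n\,\underline\chi(k)\big)
\]
for every $\om\in\Om$, $n\ge 1$, and $y\in\cJ_\om^{(k)}(T)$; this yields $\cL_{s,k,\om}^n\ind(x)\ge C_k^{-(t-s)}e^{n(t-s)\underline\chi(k)}\cL_{t,k,\om}^n\ind(x)$.

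To pass from pointwise bounds to $\lm_{k,\om}^n$ I use the bounded distortion furnished by Proposition~\ref{p120180630} applied with $\eta=\eta_k$, since $\cJ^{(k)}(T)\sbt \Om\times[\eta_k,1-\eta_k]$. The functions $\cL_{s,k,\om}^n\ind$ and $\cL_{t,k,\om}^n\ind$ then have oscillation bounded by a multiplicative constant $K_k^{\max(s,t)}$ on $\cJ_{\th^n(\om)}^{(k)}(T)$, independent of $n$ and $\om$. Consequently
\[
	\int \cL_{u,k,\om}^n\ind\,d\nu_{v,k,\th^n(\om)}\comp \lm_{k,\om}^n(u)
\]
for any $u,v\in\{s,t\}$, with implicit constants independent of $n$ and $\om$. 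Integrating the two pointwise bounds above against $\nu_{s,k,\th^n(\om)}$ and invoking this comparison yields
\[
	C_k^{-(t-s)}K_k^{-2\max(s,t)}e^{n(t-s)\underline\chi(k)}\lm_{k,\om}^n(t)
	\le \lm_{k,\om}^n(s)
	\le K_k^{2\max(s,t)}\|T'\|_\infty^{\,n(t-s)}\lm_{k,\om}^n(t).
\]
Taking logarithms, dividing by $n$, and letting $n\to\infty$ makes the multiplicative constants disappear and delivers both halves of the claimed inequality.

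The main obstacle is the uniform lower bound $|(T_\om^n)'(y)|\ge C_k^{-1}e^{n\underline\chi(k)}$, since the definition of $\underline\chi(k)$ only yields a pointwise $\liminf$ and a priori an $n$--dependent infimum is only bounded below by $\inf_{(\om,y)}\underline\chi_\om(y)$ in a weaker sense. I will therefore rely on the fact that the truncated system is uniformly distance--expanding, so that $|(T_\om^n)'(y)|\ge C\kappa^n$ holds uniformly for some $\kappa>1$; combined with the definition of $\underline\chi(k)$ this forces the uniform rate to coincide (up to an arbitrarily small loss absorbed into $C_k$) with $\underline\chi(k)$, which is precisely what the estimate needs.
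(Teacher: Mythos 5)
Your upper bound is correct, and the mechanism---termwise comparison of $\cL^n_{s,k,\om}\ind$ and $\cL^n_{t,k,\om}\ind$ via $|(T_\om^n)'|^{t-s}\le\|T'\|_\infty^{n(t-s)}$, bounded distortion to relate integrals against either truncated conformal measure to the corresponding eigenvalue up to an $n$-independent constant, and Birkhoff's Ergodic Theorem to extract $\cEP_k(\cdot)$---is a legitimate alternative for that half of the inequality.

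The lower bound has a genuine gap, precisely at the estimate you flag as your main obstacle. You need a uniform-in-$n$ bound $|(T_\om^n)'(y)|\ge C_k^{-1}e^{n\ul{\chi}(k)}$ for all $\om\in\Om$, $y\in\cJ_\om^{(k)}(T)$, and $n\ge 1$, but $\ul{\chi}(k)$ is $\inf_{(\om,y)\in\cJ^{(k)}(T)}\ul{\chi}_\om(y)$, an infimum of \emph{pointwise $\liminf$s}, and such a bound does not follow from uniform distance--expansion. Set $\phi_n(\om,y):=\log|(T_\om^n)'(y)|$ and $a_n:=\inf\{\phi_n(\om,y):(\om,y)\in\cJ^{(k)}(T)\}$. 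The cocycle relation $\phi_{n+m}=\phi_n+\phi_m\circ T^n$ together with $T$-invariance of $\cJ^{(k)}(T)$ makes $(a_n)$ superadditive, so Fekete gives $a^*:=\lim_n a_n/n=\sup_n a_n/n$, and for every $(\om,y)$ one has $\ul{\chi}_\om(y)\ge a^*$; hence $\ul{\chi}(k)\ge a^*$---but there is no mechanism forcing equality, because $\Om$ is an abstract probability space with no topology, so the worst-case pair at scale $n$ can change with $n$ and need not aggregate into a single orbit whose $\liminf$ saturates $a^*$. Uniform expansion of the truncated system gives only $a_n\ge n\log\kappa_k$ with $\kappa_k>1$ the minimal one-step derivative on $\cJ^{(k)}(T)$, and $\log\kappa_k$ can be far below $\ul{\chi}(k)$. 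Moreover, even granting $a^*=\ul{\chi}(k)$, superadditivity gives $a_n\le na^*$ for every $n$ (the infimum lies \emph{below} its linear asymptote), and nothing bounds the deficit $na^*-a_n$ uniformly in $n$, which is exactly what your multiplicative constant $C_k$ requires.

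The paper avoids all of this with a measure-density argument that needs only the pointwise $\liminf$: with $I_\om(x,n):=T_{\om,x}^{-n}(\Dl_k)$ where $\Dl_k$ is the convex hull of $\cJ^{(k)}(T)$, conformality and bounded distortion give $\nu_{u,k,\om}(I_\om(x,n))\comp\lm^{-n}_{u,k,\om}|(T_\om^n)'(x)|^{-u}$ for $u\in\{s,t\}$, so the ratio of these two measures of the same shrinking interval is $\comp|(T_\om^n)'(x)|^{s-t}\lm_{s,k,\om}^n/\lm_{t,k,\om}^n$. The standard density fact that $\liminf_n\nu_{s,k,\om}(I_\om(x,n))/\nu_{t,k,\om}(I_\om(x,n))<\infty$ for $\nu_{t,k,\om}$-a.e.\ $x$, combined with Birkhoff for the $\lm$'s, then yields $\cEP_k(s)-\cEP_k(t)\ge(t-s)\ul{\chi}_\om(x)\ge(t-s)\ul{\chi}(k)$ for $m$-a.e.\ $\om$ and $\nu_{t,k,\om}$-a.e.\ $x$, which is all the lemma needs. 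You should either switch to that argument for the lower bound or supply an honest proof of your uniform-rate claim, which I do not believe holds at this level of generality.
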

\begin{proof}
	Let $\Dl_k\sub[0,1]$ be the convex hull of $\cJ^{(k)}(T)$. For every $\om\in\Om$, every $x\in\cJ^{(k)}(T)$, and every $n\geq 0$ let 
	\begin{align*}
		I_\om(x,n):=T_{\om,x}^{-n}(\Dl_k).
	\end{align*}
	By conformality of both measures $\nu_{t,k}$ and $\nu_{s,k}$ and Proposition~\ref{p120180630}, we have that 
	\begin{align*}
	\nu_{t,k,\om}(I_\om(x,n))\comp\lm_{t,k,\om}^{-n}\absval{(T_\om^n)'(x)}^{-t}
	\end{align*}
	and
	\begin{align*}
	\nu_{s,k,\om}(I_\om(x,n))\comp\lm_{s,k,\om}^{-n}\absval{(T_\om^n)'(x)}^{-s}
	\end{align*}
	for all $\om\in\Om$, all $n\in\NN$, and all $x\in\cJ_\om^{(k)}(T)$, where the comparability constants depend in general on $s$, $t$, and $k$ but are independent of $\om$, $n$ and $x$. Therefore, 
	\begin{align*}
		\frac{\nu_{t,k,\om}(I_\om(x,n))}{\nu_{s,k,\om}(I_\om(x,n))}\comp\absval{(T_\om^n)'(x)}^{s-t}\frac{\lm_{s,k,\om}^n}{\lm_{t,k,\om}^n}.	
	\end{align*}
	Now, for every $\om\in\Om$ there exists a measurable set $\cJ_\om^{(k)}(T,s)\sub\cJ_\om^{(k)}(T)$ such that $\nu_{s,k,\om}(\cJ_\om^{(k)}(T,s))=1$ and 
	\begin{align*}
		\liminf_{n\to\infty}\frac{\nu_{t,k,\om}(I_\om(x,n))}{\nu_{s,k,\om}(I_\om(x,n))}<+\infty
	\end{align*}
	for all $x\in\cJ_\om^{(k)}(T,s)$. Likewise, for every $\om\in\Om$ there exists a measurable set $\cJ_\om^{(k)}(T,t)\sub\cJ_\om^{(k)}(T)$ such that $\nu_{t,k,\om}(\cJ_\om^{(k)}(T,t))=1$ and 
	\begin{align*}
	\liminf_{n\to\infty}\frac{\nu_{s,k,\om}(I_\om(x,n))}{\nu_{t,k,\om}(I_\om(x,n))}<+\infty	
	\end{align*}
	for all $x\in\cJ_\om^{(k)}(T,t)$. By Birkhoff's Ergodic Theorem
	\begin{align*}
		\cEP_k(t)=\lim_{n\to\infty}\frac{1}{n}\log\lm_{t,k,\om}^n
	\end{align*}
	for $m$--a.e. $\om\in\Om$, say $\om\in\Om_t$, and 
	\begin{align*}
	\cEP_k(s)=\lim_{n\to\infty}\frac{1}{n}\log\lm_{s,k,\om}^n
	\end{align*}
	for $m$--a.e. $\om\in\Om$, say $\om\in\Om_s$. Therefore, for all $\om\in\Om_s\cap\Om_t$, we have 
	\begin{align*}
		\cEP_k(s)-\cEP_k(t)\leq (t-s)\ol{\chi}(x)\leq\log\norm{T'}_\infty(t-s)
	\end{align*}
	for all $x\in\cJ_\om^{(k)}(T,s)$, and 
	\begin{align*}
		\cEP_k(s)-\cEP_k(t)\geq (t-s)\ul{\chi}(x)\geq \ul{\chi}(k)(t-s)
	\end{align*}
	for all $x\in\cJ_\om^{(k)}(T,s)$. The proof is now complete. 
\end{proof}
As an immediate consequence of this lemma we get the following. 
\begin{proposition}\label{p1ep2}
	If $0\leq s\leq t$, then
	\begin{align*}
		\cEP_\infty(s)-\cEP_\infty(t)\leq \log\norm{T'}_\infty(t-s)
	\end{align*}
	and
	\begin{align*}
		\cEP_\infty(s)-\cEP_k(t)\geq\ul{\chi}(k)(t-s)
	\end{align*}
	for every $k\in\NN$. 
\end{proposition}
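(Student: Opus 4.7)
The plan is to derive Proposition~\ref{p1ep2} directly from Lemma~\ref{l1ep1} by passing to the limit $k\to\infty$, using the monotone convergence statement of Corollary~\ref{c1tp5} which asserts that the sequence $\(\cEP_k(t)\)_{k=1}^\infty$ is weakly increasing for every $t\geq 0$ and that its limit is $\cEP_\infty(t)$.

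For the first inequality, I would start with the upper bound of Lemma~\ref{l1ep1}, namely
$$
\cEP_k(s)-\cEP_k(t)\leq (t-s)\log\norm{T'}_\infty
$$
which holds for every $k\in\NN$ and is independent of $k$ on the right-hand side. Letting $k\to\infty$ and invoking Corollary~\ref{c1tp5} on both sides gives $\cEP_\infty(s)-\cEP_\infty(t)\leq (t-s)\log\norm{T'}_\infty$, as desired.

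For the second inequality, the idea is even simpler but requires combining two facts. From the lower bound of Lemma~\ref{l1ep1} we have
$$
\cEP_k(s)-\cEP_k(t)\geq (t-s)\ul{\chi}(k),
$$
while the monotonicity in Corollary~\ref{c1tp5} gives $\cEP_\infty(s)\geq \cEP_k(s)$ (since $\cEP_k(s)$ is weakly increasing in $k$ with limit $\cEP_\infty(s)$). Subtracting $\cEP_k(t)$ from both sides of this monotonicity inequality and chaining with the previous bound yields
$$
\cEP_\infty(s)-\cEP_k(t)\geq \cEP_k(s)-\cEP_k(t)\geq (t-s)\ul{\chi}(k),
$$
which is exactly what is claimed. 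There is no substantive obstacle here; the only point worth flagging is that one should \emph{not} simultaneously let $k\to\infty$ on the right-hand side of the lower bound, since $\ul{\chi}(k)$ may well tend to $+\infty$ (the truncated systems become more and more expanding as $k\to\infty$)—this is precisely why the statement keeps $k$ fixed on the left-hand side in the second inequality.
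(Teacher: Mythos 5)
Your proposal is correct and matches the paper's argument, which simply declares the proposition an immediate consequence of Lemma~\ref{l1ep1}: the first inequality follows by letting $k\to\infty$ in the upper bound via Corollary~\ref{c1tp5}, and the second by combining the lower bound with the monotonicity $\cEP_\infty(s)\geq\cEP_k(s)$. Your caution about not passing to the limit in $\ul{\chi}(k)$ is also exactly why the statement keeps $k$ fixed.
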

By letting $k\to\infty$ in the second formula of this proposition and copying the first one, we get the following.

\begin{corollary}\label{c120180531}
If $0\leq s\leq t$, then
	\begin{align*}
		0\le \cEP_\infty(s)-\cEP_\infty(t)\leq \log\norm{T'}_\infty(t-s).
	\end{align*}
In particular, the function
	$$
	[0,+\infty)\ni u\longmapsto \cEP_\infty(u)\in\RR
	$$
is monotone decreasing and Lipschitz continuous with a Lipschitz constant equal to $\log\norm{T'}_\infty$.
\end{corollary}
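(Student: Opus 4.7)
The statement is essentially a packaging of Proposition~\ref{p1ep2} together with the fact, recorded in Corollary~\ref{c1tp5}, that the sequence $\bigl(\cEP_k(t)\bigr)_{k=1}^\infty$ is weakly increasing with limit $\cEP_\infty(t)$. So the plan is a short two-step argument rather than an independent proof.

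First I would dispose of the upper bound. The inequality
$\cEP_\infty(s)-\cEP_\infty(t)\le \log\|T'\|_\infty\,(t-s)$
is simply the first assertion of Proposition~\ref{p1ep2}, which holds for $0\le s\le t$; nothing further is needed.

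Second I would establish the lower bound $\cEP_\infty(s)-\cEP_\infty(t)\ge 0$ by passing to a limit in the second assertion of Proposition~\ref{p1ep2}. Fix $k\in\NN$ and $0\le s\le t$; then
\[
\cEP_\infty(s)\ge \cEP_k(t)+\underline{\chi}(k)(t-s)\ge \cEP_k(t),
\]
since $\underline{\chi}(k)>0$ (by Corollary~\ref{c1mis81} and Proposition~\ref{p120180630}) and $t-s\ge 0$. Letting $k\to\infty$ and invoking Corollary~\ref{c1tp5}, which yields $\cEP_k(t)\nearrow \cEP_\infty(t)$, gives $\cEP_\infty(s)\ge \cEP_\infty(t)$, i.e.\ $\cEP_\infty(s)-\cEP_\infty(t)\ge 0$.

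Combining the two bounds produces the displayed double inequality. The concluding claim about the function $u\mapsto \cEP_\infty(u)$ is then immediate: the lower bound gives monotone decrease, and the upper bound gives that $|\cEP_\infty(s)-\cEP_\infty(t)|\le \log\|T'\|_\infty\,|t-s|$ for all $s,t\in[0,+\infty)$, which is Lipschitz continuity with constant $\log\|T'\|_\infty$. There is no real obstacle here; the only place where one must be a bit careful is noting that the lower Lyapunov constants $\underline{\chi}(k)$ need not have a useful limit as $k\to\infty$, but this is harmless because we only need $\underline{\chi}(k)(t-s)\ge 0$ for the limit argument to yield the nonnegativity of $\cEP_\infty(s)-\cEP_\infty(t)$.
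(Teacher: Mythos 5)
Your proposal is correct and is essentially the paper's own argument: the paper obtains this corollary precisely by quoting the first inequality of Proposition~\ref{p1ep2} for the upper bound and letting $k\to\infty$ in its second inequality (using Corollary~\ref{c1tp5} and the positivity of $\ul{\chi}(k)$) for the lower bound, just as you do.
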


Define
\begin{align*}
	D_T:=\set{t\geq 0:\cEP_\infty(t)\geq 0}.
\end{align*}
Of course 
\begin{align*}
	D_T\sub AA(T).
\end{align*}
Consequently, $\cE P(t)$ is well--defined for each $t\in D_T$ and 
\begin{align*}
	\cEP(t)=\cEP_\infty(t)
\end{align*} 
for all $t\in D_T$. Since $\cEP_\infty(0)=\cEP(0)>0$, it immediately follows from Corollary~\ref{c120180531} that there exists a unique number $b_T\in[0,+\infty]$ such that either 
\begin{align*}
	D_T=[0,b_T) \quad \text{ or }\quad D_T=[0,b_T].
\end{align*}

As an immediate consequence of this corollary, we get the following
\begin{corollary}\label{t1ep3}
If $b_T<+\infty$, then 
$$
	\cEP_\infty(b_T)=0.
$$
and (consequently)
	\begin{align*}
		D_T=[0,b_T].
	\end{align*}
\end{corollary}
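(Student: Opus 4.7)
The plan is to derive both assertions directly from the monotonicity and Lipschitz continuity of $\cEP_\infty$ recorded in Corollary~\ref{c120180531}. First, since $\cEP_\infty:[0,+\infty)\to\RR$ is continuous, the set $D_T=\cEP_\infty^{-1}([0,+\infty))$ is the preimage of a closed half--line under a continuous function, hence it is a closed subset of $[0,+\infty)$.

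Combining this closedness with the dichotomy stated just before Corollary~\ref{t1ep3}, namely that $D_T$ equals either $[0,b_T)$ or $[0,b_T]$, we conclude that if $b_T<+\infty$ then the option $D_T=[0,b_T)$ is impossible: indeed, in that case $b_T$ would be a finite limit point of $D_T$ failing to lie in $D_T$, contradicting that $D_T$ is closed. Therefore $D_T=[0,b_T]$, which is the second assertion, and in particular $\cEP_\infty(b_T)\ge 0$.

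To obtain the matching upper bound, pick any sequence $(t_n)_{n=1}^\infty$ with $t_n\downarrow b_T$ and $t_n>b_T$; such a sequence exists because $b_T<+\infty$ forces $(b_T,+\infty)\subset D_T^c$, so that $\cEP_\infty(t_n)<0$ for every $n\ge 1$. Passing to the limit and using the Lipschitz continuity of $\cEP_\infty$ from Corollary~\ref{c120180531}, we get $\cEP_\infty(b_T)\le 0$. Together with the reverse inequality already established, this gives $\cEP_\infty(b_T)=0$, completing the proof. There is no real obstacle here; the only point to verify is that $(b_T,+\infty)$ is nonempty, which is immediate from the hypothesis $b_T<+\infty$.
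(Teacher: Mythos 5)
Your proof is correct and uses exactly the argument the paper intends: the corollary is stated as an immediate consequence of the monotonicity and Lipschitz continuity in Corollary~\ref{c120180531}, and your continuity/closedness reasoning (with $\cEP_\infty(t)<0$ for $t>b_T$ forcing $\cEP_\infty(b_T)\le 0$, and $b_T\in D_T$ giving the reverse inequality) is that same argument, merely deriving $D_T=[0,b_T]$ before rather than after $\cEP_\infty(b_T)=0$.
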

Now, we shall prove more.
\begin{theorem}\label{t2ep4}
If $b_T<+\infty$, then there exists a number $b_T^+>b_T$ such that 
	\begin{align*}
		[0,b_T^+)\sub AA(T).
	\end{align*}
\end{theorem}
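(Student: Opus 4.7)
\noindent\textbf{Proof plan for Theorem~\ref{t2ep4}.}

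My plan is to combine the Lipschitz estimate for $\cEP_\infty$ coming from Corollary~\ref{c120180531} with the fact, from Corollary~\ref{t1ep3}, that $\cEP_\infty(b_T)=0$, and then to use strict positivity of $b_T$ to get room on the right of $b_T$ in the admissibility inequality. First I would note that by Proposition~\ref{p1cmii4} and Proposition~\ref{p1tp12},
$$
\cEP_\infty(0)=\cEP(0)=\log\#\cG\ge\log 2>0,
$$
so, since $\cEP_\infty$ is continuous and $\cEP_\infty(b_T)=0$, we have $b_T>0$. This is the input that will make the neighborhood $(b_T,b_T^+)$ non-empty.

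Next, writing $L:=\log\|T'\|_\infty$, Corollary~\ref{c120180531} gives, for every $t\ge b_T$,
$$
\cEP_\infty(t)\;\ge\;\cEP_\infty(b_T)-L(t-b_T)\;=\;-L(t-b_T).
$$
Recalling that $t\in AA(T)$ is equivalent (by Corollary~\ref{c1tp5}) to
$$
\cEP_\infty(t)\;>\;-\frac{\chi_0}{1+\gm_+}\,t,
$$
it therefore suffices to produce $b_T^+>b_T$ such that
\begin{equation}\label{eq:admissibility-near-bT}
L(t-b_T)\;<\;\frac{\chi_0}{1+\gm_+}\,t\quad\text{for all }t\in[b_T,b_T^+).
\end{equation}
At $t=b_T$ the left side of \eqref{eq:admissibility-near-bT} vanishes while the right side equals $\chi_0 b_T/(1+\gm_+)>0$ because $b_T>0$ and $\chi_0\ge\log\kp>0$. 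Both sides being affine (hence continuous) in $t$, the inequality persists on some open interval to the right of $b_T$. Explicitly: if $L\le \chi_0/(1+\gm_+)$ the inequality \eqref{eq:admissibility-near-bT} holds for all $t\ge 0$ and one may take $b_T^+=+\infty$; otherwise, solving for $t$ gives
$$
b_T^+\;:=\;\frac{L\,b_T}{L-\chi_0/(1+\gm_+)}\;>\;b_T,
$$
and \eqref{eq:admissibility-near-bT} holds on $[b_T,b_T^+)$. This yields $[b_T,b_T^+)\subset AA(T)$.

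Finally, since $D_T=[0,b_T]$ by Corollary~\ref{t1ep3} and $D_T\subset AA(T)$ by the very definitions of $D_T$, $\cEP_\infty$, and $AA(T)$ (recall that $\cEP_\infty(t)\ge 0$ forces the admissibility inequality as $\chi_0 t\ge 0$), concatenating this with the interval $[b_T,b_T^+)$ produced above gives $[0,b_T^+)\subset AA(T)$, as required. I do not foresee any real obstacle here: the only conceptual point is that the admissibility threshold $-\chi_0 t/(1+\gm_+)$ is strictly negative at $t=b_T>0$ while $\cEP_\infty(b_T)$ merely touches $0$, so the strict inequality opens up a genuine neighborhood; everything else is Lipschitz interpolation.
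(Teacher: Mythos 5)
Your proposal is correct and rests on the same mechanism as the paper's proof: the admissibility threshold $-\chi_0 t/(1+\gm_+)$ is strictly negative at $t=b_T>0$ while the pressure vanishes there, and Lipschitz continuity of the pressure then keeps the admissibility inequality valid slightly to the right of $b_T$. The only difference is cosmetic: the paper runs the estimate at the level of the truncated pressures $\cEP_k(b_T)$ (via Proposition~\ref{p1ep2}, choosing $k$ large so that $\cEP_k(b_T)$ is close to $\cEP_\infty(b_T)\ge 0$, and then letting $t\downarrow b_T$), whereas you invoke Corollary~\ref{t1ep3} and the limiting Lipschitz bound of Corollary~\ref{c120180531} directly, which streamlines the computation and even produces an explicit value of $b_T^+$.
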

\begin{proof}
	The second inequality of Proposition \ref{p1ep2} tells us that 
	\begin{align*}
		\cEP_\infty(t)\geq\cEP_k(b_T)-\ul{\chi}(k)(t-b_T)
	\end{align*}
	for all $t\geq b_T$ and all $k\geq 1$. So, if $t\geq b_T$ and
	\begin{align*}
		\cEP_k(b_T)-\ul{\chi}(k)(t-b_T)>-\frac{1}{1+\gm_+}\chi_0t
	\end{align*}
for some $k\geq 1$, then $t\in AA(T)$. But this inequality is equivalent to 
	\begin{align*}
(1+\gm_+)(t-b_T)\ul{\chi}(k)<\chi_0t+(1+\gm_+)\cEP_k(b_T)
	\end{align*}
	and further, after dividing both sides by $t$, equivalent to
	\begin{align*}
(1+\gm_+)\lt(1-\frac{b_T}{t}\rt)\ul{\chi}(k)<\chi_0+(1+\gm_+)\frac{\cEP_k(b_T)}{t}.
	\end{align*}
Since $\ul{\chi}(k)\leq\log\norm{T'}_\infty$, it therefore follows that in order to know that $t\in AA(t)$, it suffices to have
	\begin{align*}
(1+\gm_+)\lt(1-\frac{b_T}{t}\rt)\log\norm{T'}_\infty
<\chi_0+(1+\gm_+)\frac{\cEP_k(b_T)}{t}   
	\end{align*}
for some $k\geq 1$. But by Corollary~\ref{t1ep3} and Corollary~\ref{c1tp5}, $\cEP_k(t)\le 0$ for every $k\geq 1$. Hence $\cEP_k(b_T)/t\ge 
\cEP_k(b_T)/b_T$. Therefore, it suffices to have
	\begin{align*}
(1+\gm_+)\lt(1-\frac{b_T}{t}\rt)\log\norm{T'}_\infty
<\chi_0+(1+\gm_+)\frac{\cEP_k(b_T)}{b_T}.   
	\end{align*}
for some $k\geq 1$. Since $\lim_{k\to\infty}\cE _k(b_T)=\cEP_\infty(b_T)\geq 0$ and since $\chi_0>0$, there exists $k\geq 1$ such that $-(1+\gm_+)\cE P_k(b_T)/b_T<\chi_0/2$. Thus, it suffices to have
	\begin{align}
(1+\gm_+)\lt(1-\frac{b_T}{t}\rt)\log\norm{T'}_\infty
<\chi_0/2.    \label{1ep4}
	\end{align}
But this is obviously true if $t>b_T$ is sufficiently close to $b_T$, say $t<b_T^+$. The proof of Theorem \ref{t2ep4} is complete.
\end{proof}

\section{Invariant Versions of Conformal Measures}\label{sec: invariant versions of conformal measures}
As the main result of this section, we shall prove the following.

\begin{theorem}\label{t2mis119}
	For every admissible parameter $t\geq 0$, i.e. belonging to AA(T), there exists a unique measure $\mu_t\in M^1_m(T)$ absolutely continuous with respect to $\nu_t$. In addition $\mu_t$ is equivalent to $\nu_t$ and ergodic.
\end{theorem}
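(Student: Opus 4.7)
The plan is to construct $\mu_t$ as a weak accumulation point, in the narrow topology, of the Cesàro averages
\begin{align*}
\mu_t^{(N)} := \frac{1}{N}\sum_{n=0}^{N-1}\nu_t\circ T^{-n}, \qquad N\ge 1.
\end{align*}
Each $\mu_t^{(N)}$ lies in the narrowly compact set $M^1_m(\cJ(T))$, so some subsequence $(N_k)$ produces a narrow limit $\mu_t\in M^1_m(\cJ(T))$. A standard Cesàro telescoping argument applied to random continuous test functions $g\in C_b(\cJ(T))$ yields $\mu_t\circ T^{-1}=\mu_t$, so $\mu_t\in M^1_m(T)$.

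The main obstacle will be establishing that $\mu_t\ll\nu_t$ with an $L^\infty$ density. Fiberwise, the $t$--Bconformality identity \eqref{120180405} gives
\begin{align*}
\nu_t\circ T_\om^{-n}(A) = \sum_{\Ga\in\cG^n}(\lm_{t,\om}^n)^{-1}\int_{A}|(T_{\om,\Ga}^{-n})'|^t\, d\nu_{t,\th^n(\om)},
\end{align*}
so the density of $\nu_t\circ T_\om^{-n}$ with respect to $\nu_{t,\om}$ equals the normalized transfer operator iterate $(\lm_{t,\om}^n)^{-1}\tr_{t,\om}^n\ind$. By the Special Bounded Distortion Property (Proposition \ref{p120180630}), combined with the lower bound on $\lm_{t,\om}^n$ from Lemma \ref{l1tp8}, these densities are uniformly bounded on the complement of any fixed shrinking fiberwise neighborhood of the critical orbit and of $\bigcup_{\ell=0}^{L}T^{-\ell}(\Om\times\{0,1\})$. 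The delicate point, handled by Corollary \ref{c520180606} together with Remark \ref{r120180526} and the decay estimates \eqref{120180523}--\eqref{220180523}, is that the $\nu_t$-mass of these bad neighborhoods is controlled uniformly in $n$ and vanishes with the radius. Cesàro averaging and passing to the narrow limit then give $\mu_t\ll\nu_t$ with density $h:=d\mu_t/d\nu_t\in L^\infty(\nu_t)$, and $T$-invariance of $\mu_t$ translates via the conformality duality into the fiberwise identity $\lm_{t,\om}^{-1}\tr_{t,\om}(h_\om)=h_{\th(\om)}$ for $m$-a.e.\ $\om$.

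Equivalence $\mu_t\comp\nu_t$ and ergodicity will follow together from the following observation. The random set $E_\om:=\{x\in\cJ_\om(T):h_\om(x)>0\}$ is $T$-invariant modulo $\nu_t$-null sets, and the functional equation $\lm_{t,\om}^{-1}\tr_{t,\om}(h_\om)=h_{\th(\om)}$ together with the transitivity provided by the vanishing of cylinder diameters (Corollary \ref{c1mis81}) forces $E_\om=\cJ_\om(T)$ modulo $\nu_t$-null sets on a full-$m$-measure set of $\om$; otherwise the $t$-conformality of $\nu_t$ and the distortion estimates would transport a positive-measure ``hole'' to arbitrarily small scales on arbitrarily many fibers, contradicting the bounded density equation. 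The same transport argument applied to a hypothetical $T$-invariant Borel set $B\sbt\cJ(T)$ with $0<\mu_t(B)<1$ produces a contradiction, so $\mu_t$ is ergodic.

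For uniqueness, suppose $\mu'\in M^1_m(T)$ also satisfies $\mu'\ll\nu_t$. Since $\mu_t\comp\nu_t$ has already been proved, $\mu'\ll\mu_t$, so $\rho:=d\mu'/d\mu_t\in L^1(\mu_t)$ is well defined. The $T$-invariance of both $\mu'$ and $\mu_t$ forces $\rho\circ T=\rho$ $\mu_t$-almost everywhere, and ergodicity of $\mu_t$ then forces $\rho$ to be a constant; normalization yields $\rho\equiv 1$, hence $\mu'=\mu_t$.
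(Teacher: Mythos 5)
Your construction of $\mu_t$ as a narrow limit of the Cesàro averages $\mu_t^{(N)}$ is exactly the paper's construction, and the $T$--invariance argument via telescoping is standard and correct. The route you take for absolute continuity is also the right idea in spirit: the $t$--Bconformality identity plus the Special Bounded Distortion Property give a uniform bound on the density of $\nu_t\circ T_\om^{-n}$ away from $\{0,1\}$, and Corollary \ref{c520180606} controls the mass near $\{0,1\}$ uniformly in $n$. Where you part ways with what can actually be proved is the claim that the resulting density $h=d\mu_t/d\nu_t$ lies in $L^\infty(\nu_t)$. The distortion constant $K_\eta=\big(\tfrac{1+\eta}{\eta}\big)^2$ in \eqref{2mis121} and \eqref{3mis123} blows up as $\eta\to 0$, so the density is bounded only on $p_2^{-1}([\eta,1-\eta])$ for each fixed $\eta$, not globally; nothing in the setup prevents $h$ from being unbounded in a fiberwise neighbourhood of $\{0,1\}$. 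The paper, correspondingly, proves only $\mu_t\ll\nu_t$ via outer regularity and a two--scale cut, without producing a bounded density, and you should do the same.

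The more serious gap is in your treatment of equivalence and ergodicity. The phrase that the functional equation and vanishing cylinder diameters ``would transport a positive--measure hole to arbitrarily small scales on arbitrarily many fibers, contradicting the bounded density equation'' is an appeal to an $L^\infty$ bound that, as noted, you do not have, and even granting some local boundedness it is not a proof: you have identified $E_\om=\{h_\om>0\}$ as backward $T$--invariant, but you do not actually argue why a $T$--invariant non--trivial $E$ cannot exist here. The paper settles this differently, and the key ingredient you are not using is Lemma \ref{l1mis.cmii2}(b) — the result that any two $t$--conformal random measures are equivalent. The paper's ergodicity step is: given a $T$--invariant set $E$ with $0<\mu(E)<1$, equivalence gives $0<\nu(E)<1$; the fiberwise normalized restrictions $\nu'$ and $\nu''$ of $\nu$ to $E$ and $E^c$ are then each checked (by the computation \eqref{120180619}) to be $t$--Bconformal, hence $t$--conformal by Corollary \ref{c520180405}, hence equivalent by Lemma \ref{l1mis.cmii2}(b) — contradicting the fact that $\nu'$ and $\nu''$ are fiberwise mutually singular by construction. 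This argument avoids any density--regularity issues entirely. Equivalence $\nu\ll\mu$ is similarly proved in the paper by a direct integration of the lower bound in \eqref{2mis121} over a carefully chosen set $A_\Om^*(\al)$ selected by Tchebyshev, and then invoking Portmanteau with open random sets; it is not simply a consequence of the ergodicity computation. Your uniqueness step (the invariant density $\rho$ is constant by ergodicity) is correct and matches the paper.
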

\begin{proof}
Recall that
$$
p_2:\Om\times[0,1]\longmapsto [0,1]
$$
is the natural projection on the second coordinate, i.e.
$$
p_2(\om,x)=x. 
$$ 
Recall also that $\eta_*\in (0,\min\{1/2,1-(|\Dl_1|/2))$ is the number produced in Lemma~\ref{l1cmii3.2B}. Fix $t\geq 0$ and set $\nu:=\nu_t$. For every $n\geq 0$ set 
	\begin{align*}
		\nu^{(n)}:=\nu\circ T^{-n} \spand \mu^{(n)}:=\frac{1}{n}\sum_{j=0}^{n-1}\nu^{(j)}. 
	\end{align*}
Pointwise this means that for all $\om\in\Om$ and every Borel set $F_\om\sbt [0,1]$:
\begin{equation}\label{1020180606}
\nu_\om^{(n)}(F_\om)
=\nu_{\ta^{-n}(\om)}\(T_{\ta^{-n}(\om)}^{-n}(F_\om)\) 
\  \  \  {\rm and} \  \  \ 
\mu_\om^{(n)}(F_\om)
=\frac{1}{n}\sum_{j=0}^{n-1}\nu_\om^{(j)}(F_\om).
\end{equation}
Fix $\eta\in(0,\eta_*]$ arbitrary. Let $A\sub \cJ(T)$ be an arbitrary measurable set such that 
	\begin{align*}
		p_2(A)\sub[\eta,1-\eta].
	\end{align*}
It then follows from \eqref{1mis121} that 
$$
K_{\eta}^{-t}\nu_\om^{(j)}([\eta,1-\eta])\)\nu_{\om}(A_{\om})
\le \nu_\om^{(j)}(A_\om)
\le K_{\eta}^tQ_\cG^{-1}(t)\nu_{\om}(A_{\om})
$$
for all $j\geq 0$ and all $\om\in\Om$. Hence,
\begin{align}\label{2mis121}
K_{\eta}^{-t}\mu_\om^{(n)}([\eta,1-\eta])\nu_\om(A_\om)
\leq \mu_\om^{(n)}(A_\om)
\leq K_{\eta}^tQ_\cG^{-1}(t)\nu_{\om}(A_{\om})
\end{align}
for all $\om\in\Om$ and all integers $n\ge 1$.

	Let now $\mu=\mu_t$ be a weak limit of the sequence $(\mu^{(n)})_{n=1}^\infty$ in the narrow topology on $\Om\times[0,1]$, say
	\begin{align*}
		\mu=\lim_{k\to\infty}\mu^{(n_k)}
	\end{align*}
for some increasing sequence $(n_k)_{k=1}^\infty$. Obviously, $\mu\in M_m^1(T)$. By virtue of Portmanteau's Theorem for $G\sub p_2^{-1}([\eta,1-\eta])$, an arbitrary random open set in $\Om\times[0,1]$, we thus get from \eqref{2mis121} that 
	\begin{align*}
\mu(G)&\leq \liminf_{k\to\infty}\mu^{(n_k)}(G)
=\liminf_{k\to\infty}\int_{\Om}\mu_\om^{(n_k)}(G_\om)dm(\om) \\
&\leq K_{\eta}^tQ_\cG^{-1}(t)\int_{\Om}\nu_\om(G_\om)dm(\om)\\
&=K_{\eta}^tQ_\cG^{-1}(t)\nu(G).
	\end{align*}
It therefore immediately follows from Theorem \ref{t1mis119} (outer regularity) that
	\begin{align}\label{3mis123}
		\mu(A)\leq K_{\eta}^tQ_\cG^{-1}(t)\nu(A)
	\end{align}
for every measurable set $A\sub p_2^{-1}([\eta,1-\eta])$.

Now, fix $\ep>0$ arbitrary. It follows from Lemma~\ref{c520180606} and Portmanteau's Theorem that with $\eta_1\in (0,\eta]$ sufficiently small,
	\begin{align}\label{1mis131}
		\mu(\Om\times([\eta_1,1-\eta_1]^c))<\ep/2.
	\end{align} 
Take now an arbitrary measurable set $A\sub \cJ(T)$ such that 
$$
\nu(A)<\ep (2K_{\eta}^t)^{-1}Q_\cG.
$$
It then follows \eqref{3mis123} and \eqref{1mis131} that
$$
\begin{aligned}
\mu(A)
&=\mu(A\cap(\Om\times[\eta_1,1-\eta_1]^c))+\mu(A\cap(\Om\times[\eta_1,1-\eta_1])) \\
&<\ep/2+K_{\eta}^tQ_\cG^{-1}(t)\nu(A\cap(\Om\times[\eta_1,1-\eta_1]))\\
&\leq\ep/2+K_{\eta}^tQ_\cG^{-1}(t)\nu(A)<\ep/2+\ep/2\\
&=\ep.
\end{aligned}
$$
The proof of the absolute continuity of $\mu$ with respect to $\nu$ is thus complete. 

	Now we shall prove that the measures $\mu$ and $\nu$ are equivalent. So, let $A$ be an arbitrary measurable subset of $\cJ(T)$ with $\nu(A)>0$. Because of Theorem~\ref{t1mis119} we can assume without loss of generality that $A$ is a closed random set. By virtue of Remark~\ref{r120180526} there then exists $\eta_2\in(0,\eta_1]$ so small that 
	$$
	\nu(A\cap(\Om\times [\eta_2,1-\eta_2]))>0.
	$$
	 Then there exists $\al>0$ such that 
	\begin{align*}
		m(A_\Om(\al))>0 \  \text{ and } \  \nu(A(\al))>0,
	\end{align*} 
	where 
	\begin{align*}
		A_\Om(\al):=\set{\om\in\Om:\nu_\om(A_\om\cap[\eta_2,1-\eta_2])\geq \al} \text{ and } A(\al):=\union_{\om\in A_\Om(\al)}\set{\om}\times (A_\om\cap[\eta_2,1-\eta_2]).
	\end{align*}
	Decreasing $\eta_2\in(0,\eta_1]$ as needed, we will get in the same way as \eqref{1mis131}, that 
	\begin{align*}
		\mu\(\Om\times(\eta_2,1-\eta_2)^c\)<\frac{1}{4}m(A_\Om(\al)).
	\end{align*}
	Then, by applying Tchebyschev's Inequality, we get
	\begin{align*}
m\left(\set{\om\in\Om:\mu_\om((\eta_2,1-\eta_2)^c)
\geq \frac{1}{2}}\right)&\leq \frac{\int_{\Om}\mu_\om\left([\eta_2,1-\eta_2]^c\right)\, dm(\om)}{1/2}\\
&=2\mu\(\Om\times((\eta_2,1-\eta_2)^c)\)\\
&<\frac{1}{2}m(A_\Om(\al)).
	\end{align*}
	Therefore, 
	\begin{align*}
		m(A_\Om^*(\al))&\geq m(A_\Om(\al))-m\left(\set{\om\in\Om:\mu_\om((\eta_2,1-\eta_2))<\frac{1}{2}}\right)\\
		&\geq m(A_\Om(\al))-\frac{1}{2}m(A_\Om(\al))\\
		&=\frac{1}{2}m(A_\Om(\al)),
	\end{align*}
	where 
	\begin{align*}
		A_\Om^*(\al):=\set{\om\in A_\Om(\al): \mu_\om((\eta_2,1-\eta_2))\geq\frac{1}{2} }.
	\end{align*}
	Integrating now the left--hand side of \eqref{2mis121} over the set $A_\Om^*(\al)$ with $A$ replaced by $A(\al)$, we thus get 
	\begin{align}
		\int_{A_\Om^*(\al)}\mu_\om^{(n)}(A_\om(\al))\, dm(\om)
&\geq K_{\eta_2}^{-t}\int_{A_\Om^*(\al)}\mu_\om^{(n)}([\eta_2,1-\eta_2])\nu_\om(A_\om(\al))\, dm(\om) \\
&\geq K_{\eta_2}^{-t}\int_{A_\Om^*(\al)}\mu_\om^{(n)}((\eta_2,1-\eta_2))\nu_\om(A_\om(\al))\, dm(\om). 
	\end{align}
Taking the limit with $n_k\to\infty$, and observing that $A(\al)$, like $A$, is a closed random set while $\bigcup_{\om\in\Om}\{\om\}\times(\eta_2,1-\eta_2)$ is an open random set, we thus get that
	\begin{align*}
\mu(A)&\geq \mu(A(\al))
\geq \limsup_{k\to\infty}\int_{A_\Om^*(\al)}\mu_\om^{(n_k)}(A_\om(\al))\, dm(\om)\\
&\geq \limsup_{k\to\infty}K_{\eta_2}^{-t}\int_{A_\Om^*(\al)}\mu_\om^{(n_k)}((\eta_2,1-\eta_2))\nu_\om(A_\om(\al))\, dm(\om)\\
&\ge K_{\eta_2}^{-t}\int_{A_\Om^*(\al)}\mu_\om((\eta_2,1-\eta_2))\nu_\om(A_\om(\al))\, dm(\om)\\
&\geq (2K_{\eta_2}^t)^{-1}\int_{A_\Om^*(\al)}\nu_\om(A_\om(\al))\, dm(\om)
\geq (2K_{\eta_2}^t)^{-1}\al m(A_\Om^*(\al))\\
		&\geq (4K_{\eta_2}^t)^{-1}\al m(A_\Om(\al))>0.
	\end{align*}
Hence $\nu$ is absolutely continuous with respect to $\mu$, and in consequence, $\mu$ and $\nu$ are equivalent.

	Now we can easily finish the proof of Theorem \ref{t2mis119}. We show the ergodicity of $\mu_t$ first. Seeking contradiction suppose that $\mu_t$ is not ergodic. This would mean that there exists a measurable set $E\sub \cJ(T)$ such that 
	\begin{align}\label{1mis139}
		0<\mu(E)<1
	\end{align}
	and 
	\begin{align}\label{2mis139}
		T^{-1}(E)=E.
	\end{align}	
	Since $\ta:\Om\to\Om$ is ergodic, this implies that $\nu_\om(E_\om)>0$ for $m$--a.e. $\om\in\Om$. Since we already know that $\mu$ and $\nu$ are equivalent, \eqref{1mis139} yields 
	\begin{align*}
		0<\nu(E)<1.
	\end{align*}
	We can then consider the following two measures $\nu'$ and $\nu''$ on $\cJ(T)$ given by their fiberwise disintegrations as follows  
	\begin{align*}
		\nu_\om'(B):=\frac{\nu_\om(B\cap E_\om)}{\nu_\om(E_\om)} 
		\  \ \text{ and } \  \  
\nu_\om''(B):=\frac{\nu_\om(B\bs E_\om)}{\nu_\om(E\bs E_\om)}, \quad \om\in\Om.
	\end{align*}
Let us first verify that the measures/collections $\nu'=\{\nu'_\om\}_{\om\in\Om}$ and $\nu''=\{\nu'_\om\}_{\om\in\Om}$ are random measures supported on $\cJ(T)$ with respect to the base measure $m$. First, 
	\begin{align*}
		\nu_\om'([0,1])=\frac{\nu_\om([0,1]\cap E_\om)}{\nu_\om(E_\om)}=\frac{\nu_\om(E_\om)}{\nu_\om(E_\om)}=1
	\end{align*}
for $m$--a.e. $\om\in\Om$ and
	\begin{align*}
		\nu_\om'(\cJ_\om(T))=\frac{\nu_\om(\cJ_\om(T)\cap E_\om)}{\nu_\om(E_\om)}=\frac{\nu_\om(E_\om)}{\nu_\om(E_\om)}=1
	\end{align*}
	for $m$--a.e. $\om\in\Om$.
Therefore, for every measurable set $A\sub\Om$, we have 
	\begin{align*}
		\nu'\circ\pi_\Om^{-1}(A)=\nu'(A\times[0,1])=\int_A\nu'_\om([0,1])\, dm(\om)=\int_A\ind \, dm(\om)=m(A).
	\end{align*}		
So, $\mu'$ is a random measure with respect to the base measure $m$ supported on $\cJ(T)$. The same calculation shows it for $\mu''$. An equivalent form of \eqref{2mis139} is that 
	\begin{align*}
		T_\om^{-1}(E_{\ta(\om)})=E_\om
	\end{align*}
	for all $\om\in\Om$. Using this, we get for every $g\in L^+_\infty([0,1])$ that
	\begin{equation}\label{120180619}
	\begin{aligned}
		\tr_\om^*(\nu_{\ta(\om)}')g
		&=\nu_{\ta(\om)}'(\tr_\om g)
		=\frac{\int_{E_{\ta(\om)}}\tr_\om g \,d\nu_{\ta(\om)}}{\nu_{\ta(\om)}(E_{\ta(\om)})}
		=\frac{\int_{\cJ_{\ta(\om)}(T)}\ind_{E_{\ta(\om)}}\tr_\om g\, d\nu_{\ta(\om)}}{\nu_{\ta(\om)}(E_{\ta(\om)})}\\
		&=\frac{\int_{\cJ_{\ta(\om)}(T)}\tr_\om\(g\cdot( \ind_{E_{\ta(\om)}}\circ T_\om)\)\, d\nu_{\ta(\om)}}{\nu_{\ta(\om)}(E_{\ta(\om)})}
		=\frac{\tr_\om^*\nu_{\ta(\om)}\(g\cdot( \ind_{E_{\ta(\om)}}\circ T_\om)\)}{\nu_{\ta(\om)}(E_{\ta(\om)})}\\
		&=\frac{\tr_\om^*\nu_{\ta(\om)}(g\cdot\ind_{E_{\om}})}{\nu_{\ta(\om)}(E_{\ta(\om)})}
		=\lm_\om\frac{\nu_{\om}(g\cdot\ind_{E_{\om}})}{\nu_{\ta(\om)}(E_{\ta(\om)})}
		=\lm_\om\frac{\nu_{\om}({E_{\om}})}{\nu_{\ta(\om)}(E_{\ta(\om)})} \cdot\frac{\nu_{\om}(g\cdot\ind_{E_{\om}})}{\nu_{\om}(E_{\om})}\\
		&=\lm_\om'\nu_\om'(g),
	\end{aligned}
	\end{equation}
	where 
	\begin{align*}
		\lm_\om'=\lm_\om\frac{\nu_\om(E_\om)}{\nu_{\ta(\om)}(E_{\ta(\om)})}.
	\end{align*}
	So, see Remark~\ref{c1mis.cmi6.1} and the proof of Proposition~\ref{p1mis.cmi5}, 
	\begin{align*}
		\tr_\om^*\nu_{\ta(\om)}'=\lm_\om'\nu_\om'.
	\end{align*}
	Likewise,
	\begin{align*}
		\tr_\om^*\nu_{\ta(\om)}''=\lm_\om''\nu_\om'',
	\end{align*}	
	where 
	\begin{align*}
		\lm_\om''=\lm_\om\frac{\nu_\om(E_\om^c)}{\nu_{\ta(\om)}(E_{\ta(\om)}^c)}.
	\end{align*} 
It follows from the above formulas, in fact from the formula \eqref{120180619} alone, and from Proposition~\ref{p1mis.cmi5}, that the random measure $\nu'$ is $t$--Bconformal. Likewise $\nu''$. Hence, by virtue of Corollary~\ref{c520180405}, both measures $\nu'$ and $\nu''$ are $t$--conformal. Therefore, by Lemma~\ref{l1mis.cmii2} (b), these two measures, $\nu'$ and $\nu''$, are equivalent. However, on the other hand, by their very definition they are mutually singular (in fact for each $\om\in\Om$ the fiber measures $\nu_\om'$ and $\nu_\om''$ are mutually singular.) This contradiction finishes the proof of the ergodicity of the measure $\mu$. The uniqueness of $\mu$ is now immediate. Since $\mu$ and $\nu$ are equivalent, any measure $\eta\in M_m^1(T)$ absolutely continuous with respect to $\nu$ is also absolutely continuous with respect to $\mu$. Since $\mu$ is ergodic, $\eta=\mu$. The proof of Theorem~\ref{t2mis119} is complete. 
\end{proof}

	\section{Bowen's Formula}\label{sec: Bowen's Formula}
	This section is entirely devoted to proving the following theorem, which is a version of Bowen's Formula proven first by Rufus Bowen in \cite{bowen} in the context of quasi--Fuchsian groups.
	\begin{theorem}\label{t1mis.bf1}
		If $T:\cJ(T)\to\cJ(T)$ is a random critically finite map, then $b_T<+\infty$ (in fact $b_T\leq 1$) and 
		\begin{align*}
		\HD(\cJ_\om(T))=b_T
		\end{align*}
		for $m$--a.e. $\om\in\Om$.
	\end{theorem}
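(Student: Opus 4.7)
The aim is to prove both that $b_T<\infty$ (in fact $b_T\le 1$) and that $\HD(\cJ_\om(T))=b_T$ for $m$-a.e.\ $\om\in\Om$. The plan has three ingredients: a direct argument for $b_T\le 1$, a covering argument yielding $\HD(\cJ_\om(T))\le b_T$, and a Birkhoff/bounded-distortion argument yielding $\HD(\cJ_\om(T))\ge b_T$, the last of which uses the invariant measure produced in Theorem~\ref{t2mis119}.

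\textbf{Finiteness of $b_T$.} I would first show $\cEP_\infty(1)\le 0$, which by Corollary~\ref{t1ep3} forces $b_T\le 1$. Fix $k\in\NN$ and work with the truncated $1$-conformal measure $\nu_{1,k}$. Since the intervals $T_{\om,\Ga}^{-n}(I)$, $\Ga\in\cG^n$, have pairwise disjoint interiors and lie in $[0,1]$, we have $\sum_{\Ga\in\cG^n}|T_{\om,\Ga}^{-n}(I)|\le 1$. Writing $\lm_{1,k,\om}^n=\int\tr_{k,\om}^n\ind\,d\nu_{1,k,\th^n(\om)}$ and using that $\cJ^{(k)}(T)\sub[\eta_k,1-\eta_k]$, the special bounded distortion property (Proposition~\ref{p120180630}) lets me replace the integrand $\sum_y|(T_\om^n)'(y)|^{-1}$ by a quantity comparable to $\sum_\Ga|T_{\om,\Ga}^{-n}(I)|$ up to a constant $C_k$ independent of $n$. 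This gives $\lm_{1,k,\om}^n\le C_k$, hence $\cEP_k(1)\le 0$ by Birkhoff, and then $\cEP_\infty(1)\le 0$ by Corollary~\ref{c1tp5}.

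\textbf{Upper bound $\HD(\cJ_\om(T))\le b_T$.} I would fix $s>b_T$ and build economical covers of $\cJ_\om(T)$. For each $n$, the family $\{T_{\om,\Ga}^{-n}(I)\}_{\Ga\in\cG^n}$ covers $\cJ_\om(T)$ with diameters tending to zero uniformly (Corollary~\ref{c1mis81}). Fix $\eta>0$ small. Split $\cG^n$ into a ``good'' set $\cG^n_g(\om,\eta)$ of $\Ga$ for which $T_\om^j(T_{\om,\Ga}^{-n}(I))\sub[\eta,1-\eta]$ for all $0\le j<n$, and a ``bad'' complement. On the good set, Proposition~\ref{p120180630} together with $b_T$-conformality of $\nu=\nu_{b_T}$ (Theorem~\ref{t1tp5} and Theorem~\ref{t1mis.cm7}) yields
$$\diam(T_{\om,\Ga}^{-n}(I))^{b_T}\comp\lm_\om^n\,\nu_\om(T_{\om,\Ga}^{-n}(I)),$$
so, using $\sum_\Ga\nu_\om(T_{\om,\Ga}^{-n}(I))\le 1$, the good part of $\sum\diam^s$ is at most a constant times $\lm_\om^n\cdot\eta^{s-b_T}$ once $\diam\le\eta$; but $\frac{1}{n}\log\lm_\om^n\to\cEP(b_T)=0$ by Proposition~\ref{p1tp12}, so $\lm_\om^n$ grows subexponentially and this term is controlled. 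For the bad $\Ga$, the orbit visits a shrinking neighborhood of $\{0,1\}\cup\Crit(\cG)$; the inequalities \eqref{120180523}--\eqref{220180523} together with Lemma~\ref{l2tp6} and Remark~\ref{r120180526} (which exploit $b_T\in AA(T)$, valid by Theorem~\ref{t2ep4}) show that the $\nu_\om$-mass of the bad union decays exponentially in the depth of the visit, yielding a matching exponential bound on the contribution to $\sum\diam^s$. Summing and letting $n\to\infty$ and then $s\downarrow b_T$ gives $\HD(\cJ_\om(T))\le b_T$ for $m$-a.e.\ $\om$.

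\textbf{Lower bound $\HD(\cJ_\om(T))\ge b_T$.} Let $\mu=\mu_{b_T}$ be the ergodic $T$-invariant measure equivalent to $\nu_{b_T}$ provided by Theorem~\ref{t2mis119}. Since $\nu_{b_T}(\Om\times\{0,1\})=0$ (Proposition~\ref{p1mis.cm7.1.2}) and the two measures are equivalent, $\mu\in M_m^{1,0}(T)_e$, so Proposition~\ref{p1mis83} gives $\chi_\mu>0$. By Birkhoff applied to $\log|T'|$, to $\log\lm$, and to the indicator of $\Om\times[\eta,1-\eta]$ (positive $\mu$-mass for small $\eta$ by Lemma~\ref{l1cmii3.2B}), for $\mu$-a.e.\ $(\om,x)$ there is a sequence $n_k\to\infty$ with $T_\om^{n_k}(x)\in[\eta,1-\eta]$, along which $\frac{1}{n_k}\log|(T_\om^{n_k})'(x)|\to\chi_\mu$ and $\frac{1}{n_k}\log\lm_\om^{n_k}\to\cEP(b_T)=0$. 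Proposition~\ref{p120180630} applies to the inverse branch $T_{\om,x}^{-n_k}$ on $[\eta,1-\eta]$, giving a ball $B(x,r_k)$ with $r_k\comp|(T_\om^{n_k})'(x)|^{-1}$ and $b_T$-conformality then yields
$$\nu_\om(B(x,r_k))\comp\lm_\om^{-n_k}|(T_\om^{n_k})'(x)|^{-b_T}=e^{-n_k b_T\chi_\mu+o(n_k)}=r_k^{b_T+o(1)}.$$
Thus the lower pointwise dimension of $\nu_\om$ at $x$ is at least $b_T$ at $\mu$-a.e.\ $(\om,x)$, which via the standard mass-distribution principle gives $\HD(\mu_\om)\ge b_T$ and hence $\HD(\cJ_\om(T))\ge b_T$ for $m$-a.e.\ $\om$.

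\textbf{Main obstacle.} The hardest step is the upper bound, precisely because the bounded distortion property of Proposition~\ref{p120180630} degenerates near critical points, so the naive replacement $\diam(T_{\om,\Ga}^{-n}(I))\approx|(T_\om^n)'|^{-1}$ fails on the ``bad'' inverse branches. The delicate part of the plan is therefore the combinatorial partition of $\cG^n$ by the last time the itinerary enters a shrinking neighborhood of $\Crit(\cG)\cup\{0,1\}$, together with a careful accounting that uses precisely the exponential $\nu$-mass decay established in Section~7 and the $A$-admissibility hypothesis $b_T\in\Int(AA(T))$ guaranteed by Theorem~\ref{t2ep4}; without this, the bad tail could swamp the good main term and the covering sum would diverge at $s=b_T$.
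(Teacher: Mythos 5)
Your lower bound is essentially the paper's own argument: conformal and invariant measures at the parameter $b_T$ from Theorem~\ref{t2mis119}, positivity of $\chi_\mu$ from Proposition~\ref{p1mis83}, Birkhoff along returns to $[\eta,1-\eta]$, the distortion estimate of Proposition~\ref{p120180630}, and $\cEP(b_T)=0$. One point you gloss over: you estimate $\nu_\om(B(x,r))$ only along the sequence $r_k\asymp|(T_\om^{n_k})'(x)|^{-1}$, whereas a bound on the \emph{lower} pointwise dimension requires all small radii; the paper handles this by attaching to every $r$ the pair of return times $k(y,r)$ and $k^+(y,r)$ and using $k^+/k\to 1$. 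Your Birkhoff remark about the indicator of $\Om\times[\eta,1-\eta]$ supplies exactly this density statement, so that step is repairable, but it must be said.

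The genuine gaps are in the other two parts. For the upper bound, your covering scheme breaks exactly where you flag the difficulty: the estimates of Section 7 (\eqref{120180523}, \eqref{220180523}, Lemma~\ref{l2tp6}, Remark~\ref{r120180526}) control the $\nu_\om$-\emph{mass} of the bad region, but the covering sum needs $\sum\diam^s$ over the bad cylinders, and converting mass into diameter is precisely the step where Proposition~\ref{p120180630} degenerates, since the inverse branches are applied to all of $[0,1]$, whose endpoints carry the critical values; the phrase ``yielding a matching exponential bound on the contribution to $\sum\diam^s$'' is asserted, not argued, and no mechanism for it is offered. (Even the good part is not finished: $\lm_\om^n\eta^{s-b_T}$ with $\eta$ fixed does not tend to $0$; you would need exponential shrinking of good cylinders to beat the subexponential factor $\lm_\om^n$.) The paper avoids the covering argument altogether: for $z$ outside the countable set $\bigcup_{n\ge0}T_\om^{-n}(\{0,1\})$ it pulls back $(\eta,1-\eta)$ at return times $j$, uses Lemma~\ref{l1cmii3.2} to get $\nu_\om(B(z,r_j))\gek\lm_\om^{-j}r_j^{b_T}$, and concludes $\liminf_{r\to0}\log\nu_\om(B(z,r))/\log r\le b_T$, whence the upper bound by the converse Frostman/Billingsley argument --- no good/bad decomposition of $\cG^n$ is needed. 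Second, your finiteness argument: the estimate $\lm_{1,k,\om}^n\le C_k$ and hence $\cEP_k(1)\le 0$, $\cEP_\infty(1)\le 0$ is fine, but Corollary~\ref{t1ep3} does not then force $b_T\le 1$; $\cEP_\infty$ is only known to be (non-strictly) decreasing, so $\cEP_\infty(1)=0$ is compatible with $\cEP_\infty$ vanishing on an interval to the right of $1$, and Lemma~\ref{l1ep1} gives strict decrease only at each finite truncation level, with rate $\underline{\chi}(k)$ that may degenerate as $k\to\infty$. The paper obtains $b_T\le 1$ (hence $b_T<\infty$) as a byproduct of the lower bound, since $\HD(\cJ_\om(T))\le 1$; that route is available to you as well and should replace the appeal to Corollary~\ref{t1ep3}.
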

	\begin{proof}
We abbreviate $b_T$ by $b$. We will show first that 
		\begin{align*}
		\HD(\cJ_\om(T))\geq b
		\end{align*}
for $m$--a.e. $\om\in\Om$. This will also gives us that things: firstly that 
		\begin{align*}
		 b\leq 1
		\end{align*}
as $\HD(\cJ_\om(T))\leq 1$ for every $\om\in\Om$. 
		
Let $\nu$ and $\mu$ be the respective conformal and $T$--invariant measures corresponding to the parameter $b$ (see Theorem~\ref{t2mis119}). 
Fix $\om\in\Om$ and 
$$
z\in\cJ_\om(T)\bs\union_{n=0}^\infty T_\om^{-n}(\set{0,1}).
$$
Set 
$$
y:=(\om,z).
$$
Now, fix $\eta_*\in (0,1/2)$ coming from Lemma~\ref{l1cmii3.2}. Since $T_\rho(1)=T_\rho(0)=0$ for all $\rho\in\Om$ and since $0$ is a uniformly expanding fixed point for every $\rho\in\Om$, we conclude that there exists $\eta\in(0,\eta_*/2]$ such that the set 
		\begin{align*}
		N(y):=\set{j\geq 0: T_\om^j(z)\in(2\eta,1-2\eta)}
		\end{align*}
		is infinite. 
For every $r\in(0,\eta)$, let $k=k(y,r)$ be the largest integer greater than or equal to zero such that 
		\begin{align*}
		T_\om^k(z)\in(2\eta,1-2\eta)
		\end{align*}
		and 
		\begin{align}\label{msu5.2}
		B(z,r)\sub T_y^{-k}(B(T_\om^k(z),\eta)).
		\end{align}
Then, using Proposition~\ref{p120180630}, we get that
\begin{equation}\label{1120180823}
B(z,r)\sub B\lt(z,K_\eta\absval{(T_\om^k)'(z)}^{-1}\eta\rt).
\end{equation}
Hence 
\begin{equation}\label{5020180823}
r\le K_\eta\absval{(T_\om^k)'(z)}^{-1}\eta.
\end{equation}
By $b$--conformality of the random measure $\nu$ and by Proposition~\ref{p120180630} again, we also get that
\begin{align}
\nu_\om(B(z,r))
&\leq\nu_\om(T_y^{-k}(B(T_\om^k(z),\eta)))\lesssim\lm_\om^{-k}\absval{(T_ \om^k)'(z)}^{-b}\nu_{\ta^k(\om)}(B(T_\om^k(z),\eta))\nonumber\\
&\leq\lm_\om^{-k}\absval{(T_\om^k)'(z)}^{-b}.\label{msu5.3}
		\end{align}
Let $k^+=k^+(y,r)$ be the least integer greater than $k$ such that $T_\om^{k^+}(z)\in(2\eta,1-2\eta)$. Then 
		\begin{align*}
		B(z,r)\not\sub T_y^{-k^+}\(B(T_\om^{k^+}(z),\eta)\),
		\end{align*}
		Since, by Proposition~\ref{p120180630},
		\begin{align*}
		T_y^{-k^+}\(B(T_\om^{k^+}(z),\eta)
		\)\bus B\lt(z,K_\eta^{-1}\absval{(T_\om^{k^+})'(z)}^{-1}\eta\rt),
		\end{align*}
		we get that
		\begin{align}\label{2bf2}
		r\geq K_\eta^{-1}\absval{(T_\om^{k^+})'(z)}^{-1}\eta.
		\end{align}
Along with \eqref{5020180823}, this gives
\begin{equation}\label{1220180823}
\begin{aligned}
-\frac{\log(K_\eta\eta)}{k(y,r)}+\frac1{k(y,r)}\log\absval{(T_\om^{k(y,r)})'(z)}
&\le \frac{-\log r}{k(y,r)} \le  \\ 
&\le \frac{\log(K_\eta/\eta)}{k(y,r)}+\frac{k^+(y,r)}{k(y,r)}\cdot \frac1{k^+(y,r)}\log\absval{(T_\om^{k^+(y,r)})'(z)}.
\end{aligned}
\end{equation}
Inserting \eqref{2bf2} into \eqref{msu5.3}, we obtain
		\begin{align*}
		\nu_\om(B(z,r))\lesssim\lm_\om^{-k}r^b\left(\frac{\absval{(T_\om^{k^+})'(z)}}{\absval{(T_\om^{k})'(z)}}\right)^b,
		\end{align*}
		and further,
\begin{align}
\frac{\log(\nu_\om(B(z,r)))}{\log r}
\geq b-\frac{\log\lm_\om^k}{\log r}+\frac{1}{\log r}\left(\log\absval{(T_\om^{k^+})'(z)}-\log\absval{(T_\om^{k})'(z)}\right)-\frac{C}{\log r}\label{msu5.6}
\end{align}
with some constant $C>0$. Equivalently, 

\begin{align}\label{120180828}
\frac{\log(\nu_\om(B(z,r)))}{\log r}
\geq b-\frac{\frac{1}{k}\log\lm_\om^k}{\frac{1}{k}\log r}+\frac{k}{\log r}\left(\frac{k^+}{k}\frac{1}{k^+}\log\absval{(T_\om^{k^+})'(z)}-\frac{1}{k}\log\absval{(T_\om^{k})'(z)}\right)-\frac{C}{\log r},
\end{align}
where, we recall $k=k(y,r)$ (so, it does depend on $r$) and $k^+=k^+(y,r)$.
Thus
\begin{align}\label{1mis.bf2}
\liminf_{r\to 0}\frac{\log(\nu_\om(B(z,r)))}{\log r}\geq b+\frac{\lim_{k\to\infty}\frac{1}{k}\log\lm_\om^k}{\lim_{k\to\infty}\frac{1}{k}\log\absval{(T_\om^{k^+})'(z)}}+\frac{\lim_{k\to\infty}\frac{1}{k}\log\absval{(T_\om^k)'(z)}}{\lim_{k\to\infty}\frac{1}{k}\log\absval{(T_\om^{k^+})'(z)}}-1
\end{align}
provided all the above limits exist.
Now, the map $\ta:\Om\to\Om$ is ergodic with respect to the measure $m$ on $\Om$ by our hypotheses and the map $T:\cJ(T)\to\cJ(T)$ is ergodic with respect to the random measure $\mu$ by Theorem~\ref{t2mis119}. It therefore follows from Birkhoff's Ergodic Theorem and from Proposition~\ref{p1mis83}, that there exists a measurable set $\Om_*\sub\Om$, with $m(\Om_*)=1$, such that for every $\om\in\Om_*$ there exists a Borel set $\cJ_\om^*(T)\sub\cJ_\om(T)$, with $\nu_\om(\cJ_\om^*(T))=1$, such that for every $z\in\cJ_\om^*(T)$, we have 
\begin{align}\label{6020180828}
\lim_{\ell\to\infty}\frac{1}{\ell}\log\lm_\om^\ell
=\cEP(b_T)=0,
\end{align}
		\begin{align}\label{120180531}
		\lim_{\ell\to\infty}\frac{1}{\ell}\log\absval{(T_\om^\ell)'(z)}=\chi_\mu>0,
		\end{align}
and 
		\begin{align*}
		\lim_{\ell\to\infty}\frac{\ell^+(\om,z)}{\ell}=1,
\end{align*}
where $\ell^+(\om,z)$ is the least integer greater than $\ell$ such that $T_\om^{\ell^+(\om,z)}(z)\in(2\eta,1-2\eta)$. 
Inserting these three properties into \eqref{120180828}, letting $r>0$ in this formula go to $0$, and making use of \eqref{1220180823}, we get for all $\om\in\Om_*$ and $z\in\cJ_\om^*(T)$, that 
\begin{align*}
\liminf_{r\to 0}\frac{\log(\nu_\om(B(z,r)))}{\log r}\geq b.
\end{align*} 

So, in order to complete the entire proof we are left to show that 
		\begin{align*}
		\HD(\cJ_\om(T))\leq b
		\end{align*}
for $m$--a.e. $\om\in\Om$. We keep $\nu$, $\mu$, and  $y=(\om,z)$ the same as in the former part of the proof.
		
		For every $j\in N(y)$, let $r_j:=r_j(y)>0$ be the least radius such that 
		\begin{align*}
		T_y^{-j}\((\eta,1-\eta)\)\sub B(z,r_j).
		\end{align*}
		It follows from Corollary \ref{c1mis81} that $r_j<\eta$ for all $j\geq 0$ large enough, say $j\geq j_0$. We fix $j\geq j_0$. By applying Proposition~ \ref{p120180630} and invoking the very deffintion of $r_j$, we get 
		\begin{align}
		r_j
		\leq\diam\(T_y^{-j}\((\eta,1-\eta)\)\)
		\leq K_\eta\absval{(T_\om^j)'(z)}^{-1}.\label{1mis.bf3}
		\end{align}
Applying this proposition again along with Lemma~\ref{l1cmii3.2} and the definition of $\eta$, the conformality of the measure $\nu$ yields 
		\begin{align*}
\nu_\om(B(z,r_j))
&\geq \nu_\om(T_y^{-j}(\eta,1-\eta))
\geq K_\eta^{-b}\lm_\om^{-j}\absval{(T_\om^j)'(z)}^{-b}\nu_{\ta^j(\om)}((\eta,1-\eta))\\
&\geq Q_{\eta_*} K_\eta^{-b}\lm_\om^{-j}\absval{(T_\om^j)'(z)}^{-b}.
		\end{align*}
By inserting \eqref{1mis.bf3} into this inequality, we get 
		\begin{align*}	
		\nu_\om(B(z,r_j))\geq K_\eta^{-2b}Q_{\eta_*}\lm_\om^{-j}r_j^b.	
		\end{align*}
		By Corollary \ref{c1mis81}, $\lim_{N(y)\ni j\to\infty}r_j=0$.
Therefore, for such $\om$, using also  \eqref{1mis.bf3}, \eqref{6020180828}, and \eqref{120180531}, we get
		\begin{align*}
		\liminf_{r\to 0}\frac{\log\nu_\om(B(z,r))}{\log r}
		&\leq \liminf_{N(y)\ni j\to\infty}\frac{\log\nu_\om(B(z,r_j))}{\log r_j}
		\leq b+\limsup_{N(y)\ni j\to\infty}\frac{\log\lm_x^{-j}}{\log r_j}\\
		&=b+\limsup_{N(y)\ni j\to\infty}\frac{\frac{1}{j}\log\lm_x^{-j}}{\frac{1}{j}\log r_j}
=b.
		\end{align*}
		Therefore, 
		\begin{align*}
		\HD\(\cJ_\om(T)\bs\cup_{n\geq 0}T_\om^{-n}(\set{0,1})\)\leq b.
		\end{align*}
		So, since $\HD(\cup_{n\geq 0}T_\om^{-n}(\set{0,1}))=0$ (as this set is countable), we finally get that 
		\begin{align}\label{1mis.bf4}
		\HD(\cJ_\om(T))\leq b
		\end{align}
		for $m$--a.e. $\om\in\Om$. The proof is complete.
	\end{proof}
\begin{remark}\label{r120180828}
We have in fact proved in the first part of the above proof that for any measurable set 
$$
\bu_{\om\in\Om_0}\{\om\}\times E_\om\sbt J(T)
$$
there exists a measurable set $\Om_1\sbt\Om_0$ such that $m(\Om_0\sms\Om_1)=0$ and 
$$
\HD(E_\om)\ge b
$$
for all $\om\in\Om_1$.
\end{remark}
	
As the last result of this paper we shall prove the following theorem which shows that the sets $\cJ_\om(T)$, $\om\in\Om$, are all, up to a set of $m$--measure zero, true fractals unless
\begin{align*}
	I_*=\union_{\Dl\in\cG}\Dl=[0,1],
\end{align*}
in which case each set $\cJ_\om(T)$ is equal to $[0,1]$. 
\begin{theorem}\label{t1v4}
	If $T:\cJ(T)\to\cJ(T)$ is a random critically finite map, then 
\begin{align*}
		b_T=1 \  \text{ if an only if }  \  \union_{\Dl\in \cG}\Dl=[0,1],
	\end{align*}
and then $\cJ_\om(T)=[0,1]$ for all $\om\in\Om$. 
\end{theorem}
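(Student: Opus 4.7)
The ``if'' direction is immediate. If $I_*=\bigcup_{\Dl\in\cG}\Dl=[0,1]$, then every $T_\om$ is defined on all of $[0,1]$ with values in $[0,1]$, so $T_\om^n$ is defined on $[0,1]$ for every $n\ge 0$, and hence $\cJ_\om(T)=[0,1]$ for each $\om\in\Om$. Bowen's Formula (Theorem~\ref{t1mis.bf1}) then yields $b_T=\HD([0,1])=1$.

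For the converse, suppose $I_*\ne[0,1]$ and, toward a contradiction, that $b_T=1$. By Corollary~\ref{t1ep3}, $\cEP(1)=\cEP(b_T)=0$; Theorem~\ref{t2ep4} gives $1\in AA(T)$, and then Theorem~\ref{t1tp5} produces a $1$-conformal random measure $\nu=\nu_1$. I will establish two facts whose conjunction yields a contradiction: (A)~$\ell(\cJ_\om(T))=0$ for every $\om\in\Om$, where $\ell$ denotes Lebesgue measure on $[0,1]$, and (B)~$\nu_\om\ll\ell$ for $m$--a.e.\ $\om\in\Om$. Given (A) and (B), writing $\nu_\om=\rho_\om\,\ell$ with $\rho_\om$ supported on $\cJ_\om(T)$ forces $1=\nu_\om(\cJ_\om(T))=\int_{\cJ_\om(T)}\rho_\om\,d\ell=0$, which is absurd.

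For (A), fix an open interval $J\sub[0,1]\sms I_*$ compactly contained in $(0,1)$; such a $J$ exists because $I_*\ne[0,1]$ while $\{0,1\}\sub\Dl_0\cup\Dl_1\sub I_*$. Setting $E_n(\om):=\bigcap_{j=0}^{n-1}T_\om^{-j}(I_*)\downarrow\cJ_\om(T)$, the change-of-variables identity $\int\hat\cL_\om f\,d\ell=\int_{I_*}f\,d\ell$ (valid for the Lebesgue transfer operator $\hat\cL_\om f(x):=\sum_\Dl(f\circ T_{\om,\Dl}^{-1})(x)|(T_{\om,\Dl}^{-1})'(x)|$) iterates to $\ell(E_n(\om))=\int\hat\cL_\om^n\ind\,d\ell$ and $\int_J\hat\cL_\om^n\ind\,d\ell=\ell\big(E_n(\om)\cap(T_\om^n)^{-1}(J)\big)$. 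Because $J$ is bounded away from the critical values $\{0,1\}$, the Special Bounded Distortion Property (Proposition~\ref{p120180630}) applies on the cylinders of $T_\om^n$ whose $n$-th images cover a fixed neighborhood of $J$ inside some $[\eta,1-\eta]$; this yields $\ell\big(E_n(\om)\cap(T_\om^n)^{-1}(J)\big)\ge c\,\ell(E_n(\om))$ with $c>0$ independent of $\om$ and $n$. Since $E_n\sms E_{n+1}\bus E_n\cap(T_\om^n)^{-1}(J)$, we obtain $\ell(E_{n+1}(\om))\le(1-c)\,\ell(E_n(\om))$, hence $\ell(\cJ_\om(T))=0$.

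For (B), the proof of Bowen's Formula (Theorem~\ref{t1mis.bf1}) shows that for $\nu$--a.e.\ $(\om,z)\in\cJ(T)$, $\liminf_{r\to 0}\log\nu_\om(B(z,r))/\log r\ge b_T=1$. Given $\dl>0$, Egorov's Theorem produces $r_1>0$ and a measurable set $F\sub\cJ(T)$ with $\nu(F)>1-\dl$ such that $\nu_\om(B(z,r))\le r^{1-\dl}$ for every $(\om,z)\in F$ and $r\in(0,r_1)$. For a Lebesgue-null set $A\sub[0,1]$ and $\varepsilon>0$, cover $A$ by open intervals $(a_i,b_i)$ of length at most $r_1$ with $\sum_i(b_i-a_i)<\varepsilon$; each interval that meets $F_\om$ sits inside $B(z_i,b_i-a_i)$ for some $z_i\in F_\om$, giving
\[
\nu_\om(A\cap F_\om)\le\sum_i\nu_\om(B(z_i,b_i-a_i))\le\sum_i(b_i-a_i)^{1-\dl}\le r_1^{-\dl}\,\varepsilon.
\]
Letting $\varepsilon\to 0$ and then passing along a sequence $\dl_k\to 0$ via Borel--Cantelli yields $\nu_\om\ll\ell$ for $m$--a.e.\ $\om$, completing the contradiction. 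The principal technical obstacle lies in this uniformisation step of~(B): converting the pointwise liminf from Bowen's proof into a uniform-in-$(\om,z)$ upper bound requires a careful application of Egorov's Theorem within Crauel's measurability framework, and the hypothesis $\cEP(1)=0$ enters crucially via the Birkhoff Ergodic Theorem to control the subexponential fluctuations of the normalizing eigenvalues $\lm_\om^k$ that appear in the Bowen estimates.
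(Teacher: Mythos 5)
Your ``if'' direction and the reduction to showing $b_T<1$ when $I_*\neq[0,1]$ are fine, but step (B) contains a genuine gap that the rest of the argument cannot survive. First, the covering estimate is simply false: for intervals of length $b_i-a_i\le r_1$ one has $(b_i-a_i)^{1-\dl}=(b_i-a_i)(b_i-a_i)^{-\dl}\ge r_1^{-\dl}(b_i-a_i)$, so $\sum_i(b_i-a_i)^{1-\dl}$ is bounded \emph{below}, not above, by $r_1^{-\dl}\sum_i(b_i-a_i)$, and for fine covers (say $n$ intervals of length $\ep/n$) the left-hand side is $n^{\dl}\ep^{1-\dl}\to\infty$ while your claimed bound $r_1^{-\dl}\ep$ stays fixed. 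Second, and more fundamentally, the input you extract from the proof of Theorem~\ref{t1mis.bf1}, namely $\liminf_{r\to0}\log\nu_\om(B(z,r))/\log r\ge 1$ for $\nu$--a.e.\ point, can never yield $\nu_\om\ll\ell$: a lower pointwise dimension $\ge 1$ only shows that $\nu_\om$ annihilates sets of Hausdorff dimension strictly less than $1$, and there are Lebesgue-null sets of Hausdorff dimension $1$ carrying measures of exact pointwise dimension $1$. So even granting (A), the scenario $\ell(\cJ_\om(T))=0$, $\HD(\cJ_\om(T))=1$, $\nu_\om\perp\ell$ is fully consistent with everything you have established, and ruling it out is precisely the content of the theorem; no contradiction is reached. (Your argument for (A) also has a soft spot: Proposition~\ref{p120180630} controls distortion only over the preimage of $[\eta,1-\eta]$, and a rank-$n$ cylinder may carry most of its Lebesgue mass on the preimage of small neighborhoods of the critical values $0,1$, so the uniform constant $c$ in $\ell(E_n\cap(T_\om^n)^{-1}(J))\ge c\,\ell(E_n)$ is not justified as stated.)

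The paper avoids this dimension-one borderline issue entirely by a porosity argument. Using the gap $H$ of $I_*$, an interval $D\ni\xi$ near an endpoint of $H$, the invariant measure $\mu_{b_T}$ from Theorem~\ref{t2mis119}, and Birkhoff's Ergodic Theorem, one finds a set $E=\bigcup_{\om\in\Om_1}\{\om\}\times E_\om$ of positive measure whose points visit $D$ with a definite frequency; at each such visit time $q_j$ the bounded distortion of Proposition~\ref{p120180630} pulls the hole $H$ back to a gap of size comparable to $|(T_\om^{q_j})'(x)|^{-1}$ at distance comparable to the same quantity from $x$, and the derivative bound \ref{(M5b)} converts the visit frequency into the linear growth condition $n_j\le Cj$ of mean porosity. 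The Koskela--Rohde theorem then gives $\HD(E_\om)<1$, while Remark~\ref{r120180828} (the conformal-measure lower bound from the proof of Bowen's formula) gives $\HD(E_\om)\ge b_T$, whence $b_T<1$. If you want to salvage your strategy, you would have to replace (B) by an argument showing directly that $\cJ_\om(T)$ (or a positive-measure subset) has Hausdorff dimension bounded away from $1$, which is exactly what porosity delivers and what pointwise-dimension estimates at exponent $1$ cannot.
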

\begin{proof}
	Obviously, if $\union_{\Dl\in \cG}\Dl=[0,1]$ then $\cJ_\om(T)=[0,1]$ for all $\om\in\Om$, and hence $b_T=1$. The easier part of our theorem is thus established, and so we now suppose that 
	\begin{align*}
		\union_{\Dl\in \cG}\Dl\neq [0,1].
	\end{align*}
	Hence, there exists $H$, a connected component of $[0,1]\bs \union_{\Dl\in \cG}\Dl$, which is a non--degenerate open interval. Note that both endpoints of $H$ belong to $\cJ_\om(T)$ for all $\om\in\Om$ and that at least one of them, denoted by $\xi$, belongs to $(0,1)$. Let $D\sub [0,1]$ be a closed interval which is 
	\begin{enumerate}
	\,	
	
	\item disjoint from $H$,
	
	\,	\item $\xi\in D$,
	
	\,	\item $\diam(D)\leq\frac{1}{2}\min\set{\diam(H),\xi,1-\xi}$.
	\end{enumerate}
	As in the proof of the previous theorem (Bowen's Formula) denote the Bowen's parameter $b_T$ by $b$. Also, as in the previous proof denote by $\nu$ and $\mu$ the respective $b$--conformal measure for $T:\cJ(T)\to\cJ(T)$ and the equivalent $T$--invariant measure. Since $T_\om(\xi)=0$ or $T_\om^2(\xi)=0$ for all $\om\in\Om$, since $0\in\supp(\mu_\om)=\supp(\nu_\om)$ for all $\om\in\Om$, and since 
	\begin{align*}
		(\Om\times H)\cap\cJ(T)=\emptyset,
	\end{align*}
	it follows that 
	\begin{align*}
		\mu_\om(D),\, \nu_\om(D)>0
	\end{align*}
	for all $\om\in\Om$. In particular, 
	\begin{align*}
		\mu(\Om\times D),\, \nu(\Om\times D)>0.
	\end{align*}
	It therefore follows from Birkhoff's Ergodic Theorem and ergodicity of the measure $\mu$, that there exists a closed random measurable set $(E_\om)_{\om\in\Om_1}$, and an integer $N\geq 1$ such that 
	\begin{enumerate}[(a)]
	
	\,	\item \label{v(a)}$\Om_1\sub\Om$ is measurable and $m(\Om_1)>0$,

\,		\item \label{v(b)}$\mu_\om(E_\om)>0$ for all $\om\in\Om_1$, 
	
	\,	\item \label{v(c)}
	
	$$\#\set{1\leq j\leq n:T_\om^j(z)\in D}\geq \frac{\mu(E)}{2}
	$$
	for all $\om\in\Om_1$, all $z\in E_\om$, and all $n\geq N$ where 
		\begin{align*}
			E:=\union_{\om\in\Om_1}\set{\om}\times E_\om.
		\end{align*}
	\end{enumerate}
	By \ref{(M5b)}, we have that 

\, \begin{enumerate}[(d)]
		\item \label{v(d)} $\frac{1}{k}\log \absval{(T_\om^k)'(z)}\leq A$
	\end{enumerate}
\, \noindent for all integers $k\geq 1$, all $\om\in\Om$, and all $z\in\cJ_\om(T)$. 
	
	We now recall (see \cite{KR}, compare \cite{PR}) that a set $X$ contained in a metric space $(Y,\varrho)$ is called mean porous if there exists a constant $C\geq 1$ such that for every $x\in X$ there exist a sequence $(n_j)$ of positive integers and a sequence $(x_j)$ of points in $Y$ such that 
	\begin{align*}
		n_j\leq Cj
	\end{align*}
	for all integers $j$ large enough. The most significant property of mean porous sets is this. 
	\begin{theorem}[\cite{KR}, comp. \cite{PR}]\label{t1v6}
		If $X$ is a porous subset of a metric Euclidean space $\RR^d$, $d\geq 1$, then the upper box--counting dimension of $X$ is smaller than $d$. In particular, 
		\begin{align*}
			\HD(X)<d.
		\end{align*}	
	\end{theorem}
	We shall prove the following. 
	\begin{claim}\label{claim1v6}
		For every $\om\in\Om_1$, the set $E_\om$ is porous in $\RR$.
	\end{claim}
	\begin{proof}
		Fix $y$, the middle point of $H$, $\om\in\Om_1$, and $x\in E_\om$. Let $(q_j)_{j\geq 1}$ be the infinite increasing sequence of all integers $\ell\geq 1$ such that 
		\begin{align*}
			T_\om^\ell(x)\in D.
		\end{align*}
		For every $\geq 1$ set 
		\begin{align*}
			x_j:=T_{\om,x}^{-q_j}(y).
		\end{align*}
Then, by the Special Bounded Distortion Property, Proposition~\ref{p120180630},
\begin{align*}
		\absval{x-x_j}\leq K\absval{(T_\om^{q_j})'(x)}^{-1}\cdot\absval{T_\om^{q_j}(x)-y}\leq K_\eta\diam(H)\absval{(T_\om^{q_j})'(x)}^{-1}
	\end{align*}
	and 
	\begin{align*}
		\dist{x_j}{\cJ_\om(T)}\geq K_\eta^{-1}\frac{1}{2}\diam(H)\absval{(T_\om^{q_j})'(x)}^{-1},
	\end{align*}
where $\eta:=\dist{D}{\{0,1\}}$.
	Now, there exists a unique integer $n_j\geq 1$ such that for every $j\geq N$ large enough 
	\begin{align}\label{1v7}
		e^{-(n_j+1)}<\absval{(T_\om^{q_j})'(x)}^{-1}\leq e^{-n_j}.
	\end{align}
	We thus have 
	\begin{align*}
		\absval{x-x_j}\leq K_\eta\diam(H)e^{-n_j}
	\end{align*}
	and 
	\begin{align*}
		\dist{x_j}{\cJ_\om(T)}\geq (2K_\eta e)^{-1}\diam(H)e{-n_j}.
	\end{align*}
	Now, if $j\geq N$, then $q_j\geq N$, whence it follows from \ref{v(c)} that 
	\begin{align*}
		j\geq \frac{\mu(E)}{2}q_j
	\end{align*}
	and from \ref{v(d)} that 
	\begin{align*}
		q_j\geq \frac{1}{A}\log\absval{(T_\om^{q_j})'(x)}.
	\end{align*} 
	Combining these two inequalities along with \eqref{1v7}, we get 
	\begin{align*}
		n_j\leq\frac{2A}{\mu(E)}j.
	\end{align*}
	The proof of Claim \ref{claim1v6} is thus complete. 
	\end{proof}
	The conjunction of Claim \ref{claim1v6} and Theorem \ref{t1v6} tells us that 
	\begin{align*}
		\HD(E_\om)<1
	\end{align*}
	for all $\om\in\Om_1$. But, by Remark~\ref{r120180828} and item \ref{v(b)}, $\HD(E_\om)=b_T$ for $m$--a.e. $\om\in\Om_1$. Invoking also \ref{v(a)}, we therefore conclude that $b_T<1$. The proof of Theorem \ref{t1v4} is complete. 
\end{proof}

\end{document}